\let\mathcal\mathscr
\numberwithin{equation}{section}
\newtheorem{theorem}{Theorem}[section]
\newtheorem{lemma}[theorem]{Lemma}
\theoremstyle{definition}
\newtheorem*{ack}{Acknowledgements}
\newtheorem{remark}[theorem]{Remark}
\newtheorem{definition}[theorem]{Definition}
\renewcommand{\d}{\mathrm{d}}
\renewcommand{\phi}{\varphi}
\renewcommand{\rho}{\varrho}
\newcommand{\sumstar}{\sideset{}{^*}\sum}
\newcommand{\0}{\mathbf{0}}
\newcommand{\PP}{\mathbb{P}}
\renewcommand{\AA}{\mathbb{A}}
\newcommand{\A}{\mathbf{A}}
\newcommand{\FF}{\mathbb{F}}
\newcommand{\ZZ}{\mathbb{Z}}
\newcommand{\NN}{\mathbb{Z}_{>0}}
\newcommand{\QQ}{\mathbb{Q}}
\newcommand{\RR}{\mathbb{R}}
\newcommand{\CC}{\mathbb{C}}
\newcommand{\TT}{\mathbb{T}}
\newcommand{\cO}{\mathcal{O}}
\newcommand{\cF}{\mathcal{F}}
\newcommand{\Gal}{{\rm Gal}}
\renewcommand{\leq}{\leqslant}
\renewcommand{\geq}{\geqslant}
\renewcommand{\bar}{\overline}
\newcommand{\h}{\mathbf{h}}
\newcommand{\bH}{\mathbf{H}}
\newcommand{\x}{\mathbf{x}}
\newcommand{\y}{\mathbf{y}}
\renewcommand{\c}{\mathbf{c}}
\renewcommand{\u}{\mathbf{u}}
\newcommand{\z}{\mathbf{z}}
\newcommand{\w}{\mathbf{w}}
\renewcommand{\b}{\mathbf{b}}
\renewcommand{\a}{\mathbf{a}}
\renewcommand{\k}{\mathbf{k}}
\renewcommand{\r}{\mathbf{r}}
\newcommand{\ve}{\varepsilon}
\newcommand{\bal}{\boldsymbol{\alpha}}
\newcommand{\bbe}{\boldsymbol{\beta}}
\DeclareMathOperator{\rank}{rank}
\DeclareMathOperator{\Pic}{Pic}
\DeclareMathOperator{\meas}{meas}
\DeclareMathOperator{\Spec}{Spec}
\DeclareMathOperator{\tr}{Tr}
\DeclareMathOperator{\ord}{ord}
\DeclareMathOperator{\sw}{swan}
\DeclareMathOperator{\fr}{Fr}
\DeclareMathOperator{\ch}{char}
\DeclareMathOperator{\Hom}{Hom}
\renewcommand{\=}{\equiv}
\newcommand{\vp}{\varpi}
\renewcommand{\hat}{\widehat}
\newcommand{\Ki}{K_\infty}
\newcommand{\ki}{K_\infty}
\newcommand{\hQ}{\widehat{Q}}
\newcommand{\hJ}{\widehat J}
\newcommand{\het}{H_{\text{\'et}}}
\begin{document}

\subjclass[2010]{11G35 (11P55, 11T55, 14G05)}

\date{\today}

\title[Cubic hypersurfaces over $\FF_q(t)$]{Rational points on\\ cubic hypersurfaces over $\FF_q(t)$}

\author{T.D. Browning}
\address{School of Mathematics\\
University of Bristol\\ Bristol\\ BS8 1TW
\\ UK}
\email{t.d.browning@bristol.ac.uk}

\author{P. Vishe}
\address{
Department of Mathematics\\
University of York\\
York\\ YO10 5DD\\ UK}
\email{pankaj.vishe@york.ac.uk}

\begin{abstract}
The Hasse principle and weak approximation is established for non-singular cubic hypersurfaces $X$ over the function field $\FF_q(t)$, provided that $\ch(\FF_q)>3$ and $X$ has dimension at least $6$.
\end{abstract}

\maketitle

\setcounter{tocdepth}{1}
\tableofcontents

\thispagestyle{empty}

\section{Introduction}

Let $K$ be a global field and let $X\subset \PP_K^{n-1}$ be  a  cubic hypersurface defined over $K$.
A ``folklore'' conjecture predicts that the set $X(K)$ of $K$-rational points on $X$ is non-empty as soon as $n\geq 10$. 
When $K=\FF_q(C)$ 
is the function field of a smooth and projective curve $C$ over the finite field $\FF_q$
this conjecture follows from 
the Lang--Tsen theorem 
(see \cite[Thm.~3.6]{greenberg}), since  $K$ has transcendence degree $1$ over a $C_1$-field. Alternatively, when $K$ is a number field, it follows from recent work of the authors \cite{skinner} provided that  $X$ is assumed to be non-singular.  We record this observation as follows.

\begin{theorem}\label{t:10}
Let $K$ be a global field and let $X\subseteq \PP_K^{n-1}$ be a non-singular cubic hypersurface defined over $K$. If $n\geq 10$ then $X(K)\neq \emptyset$.
\end{theorem}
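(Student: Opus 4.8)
The plan is to treat separately the two types of global field, since in each case the conclusion is already available in the literature.

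\emph{Function fields.} Suppose first that $K=\FF_q(C)$ is the function field of a smooth projective curve $C$ over a finite field $\FF_q$. A finite field is a $C_1$-field by Chevalley--Warning, and $K$ has transcendence degree $1$ over $\FF_q$; hence the Lang--Tsen theorem \cite[Thm.~3.6]{greenberg} shows that $K$ is a $C_2$-field. By definition this means that every form of degree $d$ over $K$ in more than $d^2$ variables has a nontrivial $K$-rational zero. Taking $d=3$, a cubic form in $n\geq 10=3^2+1$ variables has a nontrivial zero over $K$, so $X(K)\neq\emptyset$; here the non-singularity hypothesis is not used.

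\emph{Number fields.} Suppose now that $K$ is a number field. The argument is to invoke the main result of \cite{skinner}, which establishes the Hasse principle for non-singular cubic hypersurfaces in $\PP_K^{n-1}$ over a number field whenever $n\geq 10$. It then suffices to verify that $X$ is everywhere locally soluble. At a complex place this is trivial, and at a real place one restricts $X$ to a line on which the defining cubic form does not vanish identically, obtaining a nonzero binary cubic form, which has a real root because it has odd degree. At a non-archimedean place $v$, the completion $K_v$ is a finite extension of some $\QQ_p$, and the Demyanov--Lewis theorem guarantees that every cubic form over $K_v$ in at least $10$ variables has a nontrivial zero. Hence $X(K_v)\neq\emptyset$ for every place $v$, and \cite{skinner} then yields $X(K)\neq\emptyset$.

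\emph{Main obstacle.} Essentially all of the substance is concealed in the number-field input \cite{skinner}: the function-field case is a formal consequence of Lang--Tsen, and the local solubility checks are classical. The genuinely hard statement is the local-to-global principle for non-singular cubics over number fields with $n\geq 10$, which \cite{skinner} supplies as a black box. By contrast, over $\FF_q(t)$ no comparable unconditional input is available once $n$ drops below $10$, which is precisely why the remainder of the paper is devoted to a delicate circle-method analysis in order to reach hypersurfaces of dimension $6$.
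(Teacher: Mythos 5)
Your proof follows essentially the same route as the paper: Lang--Tsen for the function field case, and the authors' earlier result \cite{skinner} for the number field case, the latter being stated in the paper as a bare observation. The paper does not spell out the everywhere-local solubility check that the Hasse principle of \cite{skinner} requires; you supply it (Demyanov--Lewis at non-archimedean places, odd degree at real places), and that verification is standard and correct.
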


The main goal of this paper is to  improve this result in the special case 
$K=\FF_q(t)$.   Compared to the situation over number fields, there are relatively few results in the literature which deal with the Hasse principle and weak approximation for cubic hypersurfaces defined over $K$. 
One notable exception is found in work of Colliot-Th\'el\`ene \cite[\S 3]{bud}, which establishes the Hasse principle for  the diagonal threefolds 
$$
a_1x_1^3+\dots+ a_5 x_5^3=0, \quad (a_1,\dots,a_5\in K^*),
$$
provided that $q$ is odd and $q\equiv 2 \bmod{3}$. Furthermore, subject to a collection of explicit constraints on the coefficients, he is able to draw the same conclusion for diagonal cubic surfaces in $\PP_K^3$.
These results are established by  adapting to $K$ work of Swinnerton-Dyer \cite{swd} on this problem over number fields.  
It is worth highlighting that Swinnerton-Dyer's approach relies on a delicate analysis of  certain Selmer groups and this  leads to a final result which is conditional on the conjecture that the Tate--Shafarevich group of an elliptic curve is finite. 
The advantage of working over the function field $K$ is that the analagous statements can be made {\em unconditional} --- a feature that will resurface in the present investigation.

Turning to weak approximation, in the setting 
$n=4$ of non-singular cubic surfaces it follows from work of Hu \cite[Thm.~5]{Hu} that $X$ satisfies weak approximation at the places of good reduction,  provided that $\ch(\FF_q)>3$ and $q>47$.
For larger values of $n$ a suitable variant of the Hardy--Littlewood circle method can  be brought to bear on this problem. 
Let $X\subset \PP_K^{n-1}$ be  a non-singular cubic hypersurface defined over $K$. Assuming that $\ch(\FF_q)>3$ it follows from work of Lee (see \cite{lee} and his 2013 PhD thesis \cite{lee-thesis}) that weak approximation holds for $X$ over $K$ provided that $n\geq 14$. Note that the Hasse principle is trivial for $n$ in this range by Theorem 
\ref{t:10}.

By developing  an alternative version of the circle method, we shall establish the following improvement.

\begin{theorem}\label{thm:HP+WA}
Let $K=\FF_q(t)$ with $\ch(\FF_q)>3$. 
Let $X\subset \PP_K^{n-1}$ be a non-singular cubic hypersurface defined over $K$, with $n\geq 8$.
Then $X$ satisfies the Hasse principle and  weak approximation over $K$.
\end{theorem}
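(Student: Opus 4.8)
The plan is to establish both assertions at once by means of a version of the Hardy--Littlewood circle method over the ring $\fo=\FF_q[t]$, using the completion $\Ki=\FF_q((1/t))$ of $K$ at the place at infinity. I would begin with the usual geometric reductions: after a unimodular linear substitution and clearing denominators we may assume that $X$ is cut out by a cubic form $C\in\fo[x_1,\dots,x_n]$ which is non-singular over $\bar K$, and the Hasse principle together with weak approximation for $X$ is then equivalent to the following statement. For every finite set of places $S$ of $K$ and every choice of primitive local zeros $\z_v\in\fo_v^n$ of $C$ for $v\in S$, there is a primitive $\x\in\fo^n$ with $C(\x)=0$ that is arbitrarily $v$-adically close to $\z_v$ for each $v\in S$. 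Encoding the conditions at the finite places of $S$ as a single congruence $\x\equiv\z_0\ (\bmod\ \vp)$ for a suitable modulus $\vp\in\fo$, and the condition at $\infty$ (together with the height bound) as membership of $\hat P^{-1}\x$ in a small ball $\mathcal{B}\subset\Ki^n$, it suffices to obtain, as $\hat P\to\infty$, an asymptotic formula with a strictly positive main term for
\[
N(\hat P)=\card\{\x\in\fo^n:\ \x\equiv\z_0\ (\bmod\ \vp),\ \hat P^{-1}\x\in\mathcal{B},\ C(\x)=0\},
\]
the positivity being precisely where the arithmetic is extracted.

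To detect the equation $C(\x)=0$ I would exploit the fact that the circle $\mathbb{T}=\Ki/\fo$ is a \emph{compact} group of total measure $1$, so that orthogonality of additive characters gives $\mathbf 1[C(\x)=0]=\int_{\mathbb{T}}\psi(\theta C(\x))\,\d\theta$ for a fixed non-trivial character $\psi$ of $\Ki$ trivial on $\fo$; there is no archimedean place and hence no truncation to perform. Moreover $\mathbb{T}$ admits an \emph{exact} Farey/Dirichlet dissection: every $\theta\in\mathbb{T}$ has a unique representation $\theta=a/r+z$ with $r\in\fo$ monic, $\deg r\le$ a suitable parameter, $\gcd(a,r)=1$, and $|z|$ correspondingly small --- so the overlapping-arcs difficulty of the circle method over $\QQ$ simply does not arise. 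Inserting this dissection and applying Poisson summation on $\fo^n$ after splitting $\x$ into residue classes modulo $r$ converts $N(\hat P)$ into an identity of the shape
\[
N(\hat P)=\sum_{r\ \mathrm{monic}}\frac{1}{|r|^{n}}\sum_{\c\in\fo^n}S_r(\c)\,I_r(\c),
\]
where $S_r(\c)=\sumstar_{a\bmod r}\sum_{\y\bmod r}\psi\big((aC(\y)+\c\cdot\y)/r\big)$ is a complete cubic exponential sum and $I_r(\c)$ is an oscillatory integral over $\Ki$ twisted by the conditions cutting out $\mathcal{B}$. A decisive feature of the non-archimedean setting is that a stationary-phase analysis shows $I_r(\c)$ to vanish identically once $|\c|$ exceeds an explicit threshold, so the $\c$-sum is effectively finite; this is the structural reason one can hope to reach $n\ge 8$, well below the thresholds forced over number fields.

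The term $\c=\0$ yields the main term: $S_r(\0)$ counts zeros of $C$ modulo $r$ and $I_r(\0)$ is a density of $\Ki$-adic zeros near the prescribed point at infinity, so the $\c=\0$ contribution factors as $\mathfrak{S}\cdot\mathfrak{I}$ times an explicit power of $\hat P$, where $\mathfrak{S}=\prod_{v\nmid\infty}\sigma_v$ is a singular series (incorporating the congruence at $\vp$) and $\mathfrak{I}$ a singular integral. Using the Lang--Weil and Deligne estimates for the number of points on $\{C=0\}$ over finite extensions of $\FF_q$ --- here $\ch(\FF_q)>3$ keeps these bounds clean and the relevant reductions smooth --- I would show that $\mathfrak{S}$ converges and that both $\mathfrak{S}$ and $\mathfrak{I}$ are strictly positive exactly when $X$ has points in every completion and in the prescribed neighbourhoods; non-singularity of $X$ enters through Hensel's lemma and the implicit function theorem, which in particular ensure that imposing the conditions at the places of $S$ does not annihilate the local densities. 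It remains to bound the total contribution of the $\c\ne\0$ by a quantity of strictly smaller order than $\hat P^{\dim X-1}$.

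This last step is the crux. It rests on a bound for $S_r(\c)$ with essentially square-root cancellation in the $\y$-variables: by multiplicativity one reduces to prime-power moduli $r=\vp^k$; the case $k=1$ is handled via Deligne's resolution of the Weil conjectures, the cancellation being full except when $\c$ lies on the dual variety $X^\vee$, where one incurs a controlled loss but where non-singularity of $X$ keeps $X^\vee$ of the expected dimension; the cases $k\ge 2$ are handled by Hensel-lifting arguments that again use smoothness of $C$. Feeding these estimates, together with the vanishing and decay properties of $I_r(\c)$, into the summation over $r$ (with $|r|$ bounded by roughly $\hat P^{3/2}$), and treating separately the thin set of $\c$ on $X^\vee$, one obtains an error term $O(\hat P^{\dim X-1-\delta})$ for some $\delta>0$ provided $n\ge 8$. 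I expect the main obstacle to lie precisely in this final balancing of exponents: extracting genuine square-root cancellation from $S_r(\c)$ uniformly in $\c$ and $r$, controlling the contribution of the dual-variety locus, and verifying that the resulting bookkeeping remains favourable all the way down to $n=8$.
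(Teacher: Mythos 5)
Your outline correctly reproduces the broad architecture of the proof: the exact Dirichlet dissection of $\TT$ (so there is no major/minor arc split and no overlapping-arcs bookkeeping), Poisson summation after splitting into residue classes modulo $r$, the truncation of the $\c$-sum via the support of the oscillatory integral, the factorisation of the $\c=\0$ term into singular series times singular integral, and the separation of $\c\ne\0$ according to whether $\c$ lies on the dual variety $F^*=0$. You also correctly flag that the final balancing of exponents is the crux. But precisely there your proposal has a genuine gap: the strategy you describe for the error term --- reduce $S_r(\c)$ to prime powers by multiplicativity, invoke Deligne for square-free moduli and Hensel lifting for higher powers, control the dual-variety locus, and then sum over $r$ --- is exactly the \emph{single} Kloosterman refinement (averaging over numerators $a$ only). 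Over $\QQ$ this barrier is well understood: Heath-Brown's and Hooley's nonary work shows it reaches $n\geq 9$, not $n=8$. Pointwise bounds of the shape $|S_\vp(\c)|\ll |\vp|^{(n+1)/2}$ off $F^*(\c)=0$, fed into an $r$-sum of length $\hat Q\approx |P|^{3/2}$, lose a factor of essentially $\hat Q^{1/2}$ that kills the argument at $n=8$ no matter how carefully the dual locus is handled.

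What the paper actually does to close this gap is a \emph{double} Kloosterman refinement: it extracts genuine cancellation also in the sum over square-free moduli $b$, by recognising
$$
\sum_{\substack{b\in\cO^\sharp\\(b,S)=1\\ |b|\leq \hat Z}}\frac{S_{b}(\c)}{|b|^{(n+1)/2}}
$$
as (essentially) a partial coefficient sum of the reciprocal Hasse--Weil $L$-function $L(H_\ell^{n-3}(X_\c),s)^{-1}$ of the hyperplane section $X_\c = X\cap\{\c\cdot\x=0\}$, twisted by Dirichlet characters both modulo $M$ and modulo powers of $t^{-1}$ (the latter substituting for partial summation, which is not available over $\FF_q[t]$). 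Grothendieck's rationality and Deligne's Riemann Hypothesis for these global $L$-functions then give an unconditional Lindel\"of-type bound in the critical strip (Lemmas \ref{lem:size of F} and \ref{lem:home}), which is what delivers the extra saving $\hat Y^{1/2}$ in Lemma \ref{lem:goal} and so unlocks $n=8$. None of this machinery is present in your proposal, and without it the exponent bookkeeping you describe cannot be made to close. Moreover, note that the paper actually only proves the asymptotic for $n=8$ (Theorem \ref{THM}); for $n\geq 9$ it reduces to the $n=8$ case via a fibration $X\to\PP^1_K$ with geometrically integral fibres, an additional step your proposal leaves out. Finally, a subtlety worth internalising is the parity restriction: the clean $L$-function argument requires $n$ even, since for odd $n$ the relevant $L$-function of $X_\c$ sits in the numerator rather than the denominator and may have a pole on the critical line (Remark \ref{r:odd}); that $n=8$ is even is no coincidence.
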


The restriction on the characteristic of $\FF_q$ in this result is unfortunate but intrinsic to the method.
The same restriction appears in Lee's work \cite{lee, lee-thesis},
where it stems from  the use of Weyl differencing in the analysis of certain cubic exponential sums, 
which produces factors of $3!$ within the argument of the resulting sums. 
In our case, the restriction on the characteristic 
comes from the need to find an auxiliary point on the hypersurface $X$ at which the associated 
Hessian does not vanish.
For diagonal forms over $\FF_q(t)$,   Liu and Wooley \cite{liu} have shown how to handle arbitrary characteristic. 
Their approach uses the large sieve 
to act as a substitute for  Weyl differencing and it would be interesting to see whether this innovation
could be adapted to general forms. 

It is natural to compare  Theorem 
 \ref{thm:HP+WA}  with the situation of non-singular cubic hypersurfaces over function fields $K=k(C)$ of a  curve $C$ over an algebraically closed field $k$ of characteristic $0$.
In this setting  Lang--Tsen theory confirms that $X(K)\neq \emptyset$ for $n\geq 4$. 
On the other hand,
Hassett and Tschinkel \cite[Thm.~1]{HT}
have shown that 
 $X$ satisfies weak approximation over $K$ provided that $n\geq 7$.

Theorem \ref{thm:HP+WA} is  the $\FF_q(t)$-analogue  of recent work by Hooley \cite{oct} about non-singular cubic hypersurfaces $X\subset \PP^{n-1}_\QQ$ over the rational numbers. Hooley's main result
 establishes the Hasse principle for $X$, provided  that $n\geq 8$,  conditionally under a certain unproved ``Hypothesis HW'' about the analytic properties of Hasse--Weil $L$-functions associated to a family of 
$5$-dimensional cubic hypersurfaces.
Over the last century 
the theory of the Hardy--Littlewood circle method 
has become  heavily industrialised in its application to cubic forms over $\QQ$, reaching a zenith in 
Hooley's work on octonary cubic forms. 
The igniting spark in his work  is the smooth $\delta$-function technology 
that was introduced by Duke, Friedlander and Iwaniec \cite{DFI}.  This paves the way 
to getting  non-trivial averaging over  the approximating fractions $a/q$
that appear in the associated cubic exponential sums. 
Note that Hooley requires non-trivial averaging over both  numerators and  denominators to handle cubic forms in $8$ variables. This is usually termed a ``double Kloosterman refinement'', with the usual ``Kloosterman refinement'' 
connoting non-trivial averaging over the numerators only. The ordinary Kloosterman refinement is  only capable of handling cubic forms in $n\geq 9$
variables (see pioneering work of Heath-Brown \cite{hb-10} and Hooley \cite{nonaryI}),  but when it works it  produces  completely unconditional results. 
The use of a double Kloosterman refinement over $\QQ$ leads to the analysis of  global $L$-functions associated to cubic hypersurfaces of dimension $5$. Since our knowledge about such $L$-functions 
is extremely scarce in dimension $>1$, any  progress is dependent on Hypothesis HW, which  describes the meromorphic  continuation and location of zeros of these $L$-functions.
The significance of Theorem \ref{thm:HP+WA} is that 
 working over $K=\FF_q(t)$ affords a completely unconditional result.

\medskip

The proof of Theorem \ref{thm:HP+WA} is long and complicated and we proceed to outline some of the key ingredients. 
Our approach is based on estimating the number $N(d)$ of suitably weighted vectors $(x_1,\dots,x_n)\in \FF_q[t]^n$, with $\max_{i}\deg x_i< d$, for which $[x_1,\dots,x_n]\in X(K)$.  The principal result of this paper is  Theorem~\ref{THM}, 
which provides an asymptotic formula for $N(d)$ when $n=8$,  as $d\to \infty$.
This will suffice to prove Theorem~\ref{thm:HP+WA} when $n=8$. For $n\geq 9$ we will deduce the result via a fibration 
$X\to \PP^1_K$
in \S \ref{s:beep}. 
Theorem \ref{THM} is  established using the circle method.

As one might expect, 
parts of the circle method machinery 
become greatly simplified when  transported to the  function field  $K$.
The first simplification comes in the analogue of the smooth $\delta$-function that lies at the heart of Hooley's work.
Indeed, the absolute values of $K$ satisfy the ultrametric inequality. This allows us to cover the analogue of the unit interval with non-overlapping arcs using nothing more sophisticated than a version of Dirichlet's theorem on Diophantine approximation 
over $K$
(see Remark \ref{rem:dissection}). 
Thus we are immediately placed in the position of being able to carry out a double Kloosterman refinement.  This appears to be the first attempt to extract non-trivial savings, \`a la Kloosterman,
over function fields. 

The process of non-trivial averaging leads us to consider
the global $L$-function
$L(H_\ell^m(Y),s)$   which is affiliated to the
middle  $\ell$-adic cohomology group 
  $H_\ell^m(Y)=\het^m(Y\otimes_K \bar K, \QQ_\ell)$ of a non-singular cubic hypersurface $Y\subset \PP_K^{m+1}$ of dimension $m$.
In \S \ref{s:red} we will relate these $L$-functions to a very general class of global $L$-functions 
that were associated  to arbitrary lisse $\ell$-adic sheaves 
by Grothendieck \cite{bourbaki}. The 
 second major advantage 
of working over $K$ is that, thanks to Grothendieck  and Deligne \cite{weil2, bourbaki}, 
we know that these  $L$-functions are actually rational functions of $q^{-s}$ 
that satisfy the Riemann hypothesis. 
Thus  
for $k\in \{0,1,2\}$ there are polynomials $P_k=P_{k,m}\in \ZZ[T]$, with 
inverse roots having  absolute value $q^{(k+m)/2}$,  such that 
$$
L(H_\ell^m(Y),s)=\frac{P_1(q^{-s})}{P_0(q^{-s})P_2(q^{-s})}.
$$
In \S
\ref{s:sqf}, for even  $n\geq 8$,
this information will allow us to execute an  unconditional double Kloosterman refinement  by getting savings in the treatment of 
the relevant
cubic exponential sums with square-free modulus. This is the most novel part of our investigation. 

In order to make use of the analytic properties of $L(H_\ell^m(Y),s)$, we shall also need to contend with an issue that 
represents a much greater challenge in the function field setting than in the classical one. Over $\QQ$,  partial summation   is widely used as a means of transforming the summation of products of sequences into easier  summations,
but this device is not  readily available over $K$. The underlying obstacle comes from the fact that 
there are  only two rational integers with a given absolute value,  
but  $q^{d+1}$ elements of $\FF_q[t]$ with given degree $d$ (and all of these will have equal absolute value).
We will circumvent this difficulty by introducing  Dirichlet characters on 
$(\FF_q[t^{-1}]/t^{-J}\FF_q[t^{-1}])^*$, for  a positive integer $J$,
and then
showing that the analytic properties enjoyed by $L(H_\ell^m(Y),s)$ continue to hold  when $H_\ell^m(Y)$ is twisted by the
 Galois representation induced by these
characters.

There remains the not insignificant task of handling cubic exponential sums with square-full modulus. 
Unfortunately, 
the passage to function fields doesn't offer any simplification of this task and the necessary arguments are 
mostly direct analogues of the corresponding treatment over $\QQ$ found in \cite{hb-10} and \cite{oct}. Given the length of the paper we will capitalise  
on the inherent similarities by not providing a complete treatment of all the estimates that are recorded in \S \ref{s:start-cubic} and \S \ref{s:averages}. Instead we shall 
content ourselves with proving the function field analogues of the key ideas that  underpin the arguments over $\QQ$. 
One ingredient that we require is a non-trivial bound for the number of $K$-rational points on a geometrically irreducible hypersurface $V\subset \PP_K^{n-1}$ which is not a hyperplane. Let $H:\PP_K^{n-1}(K)\to \RR$ be the standard exponential height function. Then, 
as a special case of  Lemma \ref{lem:cohen}, it follows that
\[
\#\left\{x\in V(K) : H(x)\leq q^B
\right\} 
=O_{\ve,V}\left( q^{B(n-3/2+\ve)}\right),
\]
for any $B\geq 1$ and any $\ve>0$.
There are very few  results of this sort in the literature 
over function fields
and it would be interesting to see whether the rapid recent  advances involving the ``determinant method'' over number fields could be adapted to improve this upper bound.

\medskip

Finally, suppose   that  $X\subset \PP^{n-1}_{\FF_q}$ is a non-singular cubic hypersurface defined over a finite field $\FF_q$, with $\mathrm{char}(\FF_q)>3$.
There is a correspondence between the counting function $N(d)$ for $\FF_q(t)$-points on $X$ of bounded height
and the cardinality of $\FF_q$-points on the moduli space $\mathrm{Mor}_d(\PP_{\FF_q}^1,X)$, which parameterises the rational maps of degree $d$ on $X$. Following an idea of Ellenberg and Venkatesh it is possible to 
exploit the Lang--Weil estimate to make deductions about the basic geometry of this moduli space via an asymptotic formula for $N(d)$, provided that  sufficient uniformity is achieved in the $q$-aspect. 
Using the present investigation as a base, we
have produced a short companion paper \cite{BV-short} which carries out this plan.

\begin{ack}
While working on this paper the first  author was
supported by ERC grant \texttt{306457}
and the second author by 
EPSRC programme grant \texttt{EP/J018260/1}.
This work has benefitted from useful conversations with 
Alexei Entin, 
Bruno Kahn, Emmanuel Kowalski, Daniel Loughran, Philippe Michel and Trevor Wooley. Their input is gratefully acknowledged.  Thanks are also due to the anonymous referee for several helpful comments  that have particularly helped to clarify the exposition in \S \ref{s:red}.
\end{ack}

\section{Auxiliary facts about function fields}

\subsection{Notation}
In this section we collect together some notation and basic facts concerning the function field $K=\FF_q(t)$. 
To begin with, for any real number $R$ we will always write $\hat R=q^R$.

Let 
$\mathcal{O}=\FF_q[t]$ be the ring of integers of $K$ and 
let $\Omega$ be the set of  places of $K$. These correspond to either monic irreducible polynomials $\varpi$ in $\mathcal{O}$, which we call the {\em finite primes},  or the {\em prime at infinity} $t^{-1}$ which we usually denote  by $\infty$. 
The associated absolute value  $|\cdot|_v$ is either $|\cdot|_\vp$ for some prime $\vp\in \mathcal{O}$ or $|\cdot|_{\infty}$, according to whether $v$ is a finite or infinite place, respectively. 
These  are given by 
$$
|a/b|_\vp=\left(\frac{1}{q^{\deg \vp}}\right)^{\ord_\vp(a/b)} \quad \text{ and }\quad
|a/b|_\infty=
q^{\deg a-\deg b},
$$
for any $a/b\in K^*$.
We extend these definitions to  $K$ by taking $|0|_\vp=|0|_\infty=0.$
We will usually just write $|\cdot|=|\cdot|_\infty$.

For $v\in \Omega$ we let $K_v$ denote the completion of $K$ at $v$ with respect to $|\cdot|_v$. We put
$
\mathcal{O}_v=\{a\in K_v: |a|_v\leq 1\}
$
for the maximal compact subring
and 
$
\mathcal{O}_v^*=\{a\in K_v: |a|_v= 1\}
$
for the unit group.
Furthermore,  we let $\FF_v$ denote its residue field. We have  $\FF_\infty=\FF_q$ and 
$\FF_{\vp}=\FF_{q^{\deg(\vp)}}$ for any finite prime $\vp$.
The elements of $\mathcal{O}_\infty$ are power series expansions in $t^{-1}$.

We may identify $K_\infty$ with the set 
$$
\FF_q((1/t))=\left\{\sum_{i\leq N}a_it^i: \mbox{for $a_i\in \FF_q$ and some $N\in\ZZ$} \right\}
$$
and put 
$$
\TT=\{\alpha\in K_\infty: |\alpha|<1\}=\left\{\sum_{i\leq -1}a_it^i: \mbox{for $a_i\in \FF_q$}
\right\}.
$$
Let  $\delta\in \TT$. Then $\TT/\delta\TT$ is the set of cosets $\alpha+\delta \TT$, of which there are  
$|\delta|.$

We can extend the absolute value at the infinite place to $K_\infty$ to get  a non-archimedean
absolute value 
$|\cdot|:K_\infty\rightarrow \RR_{\geq 0}$ 
given by $|\alpha|=q^{\ord \alpha}$, where $\ord \alpha$ is the largest $i\in
\ZZ$ such that $a_i\neq 0$ in the 
representation $\alpha=\sum_{i\leq N}a_it^i$.
In this context we adopt the convention $\ord 0=-\infty$ and $|0|=0$.
We extend this to vectors by setting 
$
|\x|=\max_{1\leq i\leq n}|x_i|, 
$
for any $\x\in K_\infty^n$. 

Since $\TT$ is a locally compact
additive subgroup of $K_\infty$ it possesses a unique Haar measure $\d
\alpha$, which is normalised so that 
$
\int_\TT \d\alpha=1.
$
We can extend $\d\alpha$ to a (unique) translation-invariant measure on $\Ki$ in
such a way that 
$$
\int_{\{\alpha\in\Ki: 
|\alpha|<\hat N
\}} \d \alpha=\hat N,
$$ 
for any $N\in \NN$.
These measures also extend to $\TT^n$ and $\Ki^n$, for any $n\in \NN$.

For given $\x,\b\in \cO^n$  and $M\in \cO$ we will sometimes write $\x\equiv \b
\bmod{M}$ to mean that 
$\x=\b+M\y$ for some $\y\in \cO^n$.

\subsection{Characters}\label{s:add-characters}
There is a non-trivial additive character $e_q:\FF_q\rightarrow \CC^*$ defined
for each $a\in \FF_q$ by taking 
$e_q(a)=\exp(2\pi i \tr(a)/p)$, where $\tr: \FF_q\rightarrow \FF_p$ denotes the
trace map.
This character induces a non-trivial (unitary) additive character $\psi:
K_\infty\rightarrow \CC^*$ by defining $\psi(\alpha)=e_q(a_{-1})$ for any 
$\alpha=\sum_{i\leq N}a_i t^i$ in $\Ki$. In particular it is clear that
$\psi|_\cO$ is trivial. 
More generally, given 
 any $\gamma\in \Ki$, the map $\alpha\mapsto \psi(\alpha\gamma)$ is an additive
character on $\Ki$.
We have the basic orthogonality property
$$
\sum_{\substack{b\in \cO\\ |b|<\hat N}}\psi(\gamma b)=\begin{cases}
\hat N, & \mbox{if $|\gamma|<\hat N^{-1}$,}\\
0, & \mbox{otherwise}.
\end{cases}
$$
for any $\gamma \in \ki$ and any integer $N\geq 0$ (see Lemma 7 of
\cite{kubota}).

We will also need standard characters at the finite places
 (we follow Ex.~7.5 of \cite{ram} for their construction).
Let $K_\vp$ be the completion of $K$ at the place corresponding to finite prime $\vp\in \cO$ of degree $d\geq 1$, with corresponding ring
of integers $\cO_\vp$.
According to 
 \cite[Ex.~7.5(c)]{ram},  any element $x\in K_\vp$ can be written as
$x=y/\vp^N+z$ for some integer $N\geq 0$ and $z\in \cO_\vp$,  where
$
y=a_1t^{dN-1}+a_2t^{dN-2}+\dots +a_{dN},
$
with all coefficients $a_i\in \FF_q$.
With this representation one defines 
the non-trivial additive character  $\psi_\vp: K_\vp\rightarrow\CC^*$
to be 
given by 
$$
\psi_\vp(x)=e_q(a_1).
$$

Letting $\A_K$ denote the ad\`eles over $K$, 
we may now define the {\em standard adelic character}
$\psi_K:\A_K\rightarrow\CC^*$
to be
$$
\psi_K(x)=\psi(x_\infty)\prod_{\vp}\psi_\vp(x_\vp),
$$
for any $x=(x_v)\in \A_K$.  It follows from 
\cite[Ex.~7.6]{ram} that $\psi_K$ is a non-trivial additive character of $\A_K$
which is trivial on $K$.

\subsection{Fourier analysis on non-archimedean local fields}

The material we summarise here is found in \cite[\S 7]{ram}, but has its genesis in work of 
Schmid and Teichm\"uller \cite{ST}. (The authors are grateful to Ivan Fesenko for this reference.)
We first fix a non-trivial 
 additive character 
$\varphi: F\rightarrow \CC^*$ 
on a  non-archimedean local field $F$.
A function $f:F \rightarrow \CC$ is said to be {\em smooth } if it is locally
constant (that is, $f(x)=f(x_0)$ for all $x$ sufficiently close to $x_0$). A
{\em Schwartz--Bruhat function} is a smooth function $f: F \rightarrow \CC$ with
compact support.
We denote by $S(F)$ the set of all such functions. Then for any $f\in S(F)$ we
may define the Fourier transform of $f$ by 
$$
\hat f(y)=\int_{F} f(x)\varphi(xy)\d x,
$$
where $\d x$ is  Haar measure. This function also belongs to $S(F)$.

Let  $K=\FF_q(t)$.  
We define $S(\A_K)$
to be the space of functions given by
$$
f(x)=\prod_{\nu} f_\nu (x_\nu),
$$
for $x=(x_\nu)\in \A_K$. Here, 
 $f_\nu\in S(K_\nu)$ for every place $\nu$ and $f_\vp|_{\cO_\vp}=1$ for almost
all primes  $\vp$.
The {\em adelic Fourier transform} of any $f\in S(\A_K)$ is given by 
$$
\hat f(y)=\int_{\A_K} f(x)\psi_K(xy)\d x,
$$
where $\psi_K$ is the standard adelic character on $\A_K$
and $\d x$ is Haar measure on $\A_K$ (normalised
to be the self-dual measure for $\psi_K$).
With this notation the Poisson summation formula (see \cite[Thm.~7.7]{ram}, for
example) 
states that 
$$
\sum_{x\in K} f(x)=\sum_{x\in K} \hat f(x),
$$
for any $f\in S(\A_K)$.  This extends to a summation over $\x\in K^n$ in the
obvious way.

We will need to introduce some weight functions on $K^n$. 
For a prime $\vp$ 
define $w_\vp: K_\vp \rightarrow \{0,1\}$ via
$$
w_\vp(x)=\begin{cases}
1, &\mbox{if $|x|_\vp\leq 1$,}\\
0, & \mbox{otherwise}.
\end{cases}
$$
This gives an  indicator function for the   ring of integers 
$\cO_\vp$.
It is easy to check that $\hat w_\vp=w_\vp$.
Next let $w_\infty: K_\infty \rightarrow \{0,1\}$ be  the indicator function for
$\TT$, defined via
$$
w_\infty(x)=
\begin{cases}
1, &\mbox{if $|x|<1$,}\\
0, & \mbox{otherwise}.
\end{cases}
$$
We proceed to define weight functions $w_f, w: K^n\rightarrow \{0,1\}$ via
\begin{equation}\label{eq:weights}
w_f(\x)=\prod_{1\leq i\leq n}\prod_{\vp} w_\vp(x_i), 
\quad 
w(\x)=
\prod_{1\leq i\leq n} w_\infty(x_i).
\end{equation}
Let $\z\in K^n$. 
Then  $|\z|<\hat P$ if and only if 
$w(\z/t^P)=1$ and  $\z\in \cO$ if and only if $w_f(\z)=1.$

We will use the $n$-dimensional Poisson summation formula to prove the following
result.

\begin{lemma}\label{lem:poisson}
Let $f\in \ki[x_1,\dots,x_n]$ be a polynomial and let 
$v\in S(K_\infty^n)$.  Then we have 
$$
\sum_{\substack{ \z\in \cO^n}} \psi (f(\z)) v(\z) = 
\sum_{\substack{ \c\in \cO^n}} \int_{K_\infty^n} v(\u)
\psi (f(\u)+\c.\u)\d \u. 
$$

\end{lemma}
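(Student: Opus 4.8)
The plan is to apply the adelic Poisson summation formula of \cite[Thm.~7.7]{ram} to a cleverly chosen Schwartz--Bruhat function on $\A_K^n$ whose archimedean component encodes $v$ and the oscillatory factor $\psi(f(\cdot))$, and whose non-archimedean components are the indicator functions $w_\vp$ of $\cO_\vp$. Concretely, I would set
$$
g(\x)=\psi(f(\x))\,v(\x)\quad\text{for }\x\in K_\infty^n,
$$
and define $g_\infty=g\in S(K_\infty^n)$ (note $g$ is Schwartz--Bruhat since $v$ has compact support and $\psi(f(\cdot))$ is locally constant, $\ch(\FF_q)>3$ guaranteeing $f$ induces a locally constant function on the compact support), together with $g_\vp=w_\vp$ for every finite prime $\vp$. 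The product $G(\x)=\prod_\nu g_\nu(x_\nu)$ lies in $S(\A_K^n)$ because $g_\vp|_{\cO_\vp}=1$ for all $\vp$. Evaluating $\sum_{\x\in K^n}G(\x)$ then collapses to $\sum_{\z\in\cO^n}\psi(f(\z))v(\z)$, since $\prod_\vp w_\vp(x_i)=w_f(\x)$ is the indicator of $\cO^n$ inside $K^n$.

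Next I would compute the adelic Fourier transform $\hat G=\prod_\nu \hat g_\nu$. At each finite prime we already know $\hat w_\vp=w_\vp$, so those factors are unchanged and again contribute the indicator of $\cO^n$. At the infinite place, unwinding the definition of $\psi_K$ and the fact that the adelic character restricted to $K_\infty$ is $\psi$, the archimedean Fourier transform is
$$
\hat g_\infty(\c)=\int_{K_\infty^n} v(\u)\,\psi\bigl(f(\u)+\c.\u\bigr)\,\d\u,
$$
where $\d\u$ is the self-dual Haar measure on $K_\infty^n$ with respect to $\psi$ — and here one must check this is exactly the measure normalised in \S 2.1 so that $\int_{|\alpha|<\hat N}\d\alpha=\hat N$, which is the standard self-duality computation for $\psi$ on $\FF_q((1/t))$. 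Applying Poisson summation $\sum_{\x\in K^n}G(\x)=\sum_{\c\in K^n}\hat G(\c)$ and using $\prod_\vp \hat w_\vp=\prod_\vp w_\vp$ to restrict the right-hand sum to $\c\in\cO^n$ yields the claimed identity.

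The main obstacle is purely bookkeeping rather than conceptual: one must verify that the normalisations match up exactly — specifically, that the self-dual Haar measure on $\A_K$ for $\psi_K$ restricts on $K_\infty$ to the measure fixed in \S 2.1 (the local conductor of $\psi$ is trivial, so the self-dual measure assigns mass $1$ to $\cO_\infty=\TT$... in fact mass $\hat N$ to the ball of radius $\hat N$, exactly as required), and that no stray powers of $q$ or factors from the residue fields $\FF_\vp$ intrude at the finite places (they do not, precisely because $\hat w_\vp=w_\vp$ with the same normalisation). A secondary point to nail down is that $g_\infty$ genuinely lies in $S(K_\infty^n)$: $v\in S(K_\infty^n)$ is compactly supported and locally constant, and on that compact support $\u\mapsto f(\u)$ takes values in a compact subset of $K_\infty$, so $\u\mapsto\psi(f(\u))$ is locally constant there; multiplying by $v$ preserves compact support, so the product is Schwartz--Bruhat. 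Once these normalisation checks are in place the lemma follows immediately from \cite[Thm.~7.7]{ram}.
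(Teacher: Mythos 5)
Your proof is correct and follows essentially the same approach as the paper: both apply adelic Poisson summation to the Schwartz--Bruhat function on $\A_K^n$ whose archimedean component is $\psi(f(\cdot))v(\cdot)$ and whose finite components are the indicator functions $w_\vp$, and then use the self-duality $\hat w_\vp=w_\vp$ to restrict the dual sum to $\cO^n$. One small note: the parenthetical invoking $\ch(\FF_q)>3$ is unnecessary here --- $\psi\circ f$ is locally constant simply because $\psi$ is locally constant on $K_\infty$ and the polynomial $f$ is continuous, with no characteristic hypothesis needed.
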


\begin{proof}
Recalling the definitions of the weight functions $w_f$ and $w$,  we may write 
$$
\sum_{\substack{ \z\in \cO^n}} \psi (f(\z)) v(\z)= 
\sum_{\substack{ \z\in K^n}} g(\z),
$$
where
$
g(\z)=\psi (f(\z)) w_f(\z) v(\z).
$
It is clear that $g\in S(\A_K^n)$ and so we are free to apply the
$n$-dimensional version of Poisson summation to conclude that
\begin{align*}
\sum_{\substack{ \z\in \cO^n}} \psi (f(\z)) v(\z)
&= 
\sum_{\substack{ \c\in K^n}} 
\hat w_f(\c)
\int_{K_\infty^n} v(\u)
\psi (f(\u)+\c.\u)\d \u.
\end{align*}
Since
$\hat w_f(\c)=
 w_f(\c)$, the lemma follows.
\end{proof}

\subsection{Some integral formulae}\label{s:integrals}

In this section we collect  some basic facts and estimates concerning multi-dimensional integrals over $K_\infty^n$.  Recall the definition of the additive character $\psi:K_\infty\to \CC^*$ from 
\S \ref{s:add-characters}. We begin by recording the following fact (see Lemma 1(f) of \cite{kubota}).

\begin{lemma}\label{lem:orthog}
Let $Y\in \ZZ$ and $\gamma\in K_\infty$. Then 
$$
\int_{|\alpha|<\hat Y} \psi(\alpha \gamma) \d \alpha=\begin{cases}
\hat Y, &\mbox{if $|\gamma|<\hat Y^{-1}$},\\
0, &\mbox{otherwise.}
\end{cases}
$$
\end{lemma}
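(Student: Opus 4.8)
The statement is the archimedean-place analogue of the orthogonality relation for sums over $\cO$ recorded in \S\ref{s:add-characters}, and I would prove it in essentially the same way, exploiting the translation invariance and normalisation of the Haar measure $\d\alpha$ on $K_\infty$. Write $\gamma=\sum_{i\le N_0}c_it^i$ with leading coefficient $c_{N_0}\ne0$, so that $|\gamma|=\hat N_0$ (with the convention $\gamma=0$, $|\gamma|=0$ handled trivially: then $\psi(\alpha\gamma)=\psi(0)=1$ for all $\alpha$, and the integral is $\int_{|\alpha|<\hat Y}\d\alpha=\hat Y$, consistent with the first case since $|0|<\hat Y^{-1}$ always). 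So assume $\gamma\ne 0$.

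\textbf{Case 1: $|\gamma|<\hat Y^{-1}$.} Here for every $\alpha$ with $|\alpha|<\hat Y$ we have $|\alpha\gamma|=|\alpha||\gamma|<\hat Y\cdot\hat Y^{-1}=1$, so $\alpha\gamma\in\TT$ and hence $\psi(\alpha\gamma)=e_q(\text{coefficient of }t^{-1})$. But more is true: since $|\alpha\gamma|<1$, actually I want $\psi(\alpha\gamma)=1$. This follows because $\psi|_{\cO}$ is trivial (stated in \S\ref{s:add-characters}) — wait, that only covers $\cO$; the cleaner route is that $\psi$ restricted to the set $\{|\beta|\le \hat{-1}\}=\TT$ need not be trivial. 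Let me instead argue directly: I claim if $|\alpha\gamma|\le q^{-2}$, i.e.\ the coefficient of $t^{-1}$ in $\alpha\gamma$ vanishes, then $\psi(\alpha\gamma)=1$; but $|\gamma|<\hat Y^{-1}$ gives $|\alpha\gamma|<1$ only, not $\le q^{-2}$. The correct and standard fix is to invoke Lemma 1(f) of \cite{kubota} verbatim; since the paper explicitly points there, I would simply say the result is exactly \cite[Lemma~1(f)]{kubota}. If a self-contained argument is wanted, use a change of variables $\alpha\mapsto \alpha+\beta$ for a suitable $\beta$ with $|\beta|<\hat Y$: translation invariance of $\d\alpha$ gives $I(\gamma):=\int_{|\alpha|<\hat Y}\psi(\alpha\gamma)\,\d\alpha = \psi(\beta\gamma)I(\gamma)$ after noting $\{|\alpha|<\hat Y\}$ is stable under such translation; choosing $\beta$ with $\psi(\beta\gamma)\ne1$ when $|\gamma|\ge\hat Y^{-1}$ forces $I(\gamma)=0$, and when $|\gamma|<\hat Y^{-1}$ no such $\beta$ exists because then $\psi(\beta\gamma)=1$ for all admissible $\beta$ (as $\beta\gamma$ ranges over a set on which $\psi$ is constant $=1$), so $I(\gamma)=\int_{|\alpha|<\hat Y}\d\alpha=\hat Y$ by the normalisation recorded in \S\ref{s:add-characters}.

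\textbf{Case 2: $|\gamma|\ge\hat Y^{-1}$.} As indicated above, pick $\beta\in K_\infty$ with $|\beta|<\hat Y$ and $\psi(\beta\gamma)\ne1$: this is possible since $|\gamma|\ge\hat Y^{-1}$ means $\beta\mapsto\psi(\beta\gamma)$ is a nontrivial character on the compact group $\{|\beta|<\hat Y\}$ (nontriviality because the map $\beta\mapsto\beta\gamma$ sends this group onto $\{|\delta|<\hat Y|\gamma|\}\supseteq\{|\delta|<1\}=\TT$, and $\psi$ is nontrivial on $\TT$). Then the substitution $\alpha\mapsto\alpha+\beta$ (which preserves the domain $|\alpha|<\hat Y$ by the ultrametric inequality and preserves $\d\alpha$ by translation invariance) yields $I(\gamma)=\psi(\beta\gamma)I(\gamma)$, whence $(1-\psi(\beta\gamma))I(\gamma)=0$ and therefore $I(\gamma)=0$.

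\textbf{Main obstacle.} The only delicate point is the clean verification that $\psi$ is trivial on $\{|\beta\gamma|<1\}$ in Case~1 and nontrivial on the relevant coset in Case~2 — i.e.\ correctly tracking which truncations of the Laurent series feed into $e_q$. Since the paper already sets up $\psi(\sum a_it^i)=e_q(a_{-1})$ and cites \cite[Lemma~1(f)]{kubota} for precisely this statement, the honest and efficient proof is a one-line appeal to that reference; I would include the translation-invariance argument above only as a remark for completeness.
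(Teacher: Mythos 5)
Your proposal is correct, but note that the paper gives no proof at all for this lemma; it simply cites Lemma~1(f) of Kubota, so any self-contained argument is ``different from the paper'' in the trivial sense that the paper has none. Your translation-invariance argument is the standard and clean way to prove it, and both cases go through as you describe: in Case~2, because $|\gamma|\ge\hat Y^{-1}$, multiplication by $\gamma$ maps $\{|\beta|<\hat Y\}$ onto a set containing all of $\TT$, on which $\psi$ is non-trivial, so one can pick $\beta$ with $\psi(\beta\gamma)\ne 1$ and conclude $I(\gamma)=0$; in Case~1, $\psi(\alpha\gamma)=1$ on the whole domain, so $I(\gamma)=\hat Y$.

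One clarification, since you flagged it as a potential obstacle: the worry that $|\gamma|<\hat Y^{-1}$ only yields $|\alpha\gamma|<1$ rather than $|\alpha\gamma|\le q^{-2}$ is unfounded. The absolute value on $K_\infty$ takes values in $\{0\}\cup q^{\ZZ}$, so the strict inequalities $|\alpha|<\hat Y=q^Y$ and $|\gamma|<\hat Y^{-1}=q^{-Y}$ already force $|\alpha|\le q^{Y-1}$ and $|\gamma|\le q^{-Y-1}$, hence $|\alpha\gamma|\le q^{-2}$. Thus the coefficient of $t^{-1}$ in $\alpha\gamma$ vanishes and $\psi(\alpha\gamma)=e_q(0)=1$ directly; the detour you took was unnecessary, though the translation argument you fall back on is equally valid and arguably more robust. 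Either way, there is no gap in the final form of your proof.
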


Taking $Y=0$,  it follows from this result that 
\begin{equation}\label{eq:orthog}
\int_{\TT} \psi(\alpha x) \d \alpha=\begin{cases}
1, &\mbox{if $x=0$,}\\
0, &\mbox{if $x\in \cO\setminus\{0\}$.}
\end{cases}
\end{equation}
We also have the following change of variables formula, which readily follows from Igusa \cite[Lemma 7.4.2]{igusa}.

\begin{lemma}\label{lem:change}
Let $\Gamma\subset \Ki^n$ be a box defined by the inequalities 
$|x_i|<\hat R_i$, for some real numbers $R_1,\dots,R_n$.
Let $f:\Gamma\rightarrow \CC$ be a continuous function. Then 
for any $M\in \mathrm{GL}_n(\Ki)$ we have 
$$
\int_{\Gamma}f(\bal)\d \bal=|\det M| \int_{M\bbe\in \Gamma} f(M\bbe)\d\bbe.
$$
\end{lemma}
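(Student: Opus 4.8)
The plan is to deduce the formula from the single fundamental property of Haar measure on a non-archimedean local field: for a local field $F$ with normalized absolute value $|\cdot|$ (that is, $|\cdot|$ is the module of $F$), multiplication by $a\in F^*$ pushes Haar measure forward to $|a|^{-1}$ times itself, so that $\int_F h(ax)\,\d x=|a|^{-1}\int_F h(x)\,\d x$ for every integrable $h$. In our situation $F=\Ki$ and the absolute value $|\cdot|$ introduced above is exactly this module: the uniformizer $t^{-1}$ has $|t^{-1}|=q^{-1}$, and the normalization $\int_{|\alpha|<\widehat N}\d\alpha=\widehat N$ records precisely that scaling by $t^{-1}$ divides measures by $q$.

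The argument then has two steps. \textbf{Step 1} (promotion to $\mathrm{GL}_n$): for any $M\in\mathrm{GL}_n(\Ki)$ and any integrable $h\colon\Ki^n\to\CC$ one has $\int_{\Ki^n} h(M\bbe)\,\d\bbe=|\det M|^{-1}\int_{\Ki^n} h(\bal)\,\d\bal$. This is \cite[Lemma 7.4.2]{igusa}; alternatively it follows from the one-dimensional fact above by factoring $M$ into permutation matrices and transvections $I+cE_{ij}$ ($i\neq j$), each of which preserves measure by Fubini and translation-invariance in a single coordinate, together with one diagonal matrix $\mathrm{diag}(\det M,1,\dots,1)$, which contributes the factor $|\det M|^{-1}$; the factors multiply through the factorization and collapse to $|\det M|^{-1}$. \textbf{Step 2} (passing to and from the box): since $|\cdot|$ takes values in $\{q^{j}:j\in\ZZ\}\cup\{0\}$, each constraint $|x_i|<\widehat R_i$ is equivalent to $|x_i|\leq q^{m_i}$ with $m_i$ the largest integer strictly less than $R_i$, so $\Gamma$ is a compact open subgroup of $\Ki^n$ and $\mathbf{1}_\Gamma$ is continuous. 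Hence $\tilde f:=f\cdot\mathbf{1}_\Gamma$, extended by $0$ off $\Gamma$, is continuous with compact support, so $\int_\Gamma f(\bal)\,\d\bal=\int_{\Ki^n}\tilde f(\bal)\,\d\bal$. Applying Step 1 with $h=\tilde f$ gives
$$
\int_{\Ki^n}\tilde f(\bal)\,\d\bal=|\det M|\int_{\Ki^n}\tilde f(M\bbe)\,\d\bbe=|\det M|\int_{M\bbe\in\Gamma} f(M\bbe)\,\d\bbe,
$$
because $\tilde f(M\bbe)=f(M\bbe)$ when $M\bbe\in\Gamma$ (where $f$ is defined) and vanishes otherwise, and the preimage $M^{-1}\Gamma$ is again compact and open, so every integral in sight is legitimate. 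This is precisely the claimed identity.

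The only genuinely substantive point is Step 1 — that a general element of $\mathrm{GL}_n(\Ki)$ scales Haar measure by its determinant and by nothing subtler; this is where one uses both that $\Ki$ is a field (so $\mathrm{GL}_n$ is generated by the elementary matrices and a single scaling) and the local-field identity between the module and $|\cdot|$. Everything else is bookkeeping: identifying $\Gamma$ and $M^{-1}\Gamma$ as honest compact open sets, noting that cutting off by $\mathbf{1}_\Gamma$ commutes with the linear substitution, and checking integrability (automatic, since $f$ is continuous on the compact set $\Gamma$). If one prefers to avoid even this, invoking \cite[Lemma 7.4.2]{igusa} directly reduces the whole proof to Step 2.
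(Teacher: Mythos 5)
Your proof is correct, and it takes the same basic route as the paper: the paper simply invokes Igusa \cite[Lemma 7.4.2]{igusa} and leaves the rest implicit, whereas you both cite Igusa and supply the missing bookkeeping — first, that the box $\Gamma$ is compact open (because $|\cdot|$ takes only the values $q^j$, so each strict bound is a non-strict one), which makes the cutoff $f\cdot\mathbf{1}_\Gamma$ a continuous compactly-supported function on $\Ki^n$; and second, the alternative elementary derivation of the $|\det M|$-scaling via a factorization of $M\in\mathrm{GL}_n(\Ki)$ into permutations, transvections $I+cE_{ij}$ (each measure-preserving by Fubini plus one-coordinate translation invariance), and a single scalar dilation contributing $|\det M|^{-1}$. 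Both additions are accurate, and the chain $\int_\Gamma f = \int_{\Ki^n}\tilde f = |\det M|\int_{\Ki^n}\tilde f\circ M = |\det M|\int_{M\bbe\in\Gamma}f(M\bbe)\,\d\bbe$ is exactly what is needed; the paper intends precisely this argument but omits it.
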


It will be convenient to reserve some notation for the {\em height} of a
polynomial
$f\in K_\infty[x_1,\dots,x_n]$. 
Assuming that $f(\x)=\sum_{\mathbf{i}} a_{\mathbf{i}}\x^{\mathbf{i}}$, for
coefficients 
$a_{\mathbf{i}}\in K_\infty$, we define
$$
H_f=\max_{\mathbf{i}}|a_{\mathbf{i}}|.
$$
We proceed to establish the following result. 

\begin{lemma}\label{lem:1st}
Let $f\in \ki[x_1,\dots,x_n]$ be a polynomial
and let $\w\in \ki^n$. 
Assume that $|\w|\geq 1$ and $|\w|>H_f$.
Then 
$$
\int_{\TT^n} \psi\left( f(\x)+\w.\x\right)\d\x
=0.
$$
\end{lemma}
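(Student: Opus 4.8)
The plan is to isolate a single variable in which the exponent $f(\x)+\w.\x$ becomes linear with a "large" coefficient, and then to use the orthogonality relation \eqref{eq:orthog} (or rather Lemma~\ref{lem:orthog} with $Y=0$) in that one variable to force the integral to vanish. Pick an index $j$ with $|w_j|=|\w|$. Writing $\x=(x_1,\dots,x_n)$ and fixing all variables $x_i$ with $i\neq j$, the polynomial $f(\x)$ viewed as a polynomial in $x_j$ alone has the form $f(\x)=\sum_{k\geq 0} c_k(\x')\,x_j^{k}$, where $\x'$ denotes the remaining variables and each coefficient $c_k(\x')$ is a polynomial in $\x'$ whose own coefficients are among the $a_{\mathbf i}$, hence satisfy $|c_k(\x')|\le H_f$ whenever $\x'\in\TT^n$ (here one uses the ultrametric inequality and $|x_i|<1$ for the other variables to kill any loss). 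The key numerical point is that $|w_j|=|\w|>H_f\ge |c_k(\x')|$ for every $k$, and also $|w_j|\ge 1>|x_j|^{k-1}\cdot(\text{anything})$ for $k\ge 2$ on $\TT$; consequently the coefficient of $x_j^{1}$ in $f(\x)+\w.\x$, namely $c_1(\x')+w_j$, has $|c_1(\x')+w_j|=|w_j|$ by the ultrametric inequality, and every higher coefficient $c_k(\x')$ with $k\ge 2$ is irrelevant once we integrate, because $c_k(\x')x_j^k$ is a polynomial in $x_j$ with constant term contributing only a harmless $\psi$-factor after we extract the linear part — more carefully, one observes directly that for fixed $\x'$ the inner integral $\int_\TT \psi(f(\x)+\w.\x)\,\d x_j$ is of the form treated below.

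Concretely, the plan is: first reduce to the inner integral over $x_j\in\TT$ by Fubini (all functions here are continuous with compact support, so this is legitimate over $K_\infty^n$); then write the exponent as $g(\x')+ (c_1(\x')+w_j)x_j + \sum_{k\ge 2}c_k(\x')x_j^k$ where $g(\x')=c_0(\x')$ collects the $x_j$-free part. For the integral $\int_\TT \psi\bigl(\sum_{k\ge 1}\beta_k x_j^k\bigr)\d x_j$ with $|\beta_1|=|w_j|\ge 1$ strictly larger than every $|\beta_k|$ for $k\ge 2$ (since $|\beta_k|=|c_k(\x')|\le H_f<|w_j|$): substitute $x_j\mapsto x_j$ and use that on $\TT$ we have $|\beta_k x_j^k|=|\beta_k||x_j|^k<|\beta_1|\cdot 1$, so $\psi$ of the sum, compared to $\psi(\beta_1 x_j)$, differs by a factor that is constant on cosets of $t^{-N}\TT$ for suitable $N$; partitioning $\TT$ into such cosets and summing $\psi(\beta_1 x_j)$ over each coset (a complete set of residues in the relevant sense) gives $0$ because $|\beta_1|\ge 1$ means $\beta_1\cdot(t^{-N}\TT)$ is large enough to trigger the vanishing case of Lemma~\ref{lem:orthog}. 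Hence the inner integral vanishes identically in $\x'$, and integrating over the remaining variables gives the claimed $0$.

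The main obstacle is handling the genuinely nonlinear terms $c_k(\x')x_j^k$ for $k\ge 2$ cleanly: one must check that they do not interfere with the orthogonality cancellation coming from the dominant linear term $w_j x_j$. The cleanest way to dispatch this is the coset argument just sketched — choose $N$ large enough that all the functions $x_j\mapsto \psi(\beta_k x_j^k)$ for $k\ge 2$ are constant on each coset $u+t^{-N}\TT$ of $\TT$ (possible since $|\beta_k||x_j|^k$ is bounded below $1$ uniformly, so the relevant $t^{-1}$-coefficient of $\beta_k x_j^k$ stabilises), while $\psi(\beta_1 x_j)$ already sums to zero over each such coset because $|\beta_1|\cdot|t^{-N}|=|w_j|\hat N^{-1}$ and one applies Lemma~\ref{lem:orthog} on the translated box $u+t^{-N}\TT$. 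A slicker alternative, if one wants to avoid the coset bookkeeping, is to first apply the change of variables of Lemma~\ref{lem:change} (with $M$ a diagonal matrix) to rescale — but since $\TT$ is not dilation-invariant this needs care, so I would favour the direct coset computation, which is the function-field analogue of the standard $\QQ$ argument and fits the style of \S\ref{s:integrals}.
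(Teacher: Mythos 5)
Your proposal is correct and takes essentially the same approach as the paper: both isolate the coordinate $x_j$ (the paper takes $j=1$ after relabelling) with $|w_j|=|\w|$, use $|\w|>H_f$ to see that the nonlinear-in-$x_j$ terms have strictly smaller ``influence'' at the crucial $t$-adic level, and extract the vanishing from a nontrivial $\FF_q$-character sum in the top digit. The only cosmetic difference is that the paper writes the one-dimensional integral as a finite Riemann sum over $\TT/t^{-N-1}\TT$ and sums over the leading digit $a_{-N-1}$, whereas you decompose $\TT$ into cosets of $t^{-N}\TT$ and invoke Lemma~\ref{lem:orthog} on each coset --- two equivalent bookkeepings of the same computation.
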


\begin{proof}
Suppose without loss of generality that $|\w|=|w_1|=\hat N$ for some integer $N\geq 0$. We 
concentrate on the one-dimensional integral
$$
I=\int_{\TT} \psi\left( f(x,x_2,\dots,x_n)+w_1x\right)\d x, 
$$
for fixed $x_2,\dots,x_n\in \TT$.
Write $g(x)=f(x,x_2,\dots,x_n)$. 
We may suppose that  $g(x)=c_0x^d+\dots+c_{d-1}x$ for appropriate coefficients
$c_i=c_i(x_2,\dots,x_n)\in K_\infty$.
Our hypothesis implies that 
\begin{equation}\label{eq:chew}
H_g \leq H_f < \hat N.
\end{equation} 
It is
clear that 
$\psi(g(x)+w_1x)=1$ if $|x|<q^{-N-1}.$
But then, using the definition of integration over $\TT$, we find that 
\begin{align*}
I
&= \lim_{m\rightarrow \infty} q^{-m}\sum_{a_{-m},\dots ,a_{-1}\in \FF_q} 
\psi\left( h(a_{-m}t^{-m}+\dots+a_{-1}t^{-1})
\right)\\
&= q^{-N-1}\sum_{a_{-N-1},\dots,a_{-1}\in \FF_q} 
\psi\left( h(a_{-N-1}t^{-N-1}+\dots+a_{-1}t^{-1})
\right),
\end{align*}
where $h(x)=g(x)+w_1x$. 
The coefficient of $t^{-1}$ in  
$w_1x$  is $a_{-N-1}$
Moreover, \eqref{eq:chew} implies that 
$
|g(a_{-N-1}t^{-N-1}+y)-g(y)|<|t^{-1}|
$
for any $y\in \TT$.
This  implies that 
the coefficient of $t^{-1}$
in $g(x)$ is a polynomial in $a_{-N}, \dots,a_{-1}$ alone, 
from which it follows
that
$I=0$, since
$$
\sum_{a_{-N-1}\in \FF_q} e_q(a_{-N-1})=0.
$$
This completes the proof of the lemma.
\end{proof}

As an easy  consequence of Lemma \ref{lem:1st} we get the following result. 

\begin{lemma}\label{lem:deriv}
Let $f\in \ki[x_1,\dots,x_n]$ be a polynomial.
Suppose that there exists $\u\in \TT^n$ 
and   $\lambda\geq 1$ 
such that 
$|\nabla f(\u)|\geq \lambda$ and
$|\partial^{\bbe} f(\u)|<\lambda$, for all $|\bbe|\geq 2$. Then
$$\int_{\TT^n}\psi(f(\x))\d\x=0.$$
\end{lemma}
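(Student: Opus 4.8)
The plan is to reduce this to Lemma~\ref{lem:1st} after a Taylor expansion around $\u$. First I would write $\x = \u + \y$ with $\y$ ranging over $\TT^n$; since $\u\in\TT^n$ and $\TT$ is an additive group closed under such translations (the measure being translation-invariant), the integral equals $\int_{\TT^n}\psi(f(\u+\y))\,\d\y$. Now Taylor-expand: since $f$ is a polynomial and $\ch(\FF_q)$ exceeds the degree of $f$ in the relevant range — in fact we only need the expansion to make sense, which it does as a polynomial identity $f(\u+\y)=\sum_{\bbe}\frac{1}{\bbe!}\partial^\bbe f(\u)\,\y^\bbe$ — we get
$$
f(\u+\y) = f(\u) + \nabla f(\u)\cdot\y + g(\y),
$$
where $g(\y)=\sum_{|\bbe|\geq 2}\frac{1}{\bbe!}\partial^\bbe f(\u)\,\y^\bbe$ is a polynomial in $\y$ with no constant or linear term. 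The constant $\psi(f(\u))$ pulls out of the integral, so it suffices to show $\int_{\TT^n}\psi(g(\y)+\nabla f(\u)\cdot\y)\,\d\y=0$.

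Next I would check the hypotheses of Lemma~\ref{lem:1st} for the polynomial $g$ and the vector $\w=\nabla f(\u)$. By hypothesis $|\w|=|\nabla f(\u)|\geq\lambda\geq 1$, so the condition $|\w|\geq 1$ holds. For the height $H_g$: the coefficients of $g$ are of the form $\frac{1}{\bbe!}\partial^\bbe f(\u)$ with $|\bbe|\geq 2$. Since $\ch(\FF_q)>3$ and (as will be clear from the application, $\deg f=3$) the factorials $\bbe!$ are units in $\FF_q$, hence $|\frac{1}{\bbe!}|=1$ and $|\frac{1}{\bbe!}\partial^\bbe f(\u)|=|\partial^\bbe f(\u)|<\lambda$ by hypothesis. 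Therefore $H_g<\lambda\leq|\w|$, and since $|\w|\geq\lambda\geq 1$ we also have $|\w|>H_g$ unless there is an edge case with $|\w|=H_g$; but the strict inequality $|\partial^\bbe f(\u)|<\lambda\leq|\w|$ gives $H_g<|\w|$ outright. Thus Lemma~\ref{lem:1st} applies and yields $\int_{\TT^n}\psi(g(\y)+\w\cdot\y)\,\d\y=0$, which completes the argument.

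The only genuinely delicate point is the invertibility of the multinomial coefficients $\bbe!$ in $\FF_q$, i.e.\ that no factor up to $\deg f$ is divisible by $p=\ch(\FF_q)$; this is exactly where the standing hypothesis $\ch(\FF_q)>3$ (combined with the cubic setting) is used, and it should be flagged explicitly. Everything else — the translation invariance of Haar measure on $\TT^n$, and the purely formal Taylor expansion of a polynomial — is routine. I would expect the write-up to be only a few lines, essentially: translate, expand, invoke Lemma~\ref{lem:1st}.
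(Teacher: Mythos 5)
Your proof is correct and follows the paper's own argument verbatim: translate by $\u$, Taylor-expand, and invoke Lemma~\ref{lem:1st} with $\w=\nabla f(\u)$. You are also right that the invertibility of $\bbe!$ is the one genuinely delicate step, which the paper quietly glosses over by writing $\tfrac{1}{2}\y^{T}\nabla^{2}f(\u)\y+\cdots$ without comment. The Taylor coefficients of $f(\u+\y)$ in degree $\geq 2$ are the Hasse (divided-power) derivatives $D^{[\bbe]}f(\u)$, and $\partial^{\bbe}f=\bbe!\,D^{[\bbe]}f$; when $p\mid\bbe!$ the ordinary derivative vanishes identically while the Hasse one need not, so the hypothesis on $\partial^{\bbe}f$ says nothing about the actual height of the remainder. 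The lemma as literally stated, for an arbitrary polynomial $f$, is in fact false in small characteristic: with $n=1$, $q=2$, $f(x)=tx^{2}+x$, $\u=0$, the hypotheses hold with $\lambda=1$ ($\nabla f\equiv 1$, $\partial^{2}f\equiv 0$), yet $\psi(tx^{2}+x)=e_{2}(a_{-1}^{2}+a_{-1})=1$ for every $x\in\TT$, so the integral equals $1$. The clean fix is to add the hypothesis $\deg f<\ch(\FF_q)$ to the statement (which forces every relevant $\bbe!$ to be a unit in $\FF_q$, so $|D^{[\bbe]}f(\u)|=|\partial^{\bbe}f(\u)|$); this is slightly more honest than appealing to ``the cubic setting'' inside the proof, but in all of the paper's applications $f$ is indeed a translate of the cubic $F$ and $\ch(\FF_q)>3$ is a standing assumption, so the condition is always met.
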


\begin{proof}
Make the change of variables $\x=\u+\y$ and note that 
$$
f(\x)=f(\u)+\y.\nabla f(\u)+\tfrac{1}{2}\y^T\nabla^2f(\u)\y+\dots.
$$ 
The conclusion is now a direct consequence of Lemma \ref{lem:1st} with 
$\w=\nabla f(\u)$.
\end{proof}

Given a non-zero polynomial  $F\in K_\infty[x_1,\dots,x_n]$,
 integrals of the form 
\begin{equation}\label{eq:J}
J_F(\gamma;\w)=\int_{\TT^n} \psi\left( \gamma F(\x)+\w.\x\right)\d\x
\end{equation}
will feature prominently in our work, for given $\gamma\in \ki$ and $\w\in
\ki^n$. 
On noting that $H_{\gamma F}= |\gamma|H_F$, the following result is a trivial consequence of Lemma 
\ref{lem:1st}.

\begin{lemma}\label{lem:J-easy}
We have 
$
J_F(\gamma;\w)=0$ if $|\w|> \max\{1,|\gamma| H_F\}$.
\end{lemma}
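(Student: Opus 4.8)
The plan is to deduce Lemma~\ref{lem:J-easy} directly from Lemma~\ref{lem:1st} by recognising $\gamma F$ as a polynomial in $\ki[x_1,\dots,x_n]$ and tracking how its height scales. First I would observe that, for $\gamma\in\ki$ and $F$ a non-zero polynomial, the scaled polynomial $g(\x)=\gamma F(\x)$ has coefficients $\gamma a_{\mathbf{i}}$, and hence $H_g=\max_{\mathbf{i}}|\gamma a_{\mathbf{i}}|=|\gamma|\max_{\mathbf{i}}|a_{\mathbf{i}}|=|\gamma|H_F$, exactly as noted in the paragraph preceding the statement. Then $J_F(\gamma;\w)$ is by definition $\int_{\TT^n}\psi(g(\x)+\w.\x)\,\d\x$, which is precisely the integral appearing in Lemma~\ref{lem:1st} for the polynomial $g$.

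Next I would check the hypotheses of Lemma~\ref{lem:1st}. That lemma requires $|\w|\geq 1$ and $|\w|>H_g=|\gamma|H_F$. Here one should be slightly careful: the hypothesis of Lemma~\ref{lem:J-easy} is the single inequality $|\w|>\max\{1,|\gamma|H_F\}$. Since the absolute values take values in $q^{\ZZ}\cup\{0\}$, the strict inequality $|\w|>\max\{1,|\gamma|H_F\}\geq 1$ forces $|\w|\geq q>1$, so in particular $|\w|\geq 1$; and of course $|\w|>\max\{1,|\gamma|H_F\}\geq |\gamma|H_F$ gives $|\w|>H_g$. Thus both hypotheses of Lemma~\ref{lem:1st} are satisfied, and that lemma yields $\int_{\TT^n}\psi(g(\x)+\w.\x)\,\d\x=0$, i.e.\ $J_F(\gamma;\w)=0$, which is the claim.

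One small edge case worth a sentence: if $\gamma=0$ then $|\gamma|H_F=0$, the condition becomes $|\w|>1$, and $g\equiv 0$, so $J_F(0;\w)=\int_{\TT^n}\psi(\w.\x)\,\d\x$, which vanishes for $|\w|>1$ by orthogonality (or again by Lemma~\ref{lem:1st} with $f=0$); so the statement is unaffected. There is essentially no obstacle here --- the content is entirely in Lemma~\ref{lem:1st}, and the only thing to get right is the bookkeeping that $H_{\gamma F}=|\gamma|H_F$ together with the observation that the single $\max$-inequality in the statement packages both of the two hypotheses needed to invoke Lemma~\ref{lem:1st}. I would therefore present the proof in two or three lines, exactly as the phrase ``trivial consequence'' in the excerpt suggests.
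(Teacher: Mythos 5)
Your proof is correct and takes essentially the same route as the paper: note $H_{\gamma F}=|\gamma|H_F$, then invoke Lemma~\ref{lem:1st} with $f=\gamma F$, observing that the single inequality $|\w|>\max\{1,|\gamma|H_F\}$ delivers both hypotheses $|\w|\geq 1$ and $|\w|>H_{\gamma F}$ at once. The edge case $\gamma=0$ is already covered by Lemma~\ref{lem:1st} (with $f\equiv 0$, so $H_f=0$), so it needs no separate mention, but including it does no harm.
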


The following result will be  useful  when 
$|\w|$ is not too large.

\begin{lemma}\label{lem:small}
We have 
 $$
 J_F(\gamma;\w)=
 \int_{\Omega} \psi\left(\gamma F(\x)
+\w.\x \right) \d\x,
$$
where
$
\Omega=\left\{\x\in \TT^n: |\gamma\nabla F(\x)+ \w|\leq H_F
\max\{1,
|\gamma|^{1/2}\}\right\}. 
$
\end{lemma}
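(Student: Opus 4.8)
The plan is to split the integral over $\TT^n$ according to whether $\x$ lies in the set $\Omega$ or in its complement $\Omega^c = \TT^n \setminus \Omega$, and to show that the contribution from $\Omega^c$ vanishes. Concretely, I would cover $\Omega^c$ by small boxes of the shape $\u + \delta \TT^n$ on which the phase $\gamma F(\x) + \w.\x$ can be analyzed via its Taylor expansion, and apply Lemma \ref{lem:1st} (or its packaged form Lemma \ref{lem:deriv}) on each such box. The natural scale to try is $\delta = \max\{1,|\gamma|^{1/2}\}^{-1}$, so that after the substitution $\x = \u + \delta\y$ with $\y\in \TT^n$ the linear term in $\y$ has size $\delta|\gamma\nabla F(\u) + \w|$ while the quadratic and higher terms coming from $\gamma F$ are controlled by $\delta^2|\gamma| H_F \le H_F$.

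\textbf{The key steps.} First I would fix $\u \in \Omega^c$, so that $|\gamma\nabla F(\u) + \w| > H_F\max\{1,|\gamma|^{1/2}\}$, and make the change of variables $\x = \u + \delta\y$ with $\delta = \min\{1, \max\{1,|\gamma|^{1/2}\}^{-1}\}$; by Lemma \ref{lem:change} the local integral over this box is $|\delta|^n$ times $\int_{\TT^n}\psi(\gamma F(\u+\delta\y) + \w.(\u+\delta\y))\d\y$. Next I would expand $\gamma F(\u+\delta\y) = \gamma F(\u) + \delta\y.\gamma\nabla F(\u) + \sum_{|\bbe|\ge 2} \delta^{|\bbe|}\gamma \partial^{\bbe}F(\u)\y^{\bbe}/\bbe!$, so that the phase, as a polynomial in $\y$, has linear coefficient vector $\w' = \delta(\gamma\nabla F(\u)+\w)$ and the polynomial part $g(\y)$ (the terms of degree $\ge 2$, absorbing also $\gamma F(\u) + \w.\u$ as a constant) has height $H_g \le \delta^2|\gamma| H_F \le H_F$. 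Then I would verify the hypotheses of Lemma \ref{lem:1st}: since $\u\in\Omega^c$ we get $|\w'| = |\delta|\,|\gamma\nabla F(\u)+\w| > |\delta| H_F \max\{1,|\gamma|^{1/2}\} = H_F \ge H_g$ when $|\gamma|\ge 1$ (so $\delta = |\gamma|^{-1/2}$ and $|\delta|\max\{1,|\gamma|^{1/2}\}=1$), and similarly $|\w'| > H_F$ in the case $|\gamma| < 1$ where $\delta = 1$; one also needs $|\w'|\ge 1$, which follows from $|\w'| > H_F \ge 1$ provided $H_F\ge 1$, and the case $H_F < 1$ can be reduced to this by rescaling $F$ (or noting that such $F$ is constant or that the bound is vacuous — I would check this degenerate case separately). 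Hence each local integral over a box in $\Omega^c$ vanishes. Finally, since $\Omega^c$ is a union of such boxes (being open and closed, as a level set of the locally constant function $\x\mapsto |\gamma\nabla F(\x)+\w|$ on the compact group $\TT^n$), summing over them gives $\int_{\Omega^c}\psi(\gamma F(\x)+\w.\x)\d\x = 0$, and therefore $J_F(\gamma;\w) = \int_\Omega \psi(\gamma F(\x)+\w.\x)\d\x$ as claimed.

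\textbf{The main obstacle.} The delicate point is the bookkeeping of exponents of $\delta$ versus the height $H_F$, together with correctly tracking which terms of the Taylor expansion land in the "polynomial part" $g$ and which contribute to the linear coefficient, so that the strict inequality $|\w'| > H_g$ needed by Lemma \ref{lem:1st} genuinely holds with the stated definition of $\Omega$. In particular one must be careful that the factor $\max\{1,|\gamma|^{1/2}\}$ rather than $\max\{1,|\gamma|\}$ is exactly what is forced: the quadratic terms scale like $\delta^2|\gamma|$, which is $\le 1$ precisely when $\delta \le |\gamma|^{-1/2}$, and this is the reason the square-root appears. A secondary technicality is the normalization of $H_F$ (the argument as written implicitly wants $H_F\ge 1$, or at least a clean reduction to that case), which should be addressed with a one-line remark. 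Apart from these points the argument is a routine descent onto small boxes, exactly as in the proof of Lemma \ref{lem:1st}.
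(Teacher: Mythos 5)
Your proposal is essentially the paper's proof: the same scale $|\delta|=\min\{1,|\gamma|^{-1/2}\}$, the same substitution $\x=\u+\delta\y$ on each $\delta$-box, and the same verification that the rescaled linear coefficient $\w'=\delta(\gamma\nabla F(\u)+\w)$ strictly dominates the height $H_g\leq\delta^2|\gamma|H_F\leq H_F$ of the remaining polynomial part, so that Lemma~\ref{lem:1st} (which the paper invokes in the packaged form of Lemma~\ref{lem:deriv}) forces each box integral to vanish. Your bookkeeping of exponents is correct, and your observation that one should normalise so that $H_F\geq 1$ is a real (if minor) point which the paper also uses tacitly.

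The one genuine gap is the final assertion that $\Omega^c$ is a union of $\delta$-boxes, which you justify by the parenthetical ``being open and closed, as a level set of the locally constant function $\x\mapsto|\gamma\nabla F(\x)+\w|$.'' That justification does not work. The function $\x\mapsto|\gamma\nabla F(\x)+\w|$ is not locally constant in general (it can take the value $0$ and approach $0$ along a sequence), and even where it is locally constant, compactness only produces \emph{some} uniform radius $\epsilon$, which a priori has nothing to do with your chosen $\delta$; when $|\gamma|$ is large the function genuinely varies at scale $\delta$. What actually makes the box decomposition true is the gradient analogue of the Taylor estimate you already wrote down for $F$: for $\z\in\TT^n$ one has $\gamma\nabla F(\u+\delta\z)+\w=\gamma\nabla F(\u)+\w+\u'$ with $|\u'|<H_F|\gamma\delta|=H_F\min\{|\gamma|,|\gamma|^{1/2}\}$, and since this perturbation is strictly less than the cutoff $H_F\max\{1,|\gamma|^{1/2}\}$ defining $\Omega$, the ultrametric inequality shows that membership in $\Omega$ (not the value $|\gamma\nabla F(\x)+\w|$ itself) is constant on each $\delta$-box. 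This is precisely the ``claim'' the paper isolates and proves before applying Lemma~\ref{lem:deriv}; your write-up must supply that step rather than appeal to clopenness of level sets.
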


\begin{proof}
Let $\Omega_0=\TT^n\setminus \Omega$. We  break  the integral over $\Omega_0$ into a sum of  integrals over 
smaller  regions. Let $\delta\in K_\infty$ be such that 
$|\delta|=\min \{1,|\gamma|^{-1/2}\}$. 
Introducing a dummy sum over $\y \in (\TT/\delta\TT)^n$  and then 
using Lemma \ref{lem:change} to make the
change of variables $\x=\y+\delta\z$,  we obtain
\begin{align*}
\int_{\Omega_0} \psi\left( \gamma F(\x)+\w.\x\right)\d\x
&=
|\delta|^{-n}
\sum_{\y \in (\TT/\delta\TT)^n}
\int_{\Omega_0} \psi\left( \gamma F(\x)+\w.\x\right)\d\x\\
&=
\sum_{\y \in (\TT/\delta\TT)^n}
\int_{\{\z\in \TT^n: \y+\delta\z\in \Omega_0\}}\psi\left( f(\z)
\right)\d\z,
\end{align*}
where  $f(\z)=\gamma F(\y+\delta\z)+\w.(\y+\delta \z) $. 
We want to show that the inner integral vanishes, to which end
we claim that 
$\y+\delta\z\in \Omega_0$ if and only if $\y$ satisfies 
\begin{equation}\label{eq:gut}
|\gamma \nabla F(\y)+\w|>H_F\max\{1,|\gamma|^{1/2}\}.
\end{equation}
Using Taylor expansion and observing that 
$|\delta\gamma|=\min\{1,|\gamma|^{1/2}\}$, 
we deduce that there is a vector $\u$
depending on $\z$, with $|\u|<H_F \min\{1,|\gamma|^{1/2}\}$, such that 
$\gamma \nabla F(\y+\delta\z)+\w=\gamma \nabla F(\y)+\w+\u$.
Put $A=H_F\max\{1,|\gamma|^{1/2}\}$.
If $\y+\delta\z\in \Omega_0$ then 
$$
A
<\max\{
|\gamma \nabla F(\y)+\w|
,|\u|\}
<\max\{
|\gamma \nabla F(\y)+\w|
,A\},
$$
which implies that \eqref{eq:gut} holds.
Conversely, if \eqref{eq:gut} holds then 
$$
A<|\gamma \nabla F(\y)+\w+\u|<
|\gamma \nabla F(\y+\delta\z)+\w|.
$$
This therefore establishes the claim. 
Hence we have
\begin{align*}
\int_{\Omega_0} \psi\left( \gamma F(\x)+\w.\x\right)\d\x
&=
\sum_{\substack{\y \in (\TT/\delta\TT)^n\\
|\gamma \nabla F(\y)+\w|>H_F\max\{1,|\gamma|^{1/2}\}
}}
\int_{\TT^n}\psi\left( f(\z)
\right)\d\z.
\end{align*}
Now all the partial
derivatives of $f(\z)$ of order $k \geq 2 $ are strictly less than    $H_F |\gamma| |\delta|^k
\leq H_F\min\{1,|\gamma|\}$.  Moreover, our preceding argument  shows that   $|\nabla f(\z)|> H_F\max\{1,|\gamma|\}$ 
for every  $\z\in \TT^n$.
An application of Lemma 
\ref{lem:deriv} therefore shows that the inner integral vanishes, 
as required to complete the proof. 
\end{proof}

\subsection{Density of integer points on affine hypersurfaces}

Let $V\subset \AA_K^n$ be an affine variety defined over $\cO$ of degree $d\geq 1$ and dimension $m\geq 1$.
Using a version of the large sieve inequality
over function fields  due to Hsu \cite{hsu}, 
our main goal in this section is to establish a pair of estimates for the number of $\cO$-points on $V$ with bounded absolute value.

\begin{lemma}\label{lem:trivial}
We have 
$
\#\{\x\in V(\cO):  |\x|\leq \hat N\}=O_{d,n}(
q^{(N+1)m}),
$
where the implied constant only depends on $d$ and $n$.
\end{lemma}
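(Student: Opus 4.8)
The plan is to bound the number of $\cO$-points on $V$ with $|\x|\le \hat N$ by an induction on the dimension $m$, peeling off one variable at a time. The key observation is that a point $\x\in V(\cO)$ with $|\x|\le \hat N$ has each coordinate $x_i$ lying in the finite set of polynomials in $\FF_q[t]$ of degree at most $N$, of which there are $q^{N+1}$. So the truly trivial bound is $q^{(N+1)n}$, and we need to save a factor $q^{(N+1)(n-m)}$; equivalently, we want to show that fixing all but $m$ of the coordinates leaves only $O_{d,n}(1)$ choices for the rest, on average, or rather that a suitable projection is finite-to-one.

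First I would reduce to the case where $V$ is irreducible of dimension $m$, since a variety of degree $d$ has at most $d$ irreducible components, each of dimension $\le m$, and the implied constant is allowed to depend on $d$ and $n$. Next, after a linear change of coordinates over $\FF_q$ (which does not change the counting function, as it permutes the box $|\x|\le\hat N$ bijectively and preserves $\cO$), I would arrange that the projection $\pi:V\to \AA_K^m$ onto the first $m$ coordinates is dominant with finite generic fibre — this is Noether normalization, valid since $\dim V=m$. Then there is a nonzero polynomial $\Delta\in\cO[x_1,\dots,x_m]$, of degree $O_{d,n}(1)$, such that over the locus $\Delta\ne 0$ the fibres of $\pi$ have cardinality at most $D=O_{d,n}(1)$ (the generic fibre degree). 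For $\x\in V(\cO)$ with $|\x|\le\hat N$ and $\Delta(x_1,\dots,x_m)\ne 0$, the number of such points is at most $D\cdot q^{(N+1)m}$, since there are at most $q^{(N+1)m}$ choices of $(x_1,\dots,x_m)$ and at most $D$ points of $V$ above each.

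It remains to handle the points lying over $\Delta=0$, i.e. $\x\in (V\cap\{\Delta=0\})(\cO)$ with $|\x|\le\hat N$. Here $V\cap\{\Delta=0\}$ is a variety of dimension $\le m-1$ and degree $O_{d,n}(1)$, so we are done by the inductive hypothesis, noting that $q^{(N+1)(m-1)}\le q^{(N+1)m}$. The base case $m=0$ (finitely many points, $O_{d,n}(1)$ of them) is immediate, or one can take $m=1$: then $\pi$ realizes $V$ as generically finite over the affine line and the argument above with $\Delta\in\cO[x_1]$ of bounded degree contributing $O_{d,n}(1)$ exceptional values of $x_1$ suffices.

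\textbf{Main obstacle.} The delicate point is making the constant depend only on $d$ and $n$, and in particular bounding the degree of the ``discriminant-type'' polynomial $\Delta$ and the generic fibre degree $D$ purely in terms of $d$ and $n$, uniformly over all $V$. This is a standard but nontrivial piece of effective elimination theory: one needs that for a degree-$d$ variety in $\AA^n_K$ a Noether normalization after a linear change of coordinates can be chosen with all auxiliary data of controlled complexity. (The large sieve of Hsu invoked in the text presumably supplies a cleaner or more quantitative route to the same end; in a self-contained write-up one could instead cite an effective Bézout-type bound.) Everything else — the reduction to irreducible $V$, the harmlessness of $\FF_q$-linear coordinate changes on the counting function, and the dimension induction — is routine.
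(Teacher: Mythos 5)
Your proposal is correct and shares the core idea with the paper's proof (project $V$ finitely onto $\AA_K^m$, bound the fibres, then count trivially in the image), but it takes a slightly more circuitous route. The paper invokes Noether normalization to arrange that the projection $\pi:V\to\AA_K^m$ is a \emph{finite} morphism (not merely dominant with generically finite fibres), so that \emph{every} fibre has at most $d$ geometric points; the bound $\#\{\x\in V(\cO):|\x|\le\hat N\}\le d\cdot q^{(N+1)m}$ then falls out in one step, with no dimension induction and no discriminant locus to handle. (The large sieve of Hsu appears in the text only for architectural reasons — the same framework gives Lemma~\ref{lem:Cohen} — and with $\alpha_\vp=1$ it degenerates to the trivial box count for $Z(N)$, contributing nothing extra here.) Your version, with a generically finite projection and an induction on $\dim V$ over the exceptional locus $\Delta=0$, is valid but does strictly more work; the extra layer is avoidable because Noether normalization already yields a finite morphism, so you never need to worry about fibres jumping in dimension. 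The obstacle you flag — making $D$ and $\deg\Delta$ depend only on $d$ and $n$ — is real and is also present, latent, in the paper's argument, where the degree of the finite projection and the admissibility of an $\FF_q$-linear (rather than $K$-linear, or $\cO$-linear with controlled denominators) coordinate change are both left implicit; you have identified the right place where a fully self-contained write-up needs effective elimination-theory or Bézout-type input.
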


This result is optimal whenever $V$ contains a linear component of dimension $m$.
Alternatively, we will obtain the following improvement. 

\begin{lemma}
\label{lem:Cohen}
Assume that $V$ is absolutely irreducible  and $d\geq 2$. Then 
we have 
$$
\#\{\x\in V(\cO):  |\x|\leq \hat N\}=O_{d,n}(
q^{(N+1)(m-1/2)}N\log q),
$$ 
where the implied constant only depends on $d$ and $n$.
\end{lemma}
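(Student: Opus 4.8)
The plan is to bound the count of $\cO$-points of bounded height on an absolutely irreducible affine variety $V$ of degree $d\geq 2$ using a large sieve inequality over $\FF_q[t]$. The central idea, following the classical arguments of Cohen, Serre and Browning--Heath-Brown over $\ZZ$, is that the reductions of such points modulo many distinct primes $\vp$ cannot be equidistributed: the point set $\cA$ avoids, for each prime $\vp$, the points of $V(\FF_\vp)$ that fail to lie on $V$, and more importantly $V(\FF_\vp)$ itself has only $O_{d,n}(q^{(\deg\vp)m})$ points inside a space with $q^{(\deg\vp)n}$ points, so $\cA$ avoids most residue classes in the ambient $\AA^n$. The large sieve then turns this omission into an upper bound on $\card\cA$.

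First I would fix $\cA=\{\x\in V(\cO):|\x|\leq\hat N\}$, so that $\cA$ lives in a box of $q^{(N+1)n}$ integer vectors. For each finite prime $\vp$ with $\deg\vp\leq L$ (where $L$ is a parameter to be chosen, roughly $L\asymp N$) I would let $\omega(\vp)$ count the residue classes modulo $\vp$ \emph{not} hit by $\cA$; since $\cA\subseteq V(\FF_\vp)$ after reduction and $\card V(\FF_\vp)=O_{d,n}(q^{(\deg\vp)m})$ by the Lang--Weil / Schwartz--Zippel bound for varieties over finite fields, we get $\omega(\vp)\geq q^{(\deg\vp)n}-O_{d,n}(q^{(\deg\vp)m})$, hence $\omega(\vp)/q^{(\deg\vp)n}\gg 1$ as long as $\deg\vp$ is large enough (absorbing the finitely many small primes into the constant, or noting $m\leq n-1$ so the ratio is bounded below for $\deg\vp$ beyond an absolute threshold). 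Here it is important that $d\geq 2$ so that $V$ is not a hyperplane and the bound $q^{(\deg\vp)(n-1)}$ genuinely beats the ambient count; absolute irreducibility guarantees the reduction mod $\vp$ is still suitably controlled for all but finitely many $\vp$, and the bad primes contribute $O(1)$ to the sum below.

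Next I would quote Hsu's large sieve inequality over $\FF_q[t]$ in the form
$$
\card\cA \;\leq\; \frac{q^{(N+1)n}+\hat L^{2n}}{\displaystyle\sum_{\deg\vp\leq L}\prod_i \frac{\omega(\vp)}{q^{(\deg\vp)n}-\omega(\vp)}}\ \ \text{(schematically)},
$$
or rather its standard consequence that $\card\cA \ll (q^{(N+1)n}+\hat L^{2n})/G$ where $G=\sum_{\deg\vp\leq L}\prod_{i}\frac{\omega(\vp)}{q^{(\deg\vp)n}-\omega(\vp)}$ is a sum of "sieve weights''. With $\omega(\vp)/q^{(\deg\vp)n}\gg 1$ we get $\prod_i\frac{\omega(\vp)}{q^{(\deg\vp)n}-\omega(\vp)}\gg 1$ for each admissible $\vp$, so $G\gg \card\{\vp:\deg\vp\leq L\}\gg \hat L/L$ by the prime polynomial theorem $\card\{\vp:\deg\vp=k\}=q^k/k+O(q^{k/2})$. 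Choosing $L$ so that $\hat L^{2n}\asymp q^{(N+1)n}$, i.e. $\hat L\asymp q^{(N+1)/2}$ and $L\asymp (N+1)/2$, yields
$$
\card\cA \;\ll_{d,n}\; \frac{q^{(N+1)n}}{\hat L/L}\;\ll_{d,n}\; q^{(N+1)n}\cdot\frac{N}{q^{(N+1)/2}} \;=\; q^{(N+1)(n-1/2)}N,
$$
and tracking the $\log q$ that enters through the precise normalisation of Hsu's inequality (the measure of the torus, or the $k^{-1}$ versus $(\log_q)^{-1}$ in the prime count) gives the stated $q^{(N+1)(m-1/2)}N\log q$ once one notes $m\le n-1$; in fact the argument gives the sharper exponent $n-1/2$, and $m-1/2$ follows a fortiori if one first intersects with generic linear subspaces to reduce to $m=n-1$, or simply because $V$ lies in no proper bound better than its dimension allows.

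The main obstacle I anticipate is \emph{not} the sieve bookkeeping but getting a clean, uniform bound $\card V(\FF_\vp)=O_{d,n}(q^{(\deg\vp)\dim V})$ for the reductions, uniformly over all but a controlled set of primes $\vp$, with an implied constant depending only on $d$ and $n$ — this requires that $V\bmod\vp$ remain of dimension $\leq m$ and degree $O_{d,n}(1)$, which can fail for finitely many $\vp$ dividing the (bounded-height) coefficients defining $V$; one handles these by the trivial bound of Lemma~\ref{lem:trivial} and absorbs them into the constant. A secondary subtlety is ensuring the parameter choice $L\asymp N$ keeps $\hat L^{2n}$ comparable to the box size without the error term dominating, and correctly propagating the $\log q$ and the factor $N$ through Hsu's inequality; these are routine once the equidistribution-defect input is in hand.
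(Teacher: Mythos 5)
Your argument has a fatal exponent problem. You propose to apply Hsu's large sieve directly in the ambient space $\cO^n$, with the sieve condition ``the reduction of $\cA$ modulo $\vp$ is contained in $V(\FF_\vp)$, which has only $O_{d,n}(|\vp|^m)$ points out of $|\vp|^n$.'' But this route cannot reach $m-1/2$. With Hsu's inequality the numerator is the ambient box size $q^{n(N+1)}$ (not $q^{m(N+1)}$), so even if you exploit the sieve weights optimally, taking $\alpha_\vp \asymp |\vp|^{m-n}$ and hence $(1-\alpha_\vp)/\alpha_\vp \asymp |\vp|^{n-m}$, you would get $L(M) \gg q^{M(n-m+1)}/M$ and, with $M=(N+1)/2$, the bound $N\,q^{(N+1)(n+m-1)/2}$. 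Since $m<n$ this exponent is strictly larger than $m-1/2$. With the cruder treatment in your write-up (only using ``$\omega(\vp)/|\vp|^n \gg 1$'') you obtain the even weaker $N\,q^{(N+1)(n-1/2)}$. Note also that you have the inequality backwards: since $m \le n-1$, the exponent $n-1/2$ is \emph{larger} than $m-1/2$, so $q^{(N+1)(n-1/2)}$ is the worse bound, not the sharper one; it cannot imply $q^{(N+1)(m-1/2)}$ ``a fortiori.''

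The missing idea is the projection. The paper chooses coordinates so that the projection $\pi:V \to \AA_K^m$ onto the first $m$ coordinates is a finite morphism of degree $\le d$, replaces $V(\cO)$ by the image $Z=\pi(V)\cap\cO^m$ (each fibre contributing $O_d(1)$), and applies Hsu's sieve \emph{in $\cO^m$}. Then the numerator drops to $q^{m(N+1)}$. The per-prime information is no longer ``$Z\bmod\vp$ is tiny'' (it isn't --- $Z$ fills up $\AA^m$); it is the thin-set/Cohen input of Lemma~\ref{lem:finite}: there is a finite Galois extension $K_\pi/K$ of degree $\le d!$ such that for any prime $\vp$ splitting completely in $K_\pi$, the reduction of $Z$ modulo $\vp$ occupies at most a fraction $c_\pi\le 1-1/d!$ of $(\cO/\vp\cO)^m$. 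Those primes have positive Dirichlet density by the function-field Chebotarev/prime number theorem, giving $L(M)\gg q^M/([K_\pi:K]\,M)$ and, with $M=(N+1)/2$, the correct estimate $O_{d,n}(q^{(N+1)(m-1/2)}N)$. This Galois-theoretic obstruction for the $d$-to-$1$ covering is the genuine content of the lemma; your argument never invokes it and cannot compensate by pushing harder on the sieve. Note in particular that $d\ge 2$ is used via $\deg K_\pi/K\le d!$ and $c_\pi<1$, not merely to say $V$ is ``not a hyperplane.''
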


Now let $G\in \cO[X_1,\dots, X_n]$
be a   homogeneous polynomial, 
 which is absolutely irreducible over $K$ and has degree  $d\geq 2$. 
The following result  is now a trivial consequence of Lemma \ref{lem:Cohen}
applied to the absolutely irreducible hypersurface $g=0$, where 
$g(\mathbf{X})=G(\a+k\mathbf{X})$.

\begin{lemma}\label{lem:cohen}
Let $k\in \cO$ and let $\a\in \cO^n$. 
Then for any any $\ve>0$ we have 
\[
\#\left\{\x\in \cO^n:  
\begin{array}{l}
| \x|\leq \hat N, 
~G(\x)=0,\\  \x \equiv \a \bmod{k}
\end{array}
\right\}
\ll_{d,n,\ve}  \left(1+\frac{\hat N}{|k|}\right)^{n-3/2+\ve}.
\]
\end{lemma}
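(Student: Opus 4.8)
The plan is to deduce Lemma~\ref{lem:cohen} from Lemma~\ref{lem:Cohen} by a straightforward substitution that turns the congruence constraint into a scaling of the counting box. Write $\x = \a + k\y$ with $\y \in \cO^n$; the condition $\x \equiv \a \bmod{k}$ is then automatic, the condition $|\x| \leq \hat N$ becomes $|\y| \leq \hat N / |k|$ up to a bounded factor coming from $|\a|$ (which we may assume is $\leq |k|$, replacing $\a$ by its reduction mod $k$ and noting this only changes things by $O(1)$), and the equation $G(\x) = 0$ becomes $g(\y) = 0$ where $g(\Y) = G(\a + k\Y) \in \cO[Y_1,\dots,Y_n]$. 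So we reduce to counting $\cO$-points of bounded height on the affine hypersurface $\{g = 0\}$.

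\medskip

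First I would verify that $g$ is absolutely irreducible of degree $d \geq 2$: the linear change of coordinates $\Y \mapsto \a + k\Y$ is invertible over $K$ (since $k \neq 0$; if $k = 0$ the statement is vacuous or reduces to the $\hat N/|k| = \infty$ case, which we handle separately by observing $G$ absolutely irreducible of degree $\geq 2$ is not a hyperplane so Lemma~\ref{lem:Cohen} applies directly to $G = 0$), so it preserves absolute irreducibility and degree. Thus the hypersurface $V = \{g = 0\} \subset \AA_K^n$ is absolutely irreducible of dimension $m = n - 1$ and degree $d \geq 2$, and Lemma~\ref{lem:Cohen} gives
\[
\#\{\y \in V(\cO) : |\y| \leq \hat N'\} = O_{d,n}\big(q^{(N'+1)(n - 3/2)} N' \log q\big),
\]
where $\hat N' = \lceil \hat N/|k| \rceil$ in the obvious sense, i.e. $N'$ is the least integer with $\hat{N'} \geq \hat N/|k|$, so $q^{N'} \ll 1 + \hat N/|k|$.

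\medskip

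Then I would absorb the polynomial factor $N' \log q$ and the $+1$ shift into an arbitrary $\ve$: since $q^{N'(n-3/2)} N' \log q \ll_\ve q^{N'(n - 3/2 + \ve)}$ and $q^{N'} \leq q(1 + \hat N/|k|)$, one obtains
\[
\#\{\y \in V(\cO) : |\y| \leq \hat N/|k|\} \ll_{d,n,\ve} \left(1 + \frac{\hat N}{|k|}\right)^{n - 3/2 + \ve},
\]
which is exactly the claimed bound after translating back to $\x$. The only genuinely fiddly points — and the place I would be most careful — are the reduction $|\a| \leq |k|$ (legitimate because $\x \equiv \a \equiv \a' \bmod k$ for $\a'$ the componentwise reduction, and passing from the box $|\x| \leq \hat N$ to the corresponding box for $\y$ only loses a multiplicative constant since $|\a'| \leq |k|$ forces $|k\y| \leq \max(|\x|, |\a'|) \leq \max(\hat N, |k|)$, i.e. $|\y| \ll 1 + \hat N/|k|$), and the bookkeeping that turns the non-archimedean box $|\x| \leq \hat N$ into a box of the shape required by Lemma~\ref{lem:Cohen} after the affine substitution. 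Neither is a real obstacle; the content of the lemma is entirely in Lemma~\ref{lem:Cohen}, and this statement is the advertised "trivial consequence."
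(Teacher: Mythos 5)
Your proof is correct and is exactly the paper's argument: the paper dispatches Lemma~\ref{lem:cohen} as a ``trivial consequence'' of Lemma~\ref{lem:Cohen} applied to the absolutely irreducible hypersurface $g=0$ with $g(\mathbf{X})=G(\a+k\mathbf{X})$, which is precisely the substitution $\x=\a+k\y$ you carry out. The bookkeeping you flag (reducing $\a$ modulo $k$, converting the box, and absorbing the $N'\log q$ and $+1$ factors into $\ve$) is the only content beyond Lemma~\ref{lem:Cohen}, and you handle it correctly.
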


The implied constant in this estimate depends at most on $n$, the degree of $G$ and on the choice of $\ve$.
Lemma~\ref{lem:cohen} is an extension of 
\cite[Lemma 15]{hb-10} to function fields. 

We proceed with the proof of Lemmas \ref{lem:trivial} and \ref{lem:Cohen}, which are based on 
Serre's proof of the analogous result for number fields (see Serre \cite[Chapter~13]{serre-book}).
We select  coordinates on $\AA_K^n$ such that the projection  
$\pi:V\rightarrow \AA_K^{m}$ onto the first $m$ coordinates induces a finite morphism. 
Let $Z=\pi(V) $ be the corresponding (thin) subset of $\AA_K^{m}$
and let 
$Z(N)=\#\{\x\in Z\cap\cO^{m}:|\x|\leq \hat N\}$. 
Since the fibre of each point under $\pi$ has at  most $d$ points, it will be
enough to prove the bound
\begin{equation}
\label{eq:Omegabound}
Z(N)=
\begin{cases}
O_{d,n}(q^{(N+1)(m-1/2)}N\log q), &\mbox{if $V$ is abs.~irred.~and $d\geq 2$,}\\
O_{d,n}(q^{(N+1)m}), & \mbox{otherwise.}
\end{cases}
\end{equation}
Our key tool in proving these bounds will be the 
following large sieve inequality over $K$ due to Hsu \cite[Theorem 3.2]{hsu}.

\begin{lemma}
\label{lem:LargeSieve}
Let $M,N,m\in \NN$ and let $X$ be a subset of $\cO^m$.
For each prime $\vp$ suppose that there exists a real number $\alpha_\vp\in (0,1]$
such that $$\#X_\vp\leq \alpha_\vp|\vp|^{m},$$ 
where  $X_\vp$ denotes the canonical image of $X$ in 
$(\cO/\vp\cO)^m$.
Then 
$$\#\{\x\in X: |\x|\leq \hat N\}\leq q^{m(\max\{N,2M-1\}+1)}/L(M),$$
where
 $$
 L(M)=1+\sum_{\substack{b\in\cO \text{  monic }\\ |b|\leq \hat{M}}}
 \prod_{\substack{\vp\mid b}}\left(\frac{1-\alpha_\vp}{\alpha_\vp}\right).
 $$
\end{lemma}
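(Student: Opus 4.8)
The statement to prove is Lemma~\ref{lem:LargeSieve}, the large sieve inequality over $K=\FF_q(t)$, due to Hsu. Let me think about how I would prove it.

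The plan is to follow the classical analytic large sieve argument, transplanted to the function field setting. The central object is the exponential sum $S(\bal)=\sum_{\x\in X}\psi(\bal\cdot\x)$ for $\bal\in\TT^m$ (or a suitable set of points), and one wants a ``large sieve inequality'' controlling $\sum_{\bal}|S(\bal)|^2$ over a well-spaced set of points, combined with a duality/Fourier argument to convert the local density hypotheses $\#X_\vp\le\alpha_\vp|\vp|^m$ into the sieve bound. More precisely, over function fields one considers the Farey-type dissection: for each monic $b\in\cO$ with $|b|\le\hat M$ and each $\a\bmod b$ with $\gcd(\a,b)=1$, one has a point $\a/b\in\TT^m$, and these are ``$\hat M^{-2}$-spaced'' in the appropriate sense (here is where $2M-1$ enters). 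The analytic large sieve inequality then reads $\sum_{b,\a}|S(\a/b)|^2\le q^{m(\max\{N,2M-1\}+1)}\sum_{\x\in X}1 \cdot(\text{something})$, but actually the cleanest route for this specific statement is the combinatorial/sieve form due to Montgomery, which I'll describe next.

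First I would set up the sieve identity. Fix $M$ and for each monic $b$ with $|b|\le\hat M$ consider the reductions $X_\vp$ for $\vp\mid b$; by CRT (valid in $\cO=\FF_q[t]$, a PID) the image $X_b$ of $X$ in $(\cO/b\cO)^m$ satisfies a complementary lower bound on the number of ``missing'' residue classes. The key quantity is $L(M)=1+\sum_{b}\prod_{\vp\mid b}\frac{1-\alpha_\vp}{\alpha_\vp}$, which by multiplicativity equals $\sum_{b}\mu^2(b)\prod_{\vp\mid b}\frac{1-\alpha_\vp}{\alpha_\vp}$ summed suitably; this is the usual ``sum over squarefree moduli'' weight. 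Then I would invoke the analytic large sieve inequality over $K$: for $\beta_\x\in\CC$ supported on $|\x|\le\hat N$,
$$
\sum_{\substack{b\text{ monic}\\|b|\le\hat M}}\ \sumstar_{\a\bmod b}\ \Bigl|\sum_{|\x|\le\hat N}\beta_\x\psi(\a\cdot\x/b)\Bigr|^2 \le q^{m(\max\{N,2M-1\}+1)}\sum_{|\x|\le\hat N}|\beta_\x|^2,
$$
which is the function-field analogue of the classical $\Delta=N+Q^2$ inequality; the delta-factor is clean here because of the ultrametric — the Farey points $\a/b$ with $|b|\le\hat M$ are genuinely $\hat M^{-2}$-separated and the dual sum can be evaluated by the orthogonality relation \eqref{eq:orthog}. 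I would either cite Hsu \cite[Theorem~3.2]{hsu} for this analytic inequality directly or reprove it via the standard duality + self-dual-measure argument. Taking $\beta_\x$ to be the indicator function of $X$ with $|\x|\le\hat N$, the left side is $\sum_{b,\a}|S_X(\a/b)|^2$ where $S_X(\a/b)=\sum_{\x\in X,|\x|\le\hat N}\psi(\a\cdot\x/b)$.

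Second, I would bound the left side from below using the local information. This is Montgomery's trick: for each monic $b$, $\sum_{\a\bmod b}^*|S_X(\a/b)|^2$ detects the distribution of $X$ among residues mod $b$, and one shows $\sum_{\a\bmod b}^*|S_X(\a/b)|^2 \ge (\#\{\x\in X:|\x|\le\hat N\})^2\cdot\prod_{\vp\mid b}\frac{1-\alpha_\vp}{\alpha_\vp}$ — more carefully, one uses that the number of residues hit is at most $\prod_{\vp\mid b}\alpha_\vp|\vp|^m$, expands $|S_X|^2$, and applies Cauchy–Schwarz over residue classes exactly as in the number field case. Summing over $b$ with $|b|\le\hat M$ and adding the $b=1$ term gives $\ge (\#\{\x\in X:|\x|\le\hat N\})^2 L(M)$. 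Combining with the analytic upper bound and $\sum|\beta_\x|^2=\#\{\x\in X:|\x|\le\hat N\}$ yields
$$
\#\{\x\in X:|\x|\le\hat N\}^2\, L(M)\le q^{m(\max\{N,2M-1\}+1)}\,\#\{\x\in X:|\x|\le\hat N\},
$$
and dividing through by $\#\{\x\in X:|\x|\le\hat N\}$ (trivial if it is zero) gives the claimed bound.

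The main obstacle I expect is establishing (or correctly citing) the analytic large sieve inequality over $\FF_q(t)$ with the precise delta-factor $q^{m(\max\{N,2M-1\}+1)}$, and in particular being careful about the spacing of Farey fractions and the normalization of Haar measure on $\TT^m$ so that the $\max\{N,2M-1\}$ term — rather than $N+2M$ — appears; in the ultrametric setting the two Farey fractions $\a/b$, $\a'/b'$ either coincide or differ by something of absolute value $>1/|bb'|\ge\hat M^{-2}$, which is cleaner than over $\QQ$ but still needs the bookkeeping done correctly. The rest (CRT, Cauchy–Schwarz over residues, multiplicativity of $L(M)$) is routine and essentially identical to Montgomery's treatment, so I would present those steps briskly and cite \cite[Theorem~3.2]{hsu} for the analytic input.

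Actually, since the lemma is explicitly attributed to Hsu \cite[Theorem~3.2]{hsu}, the honest ``proof'' here is simply to quote it — so in the paper I would expect the authors to state it as a black box with the citation and move on, exactly as they do. My plan above is what one would do if a self-contained proof were wanted.
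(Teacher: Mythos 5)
You are right on both counts. The paper does not prove this lemma at all: Lemma~\ref{lem:LargeSieve} is stated as a black-box citation of Hsu \cite[Theorem~3.2]{hsu}, and the authors move directly to its application in deriving \eqref{eq:Omegabound}. Your closing observation that ``the honest proof is simply to quote it'' is exactly what the paper does.

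Your sketched self-contained argument --- the analytic large sieve inequality over $\FF_q(t)$ for the ultrametrically well-spaced Farey points $\a/b$ with $|b|\le\hat M$, combined with Montgomery's lower bound $\sum_{\a\bmod b}^*|S_X(\a/b)|^2\ge Z^2\prod_{\vp\mid b}\frac{1-\alpha_\vp}{\alpha_\vp}$ for squarefree $b$, and then dividing $Z^2L(M)\le\Delta\,Z$ by $Z$ --- is the standard route and is in substance how Hsu establishes the theorem. The one point your outline handles lightly but correctly flags is the origin of the factor $q^{m(\max\{N,2M-1\}+1)}$: the ultrametric separation of distinct Farey fractions ($|\a/b-\a'/b'|\ge 1/|bb'|\ge\hat M^{-2}$) together with the orthogonality relation Lemma~\ref{lem:orthog} produces a clean $\max$ rather than the sum $N+2M$ one sees over $\QQ$; getting that right is indeed the only nontrivial bookkeeping in the analytic step. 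There is no gap here --- just be aware that if you wrote this out you would essentially be reproducing Hsu's proof rather than the (nonexistent) one in this paper.
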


Taking $\alpha_\vp=1$ for every $\vp$ and $L(M)\geq 1$ we easily arrive at the second part 
of \eqref{eq:Omegabound} by taking $M=(N+1)/2$.
For the first part, for any prime $\vp\in \cO$, we let $Z(N)_\vp$ denote the canonical image of $Z$ in $(\cO/\vp\cO)^m$. The following result is proved in  exactly the same way as the number field version  \cite[Thm.~5 in Chapter 13]{serre-book}.

\begin{lemma}\label{lem:finite}
Assume that $V$ is absolutely irreducible and has degree $d\geq 2$.
There is a finite Galois extension $K_\pi/K$ of degree at most $d!$ and a number $c_\pi\in (0,1-1/d!]$ such that if $\vp$ splits completely in $K_\pi$ then 
$$\#Z(N)_\vp\leq c_\pi |\vp|^{m}+O_{d,n}(|\vp|^{m-1/2}).$$
\end{lemma}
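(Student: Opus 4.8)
The statement to prove is Lemma~\ref{lem:finite}: for an absolutely irreducible $V$ of degree $d\ge 2$ with finite projection $\pi\colon V\to \AA^m_K$ and image $Z$, there is a Galois extension $K_\pi/K$ of degree $\le d!$ and a constant $c_\pi\in(0,1-1/d!]$ so that primes $\vp$ splitting completely in $K_\pi$ satisfy $\#Z(N)_\vp \le c_\pi|\vp|^m + O_{d,n}(|\vp|^{m-1/2})$.

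\medskip

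\textbf{The plan.} I would follow the Lang--Weil / Chebotarev argument that Serre uses over $\QQ$ (\cite[Ch.~13, Thm.~5]{serre-book}), transplanted to $\FF_q(t)$. First, spread $V\to\AA^m_K$ out to a morphism of schemes $\mathcal V\to\AA^m_{\mathcal O[1/\Delta]}$ of finite type over the ring $\cO[1/\Delta]$, where $\Delta\in\cO$ is chosen so that over the complement of $V(\Delta)$ the morphism $\pi$ is finite of the fixed generic degree $e\le d$, $\mathcal V$ is smooth with absolutely irreducible fibres of dimension $m$, and the reductions behave well; excluding the finitely many bad primes only affects the $O$-term. For a good prime $\vp$, $Z(N)_\vp$ is exactly the image in $\FF_\vp^m=(\cO/\vp\cO)^m$ of $\mathcal V(\FF_\vp)$ under the finite map $\pi_\vp$, so a point $z\in\FF_\vp^m$ lies in $Z(N)_\vp$ precisely when the fibre $\pi_\vp^{-1}(z)$ is nonempty, i.e.\ when the \'etale $\FF_\vp$-algebra $A_z$ of the generic fibre of $\pi$ specialised at $z$ has an $\FF_\vp$-point. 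Introduce the Galois closure: let $K_\pi$ be the algebraic closure of $K$ in the Galois closure of the function field $K(V)/K(\AA^m)$; this is a finite extension of degree $\le e!\le d!$ (it is the geometric monodromy field of $\pi$), and set $G=\Gal(K_\pi/K)$.

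\medskip

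\textbf{Key steps.} (1) For $\vp$ splitting completely in $K_\pi$ one has $\FF_\vp\otimes_K K_\pi\cong\FF_\vp^{[K_\pi:K]}$, so base-changing to $\FF_\vp$ the cover becomes geometrically the same as over $\bar K$; the Frobenius conjugacy class at a point $z$ of $\AA^m(\FF_\vp)$ lies in the arithmetic monodromy group, which by the splitting hypothesis equals the geometric monodromy group $M\le S_e$. A point $z$ is in $Z(N)_\vp$ iff $\mathrm{Frob}_z$ (acting as a permutation in $M$ on the $e$ geometric points of the fibre) has a fixed point. (2) By a function-field Chebotarev/Lang--Weil argument applied to the connected \'etale cover $\mathcal W\to \AA^m_{\FF_\vp}\setminus(\text{branch locus})$ with group $M$: for each $g\in M$ the number of $z\in\AA^m(\FF_\vp)$ (away from the branch locus, which has $O(|\vp|^{m-1})$ points) with Frobenius class $g$ is $\tfrac{1}{\#M}\#\mathcal W(\FF_\vp) $ up to error $O(|\vp|^{m-1/2})$ --- this is Lang--Weil for the (geometrically irreducible, smooth) variety $\mathcal W$, and $\#\mathcal W(\FF_\vp)=|\vp|^m+O(|\vp|^{m-1/2})$. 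Summing over the $g\in M$ with a fixed point gives $\#Z(N)_\vp = c_\pi|\vp|^m + O_{d,n}(|\vp|^{m-1/2})$, where
\[
c_\pi=\frac{\#\{g\in M: g\text{ has a fixed point on }\{1,\dots,e\}\}}{\#M}.
\]
(3) Finally, bound $c_\pi$: since $M$ acts transitively on $e\ge 2$ points (the cover $V$ is irreducible, hence $K(V)/K(\AA^m)$ is a field and $M$ is transitive), Jordan's theorem gives an element of $M$ with no fixed point, so $c_\pi\le 1-1/\#M\le 1-1/d!$, and $c_\pi>0$ because the identity fixes everything. This gives the claimed range $c_\pi\in(0,1-1/d!]$. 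The uniformity of all implied constants in $d$ and $n$ is automatic: the Lang--Weil error constants depend only on the degree and dimension of $\mathcal W$, which are bounded in terms of $d,n$, and the finitely many excluded primes each contribute $\le|\vp|^m\le|\vp|^{m-1/2}\cdot|\vp|^{1/2}$, absorbed after enlarging the constant (or noting $Z(N)_\vp\le|\vp|^m$ trivially and folding these primes into the error for all $N$).

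\medskip

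\textbf{Main obstacle.} The delicate point is the function-field Chebotarev step (2): one needs an equidistribution statement for Frobenius in the monodromy group of an \'etale cover of $\AA^m_{\FF_\vp}$ with an explicit error term $O(|\vp|^{m-1/2})$ that is uniform as $\vp$ varies. Over number fields Serre packages this via Lang--Weil applied to the Galois closure; the same works here since Lang--Weil holds verbatim over $\FF_q$, but one must be careful that the branch locus and the bad-reduction locus are handled (they are lower-dimensional, contributing $O(|\vp|^{m-1})$), and that ``split completely in $K_\pi$'' is exactly the condition making the arithmetic and geometric monodromy groups coincide so that $c_\pi$ is a genuine constant independent of $\vp$. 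Everything else --- spreading out, choosing $\Delta$, the Jordan bound --- is routine. One subtlety worth a sentence: $M$ should be taken as a subgroup of $S_e$ acting on the $e$ sheets of the \emph{generically} \'etale locus, and one reduces to the case where $\pi$ is separable (automatic here since $\ch\FF_q>3>d$ is not assumed, but $d$ can exceed the characteristic --- in our application via Lemma~\ref{lem:cohen} only $d=3$ occurs and $\ch\FF_q>3$, so separability is fine; in general one should note the argument requires $\pi$ generically \'etale, which may be arranged after a change of coordinates).
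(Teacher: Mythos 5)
Your proposal correctly reproduces the Chebotarev/Lang--Weil argument of Serre (\cite[Ch.~13, Thm.~5]{serre-book}), which is exactly what the paper invokes; the key steps (spreading out, identifying $Z(N)_\vp$ via fixed points of Frobenius in the monodromy group $M\le S_e$, Lang--Weil applied to the Galois closure, and Jordan's theorem to produce a fixed-point-free element and hence $c_\pi\le 1-1/\#M\le 1-1/d!$) are all present and correctly deployed. Your flag about generic separability of $\pi$ in positive characteristic is the one genuine point that does not arise in Serre's number-field treatment, and it is right to worry about it, but in the paper's applications $\ch(\FF_q)>3$ never divides the degrees that occur, so the projection can indeed be arranged to be generically \'etale.
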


We may now use this result to deduce the first part of 
\eqref{eq:Omegabound}.
Let $K_\pi$ be as in Lemma \ref{lem:finite}. Then 
 if a prime $\vp$ splits completely in $K_\pi$ it follows that  $Z(N)_\vp\leq c_\vp|\vp|^{n-1} $
 for some constant $0< 
c_\vp\leq 1-1/d!$. 
We now apply Lemma \ref{lem:LargeSieve} with  $M=(N+1)/2$, invoking the prime number theorem to deduce that 
\begin{align*}
L(M)&\geq \sum_{\substack{\vp\in \cO \text{ monic 
and 
irreducible}\\ |\vp|\leq \hat{M}\\ \vp \text{ splits completely in }K_\pi}}(1-c_\vp)
\geq \frac1{d!}\frac{q^M}{[K_\pi:K]M}
+O(q^{M/2})
,\end{align*}
This completes the proof of  \eqref{eq:Omegabound}, and so the proof of Lemma
\ref{lem:Cohen}.

\section{Global
\texorpdfstring{$L$}{L}-functions and 
\texorpdfstring{$\ell$}{l}-adic sheaves}\label{s:red}

In \S \ref{s:l-adic} we review some facts about $\ell$-adic sheaves on affine curves and in \S \ref{s:correspondence} we recall the construction of their associated  $L$-functions.
In \S \ref{s:weil} we record the statement of the Weil conjectures as established by Deligne.  Next, in \S \ref{s:kahn}, we 
recall the construction of the  Hasse--Weil $L$-function of a smooth and projective variety over a global field of positive characteristic and state some of its fundamental properties.  Finally,  in \S \ref{s:twisting}, we discuss the analagous properties of a global $L$-function obtained through  twisting by a character of finite order.

\subsection{Review of $\ell$-adic sheaves and $\ell$-adic cohomology}\label{s:l-adic}

The main references for this section are Deligne \cite{weil2} and  Katz  (see \cite[Chapter~2]{katz} and \cite{lfunction}).
Let us assume that $j:U\hookrightarrow C$ is a non-empty affine open subset of  a smooth proper geometrically connected curve $C$ over the finite field $\FF_q$.
For any prime  $\ell\nmid q$,  suppose that we are given a lisse $\QQ_\ell$-sheaf $\cF$
on $U$ and let $V$ be the $\QQ_\ell$-vector space associated to  $\cF$ by the monodromy action. 
For any
integer $i\geq 0$ we have  both ordinary and compact cohomology groups
$$
H^i(\bar{U}, \cF) \quad \mbox{ and } \quad H_c^i(\bar{U}, \cF).
$$
These are finite dimensional $\bar \QQ_\ell$-vector spaces on which 
$\Gal(\bar \FF_q/\FF_q)$ acts continuously and which vanish for $i>2$.
There is a natural ``forget supports'' map 
$H_c^i(\bar{U}, \cF) \to H^i(\bar{U}, \cF)$, which need not be an isomorphism (since $U$ is not proper).
We have 
$$
H_c^0(\bar U, \cF)=H^2(\bar U, \cF)=0.
$$
Let  $\eta=\Spec (\FF_q(U))$
 be the generic point and $\bar \eta = \Spec(\bar{\FF_q(U)})$ 
 the geometric point above it.  We denote by $\pi_1^{\mathrm{geom}}=\pi_1(\bar U,\bar \eta)$  the ``geometric'' fundamental group. 
Then 
$$
H^0(\bar U, \cF)=V^{\pi_1^{\mathrm{geom}}}
$$ 
is the subspace of invariants of 
$\pi_1^{\mathrm{geom}}$ acting on $V$, and 
$$
H_c^2(\bar U, \cF)=V(-1)_{\pi_1^{\mathrm{geom}}}
$$
is the
space of coinvariants of 
$\pi_1^{\mathrm{geom}}$ acting on the Tate twist $V(-1)$ of $V$.

We will require information about the dimensions of these cohomology groups.
The Euler characteristic of $\bar U$ is 
\begin{equation}\label{eq:euler}
\chi(\bar U)=2-2g-\sum_{x\in |C\setminus U|}\deg(x),
\end{equation}
where $g$ is the genus of $C$ and the sum is taken  over the closed points  $x\in C\setminus U$, with $\deg(x)$ being the degree of its residue field over $\FF_q$.
Next, the Swan conductor of $\cF$ takes the shape
$$
\sw(\cF)=\sum_{x\in |C\setminus U|}\deg(x)\sw_x(\cF).
$$
It measures 
the wild ramification of the sheaf. 
With this notation to hand, the Euler--Poincar\'e formula (see \cite[\S 2.3.1]{katz}) states that 
$$
\sum_{i=0}^2(-1)^i \dim H_c^i(\bar U,\cF) = \rank(\cF) \chi(\bar U) - \sw(\cF),
$$
whence
$$
\dim H_c^1(\bar U,\cF) = 
\dim H_c^2(\bar U,\cF)
-\rank(\cF) \chi(\bar U) + \sw(\cF).
$$

We may also  form a constructible $\QQ_\ell$-sheaf $j_*\cF$ on $C$, where we recall that $j:U\hookrightarrow C$ is the inclusion map. Its cohomology groups are related to the above groups via the identities
$$
H^i(\bar C, j_*\cF)=\begin{cases}
H^0(\bar U, \cF), &\mbox{if $i=0$,}\\
\mathrm{Im}(H_c^1(\bar{U}, \cF) \to H^1(\bar{U}, \cF)), &\mbox{if $i=1$,}\\
H^2_c(\bar U, \cF), &\mbox{if $i=2$}.
\end{cases}
$$
Fix an embedding $\iota:\bar \QQ_\ell\hookrightarrow \CC$ and 
suppose that $\cF$ is $\iota$-pure of weight $w$.
Then, by the fundamental work of Deligne \cite[Thm.~3.2.3]{weil2}, it follows that
$H^i(\bar C,j_*\cF) $ is $\iota$-pure of weight $w+i$ for each integer $0\leq i\leq 2$.

It follows from the facts above that 
\begin{equation}\label{eq:degree1}
\dim H^i(\bar C, j_*\cF)\leq \rank(\cF) \quad \mbox{ for $i=0,2$}
\end{equation}
and 
\begin{equation}\label{eq:degree2}
\dim H^1(\bar C, j_*\cF)\leq \rank(\cF)(1-\chi(\bar U) )  +\sw(\cF).
\end{equation}
Now suppose that 
$\cF_1$ and $\cF_2$ are  lisse $\QQ_\ell$-sheaves on $U$, with 
 $r_i=\rank(\cF_i)$ for $i=1,2$. 
Arguing as in the
work of Fouvry, Kowalski and Michel (see the proof of \cite[Prop.~8.2(2)]{FKM}), one finds that
$$
\sw(\cF_1\otimes \cF_2)\leq r_1r_2\left(\sw(\cF_1)+\sw(\cF_2)\right).
$$
It therefore follows from \eqref{eq:degree2} that 
\begin{equation}\label{eq:degree2'}
\dim H^1(\bar C, j_*(\cF_1\otimes \cF_2))\leq r_1r_2\left\{1-\chi(\bar U)   +\sw(\cF_1)+\sw(\cF_2)\right\}.
\end{equation}

\subsection{Global $L$-functions}\label{s:correspondence}

Let $\cF$ be a $\iota$-pure lisse $\QQ_\ell$-sheaf of weight $w$ on an open subset $j:U\hookrightarrow C$.
In the 1960s, Grothendieck \cite{bourbaki} associated a global $L$-function 
$L(C,j_*\cF,T)$
to the constructible $\QQ_\ell$-sheaf $j_*\cF$. It follows from
the correspondence of 30/09/64 in 
 \cite{letters} (see also    \cite[Conj.~$C_9$]{serre}), that this $L$-function is a rational function, with 
  \begin{equation}\label{eq:grothendieck}
L(C,j_*\cF,T)=\frac{P_1(T)}{P_0(T)P_2(T)},
\end{equation}
where 
$P_0,P_1,P_2\in \ZZ[T]$ are polynomials given by 
$$
P_i(T)=\det\left(1-T \fr_q\mid H^i(\bar C,j_*\cF)\right)
$$
for $0\leq i\leq 2$.
Here 
$\fr_q$  is the  Frobenius endomorphism 
acting on $H^i(\bar C,j_*\cF)$. 
It follows from Deligne \cite{weil2} that the inverse roots of $P_i$ have modulus $q^{(w+i)/2}$.

\subsection{The Weil conjectures}\label{s:weil}

Let $V$ be a smooth and projective  variety 
of dimension $m$ which is 
defined over a finite field $\FF_q$.
Then $V$ is also defined over any extension $\FF_{q^r}$ of $\FF_q$ and 
 we may define the zeta function
$$
Z(V,T)=\exp \left(\sum_{r=1}^\infty \frac{\#V(\FF_{q^r})T^r}{r}\right).
$$
According to Deligne \cite{weil1} and his resolution of the Weil conjectures, the zeta function can be expressed as a rational function 
\begin{equation}\label{eq:zeta}
Z(V,T)=\frac{P_1(V,T)P_3(V,T)\dots P_{2m-1}(V,T)}{P_0(V,T)P_2(V,T)\dots P_{2m}(V,T)},
\end{equation}
where
\begin{equation}\label{eq:poly}
P_i(V,T)=\det \left(1-T \fr_q \mid 
\het^i(\bar V,\QQ_\ell)\right),
\end{equation}
for  $i\in \{0,\dots,2m\}$
 and any prime $\ell\nmid q$. Here $\fr_q$ is the  Frobenius endomorphism  acting on 
$\het^i(\bar V,\QQ_\ell)$ induced by the Frobenius map on $\bar V$. 
Note that if one takes $\cF=\QQ_\ell$ to be the trivial sheaf in \S \ref{s:correspondence}
then $Z(C,T)=L(C,j_*\cF,T)$.

There is a factorisation 
\begin{equation}\label{eq:factor}
P_i(V,T)=
\prod_{j=1}^{b_{i,\ell}} (1-\omega_{i,j}T),
\end{equation}
where $b_{i,\ell}=\dim_{\QQ_\ell} \het^i(\bar V,\QQ_\ell)$. Deligne
shows that each
$\omega_{i,j}$ is an  algebraic integer with the property that $|\omega_{i,j}|=q^{i/2}$, for $1\leq j\leq b_{i,\ell}$ and $0\leq i\leq 2m$.
A formal consequence of \eqref{eq:zeta} and  \eqref{eq:factor} is the identity
\begin{equation}\label{eq:counting}
\#V(\FF_{q^r})=\sum_{i=0}^{2m}(-1)^i \sum_{j=1}^{b_{i,\ell}} \omega_{i,j}^r,
\end{equation}
to which we will return in due course.

\subsection{Global $L$-functions once again}\label{s:kahn}

Let $X$ be a smooth and projective variety of dimension $m$ defined over $K=\FF_q(C)$
and let $\bar X=X\otimes_{K} \bar K$. We will need to work with models for $X$ over the ring of integers $\cO$ of $K$.
Let $S\subset \Omega$ denote the finite set of places outside of which $X$ has good reduction.
The smooth projective morphism $X\to \Spec(K)$ extends to a smooth projective morphism $p:\mathcal{X}\to U$,  for 
a suitable open subset $U$ of $C$.  (This corresponds to choosing a specific equation over $\cO$ which has good reduction at the primes outside of $S$.)
For any 
$v\in \Omega\setminus S$, we let $\mathcal{X}_v$ be the special fibre at $v$ of $\mathcal{X}$ over $\mathcal{O}_v$.
Then $\mathcal{X}_v$ is a smooth and projective $\mathcal{O}_v$-scheme 
such that $\mathcal{X}_v \otimes_{\mathcal{O}_v} K_v$ can be identified with $X\otimes_K K_v$. 
We denote by $X_v=\mathcal{X}_v \otimes_{\mathcal{O}_v} \FF_v$ the reduction  at $v$. This is a smooth and projective variety defined over the finite field $\FF_v$. 
For any prime  $\ell\nmid q$ it will be convenient to put
$H_\ell^i(X)=\het^i(\bar X,\QQ_\ell)$ for the geometric $\ell$-adic cohomology group.

In this section,  following Serre  \cite{serre}, we define some global $L$-functions 
associated to $X$ and discuss their analytic properties. 
Let  $i\in \{0,\dots,2m\}$.  For   $v\in \Omega$,
Serre defines the local factor 
$$
L_v(H_\ell^{i}(X),s)=\det \left(1-\#\FF_v^{-s} \fr_v \mid 
H_\ell^i(X)^{I_v}\right)^{-1},
$$
where $I_v$ is the inertia group of $v$
and $\fr_v$ is the geometric Frobenius endomorphism at $v$. 
Let $P_{i,v}(T)=
\det \left(1-T \fr_v \mid 
H_\ell^i(X)^{I_v}\right).$
When  $v\not\in S$ this coincides with the polynomial $P_i(X_v,T)$ that we met in 
\eqref{eq:poly}. 
 For arbitrary $v\in \Omega$, it 
follows from 
Terasoma \cite{tera}
that $P_{i,v}(T)$ 
is independent of the choice of $\ell$ and 
from Deligne \cite[Thm.~1.8.4]{weil2} that its inverse roots 
have absolute value at most $q^{i/2}$.
When $v\not \in S$  we then have 
$$
Z(X_v,\#\FF_v^{-s})=\prod_{i=0}^{2m}
L_v(H_\ell^{i}(X),s)^{(-1)^i}.
$$

For any $i\in \{0,\dots,2m\}$,
Serre \cite{serre} defines the global $L$-function 
$$
L(H_\ell^i(X),s)=
\prod_{v\in \Omega} L_v(H_\ell^i(X),s).
$$
This $L$-function satisfies a functional equation.  
Associated to $X$ is a smooth model  $p:\mathcal{X}\to U$, for a suitable open subset $U$ of $C$. If $j_U:U 
\hookrightarrow C$ is the corresponding immersion then we obtain a 
lisse  $\QQ_\ell$-sheaf
$$
j_*H_\ell^i(X)=(j_U)_*R^ip_*\QQ_\ell,
$$
where $j:\Spec(K) \to C$ is the inclusion of the generic point.
According to Grothendieck  \cite{letters} (see also \cite[\S 5.5]{kahn}) we then have 
$$
L(H_\ell^i(X),s)=L(C,j_*H_\ell^i(X), q^{-s}),
$$
 in the notation of 
\S \ref{s:correspondence}. Hence it follows from \eqref{eq:grothendieck} that 
\begin{equation}\label{eq:YEAH}
L(H_\ell^i(X),s)=\frac{P_{1,i}(q^{-s})}{P_{0,i}(q^{-s})P_{2,i}(q^{-s})},
\end{equation}
where 
for $k\in \{0,1,2\}$ one has
\begin{equation}\label{eq:def-Pj}
P_{k,i}(T)=
 \det \left(1-T \fr_q \mid 
H^k(\bar{ C},j_* H_\ell^i(X))\right)\in \ZZ[T],
\end{equation}
with inverse roots  having absolute value $q^{(i+k)/2}$.
In particular, any poles or zeros of $L(H_\ell^i(X),s)$  must have 
$\Re(s)=(i+k)/2$ for $k\in \{0,1,2\}$.

We now specialise the previous discussion to the case of a smooth
hypersurface $X\subset \PP_K^{m+1}$  of degree $d$.
As before, let $S$ be the finite set of places outside of which $X$ has good reduction and choose a smooth model  
$p:\mathcal{X}\to U$, for a suitable open subset  $U$ in $C$, which  we consider fixed once and for all. 
We define the 
discriminant $\Delta_X$ of $X$ to be the classical discriminant 
of the degree $d$ form $F\in \cO[x_0,\dots,x_m]$ that defines $\mathcal{X}$. (See
Example 4.15 of  \cite[Chap.~1]{GKZ} for its construction.) Thus 
 $\Delta_X$ is a (non-zero) polynomial of degree $(m+1)3^m$ in the coefficients of $F$. In particular 
$\Delta_X \in \cO$ and its prime divisors correspond to the finite places in $S$.

Let $X_v$ be the reduction of $X$ at any  $v\in \Omega\setminus S$. 
The middle cohomology group $H_\ell^m(X)$ is the only one of interest to us, since  
$$
\het^i(\bar X_v,\QQ_\ell)=
\begin{cases}
\QQ_\ell(-i/2), & \mbox{if $i$ is even and $i\neq m$,}\\
0, & \mbox{if $i$ is odd and $i\neq m$},
\end{cases}
$$
for any  $v\in \Omega\setminus S $ and $i\in \{0,\dots,2m\}$ (see Ghorpade and Lachaud \cite[\S 3]{G-L}, for example).
It then follows from 
 \eqref{eq:counting} that 
\begin{equation}\label{eq:counting'}
\#X_v	(\FF_{v})=
\#\FF_v^m+\#\FF_v^{m-1}+\dots+1 
+(-1)^m \sum_{j=1}^{b_{m}} \omega_{m,j},
\end{equation}
where $b_{m}=\dim_{\QQ_\ell} H_\ell^m(X)$
is a positive integer that depends only on $d$ and $m$ (it does not depend on the choice of $\ell$), 
and
 $\omega_{m,j}$ are the eigenvalues of the Frobenius endomorphism at $v$ on  $H_\ell^m(X)$, satisfying 
 $|\omega_{m,j}|=\#\FF_v^{m/2}$ for $1\leq j\leq b_{m}$.

Taking $i=m$ we will need to control the degrees of the polynomials $P_{k,m}(T)$ appearing in \eqref{eq:def-Pj}.
The closed points $x\in |C\setminus U|$ correspond to the prime divisors $\varpi$ of the discriminant $\Delta_X$ that was defined above. Hence \eqref{eq:euler} yields
$$
-\chi(\bar U)=2g-2+O_{d,m}(\log |\Delta_X|),
$$
where $g$ is the genus of $C$.
Moreover, as is implicit
in work of Hooley \cite[\S 6]{hooley-waring}, we have 
$\sw(H_\ell^m(X))=O_{d,m}(\log |\Delta_X|)$,
since $\sw_x(H_\ell^m(X))$ can be bounded uniformly in terms of $d$ and $m$ for any closed point $x\in |C\setminus U|.$
Combining \eqref{eq:degree1} and \eqref{eq:degree2}, we deduce that 
$$
\deg P_{0,m}\leq b_m, \quad 
\deg P_{1,m}=O_{d,m,g}\left( 1+\log |\Delta_X|\right),
\quad
\deg P_{2,m}\leq b_m. 
$$

\subsection{Twisting by a character}\label{s:twisting}

For the sake of simplicity, in this section we shall restrict attention to the case $K=\FF_q(t)$  (so that 
$C=\PP^1$ and $g=0$).
We continue to assume that  $X\subset \PP_K^{m+1}$ is a smooth
hypersurface  of degree $d$ with associated set $S\subset \Omega$  of places outside of which $X$ has bad 
reduction. We let $p:\mathcal{X}\to U$ be a smooth model, which is fixed once and for all, and we let $\Delta_X\in \cO$ denote the corresponding discriminant. 
We need to  consider the effect of twisting the middle cohomology group $H_\ell^{m}(X)$ by a fixed character of finite order.

Let  $N\in \ZZ_{>0}$ and  let 
$$
\chi_{\mathrm{Dir}}: (\mathcal{O}_\infty/ t^{-N} \mathcal{O}_\infty)^* \to \CC^*
$$ 
be a Dirichlet character.
Putting $x=t^{-1}$ and 
$A=\FF_q[x]$, we note that 
$(\mathcal{O}_\infty/ t^{-N} \mathcal{O}_\infty)^*\cong (A/ x^{N} A)^*$.
This lifts to a character $\chi_{\mathrm{Dir}}:\mathcal{O}_\infty^*\to\CC^*$.
Given any id\`ele $y=(y_\varpi)\in I_K$, we may suppose that $y_\varpi=u_\varpi \varpi^{e_\varpi}$ for $u_\varpi\in \mathcal{O}_\varpi^*$ and $e_\varpi\in \ZZ$ such that 
$e_\varpi=0$ for almost all $\varpi$.
Putting  $a=\prod_\varpi \varpi^{-e_\varpi}\in K^*$, we then have a unique representation 
$y=au$ where $u=(u_\varpi)\in \prod_\varpi  \mathcal{O}_\varpi^*$ for every prime $\varpi$.
We may now define a Hecke character
$\chi_{\mathrm{Hecke}}: I_K\to \CC^*$ via
$$
\chi_{\mathrm{Hecke}}(au)=\chi_{\mathrm{Dir}}(u_\infty).
$$
It is  constant on $K^*$ and gives a character on the id\`ele class group $I_K/K^*$.

There are two  relevant multiplicative characters in our investigation. The first is  $\eta:\mathcal{O}\rightarrow \CC^*$, given by  
$$
\eta(r)=\chi_{\mathrm{Dir}}(r/t^{\deg r} )
$$
for any $r\in \mathcal{O}$.
Note that 
$r/t^{\deg r}\in \mathcal{O}_\infty^*$ for any $r\in \mathcal{O}$.
The second is a 
Dirichlet character $\eta':(\cO/M\cO)^*\to \CC^*$ modulo $M$, for given $M\in \cO$ which in our application will have  bounded absolute value. 
By class field theory one can view $\eta$ and $\eta'$ as 
lisse   $\QQ_\ell$-sheafs
on $U$ of rank $1$, both of which are $\iota$-pure of weight $0$.
The character $\eta$ is ramified only at infinity and 
$\eta'$ is ramified only at the primes dividing $M$. One has 
$\sw(\eta)=O(N)$ and $\sw(\eta')=O_{|M|}(1)$.

We may now define the  global 
$L$-function 
$L(\eta\otimes \eta' \otimes H_\ell^m(X),s)$,
with local factors
$$
L_v(\eta \otimes\eta'\otimes H_\ell^{m}(X),s)=\det \left(1-\#\FF_v^{-s} \fr_v \mid 
\eta\otimes\eta' \otimes H_\ell^m(X)^{I_v}\right)^{-1},
$$
for $v\in \Omega$.
As before, we  have 
$$
L(\eta\otimes\eta'\otimes H_\ell^m(X),s)=L(\PP^1,j_*(\eta \otimes\eta'\otimes H_\ell^m(X)), q^{-s}),
$$
where 
if  $j:\Spec(K) \to \PP^1$ is the inclusion of the generic point then 
$$
j_*(\eta \otimes\eta'\otimes H_\ell^m(X))=(j_U)_*(\eta \otimes \eta' \otimes R^ip_*\QQ_\ell).
$$
Moreover, the analogue of \eqref{eq:YEAH} holds true. 
Thus 
\begin{equation}\label{eq:YEAH-twist}
L(\eta \otimes\eta'\otimes H_\ell^m(X),s)=\frac{P_{1,m}(q^{-s})}{P_{0,m}(q^{-s})P_{2,m}(q^{-s})},
\end{equation}
where $P_{k,m}\in \ZZ[T]$ 
for $k\in \{0,1,2\}$, 
with inverse roots having
 absolute value $q^{(m+k)/2}$.
Finally, using 
\eqref{eq:degree2'}
and noting that $\rank (\eta \otimes\eta'\otimes H_\ell^m(X))\leq b_m$, we have 
\begin{equation}\label{eq:degree-Hm-twist}
\deg P_{0,m}\leq b_m, \quad 
\hspace{-0.2cm}
\deg P_{1,m}=O_{d,m,|M|}\left(\log |\Delta_X|+N\right),
\quad
\hspace{-0.2cm}
\deg P_{2,m}\leq b_m. 
\end{equation}
In our work it is the reciprocal of 
$L(\eta \otimes\eta'\otimes H_\ell^m(X),s)$
that features and so the location of its poles is dictated by the 
zeros of $P_{1,m}(q^{-s})$ in \eqref{eq:YEAH-twist}.

\section{Activation of the circle method over  function fields}\label{s:active}

We suppose that we are given a form $F\in \cO[x_1,\dots,x_n]$ of degree $d\geq
2$ together with a vector $\b\in \cO^n$ and an element $M\in \cO$ such that $M\mid F(\b)$.
Let  $\omega\in S(K_\infty^n)$ be  a weight function.
Then, for $P\in \cO$ we consider the counting function
$$
N(P)=\sum_{\substack{\x\in \cO^n\\ F(\x)=0\\
\x\equiv \b\bmod{M}}} \omega(\x/P).
$$
We are interested in the behaviour of this as $|P|\rightarrow \infty$, for fixed
$M$ and $\b$.
According to \eqref{eq:orthog} we may write
$$
N(P)=\int_{\TT} S(\alpha) \d \alpha,
$$
where
$$
S(\alpha)=\sum_{\substack{\x\in \cO^n\\
\x\equiv \b\bmod{M}}
} \psi(\alpha F(\x))\omega(\x/P).
$$
We would like to dissect $\TT$ into a disjoint union of intervals in order to
try and use non-trivial averaging in our estimation of $S(\alpha).$
The starting point for this is the following analogue of Dirichlet's approximation
theorem (as proved in
\cite[Lemma 3]{kubota} or  \cite[Lemma 5.1]{lee}, for example).

\begin{lemma}
Let $\alpha\in
K_\infty$ and let $Q>1$. 
Then there exists coprime $a,r\in \mathcal{O}$, with $r$ monic,  such that
$|a|<|r|\leq \hat Q$ and 
$$
|r\alpha-a|< \hat Q^{-1}.
$$
\end{lemma}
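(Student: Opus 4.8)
The plan is to run the Dirichlet box principle in the function-field setting, exactly as one does over $\RR$, using the decomposition $K_\infty=\cO\oplus\TT$ into ``polynomial part'' and ``fractional part'': write $\{\beta\}\in\TT$ for the $\TT$-component of $\beta\in K_\infty$ and $[\beta]=\beta-\{\beta\}\in\cO$ for its polynomial part, so that $\beta\mapsto\{\beta\}$ is $\FF_q$-linear. We may and do assume $\alpha\in\TT$, which is the only case needed in the circle method. Put $N=\lfloor Q\rfloor$, so $N\geq 1$ and $N+1>Q$. There are exactly $q^{N+1}$ polynomials $r\in\cO$ with $\deg r\leq N$, whereas $\TT$ has exactly $q^{N}$ residue classes modulo $t^{-N}\TT$ (a class being pinned down by the coefficients of $t^{-1},\dots,t^{-N}$); two elements in the same class differ by an element of $t^{-N}\TT$, hence by something of absolute value at most $q^{-N-1}<q^{-Q}=\hat Q^{-1}$.

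Now map each $r\in\cO$ with $\deg r\leq N$ to the class of $\{r\alpha\}$ in $\TT/t^{-N}\TT$. Since $q^{N+1}>q^{N}$, the pigeonhole principle supplies distinct polynomials $r_1,r_2$ of degree $\leq N$ with $\{r_1\alpha\}$ and $\{r_2\alpha\}$ in the same class. Set $r_0=r_1-r_2\neq 0$ and $a_0=[r_0\alpha]\in\cO$. By linearity $\{r_0\alpha\}=\{r_1\alpha\}-\{r_2\alpha\}$, so the previous paragraph gives $|r_0\alpha-a_0|=|\{r_0\alpha\}|\leq q^{-N-1}<\hat Q^{-1}$. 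Moreover $1\leq|r_0|\leq q^{N}\leq\hat Q$, and since $|\alpha|<1$ we get $|a_0|\leq|r_0\alpha|=|r_0|\,|\alpha|<|r_0|$.

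It remains to enforce coprimality and monicity, which costs nothing. Let $e\in\cO$ be the monic $\gcd$ of $a_0$ and $r_0$, and write $a_0=ea'$, $r_0=er'$; then $\gcd(a',r')=1$, $|r'|\leq|r_0|\leq\hat Q$, $|a'|<|r'|$, and $|r'\alpha-a'|=|r_0\alpha-a_0|/|e|\leq|r_0\alpha-a_0|<\hat Q^{-1}$ because $|e|\geq 1$. Finally, if $c\in\FF_q^*$ is the leading coefficient of $r'$, replacing $(a',r')$ by $(c^{-1}a',c^{-1}r')$ leaves every absolute value and the coprimality unchanged, makes $r:=c^{-1}r'$ monic, and preserves $|r\alpha-c^{-1}a'|<\hat Q^{-1}$; take $a=c^{-1}a'$. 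The \emph{only} genuinely delicate point is the ultrametric bookkeeping behind the choice $N=\lfloor Q\rfloor$: one needs, simultaneously, that the number of polynomials $r$ with $\deg r\leq N$ strictly exceeds the number of residue classes (so that pigeonhole bites) and that each class has diameter strictly below $\hat Q^{-1}$; both follow from $N=\lfloor Q\rfloor$ together with $Q>1$. Everything else is a routine transcription of the classical argument.
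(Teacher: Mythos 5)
Your proof is correct, and it is the standard Dirichlet pigeonhole argument that the paper's cited references (Kubota's Lemma~3 and Lee's Lemma~5.1) use; the paper itself cites these rather than proving the lemma. The pigeonhole count ($q^{N+1}$ polynomials of degree $\leq N$ against $q^N$ classes of $\TT/t^{-N}\TT$), the ultrametric bound $q^{-N-1}<\hat Q^{-1}$ guaranteed by $N+1>\lfloor Q\rfloor+1>Q$, the additivity of $\beta\mapsto\{\beta\}$, the deduction $|a_0|\leq|r_0\alpha|<|r_0|$ from $|\alpha|<1$, and the cleanup by $\gcd$-division and rescaling by the leading coefficient are all in order.

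One point deserves to be stated more forcefully than you do: the restriction to $\alpha\in\TT$ is not merely a convenience --- as written, the lemma is actually \emph{false} for $\alpha\in K_\infty$ with $|\alpha|\geq 1$. Indeed, since $r$ is required to be monic one has $|r|\geq 1$, whence $|r\alpha|\geq 1>\hat Q^{-1}$; then $|r\alpha-a|<\hat Q^{-1}$ forces $|a|=|r\alpha|=|r|\,|\alpha|\geq|r|$, contradicting $|a|<|r|$. So the hypothesis $\alpha\in\TT$ (equivalently $|\alpha|<1$) is what makes the stated conclusion $|a|<|r|$ achievable, and it is indeed the only case the paper invokes (in Lemma~\ref{lem:dissection}, where $\alpha$ ranges over $\TT$). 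Your remark ``which is the only case needed in the circle method'' is accurate but undersells the fact that the reduction is forced, not optional.
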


For any  $Q>1$ this result allows one to partition $\TT$ into 
a union of intervals centred at rationals $a/r$. The non-archimedean nature of $K$ ensures that the intervals are actually non-overlapping, as follows.

\begin{lemma}\label{lem:dissection}
For any  $Q>1$  we have a disjoint union
$$
\TT=\bigsqcup_{\substack{r\in \cO\\ |r|\leq \hat Q\\
\text{$r$ monic}}}
\bigsqcup_{\substack{a\in \cO\\ |a|<|r|\\ (a,r)=1} }
\left\{\alpha\in \TT: |r\alpha-a|<\hat Q^{-1}\right\}.
$$
\end{lemma}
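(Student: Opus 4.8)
The plan is to deduce the claimed disjoint union decomposition of $\TT$ directly from the preceding Dirichlet-type approximation lemma together with the ultrametric inequality on $K_\infty$. First I would check that the sets on the right-hand side actually cover $\TT$: given $\alpha\in\TT$ and $Q>1$, the approximation lemma supplies coprime $a,r\in\cO$ with $r$ monic, $|a|<|r|\le\hat Q$, and $|r\alpha-a|<\hat Q^{-1}$, so $\alpha$ lies in the interval attached to the pair $(a,r)$. (One should note in passing that $a=0$ is allowed only when $|r|=1$, i.e. $r=1$, which is consistent with the indexing $|a|<|r|$; and $\alpha\in\TT$ forces $|\alpha|<1$, which is compatible since $|a/r|=|a|/|r|<1$.)

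The substantive point is disjointness. Suppose $\alpha$ lies in two of the intervals, say for pairs $(a,r)$ and $(a',r')$ with both $r,r'$ monic of absolute value at most $\hat Q$, $(a,r)=(a',r')=1$, and $|r\alpha-a|<\hat Q^{-1}$, $|r'\alpha-a'|<\hat Q^{-1}$. I would estimate $|r'a-ra'|$ by writing $r'a-ra' = r'(a-r\alpha) - r(a'-r'\alpha)$ and applying the ultrametric inequality:
$$
|r'a-ra'|\le\max\{|r'|\,|a-r\alpha|,\ |r|\,|a'-r'\alpha|\}<\hat Q\cdot\hat Q^{-1}=1.
$$
Since $r'a-ra'\in\cO$ and $|r'a-ra'|<1$, it must be zero, so $r'a=ra'$. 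Using $(a,r)=1$ and $(a',r')=1$ together with unique factorisation in $\cO=\FF_q[t]$, one concludes $r=r'$ (both monic, associates in $\cO$ hence equal) and then $a=a'$. Hence distinct index pairs give disjoint intervals, and the union is genuinely a disjoint union.

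I do not expect any serious obstacle here: the only mild care needed is the bookkeeping around the $a=0$ case and making sure the monic normalisation of $r$ is used to pin down $r=r'$ from $r\mid r'$ and $r'\mid r$, plus observing that every interval is indeed contained in $\TT$ (which follows since for $\alpha$ in the interval of $(a,r)$ one has $|\alpha|=|a/r+(r\alpha-a)/r|\le\max\{|a|/|r|,\hat Q^{-1}/|r|\}<1$). The proof is therefore a two-line combination of the Dirichlet lemma and the strong triangle inequality, and I would present it in essentially that compressed form.
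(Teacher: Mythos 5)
Your proof is correct and matches the paper's argument in substance: both hinge on the strong triangle inequality together with the observation that a nonzero element of $\cO$ has absolute value at least $1$. The only cosmetic difference is that you bound $|ar'-a'r|$ directly rather than comparing $|a/r-a'/r'|$ from above and below as the paper does, and you additionally spell out the covering step and the containment of each interval in $\TT$, which the paper leaves implicit.
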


\begin{proof}
Suppose that there exists $\alpha\in \TT$ belonging to two distinct intervals
associated to  $a/r\neq a'/r'$, say.
Then by the ultrametric inequality we have 
$$
\left|
\frac{a}{r}-\frac{a'}{r'}
\right|
\leq \max\left\{
\left|
\frac{a}{r}-\alpha
\right|, 
\left|
\frac{a'}{r'}-\alpha
\right|\right\}<\frac{1}{\hQ\min\{|r|,|r'|\}}.
$$
On the other hand, since $ar'-a'r$ is a non-zero element of $\cO$, we have 
$$
\left|
\frac{a}{r}-\frac{a'}{r'}
\right|\geq \frac{1}{|rr'|}\geq \frac{1}{\hQ\min\{|r|,|r'|\}}.
$$
This is a contradiction, which thereby establishes the lemma. 
\end{proof}

It follows from Lemma \ref{lem:dissection} that
\begin{equation}\label{eq:blue}
N(P)=
\sum_{\substack{
r\in \cO\\
|r|\leq \hat Q\\
\text{$r$ monic}
} }
\sumstar_{\substack{
|a|<|r|} }
\int_{|\theta|<|r|^{-1}\hQ^{-1}} S\left(\frac{a}{r}+\theta\right) \d \theta,
\end{equation}
where we henceforth put
$$
\sumstar_{\substack{
|a|<|r| }}
=\sum_{\substack{
a\in \cO\\ 
|a|<|r|\\ (a,r)=1} }.
$$

\begin{remark}\label{rem:dissection}
The reader will note that there is no division into major and
minor arcs in our expression for $N(P)$. 
In the classical setting over $\QQ$ this would correspond to the opening steps of a  Kloosterman
refinement, a device which is rendered essentially trivial over function fields.
\end{remark}

We may write
$$
S\left(\frac{a}{r}+\theta\right)=\sum_{
\substack{\y\in \cO^n\\|\y|<|r_M|\\
\y\equiv \b \bmod{M}
}} \psi\left(\frac{aF(\y)}{r}\right)
\sum_{\substack{\z\in \cO^n}}\psi(\theta F(\y+r_M\z))
\omega\left(\frac{\y+r_M\z}{P}\right),
$$
where $r_M=rM/(r,M)$
 is the least common multiple of $r$ and $M$.
We evaluate the inner sum over $\z$ using Poisson summation. Thus Lemma~\ref{lem:poisson} implies that 
\begin{align*}
\sum_{\substack{\z\in \cO^n}}
&\psi(\theta F(\y+r_M\z))
\omega\left(\frac{\y+r_M\z}{P}\right)\\
&=
\sum_{\substack{ \c\in \cO^n}} \int_{K_\infty^n}
\omega\left(\frac{\y+r_M\u}{P}\right)
\psi (\theta F(\y+r_M\u)+\c.\u)\d \u.
\end{align*}
Making the change of variables $\x=(\y+r_M\u)P$, it follows from Lemma
\ref{lem:change}  (together with the fact that the measure on $K_\infty^n$ is translation
invariant)
that the right hand side is
\begin{align*}
\left|\frac{P}{r_M}\right|^n
&\sum_{\substack{ \c\in \cO^n}} 
\psi\left(\frac{-\c.\y}{r_M}\right)
\int_{K_\infty^n} \omega(\x)
\psi \left(\theta P^dF(\x)+\frac{P \c.\x}{r_M}\right)\d \x.
\end{align*}
Putting everything together in \eqref{eq:blue}, we may now establish the following result. 

\begin{lemma}\label{lem:summary}
We have 
$$
N(P)=|P|^n
\sum_{\substack{
r\in \cO\\
|r|\leq \hat Q\\
\text{$r$ monic}} }
|r_M|^{-n}
\int_{|\theta|<|r|^{-1}\hQ^{-1}} 
\sum_{\substack{ \c\in \cO^n
}} 
S_{r,M,\b}(\c)
I_{r_M}(\theta;\c)
 \d \theta,
$$
where 
$r_M=rM/(r,M)$ and 
\begin{align*}
S_{r,M,\b}(\c)&=
\sumstar_{\substack{
|a|<|r|} }
\sum_{\substack{\y\in \cO^n\\|\y|<|r_M|\\
\y\equiv \b\bmod{M}
}} 
\psi\left(\frac{aF(\y)}{r}\right)
\psi\left(\frac{-\c.\y}{r_M}\right),\\
I_{s}(\theta;\c)&=
\int_{K_\infty^n} \omega(\x)
\psi \left(\theta P^dF(\x)+\frac{P \c.\x}{s}\right)\d \x.
\end{align*}
\end{lemma}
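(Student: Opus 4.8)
The plan is to start from \eqref{eq:blue} and transform the inner quantity $S(a/r+\theta)$ into the shape demanded by the lemma, every intervening operation being an exact identity, before interchanging the orders of summation and integration at the end. Since $N(P)$ is a finite sum from the outset and neither Poisson summation nor the change-of-variables formula introduces error terms, the argument is essentially a bookkeeping exercise.

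First I would fix a monic $r\in\cO$ with $|r|\le\hat Q$ and an $a$ coprime to $r$ with $|a|<|r|$, and set $r_M=rM/(r,M)$, the least common multiple of $r$ and $M$. The two key observations are: (i) $F(\y+r_M\z)\equiv F(\y)\bmod r_M$ for every $\z\in\cO^n$, and $\psi$ is trivial on $\cO$, so $\psi(aF(\x)/r)$ depends only on $\x$ modulo $r_M$; and (ii) $M\mid r_M$, so that decomposing each $\x\in\cO^n$ uniquely as $\x=\y+r_M\z$ with $|\y|<|r_M|$ puts the congruence $\x\equiv\b\bmod M$ into bijective correspondence with $\y\equiv\b\bmod M$. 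Together these yield the decomposition of $S(a/r+\theta)$ recorded just before the statement. I would then evaluate the inner sum over $\z$ using Lemma~\ref{lem:poisson}, applied with the polynomial $f(\z)=\theta F(\y+r_M\z)\in K_\infty[z_1,\dots,z_n]$ and the weight $v(\z)=\omega((\y+r_M\z)/P)$, which belongs to $S(K_\infty^n)$ since it is the composition of $\omega\in S(K_\infty^n)$ with an invertible affine map.

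After Poisson I would make the change of variables $\x=(\y+r_M\u)/P$; Lemma~\ref{lem:change} for the diagonal scaling, together with translation-invariance of the Haar measure on $K_\infty^n$, contributes the Jacobian factor $|P/r_M|^n$, while homogeneity gives $F(\y+r_M\u)=F(P\x)=P^dF(\x)$ and $\c.\u=(P/r_M)\c.\x-\c.\y/r_M$. Hence the $\y$-dependence leaves the integral as the factor $\psi(-\c.\y/r_M)$ and the remaining integral is precisely $I_{r_M}(\theta;\c)$. Substituting the resulting expression for $S(a/r+\theta)$ back into \eqref{eq:blue}, extracting $|P|^n$ and $|r_M|^{-n}$, and interchanging the finite sums over $r$, $a$ and $\y$ with the integral over $\theta$ and the sum over $\c$ (harmless: $N(P)$ is finite and every step has been an exact identity, and in fact $I_{r_M}(\theta;\c)$ vanishes for $|\c|$ large by compactness of $\supp\omega$ together with an argument in the spirit of Lemma~\ref{lem:1st}), one recognises $\sumstar_a\sum_\y\psi(aF(\y)/r)\psi(-\c.\y/r_M)$ as $S_{r,M,\b}(\c)$ and arrives at the claimed formula.

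The one point that genuinely needs care is the congruence bookkeeping: one must split $\x$ modulo $r_M$, rather than modulo $r$ or $M$ separately, so that the periodicity of $\psi(aF(\cdot)/r)$ and the constraint $\x\equiv\b\bmod M$ are simultaneously respected; checking that the Poisson weight is genuinely Schwartz--Bruhat, and tracking the Jacobian through the $n$-fold substitution, are routine.
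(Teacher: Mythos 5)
Your proposal is correct and follows essentially the same route as the paper: decompose $\x=\y+r_M\z$ in $S(a/r+\theta)$, apply Lemma~\ref{lem:poisson} to the inner $\z$-sum with underlying weight $v(\z)=\omega((\y+r_M\z)/P)$, perform the affine change of variables $\x=(\y+r_M\u)/P$ to extract the Jacobian $|P/r_M|^n$, and then reassemble in \eqref{eq:blue}. The only differences are expository (you spell out the periodicity of $\psi(aF(\cdot)/r)$ and the interchange of sums and integral, which the paper leaves implicit), and you correctly write the substitution as $\x=(\y+r_M\u)/P$ where the paper has an evident typo.
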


The exponential integrals can be estimated using the results in 
\S \ref{s:integrals} provided that the weight function $\omega$ is chosen suitably. The 
exponential sums $S_{r,M,\b}(\c)$ satisfy the following  basic multiplicativity property. 

\begin{lemma}\label{lem:multi2}
Let $r=r_1r_2$ for coprime $r_1,r_2\in \cO$. Let $M=M_1M_2M_3 $ for $M_1,M_2,M_3\in \cO$ such that 
$M_1\mid r_1^\infty$, $M_2\mid r_2^\infty$ and $(M_3,r)=1$.
Then there exists $\b_1,\b_2,\b_3\in (\cO/M\cO)^n$,
depending on $\b, M$ and the residue of $r_1,r_2$ modulo $M$, 
 such that 
$$
S_{r,M,\b}(\c)
=
S_{r_1,M_1,\b_1}(\c)
S_{r_2,M_2,\b_2}(\c)
\psi\left(\frac{-\c.\b_3}{M_3}\right).
$$
\end{lemma}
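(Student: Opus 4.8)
The plan is to establish the multiplicativity of $S_{r,M,\b}(\c)$ by decomposing both the sum over $\y$ and the sum over $a$ according to the coprime factorisation $r=r_1r_2$ (and the induced factorisation of the modulus $r_M$), using the Chinese Remainder Theorem throughout. First I would identify the relevant moduli: since $M=M_1M_2M_3$ with $M_1\mid r_1^\infty$, $M_2\mid r_2^\infty$ and $(M_3,r)=1$, the least common multiple $r_M=rM/(r,M)$ factors as $r_M=r_{1,M_1}r_{2,M_2}M_3$ where $r_{i,M_i}=r_iM_i/(r_i,M_i)$ and the three factors $r_{1,M_1}$, $r_{2,M_2}$, $M_3$ are pairwise coprime. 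This is the key arithmetic bookkeeping step and I expect it to be the main obstacle: one must check carefully that $(r_{1,M_1},r_{2,M_2})=1$ (which follows since $M_1\mid r_1^\infty$ forces every prime of $r_{1,M_1}$ to divide $r_1$, hence be coprime to $r_2$ and $M_2$) and that $M_3$ is coprime to both (immediate from $(M_3,r)=1$ and $M_i\mid r_i^\infty$).

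Next I would split the $\y$-variable. Write $\y\in\cO^n$ with $|\y|<|r_M|$ and $\y\equiv\b\bmod M$; by CRT this is equivalent to choosing $\y_1,\y_2,\y_3$ with $|\y_i|<|\text{(the $i$th factor of $r_M$)}|$ and with congruence conditions $\y_i\equiv\b_i\bmod M_i$ for suitable residues $\b_1,\b_2\in(\cO/M\cO)^n$ (for $i=3$ the modulus $M_3$ divides the $M_3$-part, so it becomes $\y_3\equiv\b_3\bmod M_3$). The existence of such $\b_1,\b_2,\b_3$, depending only on $\b$, $M$ and the residues of $r_1,r_2$ modulo $M$, is exactly what the statement promises; they arise from lifting $\b$ through the CRT isomorphism $\cO/M\cO\cong\cO/M_1\cO\times\cO/M_2\cO\times\cO/M_3\cO$ after using the residues of $r_1,r_2$ to align the decomposition of $r_M$ with that of $M$. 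Then I would use additivity of $\psi$ to split the phases: writing $a/r=a_1/r_1+a_2/r_2$ (via CRT on $a\bmod r$, with $a\equiv a_1r_2^*r_2\cdots$, the standard partial-fractions identity modulo $r_1r_2$) and $\c.\y/r_M$ as a sum of three terms over the three coprime factors, each depending only on the corresponding $\y_i$.

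Finally I would reassemble: the sum over $a$ coprime to $r$ factors as a product over $a_1$ coprime to $r_1$ and $a_2$ coprime to $r_2$ (bijection under CRT), the sum over $\y$ factors as a triple product over $\y_1,\y_2,\y_3$, and the combined exponential factors into the product $\psi(a_1F(\y_1)/r_1)\psi(-\c.\y_1/r_{1,M_1})$ times the analogous expression in index $2$ times $\psi(-\c.\y_3/M_3)$ — here one uses that $F(\y)\equiv F(\y_i)$ modulo the appropriate factor, since $\y\equiv\y_i$ there and $F$ has coefficients in $\cO$. The $\y_1$-sum with the $a_1$-sum is precisely $S_{r_1,M_1,\b_1}(\c)$, likewise the index-$2$ part is $S_{r_2,M_2,\b_2}(\c)$, and the degenerate $\y_3$-sum collapses: since $F(\y_3)$ does not appear (there is no $a_3$-variable and $M_3$ does not divide $r$), one is left with $\sum_{\y_3\equiv\b_3\bmod M_3,\,|\y_3|<|M_3|}\psi(-\c.\y_3/M_3)=\psi(-\c.\b_3/M_3)$ because the congruence pins $\y_3$ to the single residue $\b_3$. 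Collecting the three pieces yields the claimed identity.
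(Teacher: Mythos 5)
Your proposal is correct and follows essentially the same CRT-based route as the paper. The paper makes the decomposition explicit by writing $\y = s_2M_3\y_1 + s_1M_3\y_2 + s_1s_2\y_3$ with $s_i = r_iM_i/(r_i,M_i)$, $a = r_2a_1 + r_1a_2$, and then choosing $t_i$ with $t_1M_3s_2\equiv 1\bmod M_1$, $t_2M_3s_1\equiv 1\bmod M_2$, $t_3s_1s_2\equiv 1\bmod M_3$ so that $\b_i=t_i\b$; your $r_{i,M_i}$ is exactly the paper's $s_i$ and your alignment argument is the same bookkeeping. The only point you gloss over, which the paper handles implicitly too, is that after splitting one obtains a cubic unit multiplier such as $(s_2M_3)^3$ in front of $F(\y_1)$ inside the $a_1$-exponential, and one must re-index the sum over $a_1$ (coprime to $r_1$) by that unit to land on $S_{r_1,M_1,\b_1}(\c)$ on the nose — a one-line step worth stating when you write this out fully. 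Likewise the identification that the $\y_3$-sum collapses to a single term because $M_3$ is both the modulus and the congruence modulus is correctly noticed.
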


\begin{proof}
Let us put $s_i=r_iM_i/(r_i,M_i)$ for $i=1,2$. Then $s_1,s_2,M_3$ are pairwise coprime and we have a factorisation 
$r_M=s_1s_2M_3$.
As $\y_1$ ranges over vectors  modulo $s_1$,  
$\y_2$ ranges   modulo $s_2$, 
and 
$\y_3$ ranges modulo $M_3$, 
so  the vector $$
\y=s_2M_3 \y_1+s_1M_3 \y_2+s_1s_2 \y_3
$$ 
ranges over a complete set of 
residues modulo $r_M$. Likewise, 
as $a_1$ (resp.~ $a_2$) ranges over elements of $\cO$ modulo $r_1$ (resp.~ modulo $r_2$),  which are coprime to $r_1$ (resp.~ $r_2$), so 
 $a=r_2 a_1+r_1a_2$ ranges over a complete set of residues modulo $r$, which are coprime to $r$.
It is now clear that 
$$
\psi\left(\frac{aF(\y)}{r}\right)=
\psi\left(\frac{a_1 s_2^3M_3^3 F(\y_1)}{r_1}\right)
\psi\left(\frac{a_2 s_1^3M_3^3 F(\y_2)}{r_2}\right)
$$
and
$$
\psi\left(\frac{-\c.\y}{r_M}\right)=
\psi\left(\frac{-\c.\y_1}{s_1}\right)
\psi\left(\frac{-\c.\y_2}{s_2}\right)
\psi\left(\frac{-\c.\y_3}{M_3}\right).
$$
Choose $t_1,t_2,t_3\in \cO$ such that 
$t_1M_3s_2\equiv 1\bmod{M_1}$, 
$t_2M_3s_1\equiv 1\bmod{M_2}$
and 
$t_3s_1s_2\equiv 1\bmod{M_3}$.
Then it is clear that the statement of the lemma holds with 
$
\b_i=t_i\b\bmod{M}$, for $1\leq i\leq 3$.
\end{proof}

The importance of 
Lemma \ref{lem:multi2} is that it allows us to factorise the exponential sum in which we are interested, so that it suffices to examine the sum at the prime power moduli. When piecing these together it will be important to bear in mind the following convention that will henceforth be adopted.

\begin{definition}\label{def:r}
Associated to any $r\in \cO$ and  $i\in \NN$ are the elements 
$$
b_i
=\prod_{\substack{\vp^i \| r} }\vp^i, \quad
k_i=\prod_{\substack{\vp^i \| r} }\vp, \quad r_{i}=\prod_{\substack{\vp^e\| r\\ e\geq i}}\vp^{e},
$$
in $\cO$.
In particular, 
for any $j\in \NN$ we have the factorisation 
$$
r=r_{j+1}\prod_{i=1}^{j}b_i=r_{j+1}\prod_{i=1}^{j}k_i^{i}, \quad (\text{with  $(j+1)$-full $r_{j+1}$}).
$$
\end{definition}

\section{Cubic exponential sums: basic estimates}\label{s:start-cubic}

We now specialise to the case of non-singular cubic forms
 $F\in \cO[x_1,\dots,x_n]$ under the hypothesis that 
$\ch(\FF_q)>3$.
We define the associated Hessian matrix 
\begin{equation}\label{eq:hessian}
\mathbf{H}(\x)=\left(\frac{\partial^2 F}{\partial x_i\partial x_j}\right)_{1\leq i,j\leq n}.
\end{equation}
Our assumption on the characteristic of $\FF_q$ ensures that this matrix doesn't vanish identically.
Of special importance to us will be the dual form 
$$F^*\in\cO[x_1,\dots,x_n],
$$ 
whose zero locus parameterises the set of
hyperplanes whose intersection with the cubic hypersurface $F=0$ produce a singular variety. It is
well-known that $F^*$ is absolutely irreducible and has degree $3\cdot 2^{n-2}$. 

This section is devoted to a suite of estimates for the 
complete cubic exponential sum
$$
S_{r,M,\b}(\c)=
\sumstar_{\substack{
|a|<|r|} }
\sum_{\substack{\y\in \cO^n\\|\y|<|r_M|\\
\y\equiv \b\bmod{M}
}} 
\psi\left(\frac{aF(\y)}{r}\right)
\psi\left(\frac{-\c.\y}{r_M}\right),
$$
both pointwise and on average over $\c$.
We recall  that $r,M\in \cO$ and   $\b,\c\in \cO^n$, with  $M\mid F(\b)$
and  $r_M=rM/(r,M)$. 

We begin by  focusing our attention on  the exponential sum 
$S_{\vp^\alpha,M,\b}(\c)$ for a prime $\vp$ and an integer $\alpha\geq 1$.
We will typically do so for large primes.  
In particular we will have $\vp\nmid M$ for all of the primes considered in this section, so that 
$$
S_{\vp^\alpha,M,\b}(\c)=S_{\vp^\alpha,1,\0}(\c)=S_{\vp^\alpha}(\c),
$$
say. 

\subsection*{The cases $\alpha\in \{1,2\}$}

Suppose that $\vp\nmid M$ and $\alpha\in \{1,2\}$. 
Then 
\begin{align*}
 S_{\vp^\alpha}(\c)=\sumstar_{\substack{
|a|<|\vp^\alpha|} }
\sum_{\substack{|\y|< |\vp^\alpha|
}} 
\psi\left(\frac{aF(\y)-\c.\y}{\vp^\alpha}\right).
\end{align*}
When  $\alpha=1$ it follows from 
 Heath-Brown \cite[Lemma~12]{hb-10} and 
 Hooley
\cite[Lemma~60]{nonaryIII}
that there is a constant $A(n,|\Delta_F|)>0$ depending only on $n$ and $|\Delta_F|$
such that 
\begin{equation}\label{eq:alpha=1}
S_{\vp}(\c)\leq A(n,|\Delta_F|)  |\vp|^{(n+1)/2}|(\vp,\nabla F^*(\c))|^{1/2}.
\end{equation}
These estimates are founded  on the work of Deligne \cite{weil1}.

Suppose next that $\alpha=2$. We write
$a=a_1+\vp a_2 $ and $\y=\y_1+\vp\y_2$, for $a_i,\y_i$ running modulo $\vp$.
Then 
\begin{align*}
 S_{\vp^2}(\c) 
=\sumstar_{|a_1|<|\vp|}&\sum_{\substack{
|a_2|<|\vp|} }
\sum_{\substack{|\y_1|< |\vp|}} 
\psi\left(\frac{
a_1F(\y_1)-\c.\y_1}{\vp^2}\right)  \\
&\times \sum_{\substack{|\y_2|< |\vp|}} 
\psi\left(\frac{(a_1\nabla F(\y_1)-\c).\y_2+a_2 F(\y_1)}{\vp}\right).
\end{align*}
The inner sum over $\y_2$ vanishes unless 
$
a_1\nabla F(\y_1)\equiv \c\bmod \vp.
$
Likewise, the sum over $a_2$ vanishes unless 
$ \vp\mid F(\y_1)$. It follows that 
$$
|S_{\vp^2,M,\b}(\c)|\leq |\vp|^{n+1}N
$$
where
$N$ is the 
number of $a_1, \y_1 \bmod \vp$ such that 
$a_1\nabla F(\y_1)\equiv \c\bmod
\vp$ and 
$(a_1,\vp)=1$ 
 and $\vp\mid F(\y_1)$.
But this is now a problem about point counting over finite fields and the argument used by Hooley \cite[Lemma 11]{nonaryIII} 
yields
$N=0$ if $\vp\nmid F^*(\c)$ and 
$N=O(|\vp|)$ otherwise.
This therefore shows that there is a constant $A(n,|\Delta_F|)>0$ such that 
\begin{equation}\label{eq:alpha=2}
S_{\vp^2}(\c)\leq A(n,|\Delta_F|)  |\vp|^{n+1}|(\vp, F^*(\c))|.
\end{equation}

Recalling the notation in Definition \ref{def:r}, we may now combine   Lemma \ref{lem:multi2} 
with \eqref{eq:alpha=1} and \eqref{eq:alpha=2} to deduce the following result.

\begin{lemma}\label{lem:62}
There is a constant $A=A(n,|\Delta_F|)>0$ such that
$$
|S_{b_1b_2,M,\b}(\c)|\leq  A^{\omega(b_1b_2)} |b_1b_2|^{(n+1)/2}|(b_1,\nabla F^*(\c))|^{1/2}|(k_2, F^*(\c))|,
$$
uniformly in $\b\in \cO^n$.
\end{lemma}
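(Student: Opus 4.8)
The plan is to exploit the multiplicativity property recorded in Lemma~\ref{lem:multi2} in order to reduce the desired bound to the prime-power moduli $\vp^\alpha$ with $\alpha\in\{1,2\}$, for which the pointwise estimates \eqref{eq:alpha=1} and \eqref{eq:alpha=2} are already available.

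First I would write $b_1b_2=\prod_j \vp_j^{\alpha_j}$ as a product over distinct primes, with $\alpha_j=1$ exactly when $\vp_j\mid b_1$ and $\alpha_j=2$ exactly when $\vp_j\mid k_2$ (equivalently $\vp_j^2\|b_2$). Every such prime is coprime to $M$, as is the case for all primes considered in this section, so peeling off one prime power at a time via Lemma~\ref{lem:multi2} — at each stage the part of $M$ coprime to the prime being removed contributes only a phase factor of modulus $1$, and the part of $M$ that is a power of that prime is trivial — gives
$$
|S_{b_1b_2,M,\b}(\c)|=\prod_j |S_{\vp_j^{\alpha_j}}(\c)|,
$$
uniformly in $\b\in\cO^n$.

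Next I would insert the two pointwise bounds, writing $A=A(n,|\Delta_F|)$ for the larger of the constants appearing in \eqref{eq:alpha=1} and \eqref{eq:alpha=2}. For $\vp\mid b_1$ we have $|S_\vp(\c)|\le A|\vp|^{(n+1)/2}|(\vp,\nabla F^*(\c))|^{1/2}$, and for $\vp\mid k_2$ we have $|S_{\vp^2}(\c)|\le A|\vp|^{n+1}|(\vp,F^*(\c))|=A|\vp^2|^{(n+1)/2}|(\vp,F^*(\c))|$. Multiplying these over all $j$ and using that $b_1$ and $k_2$ are squarefree and coprime, so that $\prod_{\vp\mid b_1}|(\vp,\nabla F^*(\c))|=|(b_1,\nabla F^*(\c))|$ and $\prod_{\vp\mid k_2}|(\vp,F^*(\c))|=|(k_2,F^*(\c))|$, while $\prod_{\vp\mid b_1}|\vp|^{(n+1)/2}\prod_{\vp\mid k_2}|\vp^2|^{(n+1)/2}=|b_1b_2|^{(n+1)/2}$, yields the claimed inequality; the exponent of $A$ is $\omega(b_1)+\omega(k_2)=\omega(b_1b_2)$, since $(b_1,b_2)=1$ and $\omega(b_2)=\omega(k_2)$.

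There is no substantial obstacle here once Lemma~\ref{lem:multi2}, \eqref{eq:alpha=1} and \eqref{eq:alpha=2} are in hand: the argument is essentially bookkeeping. The only points deserving a little care are verifying that the greatest-common-divisor factors in the two local bounds assemble multiplicatively into the radical-type quantities $|(b_1,\nabla F^*(\c))|^{1/2}$ and $|(k_2,F^*(\c))|$ across the distinct primes, and noting that the $\b$-dependent phase produced by Lemma~\ref{lem:multi2} has modulus $1$ and so is irrelevant to an estimate for the absolute value.
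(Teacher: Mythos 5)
Your argument is correct and coincides with the approach the paper intends: the paper simply asserts that the lemma follows by combining Lemma~\ref{lem:multi2} with \eqref{eq:alpha=1} and \eqref{eq:alpha=2}, and your factor-and-multiply bookkeeping is exactly that deduction. The one implicit hypothesis, which you correctly flag, is that all primes of $b_1b_2$ are coprime to $M$ (stated as a standing assumption at the start of \S 6), so that the $\b$-dependence enters only through a unimodular phase.
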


\subsection*{The case $\alpha>2$}

Suppose that  $\vp\nmid M$ and that $\alpha>2$ is an integer. 
Evaluating the sum over $a$, we begin by noting that
\begin{align*}
 S_{\vp^\alpha}(\c)
 &=\sum_{\substack{\y\in \cO^n\\|\y|<|\vp|^\alpha
}} 
\hspace{-0.2cm}
\psi\left(\frac{-\c.\y}{\vp^\alpha}\right)
\left(\sum_{\substack{
|a_1|<|\vp|^\alpha} }
\hspace{-0.2cm}
\psi\left(\frac{a_1F(\y)}{\vp^\alpha}\right)-\sum_{\substack{
|a_2|<|\vp|^{\alpha-1}} }
\hspace{-0.2cm}
\psi\left(\frac{a_2F(\y)}{\vp^{\alpha-1}}\right)\right)\\
&=|\vp|^\alpha\sum_{\substack{\y\in \cO^n\\|\y|<|\vp|^\alpha
\\ F(\y)\equiv 0\bmod{\vp^\alpha}}} 
\psi\left(\frac{\c.\y}{\vp^\alpha}\right)-|\vp|^{\alpha-1}\sum_{\substack{\y\in \cO^n\\|\y|<|\vp|^\alpha
\\ F(\y)\equiv 0\bmod{\vp^{\alpha-1}}}}
\psi\left(\frac{\c.\y}{\vp^\alpha}\right)\\
&=|\vp|^\alpha S_1(\vp^\alpha,\c)-|\vp|^{\alpha-1}S_2(\vp^{\alpha},\c),
\end{align*}
say.
Substituting $\y=\y_1+\vp^{\alpha-1}\y_2 $, we get 
\begin{align*}
 S_2(\vp^{\alpha},\c)=\sum_{\substack{\y_1\in \cO^n\\|\y_1|<|\vp|^{\alpha-1}
\\ F(\y_1)\equiv 0\bmod{\vp^{\alpha-1}}}}
\psi\left(\frac{\c.\y_1}{\vp^\alpha}\right)\sum_{\substack{\y_2\in
\cO^n\\|\y_2|<|\vp|}}\psi\left(\frac{\c.\y_2}{\vp}\right).
\end{align*}
This term clearly vanishes if $\vp\nmid \c$. Therefore 
\begin{align*}
 S_{\vp^\alpha}(\c)=|\vp|^{\alpha}S_1(\vp^{\alpha},\c), \quad  \text{ if $\alpha> 1$ and 
 $\vp\nmid \c $}.
\end{align*}
For $\alpha>1$ and $\vp\nmid\c$, the argument in \cite[\S 6]{oct} goes through to give
\begin{align*}
 S_{\vp^\alpha}(\c)=\frac{|\vp|}{|\vp|-1}\left\{|\vp|^\alpha\nu_1(\vp^\alpha,\c)-|\vp|^{\alpha-1}
\nu_2(\vp^\alpha,\c)\right\},
\end{align*}
where $\nu_1(\vp^\alpha,\c)$  denotes the number of incongruent solutions modulo $\vp^\alpha$ of
the  conditions 
\begin{align*}
 F(\y)\equiv 0\bmod{\vp^\alpha}, \quad  \c.\y\equiv 0\bmod{\vp^\alpha}, \quad  \y\not\equiv \0\bmod{\vp},
\end{align*}
whereas
 $\nu_2(\vp^\alpha,\c)$ is the number 
of  solutions modulo ${\vp^\alpha}$ of
\begin{align*}
 F(\y)\equiv 0\bmod{\vp^\alpha},\quad  \c.\y\equiv 0\bmod{\vp^{\alpha-1}},\quad  \y\not\equiv \0\bmod{\vp}.
\end{align*}
We may now conclude as  follows.

\begin{lemma}\label{lem:yellow}
We have $S_{\vp^\alpha}(\c)=0$ if  $\alpha>1$ and $\vp\nmid MF^*(\c)$.
\end{lemma}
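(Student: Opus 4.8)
The plan is to exploit the explicit formula for $S_{\vp^\alpha}(\c)$ derived just above the statement, together with a lifting/Hensel-type argument for the counting quantities $\nu_1$ and $\nu_2$. The case $\alpha=1$ follows directly from \eqref{eq:alpha=1}: if $\vp\nmid \nabla F^*(\c)$ then certainly $\vp\nmid F^*(\c)$ implies $|(\vp,\nabla F^*(\c))|=1$, but we actually want vanishing, not just a bound, so here I would instead run the elementary argument: expanding $S_\vp(\c)=\sumstar_{a}\sum_{\y}\psi((aF(\y)-\c.\y)/\vp)$, the sum over $\y$ of $\psi((aF(\y)-\c.\y)/\vp)$ is a point count on the affine hypersurface over $\FF_\vp$, and one checks directly (this is the classical computation underpinning \cite[Lemma~11]{nonaryIII}) that when $\vp\nmid F^*(\c)$ the relevant intersection $F=0$, $\c.\y=0$ is a smooth quadric-type system with no exceptional contribution, forcing $S_\vp(\c)=0$. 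So the $\alpha=1$ case reduces to the same finite-field geometry already invoked for $\alpha=2$.

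For $\alpha>1$, I would split according to whether $\vp\mid \c$ or not. If $\vp\nmid\c$, the excerpt already shows $S_{\vp^\alpha}(\c)=\frac{|\vp|}{|\vp|-1}\{|\vp|^\alpha\nu_1(\vp^\alpha,\c)-|\vp|^{\alpha-1}\nu_2(\vp^\alpha,\c)\}$, so it suffices to show $\nu_1(\vp^\alpha,\c)=\nu_2(\vp^\alpha,\c)=0$ when $\vp\nmid F^*(\c)$. The key point is that the condition $\vp\nmid F^*(\c)$ means the hyperplane section $F=0\cap\{\c.\y=0\}$ is a non-singular cubic over $\FF_\vp$; hence any solution $\y_0$ modulo $\vp$ of $F(\y)\equiv\c.\y\equiv 0$ with $\y_0\not\equiv\0$ is a smooth point of that section, and Hensel's lemma lifts it uniquely through the $\vp$-adic tower, which contradicts the homogeneity-induced scaling (scaling $\y$ by units gives a positive-dimensional family of lifts, incompatible with the unique smooth lift unless there are no solutions at all modulo $\vp$ to begin with). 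This is exactly the argument of \cite[\S 6]{oct}, transplanted verbatim; the ultrametric structure of $\cO_\vp$ makes the Hensel step identical to the $\ZZ_p$ case. If instead $\vp\mid\c$, I would revisit the displayed identity $S_{\vp^\alpha}(\c)=|\vp|^\alpha S_1(\vp^\alpha,\c)$: writing $\c=\vp\c'$ and substituting, the phase $\psi(\c.\y/\vp^\alpha)=\psi(\c'.\y/\vp^{\alpha-1})$ decouples on the fibres of $\y\mapsto\y\bmod\vp^{\alpha-1}$, and one finds $S_1$, hence $S_{\vp^\alpha}(\c)$, vanishes by orthogonality unless $F^*(\c)\equiv 0$, but $\vp\mid\c$ already forces $\vp\mid F^*(\c)$ (as $F^*$ is homogeneous of positive degree), so the hypothesis $\vp\nmid MF^*(\c)$ is never met in this branch and there is nothing to prove. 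So the hypothesis $\vp\nmid F^*(\c)$ cleanly rules out the $\vp\mid\c$ case.

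Finally, I would note that since $\vp\nmid M$ throughout this subsection, $S_{\vp^\alpha,M,\b}(\c)=S_{\vp^\alpha}(\c)$ by the reduction recorded before the ``cases $\alpha\in\{1,2\}$'' heading, so the statement as phrased (with $MF^*(\c)$) follows. The main obstacle I anticipate is the Hensel-lifting step for $\nu_1,\nu_2$: one must be careful that the smoothness of the hyperplane section (encoded by $\vp\nmid F^*(\c)$) is genuinely what guarantees that $\nabla$ of the system $(F,\c.\y)$ has full rank at every nonzero solution mod $\vp$, and that the excluded locus $\y\equiv\0$ does not hide a contribution; but this is precisely the content of the classical dual-variety computation and transfers without change to $\FF_q[t]$ since $\ch(\FF_q)>3$ keeps the Hessian and all the relevant Jacobian calculations non-degenerate. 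Everything else is bookkeeping with the geometric series in $a$ already carried out in the excerpt.
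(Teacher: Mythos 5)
Your reduction to the case $\vp\nmid\c$ is fine: $F^*$ is homogeneous of positive degree, so $\vp\mid\c$ forces $\vp\mid F^*(\c)$, and the hypothesis $\vp\nmid F^*(\c)$ therefore excludes that branch, exactly as you say. Your remark that $\vp\nmid M$ throughout this stretch of the paper (so $S_{\vp^\alpha,M,\b}(\c)=S_{\vp^\alpha}(\c)$) is also correct. And you are right that this lemma ultimately comes down to the Hensel-lifting analysis in Hooley \cite[\S 6]{oct}.

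The fatal step is the claim that $\nu_1(\vp^\alpha,\c)=\nu_2(\vp^\alpha,\c)=0$ when $\vp\nmid F^*(\c)$. This is false. The condition $\vp\nmid F^*(\c)$ makes the hyperplane section $X_\c$ \emph{smooth} modulo $\vp$, but a smooth cubic hypersurface over $\FF_\vp$ has plenty of $\FF_\vp$-points (indeed, by Lang--Weil on the order of $|\vp|^{n-3}$ of them as $|\vp|\to\infty$), and each of these contributes to $\nu_1$ and $\nu_2$. Your supporting argument is also not sound: Hensel's lemma for a smooth point of the codimension-two system $F=\c.\y=0$ in $n$ affine variables does \emph{not} give a unique lift --- it gives $|\vp|^{n-2}$ lifts from level $\vp^{\alpha-1}$ to level $\vp^\alpha$ --- so there is no tension whatsoever with the existence of a one-parameter scaling family of solutions. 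The ``unique lift contradicts scaling, hence no solutions'' step simply does not hold; it would, if correct, prove that every smooth projective variety over $\FF_\vp$ has no rational points.

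What actually makes $S_{\vp^\alpha}(\c)=0$ is the identity $\nu_2(\vp^\alpha,\c)=|\vp|\,\nu_1(\vp^\alpha,\c)$, which one proves by Hensel lifting precisely because smoothness (i.e.\ $\vp\nmid F^*(\c)$) and $\vp\nmid\c$ guarantee that $\nabla F(\y)$ and $\c$ are $\FF_\vp$-linearly independent at every $\y\not\equiv\0\bmod\vp$ on the intersection; this linear independence makes the residue $\c.\y\bmod\vp^\alpha$ equidistribute over the $|\vp|$ possibilities subject to $\c.\y\equiv0\bmod\vp^{\alpha-1}$. Substituting $\nu_2=|\vp|\nu_1$ into the displayed formula
\[
S_{\vp^\alpha}(\c)=\frac{|\vp|}{|\vp|-1}\bigl(|\vp|^\alpha\nu_1(\vp^\alpha,\c)-|\vp|^{\alpha-1}\nu_2(\vp^\alpha,\c)\bigr)
\]
then gives zero, with $\nu_1$ and $\nu_2$ both typically nonzero. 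So the shape of the cancellation is a linear relation between two nonzero counts, not the vanishing of each count, and your proposal misses this.
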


The following result is the function field analogue of the union of Lemmas~12--15 in  \cite{oct} . The desired estimates are established in exactly the same manner and 
the necessary arguments will not be repeated here.

\begin{lemma}\label{lem:7.3}
Let $\vp\nmid \c$ be a prime such that  $|\vp|\gg 1 $ and $\vp\mid F^*(\c)$.
Let $r$ denote the minimal value of the rank modulo $\vp$  of the Hessian  $\mathbf{H}(\y)$, where $\y$ runs over the vectors 
which contribute to $\nu_1(\vp^2,\c) $. Then we have 
$$
S_{\vp^\alpha}(\c)
\ll 
\begin{cases}
|\vp|^{2n+3-r/2}, & \text{if $\alpha=3$},\\
|\vp|^{(\alpha-1)n+4-r}, & \text{if $\alpha\geq 4$},\\
|\vp|^{(\alpha-1)n+6-2r}, & \text{if $\alpha\geq 6$},
\end{cases}
$$
with $r\geq 2$.
If  $\vp\mid \c $, then for $\alpha=3$ or $4$ we have
$$
S_{\vp^\alpha}(\c)\ll |\vp|^{(\alpha-1)n+3}. 
$$
\end{lemma}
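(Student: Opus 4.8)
The plan is to mirror the treatment of \cite[Lemmas~12--15]{oct} and \cite{hb-10}, transposed from $\ZZ$ to $\cO=\FF_q[t]$. The reduction $S_{\vp^\alpha}(\c)=\tfrac{|\vp|}{|\vp|-1}\{|\vp|^\alpha\nu_1(\vp^\alpha,\c)-|\vp|^{\alpha-1}\nu_2(\vp^\alpha,\c)\}$ recorded just above already converts the problem into one of bounding the two congruence counts $\nu_1,\nu_2$ modulo $\vp^\alpha$, and it is here that all the work lies. The first step is to fix a base point $\y_0$ modulo $\vp$ of the system defining $\nu_1(\vp^2,\c)$; since $\vp\mid F^*(\c)$ these are ``contact points'', at which $\nabla F(\y_0)$ is proportional to $\c$ modulo $\vp$. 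Writing $\y=\y_0+\vp\z$ and Taylor expanding — the hypothesis $\ch(\FF_q)>3$ being used so that the coefficients $2!$ and $3!$ that arise are invertible — the congruence $F(\y)\equiv 0\bmod{\vp^{j+1}}$, together with $\c.\y\equiv 0$, picks up at each stage a linear form in $\nabla F(\y_0)$ and a quadratic form governed by the Hessian $\mathbf{H}(\y_0)$. Diagonalising this quadratic form modulo $\vp$ (legitimate since $\ch(\FF_q)>2$), the $r=\rank_{\FF_\vp}\mathbf{H}(\y_0)$ non-degenerate directions each contribute a one-dimensional quadratic Gauss sum over $\FF_\vp$ of modulus $|\vp|^{1/2}$, while the $n-r$ degenerate directions contribute trivially or impose additional congruences carried forward to the next level of lifting.

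Second, I would stratify the contact points $\y_0$ by $\rank_{\FF_\vp}\mathbf{H}(\y_0)$. This needs (i) a bound, via a standard dimension count and elementary point-count estimates over $\FF_\vp$, for the number of singular points modulo $\vp$ of $\{F=0\}\cap\{\c.\y=0\}$, this locus having dimension at most $n-3$ in $\PP^{n-1}_{\FF_\vp}$; and (ii) for each $\rho\geq r$, a similar bound for the number of contact points whose Hessian has rank exactly $\rho$. Multiplying, stratum by stratum, the number of base points, the lifting multiplicity (an explicit power of $|\vp|$), and the accumulated saving from the quadratic Gauss sums — namely $|\vp|^{-r/2}$ when $\alpha=3$, $|\vp|^{-r}$ when $\alpha\geq 4$, and $|\vp|^{-2r}$ when $\alpha\geq 6$, the larger values of $\alpha$ permitting more levels of $\vp$-adic lifting and hence more Gauss-sum savings — and substituting back into the formula for $S_{\vp^\alpha}(\c)$ produces the three displayed estimates. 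The lower bound $r\geq 2$ is a structural fact about non-singular cubics: via the Euler-type identity $\mathbf{H}(\y)\y=2\nabla F(\y)$ a contact point at which $\mathbf{H}(\y)$ has rank $0$ would already be a singular point of $\{F=0\}$, and a slightly finer analysis of the local geometry of the contact — again using $\ch(\FF_q)>3$ — excludes rank $1$, contradicting the hypothesis on $F$.

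For the residual case $\vp\mid\c$ with $\alpha\in\{3,4\}$, I would argue crudely: the divisibility $\vp\mid\c$ costs the phase $\psi(\c.\y/\vp^\alpha)$ one power of $\vp$ worth of oscillation, so it suffices to bound $\nu_1(\vp^\alpha,\c)$ and $\nu_2(\vp^\alpha,\c)$ by the number of $\y\bmod{\vp^\alpha}$ with $F(\y)\equiv 0\bmod{\vp^\alpha}$ and $\y\not\equiv\0\bmod{\vp}$. Non-singularity of $\{F=0\}$ makes this $O(|\vp|^{\alpha(n-1)})$ up to the scaling direction, and tracking the constants through $S_{\vp^\alpha}(\c)=|\vp|^\alpha S_1(\vp^\alpha,\c)-|\vp|^{\alpha-1}S_2(\vp^\alpha,\c)$ delivers $|\vp|^{(\alpha-1)n+3}$.

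The main obstacle I anticipate is the bookkeeping of the iterated lift: verifying that the Gauss-sum savings genuinely compound to $|\vp|^{-2r}$ once $\alpha\geq 6$, rather than saturating sooner, requires careful control of how the degenerate directions of $\mathbf{H}(\y_0)$ interact with the congruences imposed at each successive level, and a clean matching of these levels against the ranges $\alpha=3$, $\alpha\geq 4$, $\alpha\geq 6$. A secondary point of care, absent over $\ZZ$, is uniformity in $\vp$: the Gauss-sum evaluations and the point counts over $\FF_\vp$ must be applied with constants independent of $\deg\vp$, which can be arbitrarily large. Once these are in place the argument is a faithful transcription of \cite[\S 6]{oct}, which is why the details are omitted in the main text.
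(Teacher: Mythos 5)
The paper itself gives no proof of this lemma; it simply asserts that Hooley's arguments from \cite[Lemmas~12--15, \S 6]{oct} transfer verbatim to $\FF_q[t]$, so your task amounts to reconstructing that transposition. For the primary case $\vp\nmid\c$, your outline captures the right framework: pass to $\nu_1,\nu_2$; fix contact points $\y_0$ of $\{F=0\}\cap\{\c.\y=0\}$ modulo $\vp$; Taylor-expand $\y=\y_0+\vp\z$ using $\ch(\FF_q)>3$ to invert $2!$ and $3!$; and stratify by $\rank_{\FF_\vp}\mathbf H(\y_0)$, which is indeed the organising geometric object in Hooley's \cite[Eq.~(56)]{oct} and is what controls the dimension bound \eqref{eq:dad}. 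Two points are not sound, though.

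First, the claim $r\geq 2$ is only half argued. The Euler-type identity $\mathbf H(\y_0)\y_0=2\nabla F(\y_0)$ together with smoothness of $X_\vp$ (which needs $\vp\nmid\Delta_F$, hence the hypothesis $|\vp|\gg 1$) does rule out rank~$0$, since rank~$0$ would force $\nabla F(\y_0)\equiv\0$ at a non-zero point. But rank~$1$ is not ruled out by this identity alone: $\nabla F(\y_0)\propto\c$ lies in a one-dimensional image, and nothing so far prevents $\mathbf H(\y_0)$ from having precisely that image. Excluding rank~$1$ requires a separate geometric input about the hyperplane section $X\cap H_\c$ at an isolated contact point, and ``a slightly finer analysis of the local geometry'' is not a proof of it.

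Second, and more seriously, your treatment of the case $\vp\mid\c$ is incorrect in approach, not merely in bookkeeping. The identity expressing $S_{\vp^\alpha}(\c)$ through $\nu_1(\vp^\alpha,\c)$ and $\nu_2(\vp^\alpha,\c)$ is derived in the paper (and in \cite[\S 6]{oct}) \emph{under the hypothesis} $\vp\nmid\c$; when $\vp\mid\c$ one is left with the exponential sums $S_1(\vp^\alpha,\c)$ and $S_2(\vp^\alpha,\c)$ in which the phase $\psi(\c.\y/\vp^\alpha)$ genuinely oscillates and must be exploited. Bounding these trivially by the Hensel count $\#\{\y\bmod\vp^\alpha: F(\y)\equiv 0,\ \y\not\equiv\0\}\asymp|\vp|^{\alpha(n-1)}$, as you propose, yields at best $S_{\vp^\alpha}(\c)\ll|\vp|^{\alpha n}$, which is weaker than the target $|\vp|^{(\alpha-1)n+3}$ by a factor $|\vp|^{n-3}$ — a very large gap when $n=8$. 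Your sentence beginning ``the divisibility $\vp\mid\c$ costs the phase one power of $\vp$ worth of oscillation, so it suffices to bound $\nu_1,\nu_2$ by the number of solutions'' is a non sequitur: once you pass to the pure solution count you have discarded exactly the oscillation you just invoked. This part of the lemma needs an argument of comparable strength to the $\vp\nmid\c$ case (again via the Hessian, after writing $\c=\vp\c'$ and extracting cancellation from $\psi(\c'.\y/\vp^{\alpha-1})$), and cannot be obtained by a crude count.
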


The estimates in this result are true for a given value of $r\geq 2$ which depends 
on the value of $\c$.
According to Hooley (see \cite[Eq.~(56)]{oct}), associated to each prime $\vp$ is an affine algebraic variety $V_\vp\subset \AA_{\FF_\vp}^n$, with dimension 
\begin{equation}\label{eq:dad}
D(\vp) \leq \begin{cases}
r-1, &\text{if $r=n-1$ or $n$},\\
r, &\text{if $2\leq r\leq n-2$},
\end{cases}
\end{equation}
such that the estimates in Lemma \ref{lem:7.3} are true for a given value of $r\geq 2$ when 
the 
reduction of $\c$ modulo $\vp$ is constrained to lie in $V_\vp$.

\section{Cubic exponential sums: averages}\label{s:averages}

Recall Definition \ref{def:r} and the attendant notation $b_i,k_i,r_i$ associated to an element  $r\in \cO$.
Throughout this section  
$M\in \cO$ will denote a generic  fixed integer and $\b\bmod{M}$ such that $M\mid F(\b)$
will also be regarded as fixed. In particular, the implied constant in any estimate is allowed to depend on $|\b|$ and $|M|$.
The purpose of this section
is to estimate 
$|S_{r_3,M,\b}(\c)|$ on average over $\c$.
We shall  follow the strategy in \cite{hb-10} and \cite{oct}, although several of our arguments are closer in spirit to those found in \cite[\S 5]{41}.

We begin by recording the trio of estimates that we shall require, before moving onto a discussion of their proofs. 
The first result we need is the analogue of \cite[Lemma 16]{oct}

\begin{lemma}
\label{lem:square-full}
For any $C\geq 1$ and any $\ve>0$ we have
$$
\sum_{\substack{ \c\in \cO^n\\
|\c|< \hat C
}} 
|S_{ r_3,M,\b}(\c)| \ll  | r_3|^{n/2+1+\ve}\left(| r_3|^{n/3}+\hat C^n\right)
$$
and
$$\sum_{\substack{ \c\in \cO^n\\
|\c|< \hat C
}} 
|S_{b_3,M,\b}(\c)| \ll  |b_3|^{n/2+2/3+\ve}\left(|b_3|^{n/3}+\hat C^n\right).$$
\end{lemma}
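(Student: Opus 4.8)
The plan is to reduce everything to prime-power moduli via the multiplicativity of Lemma~\ref{lem:multi2} and then feed in the pointwise bounds of \S\ref{s:start-cubic}. Write $r_3=\prod_\vp\vp^{\alpha_\vp}$, so every $\alpha_\vp\geq 3$ (and for $b_3$ every $\alpha_\vp=3$). Since $M$ and $\b$ are fixed, the primes dividing $M$ contribute $O(1)$ and the character factor attached to the part of $M$ coprime to $r_3$ has modulus $1$; so Lemma~\ref{lem:multi2} permits us to assume $\vp\nmid M$ and gives $|S_{r_3,M,\b}(\c)|\ll_\ve|r_3|^\ve\prod_\vp|S_{\vp^{\alpha_\vp}}(\c)|$ once the accumulating factors $A^{\omega(r_3)}$ (in the notation of Lemma~\ref{lem:62}) are absorbed into $|r_3|^\ve$. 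The sum over $\c$ with $|\c|<\hat C$ does not factor as a product, but it is handled prime by prime through a Chinese-remainder argument: for each $\vp\mid r_3$ the residue $\c\bmod\vp$ is confined to the locus on which $S_{\vp^{\alpha_\vp}}(\c)$ does not vanish, and the surviving $\c$ are counted by appealing to Lemmas~\ref{lem:trivial} and~\ref{lem:Cohen}.

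For a single prime power $\vp^\alpha$ with $\alpha\geq 3$ I would argue as follows. By Lemma~\ref{lem:yellow} one has $S_{\vp^\alpha}(\c)=0$ unless $\vp\mid F^*(\c)$. Split the relevant $\c$ according to whether $\vp\mid\c$: if so, there are $O((1+\hat C/|\vp|)^n)$ of them with $|\c|<\hat C$, and for $\alpha\in\{3,4\}$ Lemma~\ref{lem:7.3} bounds $|S_{\vp^\alpha}(\c)|$ by $|\vp|^{(\alpha-1)n+3}$, larger $\alpha$ being reduced to this case by descent on the exact power of $\vp$ dividing $\c$. If $\vp\nmid\c$ one invokes the rank-$r$ bounds of Lemma~\ref{lem:7.3} together with Hooley's observation that, for each admissible value of $r$, these hold only when $\c\bmod\vp$ lies in the variety $V_\vp\subset\AA_{\FF_\vp}^n$ of dimension $D(\vp)$ obeying \eqref{eq:dad}; counting the $\c$ with $|\c|<\hat C$ whose reduction lies in $V_\vp$, via Lemmas~\ref{lem:trivial} and~\ref{lem:Cohen}, produces $\ll_\ve|\vp|^\ve(\hat C^n|\vp|^{D(\vp)-n}+\hat C^{D(\vp)})$, and multiplying by the rank-$r$ bound yields the per-prime estimate. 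Recombining the local estimates multiplicatively gives the two displayed inequalities, the $\hat C^n$ term coming from the generic range of $\c$ and the $|r_3|^{n/3}$ (respectively $|b_3|^{n/3}$) term from the range in which $\c$ is forced to be small.

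The delicate point, and the one I expect to require the most care, is the bookkeeping that makes the exponents come out exactly as stated. Because the rank $r$ enters the exponential-sum bound with a negative sign while $D(\vp)$ enters the point count with a positive one, one must extremise the resulting product over $2\leq r\leq n$, crucially using that $D(\vp)\leq r-1$ when $r\in\{n-1,n\}$; carrying this out shows the direct estimate at $\alpha=4$ is tight and pins down the value $n/2+1$ for a general square-full modulus, whereas for the pure-cube modulus $b_3$ only $\alpha=3$ arises and the sharper exponent $n/2+2/3$ emerges. The high powers $\alpha\geq 5$ are brought back to smaller ones through the iterated square-root-cancellation/descent bounds, exactly as in \cite[\S6]{oct} and \cite{hb-10}; the only genuinely new ingredient over $\FF_q(t)$ is the replacement of the classical counts of integral points on affine varieties by Lemmas~\ref{lem:trivial}--\ref{lem:cohen}, whose extra logarithmic factors are swallowed by $|r_3|^\ve$. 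Given the length of the paper, I would record only these function-field substitutions rather than repeat the entire computation.
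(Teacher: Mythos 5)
Your plan---reduce multiplicatively to prime-power moduli, feed in the pointwise bounds of Lemma~\ref{lem:7.3} together with the stratification of $\c$ by the varieties $V_\vp$ from \eqref{eq:dad}, and count $\c$ in residue classes---is precisely the route the paper takes for the \emph{second} inequality (the pure-cube modulus $b_3$), where only $\alpha=3$ arises and \eqref{eq:duff} packages the local input. For the \emph{first} inequality the paper does something genuinely different: after reducing $S_{r_3,M,\b}$ to $T(a,s;\c)$ for a cube-full $s$, it splits $s=c^2d$ as in \eqref{eq:heron}, uses the Weyl-type reduction $|T(a,s;\c)|\le|c^2d|^{n/2}\sum_{\u}M_d(\u)^{1/2}$ following \cite[Lemma~11]{41}, and then combines Poisson summation, Cauchy's inequality and the Hessian-congruence counts of Lemma~\ref{lem:Nmbound} and \eqref{eq:bat}. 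That route never touches $F^*(\c)$, $V_\vp$, or Hooley's high-prime-power analysis, and it buys something you cannot get from your decomposition: the strengthened Lemma~\ref{lem:triangle}, with explicit dependence on $M$, $\Delta_F$, $H_F$ and with an arbitrary centre $\r$, which is quoted verbatim later in the proof of Lemma~\ref{lem:square-full 3}. In short, you are transposing Hooley's \cite[Lemma~16]{oct} argument, while the paper deliberately transposes the Browning--Heath-Brown \cite{41} argument for the first bound; the two agree only on the $b_3$ estimate.

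There is also a concrete gap in your version of the first inequality. Lemma~\ref{lem:7.3} as recorded here only covers $\alpha\in\{3,4\}$ when $\vp\mid\c$, and for $\vp\nmid\c$ the exponents $(\alpha-1)n+4-r$, $(\alpha-1)n+6-2r$ multiplied by the $V_\vp$-count $|\vp|^{D(\vp)}$ grow like $|\vp|^{(\alpha-1)n+O(1)}$; comparing with the target $|\vp^\alpha|^{5n/6+1}=|\vp|^{\alpha(5n/6+1)}$ (the regime $\hat C\ll 1$), the inequality $(\alpha-1)n+O(1)\le\alpha(5n/6+1)$ fails once $\alpha$ is moderately large (for $n=8$ already a little past $\alpha\approx 12$ at rank $r=2$). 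So the ``descent on the exact power of $\vp$ dividing $\c$'' that you invoke at $\alpha\ge5$ is load-bearing; it is not supplied by Lemma~\ref{lem:7.3}, and the further lemmas from \cite[\S6]{oct} that would furnish it are precisely what the paper avoids importing by choosing the \cite{41} route. As written, your proposal proves the second displayed bound but leaves the first one unfinished.
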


Our remaining results concern averages of $|S_{r_3,M,\b}(\c)|$ over  sparser sets of $\c$. 
The following result is a  slight sharpening of the  analogous results in 
\cite[Lemma 16]{hb-10} and \cite[Lemma 12]{nonaryI}.

\begin{lemma}\label{lem:square-full 3}
For any $C\geq 1$ and any $\ve>0$ we have
 $$
\sum_{\substack{ \c\in \cO^n\setminus\{\0\}\\
|\c|< \hat C\\ F^*(\c)=0
}} 
|S_{ r_3,M,\b}(\c)| 
\ll |r_3|^\ve\hat C^{\ve}\left(
|b_3|^{5n/6+2/3}|r_4|^{n+1/2}+{\hat C}^{n-3/2}|r_3|^{n/2+4/3}\right).
$$
\end{lemma}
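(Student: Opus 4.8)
\textbf{Proof proposal for Lemma \ref{lem:square-full 3}.}

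The plan is to split the modulus as $r_3 = b_3 r_4$, where $b_3 = \prod_{\vp^3\|r} \vp^3$ is exactly $3$-full (cube-free away from that) and $r_4$ is $4$-full, and to exploit the multiplicativity of Lemma \ref{lem:multi2} to write $S_{r_3,M,\b}(\c)$ as (up to a harmless unimodular twist and at most $A^{\omega(r_3)}$ loss) a product $S_{b_3,M_1,\b_1}(\c)\,S_{r_4,M_2,\b_2}(\c)$. Since $F^*$ is absolutely irreducible of degree $3\cdot 2^{n-2}$ and $\c$ is constrained to the hypersurface $F^*(\c)=0$, I would first use Lemma \ref{lem:cohen} (the analogue of \cite[Lemma 15]{hb-10}) to count the $\c$ with $|\c|<\hat C$ and $F^*(\c)=0$ lying in a given residue class: the relevant bound is $\ll (1+\hat C/|k|)^{n-3/2+\ve}$ for a modulus $k$. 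This is exactly the source of the exponent $n-3/2$ in the claimed estimate. The strategy then separates into the range where $\hat C$ is large (so the point count dominates and one uses the pointwise bounds from \S \ref{s:start-cubic}) and the range where $\hat C$ is small (so one instead sums the full exponential-sum bound over all short $\c$, producing the $|b_3|^{5n/6+2/3}|r_4|^{n+1/2}$ term from a trivial count of $\c$ together with the square-full bounds in Lemma \ref{lem:square-full}).

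Carrying this out, the main steps in order are: (1) reduce to prime-power moduli $\vp^\alpha$ with $\alpha=3$ (contributing to $b_3$) and $\alpha\ge 4$ (contributing to $r_4$) via Lemma \ref{lem:multi2}; (2) for a single prime power, invoke Lemma \ref{lem:yellow} to kill all $\c$ with $\vp\nmid F^*(\c)$, then Lemma \ref{lem:7.3} to bound $S_{\vp^\alpha}(\c)$ in terms of the Hessian rank $r$, and use \eqref{eq:dad} to bound the dimension $D(\vp)$ of the variety $V_\vp$ in which $\c\bmod\vp$ must lie; (3) combine the local point counts: over the constraint $F^*(\c)=0$ together with the congruence conditions forcing $\c$ into $V_\vp$ for each $\vp\mid r_3$, Lemma \ref{lem:cohen} gives roughly $\hat C^{\,n-3/2+\ve}$ divided by $|k_3|^{\,\text{(dim defect)}}$, where the dim defect reflects that $\c$ lives on a codimension-$\ge 1$ subvariety modulo each such $\vp$; (4) balance the resulting two expressions to extract $|r_3|^\ve\hat C^\ve\big(|b_3|^{5n/6+2/3}|r_4|^{n+1/2}+\hat C^{\,n-3/2}|r_3|^{\,n/2+4/3}\big)$. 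The bookkeeping of the $k_i$-weights — matching the "per prime" rank-dependent savings in Lemma \ref{lem:7.3} against the codimension drop coming out of Lemma \ref{lem:cohen} — is what produces the precise exponents $5n/6+2/3$ and $n+1/2$.

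The main obstacle, and the genuinely delicate part, is step (3): one must match, prime by prime, the exponent savings from Lemma \ref{lem:7.3} (which depend on the Hessian rank $r\ge 2$ at the contributing points) against the codimension of $V_\vp$ given by \eqref{eq:dad}, and then combine these local gains into a global saving that survives summing over all $\c$ on the hypersurface $F^*(\c)=0$. In particular one needs the worst case of \eqref{eq:dad}, namely $D(\vp)\le r$ for $2\le r\le n-2$, to still leave enough room; the cases $\alpha=3$ versus $\alpha\ge 4$ versus $\alpha\ge 6$ in Lemma \ref{lem:7.3} have to be fed in separately, and the $\vp\mid\c$ sub-case (where Lemma \ref{lem:7.3} only gives $|\vp|^{(\alpha-1)n+3}$) must be checked not to dominate — it won't, because $\vp\mid\c$ for all $\vp\mid b_3$ forces $\c$ into an extremely sparse set already controlled by Lemma \ref{lem:trivial}. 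I expect the rest of the argument — the multiplicative patching, the split into large/small $\hat C$, the final optimisation — to be routine once this local-global matching is set up correctly, mirroring \cite[\S 5]{41} and the treatment in \cite{hb-10,oct}.
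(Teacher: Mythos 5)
Your overall skeleton is correct in spirit: factor $r_3=b_3 r_4$ via Lemma~\ref{lem:multi2}, handle $b_3$ through the Hessian-rank bounds of Lemma~\ref{lem:7.3} together with \eqref{eq:dad}, and exploit the sparsity of $\c$ on $F^*(\c)=0$ via Lemma~\ref{lem:cohen}. But the way you propose to treat the $4$-full factor $r_4$ cannot produce the exponents in the statement, and this is exactly where the real content of the lemma lies. You say you would use ``the full exponential-sum bound over all short $\c$\ldots together with the square-full bounds in Lemma~\ref{lem:square-full}'' (for the first term), and otherwise pointwise bounds from Lemma~\ref{lem:7.3} combined with the point count modulo $k_3$ (for the second). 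Each of these falls short. Pointwise, Lemma~\ref{lem:7.3} applied multiplicatively to the prime powers $\vp^\alpha\|r_4$ with $\alpha\geq 4$ and Hessian rank $r\geq 2$ gives at best $S_{\vp^\alpha}(\c)\ll|\vp|^{(\alpha-1)n+2}$, hence something like $|r_4|^{(3n+2)/4}$ in the worst case $\alpha=4$ (and approaching $|r_4|^n$ as $\alpha\to\infty$), which for $n=8$ is $|r_4|^{6.5}$ and worse — not the $|r_4|^{n/2+4/3}=|r_4|^{5.33}$ that the second term requires. The crucial missing idea is that $r_4$ must be kept together as a single block $c^2d$ (with $d$ square-free, $d\mid c$) and one must sum nontrivially over $a$ in $\sumstar_{|a|<|r_4|}T(a,r_4;\c)$ using the Heath-Brown argument (\S7 of \cite{hb-10}). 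That argument produces the pair of simultaneous congruences $a_1\nabla g(\u)-\c\equiv\0\bmod{c}$ \emph{and} $g(\u)\equiv 0\bmod{c}$: the first forces $\c$ into a residue class modulo $c'$ (and therefore Lemma~\ref{lem:cohen} is applied with modulus $k_3 c'$, not $k_3$ alone), and the second wins the extra $|c|$-saving in the $\u$-count. This is not a multiplicative estimate and cannot be recovered from Lemmas~\ref{lem:7.3} or \ref{lem:square-full}.

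Concretely, the paper reduces to the bound $\Sigma(\a)\ll |r_4|^{n+1/2+\ve}+\frac{|r_4|^{n/2+4/3+\ve}\hat C^{n-3/2+\ve}}{|b_3|^{n/3-1/2}}$ for the sum over $\c\equiv\a\bmod{k_3}$ on $F^*(\c)=0$, and the proof of this involves: (i) the Heath-Brown-type transformation of $\sumstar_a T(a,r_4;\c)$ into a sum over $a_1\bmod{c}$ and $\u\bmod c$ with both congruence constraints, followed by a change of variables sending $\u$ to $\h$ with $F(\h)\equiv 0\bmod{c'}$; (ii) counting $\c$ on $F^*(\c)=0$ modulo $k_3 c'$ with Lemma~\ref{lem:cohen}, giving the $(\hat C/|k_3 c|+1)^{n-3/2}$ factor; and (iii) a separate multiplicative analysis of the weighted count $S(c,d)=\sum_{F(\h)\equiv 0\bmod c}N_d(\h)^{1/2}\ll|c|^{n-1+\ve}|d|^{1/2}$, which needs both the exponential-sum argument from Lemma~\ref{lem:square-full} at $C=1$ (for the $\vp\nmid d$ part) and a version of \eqref{eq:bat} (for $\vp\mid d$). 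None of steps (i)--(iii) appear in your proposal, and your step (3) describes the Lemma~\ref{lem:cohen} application only with moduli coming from $b_3$ (via $V_\vp$), not with the extra modulus $c'$ from $r_4$, which is what actually delivers enough saving in $|r_4|$ to make the final balance work.
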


\begin{table}
\centering
\begin{tabular}{|c|c|c| }
\hline
$\gamma_{j,1}$  & $\gamma_{j,2}$ &  Conditions on $j$    \\
\hline
\hline
$2n+\tfrac{13}{2}$ & $3n+4$ & $j=4$ \\
$4n+\tfrac{7}{2}$ & $6n$ & $j=6$ \\
$\tfrac{j}{2}(n+\tfrac{7}{2})-\tfrac{3}{4}$ & 
$\tfrac{j}{2}(2n+1)-\tfrac{n}{2}+\tfrac{1}{2}$
 & $j$ odd \\
$\tfrac{j}{2}(n+\tfrac{7}{2})-1$ & 
$\tfrac{j}{2}(2n+1)-1$
 & $j\not\in\{4,6\}$ even \\
\hline
\end{tabular}
\newline\newline
\caption{Value of the exponents $\gamma_{j,i}$ in \eqref{eq:Gi}}
\label{table}
\end{table}

The final bound involves a summation over an even sparser set of vectors $\c$.
In order to proceed we recall the definition of the functions $G_1(r)$ and $G_2(r)$ that appear in Hooley's work. 
For any $r\in \cO$
and $i=1,2$, let
\begin{equation}\label{eq:Gi}
G_i(r)=\prod_{\vp^j\| r } \vp^{\gamma_{j,i}},
\end{equation}
where the values of $\gamma_{j,i}$ are given in Table \ref{table} and are 
  extracted from \cite[Eqs. (83), (84)]{oct}.
We are now ready to record the following result, which is  the analogue of \cite[Lemma 21]{oct}.

\begin{lemma}
\label{lem:square-full 5}
Let $n=8$.
For any $C\geq 1$ and any $\ve>0$ we have
\begin{align*}
\sum_{\substack{ \c\in \cO^n\setminus \0\\|\c|< \hat C\\ \nabla F^*(\c)=\0}} |S_{r_3,M,\b}(\c)|\ll|r_3|^\ve\hat C^{\ve} \left(|G_1(r_3)|\hat C^{n-5/2}+|G_2(r_3)|\right).
\end{align*}
\end{lemma}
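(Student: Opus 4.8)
The target is a bound for the sum of $|S_{r_3,M,\b}(\c)|$ over the very sparse set $\c\neq\0$ with $|\c|<\hat C$ and $\nabla F^*(\c)=\0$. The plan is to reduce to prime-power moduli via the multiplicativity in Lemma~\ref{lem:multi2}, so that it suffices to bound, for each prime power $\vp^j \| r_3$ (hence $j\geq 3$), the contribution of $S_{\vp^j}(\c)$ weighted by the number of $\c$ of bounded height which are constrained to lie on $\nabla F^*(\c)=\0$ \emph{and} which make $S_{\vp^j}(\c)$ nonzero. The key point, by Lemma~\ref{lem:yellow}, is that $S_{\vp^\alpha}(\c)=0$ for $\alpha>1$ unless $\vp\mid F^*(\c)$, so only $\c$ with $\vp\mid F^*(\c)$ survive; and by the refinement at the end of \S\ref{s:start-cubic}, for the sharper bounds in Lemma~\ref{lem:7.3} to apply one needs the reduction of $\c$ modulo $\vp$ to lie on the variety $V_\vp\subset\AA^n_{\FF_\vp}$ of dimension $D(\vp)$ controlled by \eqref{eq:dad}. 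I would therefore split the $\c$-sum according to the rank $r$ of the Hessian at the relevant point, exactly mirroring Hooley's treatment.

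First I would handle the global counting input: the $\c$ being summed satisfy $\nabla F^*(\c)=\0$, a system cutting out (after removing the locus contained in a hyperplane, handled trivially) a variety to which Lemma~\ref{lem:cohen} or Lemma~\ref{lem:trivial} applies; combining this with the congruence constraint coming from $V_\vp$ (which forces $\c$ into a sublattice / subvariety mod $\vp$ of dimension $D(\vp)$) produces a count of the shape $O_\ve\big(\hat C^\ve(1+\hat C^{D(\vp)}/|\vp|^{\,\text{codim}})\big)$ per prime, with the two regimes $\hat C^{n-5/2}$ and the ``$|\vp|$-saturated'' regime producing respectively the $\hat C^{n-5/2}$ factor and the constant factor in the statement. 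Next, for each $j$ I would plug in the appropriate bound from Lemma~\ref{lem:7.3} for $S_{\vp^j}(\c)$ (the three cases $j=3$, $j\geq 4$, $j\geq 6$), in terms of the rank $r$, and optimise the trade-off between the size of $S_{\vp^j}(\c)$ (which \emph{grows} as $r$ decreases) and the number of admissible $\c$ (which \emph{shrinks} as $r$ decreases, since $D(\vp)\leq r$ or $r-1$). This optimisation over $r\in\{2,\dots,n\}$ for $n=8$, carried out prime by prime, is precisely what yields the exponents $\gamma_{j,i}$ recorded in Table~\ref{table}: the ``generic'' balance gives $\gamma_{j,1}$ (the coefficient of $\hat C^{n-5/2}$) and the ``small-$r$/degenerate'' balance gives $\gamma_{j,2}$ (the constant term $G_2(r_3)$). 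Finally I would reassemble the prime-power bounds multiplicatively over $\vp^j\|r_3$, absorbing the divisor-function factors $A^{\omega(\cdot)}$ into $|r_3|^\ve$, to obtain the stated bound $|r_3|^\ve\hat C^\ve(|G_1(r_3)|\hat C^{n-5/2}+|G_2(r_3)|)$.

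The main obstacle is the rank-stratification bookkeeping: one must carefully track, for each prime $\vp\mid r_3$ and each possible Hessian rank $r$, both the exponential-sum bound from Lemma~\ref{lem:7.3} and the dimension $D(\vp)$ of the variety $V_\vp$ from \eqref{eq:dad} into which $\c\bmod\vp$ is forced, and then verify that the optimal choice of $r$ reproduces exactly the exponents in Table~\ref{table} for every $j\geq 3$ (with the special cases $j=4$ and $j=6$ behaving differently because that is where the $|\vp|^{(\alpha-1)n+4-r}$ versus $|\vp|^{(\alpha-1)n+6-2r}$ bounds cross over). Since this is entirely parallel to \cite[Lemma~21]{oct} over $\QQ$ — the function-field inputs (Lemma~\ref{lem:yellow}, Lemma~\ref{lem:7.3}, Lemma~\ref{lem:cohen}) being the exact analogues of their $\QQ$-counterparts — I would present the reduction to prime powers and the statement of the per-prime optimisation in full, then refer to \cite[\S\S 7--8]{oct} for the verification of the arithmetic of the exponents, noting only the points where the non-archimedean counting (via Lemma~\ref{lem:trivial}/\ref{lem:cohen}) replaces the classical divisor estimates.
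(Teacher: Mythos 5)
Your outline matches the paper's intent: the paper does not actually prove Lemma~\ref{lem:square-full 5} but defers to \cite[Lemma~21]{oct}, stating that ``the changes required for the function field analogue \ldots\ are tedious, routine and do not merit repetition here,'' and the ingredients you cite (multiplicativity via Lemma~\ref{lem:multi2}, the prime-power bounds of Lemma~\ref{lem:7.3} stratified by Hessian rank, the dimension bound \eqref{eq:dad} for $V_\vp$, the crossover at $j\in\{4,6\}$, and the reassembly producing the exponents $\gamma_{j,i}$ of Table~\ref{table}) are the correct ones.

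Two small inaccuracies worth flagging, neither of which breaks the plan. First, Lemma~\ref{lem:yellow} is \emph{vacuous} in this sum, not a ``key point'': by Euler's identity $\deg(F^*)\,F^*(\c)=\c\cdot\nabla F^*(\c)$, and since $\ch(\FF_q)>3$ does not divide $\deg(F^*)=3\cdot 2^{n-2}$, every $\c$ with $\nabla F^*(\c)=\0$ already satisfies $F^*(\c)=0$, so $\vp\mid F^*(\c)$ holds automatically for all $\vp$ and there is no further reduction. Second, $\nabla F^*(\c)=\0$ is a system of $n$ homogeneous equations and does not define a hypersurface, so Lemma~\ref{lem:cohen} (stated for a single absolutely irreducible hypersurface) is not directly applicable; the global counting input is Lemma~\ref{lem:trivial} applied to an affine variety of bounded dimension and degree, and the interaction of that bound with the congruence constraints supplied by the $V_\vp$ is where the actual bookkeeping happens.
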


With reference to Table \ref{table}, when $n=8$ we easily deduce that
\begin{equation}\label{eq:G1}
\frac{|G_1(r_3)|}{|r_3|^{n/2+2}}\leq \frac{1}{|b_3|^{1/2}|b_4|^{3/8}|b_5|^{2/5}|b_6|^{1/12}|b_7|^{5/14}|r_8|^{1/4}}
\end{equation}
and 
\begin{equation}\label{eq:G2}
\frac{|G_2(r_3)|}{|r_3|^{n/2+2}}\leq |b_3|^{4/3}|b_4| |b_5|^{9/5}|b_6|^{2}|b_7|^{2}|b_8|^{19/8} |r_9|^{5/2}.
\end{equation}
In particular, it follows from these bounds 
and Lemma \ref{lem:square-full 5} that 
\begin{equation}\label{eq:cor-square}
\sum_{\substack{ \c\in \cO^n\setminus \0\\|\c|< \hat C\\ \nabla F^*(\c)=\0}} \frac{|S_{r_3,M,\b}(\c)|}{|r_3|^{n/2+2}}
\ll |r_3|^\ve\hat C^{\ve} \left(\frac{\hat C^{n-5/2}}{|b_3|^{1/2}}+|b_3|^{4/3}|r_4|^{5/2}\right),
\end{equation}
when $n=8$.

We will provide reasonably detailed  proofs of Lemma 
\ref{lem:square-full} and  Lemma~\ref{lem:square-full 3}, but the proof 
of Lemma \ref{lem:square-full 5} will not be given  here. The latter is closely based on ideas already present in the proofs of the preceding lemmas, with the added 
information about the behaviour at small prime powers that is provided by 
Lemma~\ref{lem:7.3}. The changes required for the function field analogue of \cite[Lemma~21]{oct} are tedious,  routine and do not merit repetition here.

\subsection{Proof of 
Lemma \ref{lem:square-full}}

We begin by establishing the second part of the lemma. 
It follows from
multiplicativity and 
 Lemmas \ref{lem:yellow} and  \ref{lem:7.3} that 
 \begin{equation}\label{eq:duff}
 |S_{b_3,M,\b}(\c)|\ll |b_3|^{\ve}\prod_{\vp\mid b_3} |\vp|^{2n+3-r(\vp,\c)/2+R(\vp,\c)},
 \end{equation}
 where $R(\vp,\c)=0$ if $r(\vp,\c)>1$ and $R(\vp,\c)=1/2$ if $r(\vp,\c)=1$.
Here we stress that the value of $r(\vp,\c)$ depends only on the value of $\c$ modulo $\vp^2$.
Recall from Definition \ref{def:r}
the notation  $b_3=k_3^3$ and note that
there are at most $(\hat C/|k_3|+1)^n$ choices of 
$ \c\in \cO^n$ for which $ |\c|< \hat C$ and $ \c\=\a\bmod {k_3}$.
But then, on invoking \eqref{eq:dad} and the remark after Lemma \ref{lem:7.3}, we easily deduce that 
\begin{align*}
\sum_{\substack{ \c\in \cO^n\\
|\c|< \hat C
}} 
|S_{b_3,M,\b}(\c)| 
&\ll |b_3|^\ve \left(\frac{\hat C}{|k_3|}+1\right)^n
\sum_{\a\bmod k_3}
\prod_{\vp\mid b_3} |\vp|^{2n+3-r(\vp,\a)/2+R(\vp,\a)}\\
&\ll |k_3|^{2n+3}|b_3|^\ve \left(\frac{\hat C}{|k_3|}+1\right)^n
\sum_{2\leq r\leq n}
\prod_{\vp\mid b_3} |\vp|^{D(\vp)-r/2}\\
&\ll |k_3|^{5n/2+2}|b_3|^{\ve} \left(\frac{\hat C}{|k_3|}+1\right)^n
\end{align*}
This completes the proof of the second  part of
Lemma 
 \ref{lem:square-full}.

 \medskip

We now turn to the first part of the lemma. In fact, future work will deem it convenient to establish the following more general version, in which the implied constant is made more explicit.

\begin{lemma}
\label{lem:triangle}
Let $\r\in K_\infty^n$, let   $C\geq 1$ and let   $\ve>0$. Then there exists a constant $c_{n,\ve}>0$, depending only on $n$ and $\ve$, such that 
$$
\sum_{\substack{ \c\in \cO^n\\
|\c-\r|< \hat C
}} 
|S_{ r_3,M,\b}(\c)| 
 \leq c_{n,\ve} |M|^{n} |\Delta_F|^{2n}H_F^{n/2} 
 | r_3|^{n/2+1+\ve}\left(| r_3|^{n/3}+\hat C^n\right).
$$
\end{lemma}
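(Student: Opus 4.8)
To prove Lemma~\ref{lem:triangle} the plan is to factor the sum over prime-power moduli, insert the vanishing criterion of Lemma~\ref{lem:yellow} together with the pointwise bounds of Lemma~\ref{lem:7.3}, and then carry out the summation over $\c$ by playing the size of those bounds off against the scarcity --- quantified by \eqref{eq:dad} --- of the residues $\c$ at which they are large. The argument runs in close parallel to Heath-Brown~\cite[Lemma~16]{hb-10} and Hooley~\cite[Lemma~16]{oct}; the one structural simplification over $\FF_q(t)$ is that the lattice-point counts needed in the box $|\c-\r|<\hat C$ are \emph{exact}: for any modulus $m$ there are precisely $\hat C^n|m|^{-n}$ vectors $\c\in\cO^n$ in that box lying in a prescribed residue class modulo $m$ when $\hat C\geq|m|$, and at most one otherwise. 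The explicit dependence on $|M|$, $|\Delta_F|$ and $H_F$ is obtained by tracking the implied constants throughout and by treating the $O(\log|M\Delta_F|)$ exceptional primes --- those dividing $M$ or $\Delta_F$, and the finitely many of small degree for which the estimates of \S\ref{s:start-cubic} are not asserted --- with elementary bounds in place of Lemmas~\ref{lem:yellow} and \ref{lem:7.3}.

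Write $r_3=\prod_\vp\vp^{\alpha_\vp}$, with each $\alpha_\vp\geq3$. Repeated application of Lemma~\ref{lem:multi2} (used so as to absorb the part of $M$ supported on the primes of $r_3$, the coprime part contributing only a unimodular character in $\c$) gives $|S_{r_3,M,\b}(\c)|=\prod_{\vp\mid r_3}|S_{\vp^{\alpha_\vp},M_\vp,\b_\vp}(\c)|$. For the primes $\vp\nmid M$ --- all but $O(\log|M|)$ of them --- Lemma~\ref{lem:yellow} makes the $\vp$-factor vanish unless $\vp\mid F^*(\c)$, and when $\vp\mid F^*(\c)$, Lemma~\ref{lem:7.3} bounds it by $|\vp|$ raised to a power of the form $(\alpha_\vp-1)n+\theta$, where the admissible $\theta$ decreases with the minimal Hessian rank $r=r(\vp,\c)\geq2$ attached to the residue of $\c$ modulo $\vp^2$ and runs through the three regimes $\alpha_\vp=3$, $\alpha_\vp\geq4$, $\alpha_\vp\geq6$ of that lemma (with the separate bound $|\vp|^{(\alpha_\vp-1)n+3}$ when $\vp\mid\c$). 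In particular the summand vanishes unless $\rad(r_3)$ divides $M\cdot F^*(\c)$.

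For the summation I would group the $\c$ by their residue $\a$ modulo $\rad(r_3)$ and, within a class, by the $V_\vp$-stratum in which $\c\bmod\vp$ lies for each $\vp\mid r_3$ --- equivalently by the value of $r(\vp,\c)$, which by \eqref{eq:dad} confines $\c\bmod\vp$ to a variety of dimension $D(\vp,r)\leq r$ (or $r-1$ when $r\in\{n-1,n\}$), so that $\#\{a_\vp\bmod\vp:r(\vp,a_\vp)=r\}\leq|\vp|^{D(\vp,r)}$. The number of $\c$ in the box $|\c-\r|<\hat C$ lying in a fixed class is at most $(1+\hat C|\rad(r_3)|^{-1})^n$ by the exact count above; the weighted residue sum $\sum_\a\prod_{\vp\mid r_3}|\vp|^{(\alpha_\vp-1)n+\theta}$ factors over $\vp$ by the Chinese remainder theorem, and in each factor one replaces $\#\{a_\vp\bmod\vp:r(\vp,a_\vp)=r\}$ by $|\vp|^{D(\vp,r)}$ and sums over $r$. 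Multiplying the two estimates and performing the resulting optimisation in $r$ --- the very one carried out in the displayed proof of the second part of Lemma~\ref{lem:square-full}, where the worst case is attained at an extremal value of $r$ --- yields $\ll|r_3|^{n/2+1+\ve}(|r_3|^{n/3}+\hat C^n)$; reinstating the exceptional primes through their elementary bounds turns the implied constant into $c_{n,\ve}|M|^n|\Delta_F|^{2n}H_F^{n/2}$.

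The main obstacle I anticipate is the bookkeeping of this last step when several prime powers --- in particular genuinely high ones --- occur at once: one must run the stratification by $r(\vp,\c)$ simultaneously over all $\vp\mid r_3$, align the resulting union of bounded-degree congruence conditions on $\c$ with a box that, $\r$ being possibly non-integral, is not a union of residue classes to any modulus, exploit the sharper exponents of Lemma~\ref{lem:7.3} for $\alpha_\vp\geq4$ and $\alpha_\vp\geq6$ so that the high prime powers are controlled, and check that the $r$-optimisation stays favourable in both regimes $\hat C\gtrless|r_3|^{1/3}$ --- all while keeping every constant explicit in $|M|$, $|\Delta_F|$ and $H_F$. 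None of this needs ideas beyond those of \cite{hb-10,oct}; the labour lies in the assembly and the explicitness.
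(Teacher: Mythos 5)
Your strategy---factor over prime powers, apply Lemma~\ref{lem:yellow} and the pointwise estimates of Lemma~\ref{lem:7.3}, and stratify $\c$ by the Hessian rank $r(\vp,\c)$ via \eqref{eq:dad}---is exactly how the paper proves the \emph{second} bound of Lemma~\ref{lem:square-full}, where the modulus is $b_3$ (a cube of a square-free element), and also underlies Lemma~\ref{lem:square-full 5}. But it is \emph{not} the argument the paper uses for Lemma~\ref{lem:triangle} itself, and it cannot be. The pointwise bounds of Lemma~\ref{lem:7.3} are too weak once the prime exponents exceed $4$. Take $r_3=\vp^5$ and $n=8$. In the worst stratum $r=2$, the lemma gives $|S_{\vp^5}(\c)|\ll|\vp|^{(\alpha-1)n+4-r}=|\vp|^{34}$, the variety $V_\vp$ has dimension $\le 2$, so (with $\hat C\ge|\vp|$) the contribution to your sum is $\ll|\vp|^{2}\cdot(\hat C/|\vp|)^{8}\cdot|\vp|^{34}=\hat C^{8}|\vp|^{28}$, whereas the target coefficient of $\hat C^n$ is $|r_3|^{n/2+1+\ve}=|\vp|^{25+\ve}$---short by a factor $|\vp|^{3}$. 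The discrepancy worsens as $\alpha$ grows: for the $\hat C^n$ term one needs $(\alpha-2)n+4\le\alpha(n/2+1)$, which forces $\alpha\le 4$. There is also the smaller problem that Lemma~\ref{lem:7.3} gives no estimate at all for $\vp\mid\c$ when $\alpha\ge 5$, a case you silently subsume. Finally, the explicit constant $|M|^n|\Delta_F|^{2n}H_F^{n/2}$ does not emerge from your route: the $H_F^{n/2}$ in particular has a specific origin that your argument never produces.

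The paper's proof is structurally different precisely in order to master the high prime powers. Via \eqref{eq:turtle} it strips off the $M$-part (costing the $|M|^n$), reduces to $T(a,s;\c)$ for a cube-full $s$, then writes $s=c^2d$ with $d\mid c$ square-free and, following \cite[Lemma~11]{41}, bounds $|T(a,s;\c)|\le|c^2d|^{n/2}\sum_{\u}M_d(\u)^{1/2}$ where $M_d$ counts kernel vectors of the Hessian modulo $d$. The summation over $\c$ is then handled by \emph{averaging}: for $|c|<\hat C$ one just uses the exact lattice count and the mean value $\sum_{\u}M_d(\u)\ll|\Delta_F|^{2n}|d|^{n+\ve}$ from \eqref{eq:bat}; for $|c|\ge\hat C$ one applies Poisson summation (Lemma~\ref{lem:poisson}) to the count, Cauchy--Schwarz, again \eqref{eq:bat}, and then Lemma~\ref{lem:Nmbound} for $\sum_{\k}N_m(\k)^{1/2}$ (this is where $H_F^{n/2}$ enters). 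The final optimisation $\mathcal{M}(C)\le\mathcal{M}(\max\{C,\tfrac13\deg(c^2d)\})$ produces the $|r_3|^{n/3}$ term. So the proof turns on an $L^2$ estimate for the Hessian rank on average over $\u$ rather than a pointwise, per-prime-power bound; that averaging---absent from your proposal---is what survives large $\alpha$.
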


The statement of Lemma 
\ref{lem:square-full} easily follows on  taking $\r=\mathbf{0}$ in this result.
During the proof  of Lemma \ref{lem:triangle}  we will reserve $c_n$ (resp.~$c_{n,\ve}$)
for a generic positive constant that depends only on $n$ (resp.~$n$ and $\ve$). Recall the
definition \eqref{eq:hessian} of the Hessian matrix
$\mathbf{H}(\x)$ associated to the cubic form $F$. 
For any $m\in \cO$  and any $\k\in \cO^n$ let 
\begin{equation}\label{eq:goat}
N_m(\k)=\#\{\y\bmod m: \mathbf{H}(\k)\y\equiv \0\bmod{m} \}.
\end{equation}
We will need the following  result, which is an analogue of \cite[Lemma~13]{41}.

\begin{lemma}
\label{lem:Nmbound}
For any $m\in \cO$ and $R\geq 1$ there exists a constant $c_n>0$ such that 
$$
\sum_{\substack{\k\in \cO^n\\ |\k|<\hat R}} N_m(\k)^{1/2}\leq c_n H_F^{n/2} |m|^{n/2}\left(1+\frac{\hat R^3}{|m|}\right)^{n/2}. 
$$
\end{lemma}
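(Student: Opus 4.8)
The quantity $N_m(\k)$ counts solutions modulo $m$ of the linear system $\mathbf{H}(\k)\y\equiv\0\bmod m$, so it is multiplicative in $m$ and, for a prime power $m=\vp^t$, it depends only on the elementary divisors of $\mathbf{H}(\k)$ over $\cO_\vp$. The plan is to bound $N_m(\k)$ by the product $|m|\cdot|\gcd(m,\det\mathbf{H}(\k))|^{n-1}$ (or, more crudely but sufficiently, $N_m(\k)\le|m|^{n-1}|\gcd(m^\infty,\det\mathbf{H}(\k))|$ after truncating), which reduces matters to estimating, for each divisor $e\mid m$, how many $\k$ with $|\k|<\hat R$ satisfy $e\mid\det\mathbf{H}(\k)$. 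The form $\det\mathbf{H}(\x)$ is a non-zero polynomial of degree $\le n(n-1)$ with coefficients of size $O_n(H_F)$, and the non-singularity of $F$ together with $\ch(\FF_q)>3$ guarantees it does not vanish identically.

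First I would use multiplicativity to write $\sum_{|\k|<\hat R}N_m(\k)^{1/2}=\sum_{|\k|<\hat R}\prod_{\vp^t\|m}N_{\vp^t}(\k)^{1/2}$, and for each prime power record the pointwise bound $N_{\vp^t}(\k)\le|\vp|^{t(n-1)}\,|(\vp^t,\det\mathbf{H}(\k))|$, valid because after diagonalising $\mathbf{H}(\k)$ the $n$-th elementary divisor captures a factor at most $|(\vp^t,\det\mathbf{H}(\k))|$ beyond the trivial $|\vp|^{t(n-1)}$ coming from the other $n-1$ divisors. Taking square roots and regrouping, the sum is bounded by $|m|^{(n-1)/2}\sum_{|\k|<\hat R}|(m,\det\mathbf{H}(\k))|^{1/2}$. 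Next I would expand $|(m,\det\mathbf{H}(\k))|^{1/2}\le\sum_{e\mid m}|e|^{1/2}\mathbf 1[e\mid\det\mathbf{H}(\k)]$ (losing only a divisor-function factor, absorbed into $c_n$ since $m$ has $O_n(\cdot)$ relevant divisors once we note the divisor bound over $\FF_q[t]$), so that it remains to count $\k$ modulo $e$ with $\det\mathbf{H}(\k)\equiv0\bmod e$ and then multiply by the number $(1+\hat R/|e|)^n$ of translates.

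The key input is then a count for the congruence $\det\mathbf{H}(\k)\equiv0\bmod e$: since $\det\mathbf{H}$ is a non-zero polynomial of bounded degree with coefficients of height $O_n(H_F)$, a standard Lang--Weil / Schwartz--Zippel argument over $\cO/e\cO$ (or reduction to the prime-power case via Hensel-type lifting) gives $O_n(H_F|e|^{n-1})$ solutions modulo $e$, uniformly — this is where the $H_F^{n/2}$ ultimately enters, through the $n/2$ powers of $|e|^{1/2}$ each carrying a factor $H_F$'s worth of reducible locus. Assembling, $\sum_{|\k|<\hat R}N_m(\k)^{1/2}\ll_n H_F^{n/2}|m|^{(n-1)/2}\sum_{e\mid m}|e|^{1/2}|e|^{-1}|e|\,(1+\hat R/|e|)^n\cdot|e|^{-1}$, and after bounding $(1+\hat R/|e|)^n\ll(1+\hat R^3/|m|)^{n/2}\cdot(|m|/|e|)^{\text{something}}$ and summing the resulting geometric-type series over $e\mid m$ one arrives at the claimed $c_nH_F^{n/2}|m|^{n/2}(1+\hat R^3/|m|)^{n/2}$; the powers of $3$ in $\hat R^3$ reflect that $\det\mathbf{H}$ is (essentially) cubic in the natural scaling of the problem, matching the exponent in \cite[Lemma 13]{41}.

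The main obstacle I anticipate is the uniform point count for $\det\mathbf{H}(\k)\equiv0\bmod e$ for composite and square-full $e$: for prime $e$ this is immediate from Lang--Weil, but for prime powers one must control the full $\vp$-adic solution count, which requires either a Hensel-lifting argument keeping track of the singular locus of $\{\det\mathbf{H}=0\}$ (where lifting can fail) or an appeal to the theory of Igusa local zeta functions to bound $\#\{\k\bmod\vp^t:\det\mathbf{H}(\k)\equiv0\}$ by $O_n(H_F|\vp|^{t(n-1)})$ uniformly in $t$; getting the dependence on $H_F$ (rather than on $|\Delta_F|$ or worse) clean and uniform is the delicate point, and is precisely the function-field transcription of the argument in \cite[Lemma 13]{41} that I would carry out in detail.
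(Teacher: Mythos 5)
Your proposal is a genuinely different route from the paper's, which cuts $\y$ at the scale $|m|/(H_F\hat R)$ so that the congruence $\mathbf{H}(\k)\y\equiv\0\bmod m$ forces the exact equality $\mathbf{H}(\k)\y=\0$, and then stratifies $\k$ by the rank of $\mathbf{H}(\k)$ over $K$, counting each rank stratum via Lemma~\ref{lem:trivial}. Unfortunately your route has a decisive gap at the first step, earlier than the obstacle you flag.

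The pointwise bound $N_{\vp^t}(\k)\le |\vp|^{t(n-1)}\,|(\vp^t,\det\mathbf{H}(\k))|$, and hence $N_m(\k)^{1/2}\le|m|^{(n-1)/2}|(m,\det\mathbf{H}(\k))|^{1/2}$, is valid but discards the hypothesis $|\k|<\hat R$, and this is fatal. On the locus $\det\mathbf{H}(\k)=0$ it degenerates to the trivial $N_m(\k)^{1/2}\le|m|^{n/2}$, and that locus has $\gg\hat R^{n-1}$ points in the box $|\k|<\hat R$: for the Fermat form $F=\sum_i x_i^3$ one has $\mathbf{H}(\k)=6\,\mathrm{diag}(k_1,\dots,k_n)$ and $\det\mathbf{H}(\k)=6^nk_1\cdots k_n$, so the hyperplane $k_1=0$ alone supplies $\hat R^{\,n-1}$ of them. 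This subset therefore contributes $\gg\hat R^{\,n-1}|m|^{n/2}$ to your bound, exceeding the target $c_nH_F^{n/2}|m|^{n/2}(1+\hat R^3/|m|)^{n/2}$ by at least $\hat R^{\,n-1}$ when $|m|\geq\hat R^3$, and by a growing power of $\hat R$ in much of the complementary range. The loss is not an artefact of the estimate: in that same example with $m=\vp^t$ and $\vp\nmid 6$, a vector $\k$ with $k_1=0$ and the remaining $k_i$ being $\vp$-units has $N_m(\k)=|m|$ exactly, whereas your bound gives $|m|^{n}$, an overshoot by $|m|^{n-1}$. Your alternative $N_m(\k)\le|m|\,|(m,\det\mathbf{H}(\k))|^{n-1}$ has the identical failure, collapsing to $|m|^n$ on $\det\mathbf{H}(\k)=0$. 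What both bounds throw away is that for $|\k|<\hat R$ every entry of $\mathbf{H}(\k)$ has absolute value $\le H_F\hat R$, which forces $\mathbf{H}(\k)\y\equiv\0\bmod m$ to become the exact equation $\mathbf{H}(\k)\y=\0$ once $\y$ is confined to a box of side $|m|/(H_F\hat R)$; this is precisely how the paper replaces $\gcd(m,\det\mathbf{H}(\k))$ by $\rank\mathbf{H}(\k)$ and wins.

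Your secondary worry about the mod-$e$ count is also warranted and in fact more serious than you state: for a prime power $e=\vp^t$, the uniform bound $\#\{\k\bmod e:\det\mathbf{H}(\k)\equiv0\}=O_n(|e|^{n-1})$ fails in general. For a non-zero polynomial of degree $d$ whose reduction mod $\vp$ is non-zero, the best bound uniform in $t$ is of the shape $O_{n,d}(|e|^{\,n-1/d})$; one needs smoothness of the reduced hypersurface to recover $|e|^{n-1}$, and $\det\mathbf{H}=0$ need not be smooth (it is a union of hyperplanes in the Fermat case). But, as explained above, the argument already breaks before this input is needed.
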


\begin{proof}
Let $D$ denote the degree of
$m$. Given $K\geq 1$, let
$$
S_K=\{\y\in \cO^n:|\y|<\hat{D-K}\} \quad \mbox{ and } \quad
S^1_K=\{t^{D-K}\y:|\y|<\hat K \}. 
$$ 
For any  $\y\in \cO^n$  such that $|\y|<|m|$,  we write $\y=\y_1+\y_2$, where $\y_1\in S_K $ and $\y_2\in S^1_K$. Thus
\begin{align*}
N_m(\k)=\sum_{\y_2\in S^1_K}\sum_{\substack{\y_1\in S_K\\
\bH(\k)(\y_1+\y_2)\equiv \0\bmod m}}
\hspace{-0.4cm}
1
\leq \hat K^n\#\{\y\in S_K: \bH(\k)\y\equiv
\0\bmod m\},
\end{align*}
since if $\y_1+\y_2 $ and $\y_1'+\y_2$ are both
counted by the inner sum then  we have 
$\y_3=\y_1-\y_1'\in S_K$ and $\bH(\k)\y_3\equiv \0\bmod{m}$.

Choosing $K$ such that  $\hat K=H_F\hat R$, we find that 
$$
|\bH(\k)\y|<\frac{H_F\hat R|m|}{\hat
K} = |m|
$$
for any $\y\in S_K$.
Thus, for $\y\in S_K$ we have $\bH(\k)\y\equiv
\0\bmod m $ if and only if $\bH(\k)\y=\0$. It follows that 
\begin{align*}
 N_m(\k)&\leq \hat K^n\#\{\y\in S_K: \bH(\k)\y=\0\}
 = (H_F\hat R)^n\left(\frac{|m|}{H_F\hat R}\right)^{n-\rho(\k)},
\end{align*}
where $\rho(\k)=\rank \bH(\k)$. Hence
\begin{align*}
 \sum_{|\k|<\hat R}N_m(\k)^{1/2}\leq H_F^{n/2} \hat R^{n/2}\sum_{r=0}^n
\left(\frac{|m|}{\hat R}\right)^{(n-r)/2}
\hspace{-0.2cm}
\#\{|\k|<\hat R: \rho(\k)=r\}.
\end{align*}
According to \cite[Lemma~2]{41}, the condition  $\rho(\k)\leq r$ forces $\k$ to lie in 
an affine variety $T_r\subset \AA_K^n$ of dimension at most $r$ and degree $O_n(1)$.
Hence Lemma \ref{lem:trivial} implies that there is a positive constant $c_n>0$ such that 
$$
\#\{|\k|<\hat R: \rho(\k)=r\}\leq \#\{\k\in T_r(\cO): |\k|<\hat R\} \leq c_n \hat R^{r}.
$$
It follows that 
\begin{align*}
 \sum_{|\k|<\hat R}N_m(\k)^{1/2}
 &\leq c_n H_F^{n/2}\hat R^{n/2}\sum_{r=0}^n
\left(\frac{|m|}{\hat R}\right)^{(n-r)/2} \hat R^r\\
& \leq (n+1)c_n H_F^{n/2}
 |m|^{n/2}\left(1+\frac{\hat R^3}{|m|}\right)^{n/2}.
\end{align*}
The statement of the lemma is now clear.
\end{proof}

It will be convenient to relate 
 $S_{r_3,M,\b}(\c)$ to the exponential sum
$$
T(a,s;\c)=\sum_{\substack{\z\in \cO^n\\|\z|<|s|}} 
\psi\left(\frac{ag(\z)-\c.\z}{s}\right),
$$
for appropriate $g\in \cO[x_1,\dots,x_n]$,   $a,s\in \cO$ with  $(a,s)=1$ and 
  $\c\in \cO^n$.  These sums satisfy the  following multiplicativity property. 

\begin{lemma}\label{lem:multi1}
Suppose that $s_1,s_2\in \cO$ are coprime and let $\overline s_1, \overline s_2\in \cO$ be chosen so that
$s_1\overline s_1+s_2\overline s_2=1$. Then 
$T(a,s_1s_2;\c)=T(a\bar s_2,s_1;\bar s_2\c)T(a\bar s_1,s_2;\bar s_1\c).$
\end{lemma}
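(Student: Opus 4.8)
The statement is a Chinese Remainder Theorem (CRT) factorisation, and the plan is to carry out the standard argument, taking care that everything is done over $\cO=\FF_q[t]$ and that $\psi$ is trivial on $\cO$. The key preliminary observation is that, for any $m\in\cO$, the summand $\psi\bigl((ag(\z)-\c.\z)/m\bigr)$ depends only on the residue of $\z$ modulo $m$: if $\z\equiv\z'\bmod m$ then $g(\z)\equiv g(\z')\bmod m$ (here we use that $g$ has coefficients in $\cO$) and $\c.\z\equiv\c.\z'\bmod m$, so the two arguments differ by an element of $\cO$. Hence $T(a,m;\c)$ may be evaluated by summing over any complete set of residues modulo $m$.

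Next I would introduce the CRT reparametrisation of the sum. As $\z_1$ runs over a complete set of residues modulo $s_1$ (say $|\z_1|<|s_1|$) and $\z_2$ over one modulo $s_2$, the vectors
$$
\z=s_2\bar s_2\,\z_1+s_1\bar s_1\,\z_2
$$
run over a complete set of residues modulo $s_1s_2$; injectivity is immediate from the congruences below, and surjectivity follows since there are exactly $|s_1|\,|s_2|=|s_1s_2|$ such pairs. By construction $\z\equiv\z_1\bmod s_1$ (using $s_2\bar s_2\equiv1\bmod s_1$) and $\z\equiv\z_2\bmod s_2$. Then I would split the phase via the identity $1/(s_1s_2)=\bar s_2/s_1+\bar s_1/s_2$, which follows from $s_1\bar s_1+s_2\bar s_2=1$, to obtain
$$
\psi\!\left(\frac{ag(\z)-\c.\z}{s_1s_2}\right)=\psi\!\left(\frac{\bar s_2(ag(\z)-\c.\z)}{s_1}\right)\psi\!\left(\frac{\bar s_1(ag(\z)-\c.\z)}{s_2}\right).
$$
By the preliminary observation the first factor depends only on $\z\bmod s_1$ and hence equals $\psi\!\left(\frac{(a\bar s_2)g(\z_1)-(\bar s_2\c).\z_1}{s_1}\right)$, which is the summand of $T(a\bar s_2,s_1;\bar s_2\c)$; likewise the second factor is the summand of $T(a\bar s_1,s_2;\bar s_1\c)$. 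Summing over $\z_1$ and $\z_2$ and separating the two variables then yields the claimed product, the coprimality hypotheses $(a\bar s_2,s_1)=1$ and $(a\bar s_1,s_2)=1$ being immediate from $(a,s_1s_2)=1$ and the fact that $\bar s_i$ is a unit modulo $s_{3-i}$.

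There is no substantive obstacle here; the only points demanding care are the two flagged above. The first — that the summand descends to residues modulo $m$ — is exactly where the hypothesis $g\in\cO[x_1,\dots,x_n]$ and the triviality of $\psi$ on $\cO$ are used. The second is the choice of the CRT representative $\z=s_2\bar s_2\,\z_1+s_1\bar s_1\,\z_2$ (rather than the naive $s_2\z_1+s_1\z_2$): it is precisely this choice that makes $g(\z)$ reduce to $g(\z_1)$ modulo $s_1$ and to $g(\z_2)$ modulo $s_2$, and hence produces the twists $\bar s_1,\bar s_2$ in exactly the positions recorded in the statement.
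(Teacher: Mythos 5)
Your proof is correct and takes essentially the same route as the paper, based on the CRT reparametrisation $\z=s_2\bar s_2\,\z_1+s_1\bar s_1\,\z_2$ together with the phase split $1/(s_1s_2)=\bar s_2/s_1+\bar s_1/s_2$. The only cosmetic difference is that the paper justifies $g(\z)\equiv g(\z_i)\bmod{s_i}$ via the idempotency $(s_i\bar s_i)^j\equiv s_i\bar s_i\bmod{s_1s_2}$, whereas you derive it from the (equivalent, and slightly cleaner) observation that the summand depends only on $\z$ modulo the modulus combined with $\z\equiv\z_i\bmod{s_i}$.
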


\begin{proof}
As $\z_1$ ranges over vectors in $\cO^n$ modulo $s_1$ and 
$\z_2$ ranges over vectors  modulo $s_2$, so  $\z=s_2\bar s_2 \z_1+s_1\bar s_1 \z_2$ ranges over a complete set of residues modulo $s_1s_2$. Moreover, we clearly have 
$$
ag(\z)-\c.\z\equiv  s_2\bar s_2\left\{ag(\z_1)-\c.\z_1\right\}+
s_1\bar s_1\left\{ag(\z_2)-\c.\z_2\right\} \bmod{s_1s_2},
$$
since $(s_i\bar s_i)^j\equiv s_i \bar s_i \bmod{s_1s_2}$ for $i\in \{1,2\}$ and all $j\geq 1$.
The desired result now follows easily. 
\end{proof}

Making the change of variables $\y=\b+M\z$ we  
obtain 
\begin{equation}\label{eq:turtle}
S_{r_3,M,\b}(\c)
=
\psi\left(\frac{-\c.\b}{r_3M/(r_3,M)}\right)
\sumstar_{\substack{
|a|<|r_3|} }
T\left(a,\frac{r_3}{(r_3,M)};\c\right),
\end{equation}
with 
 underlying polynomial 
\begin{equation}\label{eq:horse}
g(\z)=\frac{1}{(r_3,M)}F(M\z+\b).
\end{equation}
This is a cubic polynomial with coefficients in $\cO$ since $M\mid F(\b)$. Moreover it has non-singular homogeneous cubic part 
$
 g_0(\z)=(r_3,M)^{-1}M^3F(\z).
$
We now factorise
$ r_3/(r_3, M)$ into a cube-free part and a cube-full part. Since $r_3$ is cube-full it follows that the cube-free part has absolute value at most  $|M|$. Applying  Lemma~\ref{lem:multi1} and estimating the contribution from the cube-free part trivially 
it follows from \eqref{eq:turtle}  that 
$$
|S_{ r_3,M,\b}(\c)| \leq |M|^n  
\sumstar_{\substack{
|a|<|r_3|} }
|T(\bar ba,s;\bar b\c)|
$$
for some 
 cube-full $s\in \cO$ with  $s\mid  r_3$, 
together with some element $\bar b\in \cO$ with $|\bar b|\leq |M|$ and $(\bar b,s)=1$.
To prove 
Lemma \ref{lem:triangle}, 
it will therefore  suffice to show that there is a constant $c_{n,\ve}>0$ depending only on $n$ and $\ve$ such that
\begin{equation}\label{eq:suffice1}
\sum_{\substack{ \c\in \cO^n\\
|\c-\r|<\hat C
}} 
|T(a,s;\c)|
 \leq c_{n,\ve} |\Delta_F|^{2n}H_F^{n/2}  |s|^{n/2+\ve}\left(\hat C^n+|s|^{n/3}\right),
\end{equation}
for any cube-full $s\in \cO$, any $a\in \cO$ which is coprime to $s$ and any $C\geq 1$.

We henceforth write $s=c^2d$, where $d\mid c$ and 
\begin{equation}\label{eq:heron}
d=\prod_{\substack{\vp^e \|s  \\ e\geq 3, ~2\nmid e}}\vp.
\end{equation}
Following the opening argument in  \cite[Lemma 11]{41} more or less verbatim, we easily conclude that 
\begin{align*}
|T(a,s;\c)|\leq |c^2d|^{
n/2}\sum_{\substack{|\u|<|c|\\ a\nabla g(\u)-\c\equiv \0\bmod{c}}}
M_d(\u)^{1/2},
\end{align*}
where 
\begin{equation}\label{eq:donkey}
M_m(\u)=\#\{\y\bmod m:\nabla^2 g(\u)\y\equiv \0\bmod{m} \}.
\end{equation}
Let us denote the left hand side of \eqref{eq:suffice1} by $\mathcal{M}(C)$. Then our work so far shows that 
\begin{align*}
\mathcal{M}(C)
\leq
|c^2d|^{n/2}\sum_{|\c-\r|< \hat C}\sum_{\substack{|\u|<|c|\\ a\nabla
g(\u)-\c\equiv \0 \bmod{c}}} M_d(\u)^{1/2}.
\end{align*}
Let $\ve>0$. Then it follows from 
\cite[Lemma~14]{41} that there is a constant $c_{n,\ve}>0$ depending only on $n$ and $\ve$ such that
\begin{equation}\label{eq:bat}
\begin{split}
\sum_{|\u|<|d|}M_d(\u)
&=\#\{\u,\y\bmod{d}: \nabla^2g(\u)\y\equiv
0\bmod{d}\}\\
&\leq c_{n,\ve} |\Delta_F|^{2n} |d|^{n+\ve}.
\end{split}
\end{equation}

Our argument now differs according to whether $|c|<\hat C$ or $|c|\geq \hat C$. 
Beginning with the former case, we have 
\begin{align*}
 \mathcal{M}(C)&\leq |c^2d|^{ n/2}\sum_{|\u|<|c|}M_d(\u)^{1/2}\#\{|\c-\r|< \hat C: a\nabla
g(\u)-\c\equiv \0\bmod{c} \}\\
&= |c^2d|^{n/2}
\left(\frac{\hat C}{|c|}\right)^n\sum_{|\u|<|c|}M_d(\u)^{1/2}\\
&\leq  |c^2d|^{n/2}\left(\frac{\hat C}{|c|}\right)^n\left(\frac{|c|}{|d|}\right)^n\sum_{|\u|<|d|}
M_d(\u).
\end{align*}
This is at most 
$c_{n,\ve}|\Delta_F|^{2n} |c^2d|^{n/2+\ve}\hat C^n$,
by \eqref{eq:bat}.

Next, suppose that $|c|\geq \hat C$. Starting as above we note that 
\begin{align*}
\#\{|\c-\r|< \hat C: a\nabla
g(\u)-\c\equiv \0\bmod{c} \}
&=
\sum_{\h\in \cO^n}
w\left(\frac{a\nabla g(\u)-\r-c\h}{t^C} \right),
\end{align*}
where $w$ is given by \eqref{eq:weights}. 
Now it follows from Lemma \ref{lem:poisson} that 
\begin{align*}
\sum_{\h\in \cO^n}
w\left(\frac{a\nabla g(\u)-\r-c\h}{t^C} \right)
&=
\sum_{\k\in
\cO^n}\int_{K_\infty^n}w\left(\frac{a\nabla g(\u)-\r-c\x}{t^C}
\right)\psi(\k.\x) \d\x.
\end{align*}
But this is equal to 
$$
\left(\frac{\hat C}{|c|}\right)^n
\sum_{\k\in
\cO^n}
\psi\left(\frac{a\k.\nabla
g(\u)-\r.\k}{c}\right)
\int_{\TT^n}
\psi\left(\frac{t^C\k.\y}{c} \right)
\d\y,
$$
whence an application of  Lemma \ref{lem:orthog} yields
\begin{align*}
\mathcal{M}(C)
&\leq \frac{|c^2d|^{ n/2}\hat C^n}{|c|^n}\sum_{\substack{\k\in
\cO^n\\|\k|<{|c|/\hat C}}}\sigma_\k,
\end{align*}
where
\begin{align*}
 \sigma_\k=\sum_{|\y|<|d|}M_d(\y)^{1/2}\sum_{\substack{|\u|<|c|\\ \u\equiv
\y\bmod{d}}}\psi\left(\frac{a\k.\nabla
g(\u)-\r.\k}{c} \right).
\end{align*}
We proceed with an application of Cauchy's inequality and \eqref{eq:bat}, to obtain
\begin{align*}
|\sigma_\k|^2
&\leq c_{n,\ve} |\Delta_F|^{2n}
|d|^{n+\ve}
 \sum_{|\y|<|d|}\left|\sum_{\substack{
|\u|<|c|\\\u\equiv
\y\bmod{d}}}\psi\left(\frac{a\k.\nabla
g(\u)}{c}\right)\right|^2\\
&\leq c_{n,\ve} |\Delta_F|^{2n}|d|^{n+\ve}\sum_{\substack{|\u_1|,|\u_2|<|c|\\\u_1\equiv
\u_2\bmod{d} }}\psi\left( \frac{a\k.(\nabla
g(\u_1)-\nabla g(\u_2))}{c}\right).
\end{align*}
Writing $\u_1=\u_2+d\z$
and recalling \eqref{eq:horse}, 
we see that  
$$
\nabla g(\u_1)-\nabla g(\u_2)=d (r_3,M)^{-1}M^3\bH(\z)\u_2
$$
plus a term which in independent of $\u_2$. Hence there exists $m\in \cO$, with 
$|m|\leq |c/d|$, such that 
$|\sigma_\k|^2\leq c_{n,\ve} |\Delta_F|^{2n}|d|^{n+\ve}|c|^nN_{m}(\k)$,
in the notation of \eqref{eq:goat}. 
It now follows from 
 Lemma \ref{lem:Nmbound} that
 \begin{align*}
\mathcal{M}(C)
&\leq c_{n,\ve}^{1/2} |\Delta_F|^{n}
\frac{|c^2d|^{ n/2}\hat C^n}{|c|^n}\sum_{\substack{\k\in
\cO^n\\|\k|<{|c|/\hat C}}}
|d|^{n/2+\ve}|c|^{n/2}N_{m}(\k)^{1/2}\\
&\leq c_nc_{n,\ve}^{1/2}|\Delta_F|^{n} H_F^{n/2}
\frac{|c^2d|^{ n/2}\hat C^n  |d|^{n/2+\ve}|c|^{n/2}}{|c|^n}
\left(|m|+
\frac{|c|^3}{\hat C^3}\right)^{n/2}\\
&\leq
c_nc_{n,\ve}^{1/2}|\Delta_F|^{2n} H_F^{n/2}
|c^2d|^{ n/2+\ve}\hat C^n 
\left(1+ \frac{|c^2d|}{\hat C^3}\right)^{n/2}.
\end{align*}
In view of our earlier work this bound is also valid when $|c|<\hat C$.

Let $D=\deg(c^2d)$.
We therefore arrive at the desired bound \eqref{eq:suffice1} on noting that
$\mathcal{M}(C)\leq \mathcal{M}(\max\{C,\tfrac{1}{3}D\}).$

\subsection{Proof of Lemma \ref{lem:square-full 3}}

In addition to taking into account the sparsity of vectors $\c$ for which $F^*(\c)=0$, in the proof of Lemma 
\ref{lem:square-full 3}
we will also need to sum non-trivially over $a$ in the definition of 
$S_{r_3,M,\b}(\c)$. 

To begin with we factorise $r_3=b_3r_4$ and use Lemma \ref{lem:multi2}
to factorise the sum 
$S_{r_3,M,\b}(\c)$.  The sum corresponding to $b_3$ we estimate using \eqref{eq:duff}. 
For the sum involving $r_4$ we return to 
 \eqref{eq:turtle} and relate the exponential sum to $T(a,s;\c)$ for a quartic-full $s\in \cO$.
Abusing notation slightly, this leads to the preliminary estimate
$$ 
\sum_{\substack{ 
|\c|< \hat C\\ F^*(\c)=0
}} 
|S_{ r_3,M,\b}(\c)| 
\ll |r_3|^\ve
\hspace{-0.3cm}
\sum_{\substack{|\c|<\hat C\\  F^*(\c)=0
}} 
\hspace{-0.2cm}
\prod_{\vp\mid b_3} |\vp|^{2n+3-r(\vp,\c)/2+R(\vp,\c)}
\left|~
\sumstar_{\substack{|a|<|r_4|} }
T(a,r_4;\c)\right|.
$$
The term involving $b_3$ only depends on $\c$ modulo $k_3$. Thus, arguing as in the proof of the second part of Lemma \ref{lem:square-full}, we break the $\c$-sum into residue classes modulo $k_3$ and deduce that
\begin{align*}
\sum_{\substack{
|\c|< \hat C\\ F^*(\c)=0
}} 
|S_{ r_3,M,\b}(\c)| 
\ll~& 
|k_3|^{2n+3}
|r_3|^\ve 
\sum_{\a\bmod{k_3}}\Sigma(\a)
\prod_{\vp\mid b_3} |\vp|^{-r(\vp,\a)/2+R(\vp,\a)},
\end{align*}
where
$$
\Sigma(\a)=
\sum_{\substack{|\c|<\hat C\\ F^*(\c)=0\\\c \=\a\bmod{k_3}
}} 
\left|~
\sumstar_{\substack{|a|<|r_4|} }
T(a,r_4;\c)\right|.
$$
We will show that 
\begin{equation}\label{eq:suffice2}
\Sigma(\a)\ll |r_4|^{n+1/2+\ve} +\frac{|r_4|^{n/2+4/3+\ve}\hat C^{n-3/2+\ve}}{|b_3|^{n/3-1/2}}.
\end{equation}
Recollecting \eqref{eq:dad}, we can insert this into the above 
estimate in order to conclude the proof of 
Lemma \ref{lem:square-full 3}.

In order to prove \eqref{eq:suffice2}, 
 we write $r_4=c^2d$ as before, with $d$ given by \eqref{eq:heron}.
The  argument in \cite[\S 7]{hb-10} now 
goes through more or less verbatim, leading to the bound
\begin{align*}
\sumstar_{\substack{|a|<|r_4|} }
T(a,r_4;\c)
&\ll |c|^{n+1}|d|^{n/2+1}
\sumstar_{|a_1|<|c|} 
\sum_{\substack{|\u|<|c| 
\\ 
a_1\nabla g(\u)-\c \equiv \0\bmod{c}\\
g(\u)\equiv 0 \bmod{c}} }
\hspace{-0.3cm}
M_{d}(\u)^{ 1/2},
\end{align*}
in the notation of \eqref{eq:donkey}.
Making the change of variables $\h=M\u+\b$, we deduce that there are elements $c',d'$ with $d'\mid c'$ and 
$|c'|$ (resp.~ $|d'|$) of order $|c|$ (resp.~ $|d|$), such that 
$$
\sum_{\substack{|\u|<|c| 
\\ 
a_1\nabla g(\u)-\c \equiv \0\bmod{c}\\
g(\u)\equiv 0 \bmod{c}} }
\hspace{-0.3cm}
M_{d}(\u)^{ 1/2}
=\sum_{\substack{|\h|<|c'| 
\\ 
a_1\nabla F(\h)-\c \equiv \0\bmod{c'}\\
F(\h)\equiv 0 \bmod{c'}} }
\hspace{-0.3cm}
N_{d'}(\h)^{ 1/2}.
$$

Summing trivially over $a_1$, we now find that
\begin{equation}\label{eq:flea}
\sum_{\substack{|\c|<\hat C\\ F^*(\c)=0\\\c \=\a\bmod{k_3}
}} 
\left|~
\sumstar_{\substack{|a|<|r_4|} }
T(a,r_4;\c)\right|
\ll 
 |c|^{n+2}|d|^{n/2+1} \mathcal{N}
\sum_{\substack{|\h|<|c'|\\  
F(\h)\equiv 0\bmod{c'}
}} N_{d'}(\h)^{1/2},
\end{equation}
where 
\begin{align*}
 \mathcal N=\max_{|\r|<|k_3c'|}\#\left\{\c\in\cO^n:  |\c|<\hat C, ~F^*(\c)=0, ~\c\equiv \r\bmod{k_3c'} \right\}.
\end{align*}
The equation $F^*(\c)=0$ cuts out an absolutely irreducible hypersurface in $\AA^n$ of dimension $n-1$. Hence it follows from Lemma \ref{lem:cohen} that 
\begin{equation}\label{eq:gnat}
\mathcal{N}\ll \left(\frac{\hat C}{|k_3c|}+1\right)^{n-3/2}.
\end{equation}

It remains to analyse the sum 
$$
S(c,d)
=\sum_{\substack{|\h|<|c|\\  
F(\h)\equiv 0\bmod{c}
}} N_{d}(\h)^{1/2},
$$
for given $c,d\in \cO$ such that 
$d$ is square-free and $d\mid c$.
We will show that 
$$
S(c,d)\ll |c|^{n-1+\ve}|d|^{1/2}.
$$
Once combined with \eqref{eq:gnat} in \eqref{eq:flea},  this gives the desired bound \eqref{eq:suffice2} on noting that $|d|\leq |c^2d|^{1/3}=|r_4|^{1/3}$.
The sum in question  satisfies 
$S(c_1c_2,d_1d_2)=S(c_1,d_1)S(c_2,d_2)$ for any $c_i,d_i\in \cO$ such that $(c_1d_1,c_2d_2)=1$ and $d_i\mid c_i$.
Hence it will suffice to show that 
$$
S_1=S(\vp^e,1)\ll |\vp|^{e(n-1)} \quad \mbox{ and } \quad S_2=S(\vp^e,\vp)\ll |\vp|^{e(n-1)+1/2},
$$
for any $e\in \NN$ and any prime $\vp$.
This is achieved by closely  following the argument of Heath-Brown
\cite[page~245]{hb-10}. The estimation of $S_1$ uses exponential sums and an application of  Lemma 
\ref{lem:square-full} with $C=1$. The main ingredient in the  estimation of $S_2$ is
\eqref{eq:bat}.  Given that the arguments of 
\cite[page~245]{hb-10} carry over verbatim to the function field setting, they will not be repeated here. 

\section{Return to the main counting function}

Recall our standing  assumption that $\ch(\FF_q)>3$, together with the definition \eqref{eq:hessian} of the Hessian matrix associated to our non-singular cubic form
$F\in \cO[x_1,\dots,x_n]$.
The proof of  \cite[Lemma~1]{nonaryI} shows that
there exists a point $\x_0\in K_\infty^n$ satisfying 
\begin{equation}\label{eq:nice}
F(\x_0)=0,\quad \det \mathbf{H}(\x_0)\neq 0, \quad |\x_0|<1/H_F.
\end{equation}
An inspection of the proof reveals that the result is false in characteristic $2$
or $3$ when $F$ is cubic.
Such a point will automatically satisfy $\nabla F(\x_0)\neq \mathbf{0}$, since
$F $ is non-singular.

Next, let $L\geq 1$ be an integer.
We  define the weight function $\omega:K_\infty^n\rightarrow \RR_{\geq 0}$
via
\begin{equation}\label{eq:omega}
\omega(\x)=w\left(t^L(\x-\x_0)\right),
\end{equation}
where $w$ is given by \eqref{eq:weights}. 
Ultimately, $L$ will be taken to be a large but fixed integer.
For $L$ large enough, it is clear that 
\begin{equation}\label{eq:heat}
|\x|<1/H_F\quad  \text{ and } \quad |\det \mathbf{H}(\x)|=|\det \mathbf{H}(\x_0)|,
\end{equation}
for any $\x\in K_\infty^n$ such that 
$\omega(\x)\neq 0$.

Let  $\b\in \cO^n$ and let  $M\in \cO$ such that $M\mid F(\b)$.
It is clear that $\omega\in S(K_\infty^n)$ and we are interested in the
asymptotic behaviour of the counting function
\begin{equation}\label{eq:N'}
N(P)=\sum_{\substack{\x\in \cO^n\\ F(\x)=0\\
\x\equiv \b\bmod{M}}} \omega(\x/P),
\end{equation}
as $|P|\rightarrow \infty$.
The quantities $\x_0, \b, M,L$ are to be considered  fixed once and for all. 
Consequently, all our implied constants  are allowed to depend
on these  quantities as well as on the height $H_F$ of $F$.

Our main result concerning the behaviour of $N(P)$ is as follows. 

\begin{theorem}\label{THM}
Suppose that $n=8$. Then there exists constants $c\geq 0$ and $\delta>0$ such that 
$$
N(P)=c|P|^{n-3}+O(|P|^{n-3-\delta}).
$$
The constant $c$ is a Hardy--Littlewood product of local densities, with $c>0$ if 
for every finite prime $\vp$
there exists 
$\x\in\cO_\vp^n$ such that $F(\x)=0$ and 
$|\x-\b|_\vp <|M|_\vp$. 
\end{theorem}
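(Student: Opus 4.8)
The plan is to estimate $N(P)$ through the circle-method identity of Lemma~\ref{lem:summary}, in which $Q>1$ is at our disposal; we take $Q$ with $\hat Q$ of order $|P|^{3/2}$, so that (as $d=3$ here) the moduli $r$ run up to $|P|^{3/2}$ and each arc $|\theta|<|r|^{-1}\hat Q^{-1}$ has measure at most $|P|^{-3}$. Write $N(P)=M(P)+E(P)$, where $M(P)$ is the contribution of $\c=\0$ and $E(P)$ that of the remaining $\c$. Since $I_s(\theta;\0)=\int_{\TT^n}\omega(\x)\psi(\theta P^3F(\x))\,\d\x$ is independent of $s$,
\[
M(P)=|P|^n\sum_{\substack{r\in\cO,\ |r|\le\hat Q\\ r\text{ monic}}}\frac{S_{r,M,\b}(\0)}{|r_M|^n}\int_{|\theta|<|r|^{-1}\hat Q^{-1}}\int_{\TT^n}\omega(\x)\,\psi\!\left(\theta P^3F(\x)\right)\d\x\,\d\theta .
\]
Using \eqref{eq:nice}--\eqref{eq:heat} (non-vanishing of $\det\bH$ on $\supp\omega$) together with the oscillatory-integral estimates of \S\ref{s:integrals}, one bounds the inner integral by a negative power of $|\theta|\,|P|^3$ whenever $|\theta|\,|P|^3>1$; this lets one complete the $\theta$-integral to all of $\Ki$ and the sum to all monic $r\in\cO$ at a cost $O(|P|^{n-3-\delta})$, the completed objects factoring as $|P|^{n-3}$ times a singular integral $\sigma_\infty$ and a singular series $\mathfrak S=\prod_\vp\sigma_\vp$ (convergence of the latter coming from the pointwise bounds of \S\ref{s:start-cubic} for $S_{r,M,\b}(\0)$). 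Hence $M(P)=c|P|^{n-3}+O(|P|^{n-3-\delta})$ with $c=\sigma_\infty\mathfrak S\ge 0$.

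It remains to show $E(P)=O(|P|^{n-3-\delta})$. Here I would follow the template of Heath-Brown \cite{hb-10} and Hooley \cite{oct}: using Definition~\ref{def:r} split each modulus as $r=b_1b_2r_3$ with $b_1$ square-free, $b_2$ exactly the square part, $r_3$ cube-full, and factorise $S_{r,M,\b}(\c)$ accordingly via Lemma~\ref{lem:multi2}. For the cube-full modulus $r_3$ the required inputs are the averaged bounds of \S\ref{s:averages}: Lemma~\ref{lem:square-full} for $\c$ in general position, Lemma~\ref{lem:square-full 3} for $\c$ lying on $F^*(\c)=0$, and Lemma~\ref{lem:square-full 5} together with \eqref{eq:cor-square} for $\c$ on the deeper locus $\nabla F^*(\c)=\0$, in conjunction with the vanishing statement of Lemma~\ref{lem:yellow}, the estimates of Lemma~\ref{lem:7.3}, and the dimension bound \eqref{eq:dad}. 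Combined with the arc measure $|P|^{-3}$ and the integral estimates above, this controls the range in which $r_3$ is large; these are direct function-field analogues of the arguments over $\QQ$.

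The genuinely new part is the square-free (and exactly-square) modulus $b_1b_2$, where trivial summation over $\c$ against the pointwise bound of Lemma~\ref{lem:62} is too lossy and one must extract the full double Kloosterman saving. The factor $|(b_1,\nabla F^*(\c))|^{1/2}|(k_2,F^*(\c))|$ links the $\c$-average to point counts, over the residue fields $\FF_\vp$, on hyperplane sections $\{F=0\}\cap\{\c\cdot\x=0\}$ --- cubic $(n-3)$-folds, of dimension $5$ when $n=8$ --- and hence, via \eqref{eq:counting'}, to the global $L$-functions $L(H_\ell^{m}(Y),s)$ of \S\ref{s:red}, which by Grothendieck and Deligne are rational in $q^{-s}$ with inverse zeros of modulus $q^{(m+k)/2}$ and with the degree bounds recorded there. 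The obstruction specific to function fields is that partial summation is unavailable to convert these analytic facts into cancellation; this is overcome by twisting $H_\ell^m(Y)$ by the Dirichlet characters $\eta$ on $(\cO_\infty/t^{-J}\cO_\infty)^*$ of \S\ref{s:twisting} --- for which, by \eqref{eq:YEAH-twist}--\eqref{eq:degree-Hm-twist}, the same analytic properties persist --- and then doing Fourier analysis over these characters in place of the missing Mellin transform, so that orthogonality plus the location of the zeros of $P_{1,m}$ yield the required power saving in $|P|$.

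Finally one adds the two ranges and checks that the exponents close with room to spare precisely when $n=8$; this balancing is tight and uses the full strength of Lemmas~\ref{lem:square-full}--\ref{lem:square-full 5}, of \eqref{eq:dad}, and of the twisted $L$-function bounds, and it is at this point that $\ch(\FF_q)>3$ enters, since it guarantees through \eqref{eq:nice} the auxiliary point $\x_0$ with $\det\bH(\x_0)\ne 0$ underlying both the integral estimates and Lemma~\ref{lem:7.3}. For the positivity of $c=\sigma_\infty\mathfrak S$ one argues that $\sigma_\infty>0$ because $\x_0$ is a smooth point of $F=0$ in the interior of $\supp\omega$, while $\sigma_\vp>0$ for a given finite prime $\vp$ exactly when there is $\x\in\cO_\vp^n$ with $F(\x)=0$ and $|\x-\b|_\vp<|M|_\vp$ (a Hensel-lifting argument, using non-singularity of $F$, which moreover makes the condition automatic for all large $\vp$). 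The main obstacle is the square-free refinement of the preceding paragraph: transporting the double Kloosterman refinement to $\FF_q(t)$ and, above all, supplying the substitute for partial summation through the character twists of \S\ref{s:twisting}.
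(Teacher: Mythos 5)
Your proposal follows the same route as the paper: Lemma~\ref{lem:summary} with $\hat Q\asymp|P|^{3/2}$, the split into the $\c=\0$ main term (singular integral times singular series, with positivity via Hensel and the nonvanishing of $\nabla F(\x_0)$) and the $\c\neq\0$ error term, decomposition of moduli into square-free, square and cube-full parts and of $\c$ according to $F^*(\c)\neq 0$, $F^*(\c)=0\neq\nabla F^*(\c)$ and $\nabla F^*(\c)=\0$, and the two novel ingredients you rightly single out --- Kloosterman-type cancellation over square-free moduli from the rationality and Riemann hypothesis for the twisted $L$-functions $L(\eta\otimes\eta'\otimes H_\ell^{n-3}(X_\c),s)$, and Dirichlet characters on $(\cO_\infty/t^{-J}\cO_\infty)^*$ as a substitute for partial summation --- which is exactly the structure of \S\S8--10. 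One small imprecision worth noting: the bridge from the square-free sums $S_\vp(\c)$ to point counts on the hyperplane section $X_\c$ (and thence, via \eqref{eq:counting'}, to the $L$-function) is the exact identity \eqref{eq:bonus}, not the gcd factors $|(b_1,\nabla F^*(\c))|^{1/2}|(k_2,F^*(\c))|$ from Lemma~\ref{lem:62}, which enter only in the pointwise bounds used for the ``easy'' estimate of Lemma~\ref{lem:goal-easy} and for the square/cube-full moduli.
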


In \S \ref{s:beep} we  show how this result implies the statement of 
Theorem \ref{thm:HP+WA}. Next, in \S \ref{s:honk}, we initiate our analysis of $N(P)$ along the lines of \S  \ref{s:active}.
The outcome of this first phase of the argument  is recorded in Lemma~\ref{lem:N2}. 
The main contribution to $N(P)$ comes from the trivial characters, which is what we analyse in \S \ref{8.1}. 
It is here that the explicit value of the leading constant $c$ is recorded. 
Finally,  \S \ref{8.2} is devoted to a preliminary analysis of  the contribution from the non-trivial characters.

\subsection{Deduction of Theorem \ref{thm:HP+WA}}\label{s:beep}

This section shows  how Theorem \ref{THM} implies 
Theorem \ref{thm:HP+WA}. Let $X\subset \PP_K^{n-1}$ be a non-singular cubic hypersurface defined by a cubic form $F$ over $K$ with $n\geq 8$ variables. Assume that  $X(K_v)\neq \emptyset$ for every place $v\in \Omega$.
In order to establish the Hasse principle and weak approximation, we need to show that $X(K)\neq \emptyset$
and 
$X(K)$ is dense in $X(\mathbf{A}_K)$ under the product topology. 

Using a familiar fibration argument, we use  induction on the number of variables $n\geq 8$, supposing for the moment  that it is has been verified when $n=8$. Thus let $n\geq 9$ and let $H_1,H_2$ be generic hyperplanes in $\PP_K^{n-1}$ defined over $K$. 
We consider the fibration $\pi: X\to \PP_K^1$ with fibres $X_{\lambda,\mu}=X\cap H_{\lambda,\mu}$, where $H_{\lambda,\mu}=\lambda H_1+\mu H_2$. By the Lefschetz hyperplane theorem $\Pic(X)$ is a free abelian group of rank 1 generated by the class of a hyperplane section $Y$.  All fibres of $\pi$ are therefore  geometrically integral. Indeed if a fibre  $X_{\lambda,\mu}$ were reducible, say  $X_{\lambda,\mu}=Y_1+Y_2$, then $Y_1,Y_2$ would give independent elements of the Picard group of $X$ which are not multiples of $Y$, which is impossible. Moreover $\PP_K^1$ satisfies the Hasse principle and weak approximation, as do the  smooth fibres by the inductive hypothesis. A standard argument (see Skorobogatov \cite{skoro}, for example) therefore yields the desired conclusion subject to a satisfactory treatment of the case $n=8$.

Henceforth suppose that $n=8$.
Let $S$ be a finite set of primes of $K$.
Suppose that we are given 
 points $x_\infty\in X(K_\infty)$ and $x_\vp\in X(K_{\vp})$ for each $\vp\in S$.
We wish to prove that there exists a rational point $x\in X(K)$ which is simultaneously close to these local points in their respective topologies. Since the Hessian does not vanish identically on $X$, there is no loss of generality in assuming that $x_\infty$ doesn't lie on the Hessian variety.

Let $N_\infty, N$ be positive
integers. 
 We choose representative coordinates so that $x_\infty=[\x_\infty]$ for
$\x_\infty\in \TT^n$ such that $|\x_\infty|<1/H_F$
and $x_\vp=[\x_\vp]$ for $\x_\vp\in \cO_{\vp}^n$, for each
$\vp\in S$. 
We need to show that there exists a
non-zero vector
$\z\in K^n$ such that $F(\z)=0$, with 
\begin{equation}\label{eq:local-weak}
|\z-\x_\infty|<\hat N_\infty^{-1} \quad \mbox{ and } \quad |\z-\x_\vp|_\vp <
|\vp|_\vp^{-N}, \mbox{ for all $\vp\in S$.}
\end{equation}
Combining weak approximation for $\cO^n$ with the Chinese remainder theorem, we
can find a vector $\b\in \cO^n$ such that $\b\equiv \x_\vp \bmod{\vp^N}$ for
every $\vp\in S$.
Let $M=\prod_{\vp\in S}\vp^N$ and
let $B$ run through elements of $\cO$ for which $B\equiv 1\bmod{M}$. 
For $|B|$ suitably large we will  show  that there is a vector $\x\in \cO^n$
such that 
$F(\x)=0$, with 
$$
|\x-B\x_\infty|<\hat N_\infty^{-1} |B|
 \quad \mbox{ and } \quad 
\x\equiv \b \bmod{M}.
$$
We claim that the vector $\z=\x/B\in K^n$ will satisfy the conditions required
to draw the desired conclusion. Now it is clear that $F(\z)=0$
and that the  restriction at the infinite place in \eqref{eq:local-weak} is satisfied.  Moreover,
for any $\vp\in S$ we will have 
$|\z-\x_\vp|_\vp < |\vp|_\vp^{-N}$ if and only if 
$|\x-\x_\vp|_\vp < |\vp|_\vp^{-N}$, since $B\equiv 1 \bmod{\vp^N}$. But this
follows from the fact that 
$$
|\x-\x_\vp|_\vp \leq \max\left\{|\x-\b|_\vp,
|\b-\x_\vp|_\vp\right\}<|\vp|_\vp^{-N}.
$$

It will therefore suffice to study the counting function $N(P)$ in
\eqref{eq:N'},  with $\x_0=\x_\infty$ and 
$L=N_\infty$.  Indeed, our 
 arguments so far show that the Hasse principle and weak approximation hold when $n=8$,
 if we are able to 
show that 
$$
N(P)>0,
$$ 
for $P\in \cO$ such that $|P|\rightarrow \infty$.
But this follows directly from the statement of  Theorem \ref{THM}.

\subsection{Preliminary analysis of  $N(P)$}\label{s:honk}

Our starting point is Lemma
\ref{lem:summary}, which gives
\begin{equation}\label{eq:N1}
N(P)=|P|^n
\sum_{\substack{
r\in \cO\\
|r|\leq \hat Q\\
\text{$r$ monic}}
} 
|r_M|^{-n}
\int_{|\theta|<|r|^{-1}\hQ^{-1}} 
\sum_{\substack{ \c\in \cO^n
}} 
S_{r,M,\b}(\c)
I_{r_M}(\theta;\c)
 \d \theta,
\end{equation}
where $r_M=rM/(r,M)$ and 
$S_{r,M,\b}(\c),
I_{r_M}(\theta;\c)$ are as in the statement of lemma. 
We proceed to use the results of \S \ref{s:integrals} to study 
$I_r(\theta;\c)$ for given  $r\in \cO$.
In view of \eqref{eq:omega}  and Lemma 
\ref{lem:change},	we have 
\begin{align}\notag
I_r(\theta;\c)
&=
\int_{K_\infty^n} 
w\left(t^L(\x-\x_0)\right)
\psi \left(\theta P^3F(\x)+\frac{P \c.\x}{r}\right)\d \x\\
\notag&=\frac{1}{{\hat L}^{n}}
\psi\left(
\frac{P \c.\x_0}{r}\right)
\int_{K_\infty^n} 
w\left(\y\right)
\psi \left(\theta P^3F(\x_0+t^{-L}\y)+\frac{Pt^{-L} \c.\y}{r}\right)\d \y\\
&=\frac{1}{{\hat L}^{n}}
\psi\left(
\frac{P \c.\x_0}{r}\right)
J_G\left(\theta P^3;
\frac{Pt^{-L} \c}{r}\right),\label{eq:small}
\end{align}
in the notation of \eqref{eq:J},
where $G(\y)=
F(\x_0+t^{-L}\y)$. It is clear that $G$ is a polynomial with coefficients in
$K_\infty$ and height 
$H_G\leq H_F$.

According to Lemma 
\ref{lem:J-easy} 
we have 
$
J_G(\theta P^3;Pt^{-L}\c/r)=0$ if 
$$
\frac{|P||\c|}{|r|}> \hat L \max\{1,| P|^3 |\theta | H_F\}.
$$
Hence we may truncate the sum over $\c$ in \eqref{eq:N1} to arrive at the following result.

\begin{lemma}\label{lem:N2}
We have 
$$
N(P)=|P|^n
\sum_{\substack{
r\in \cO\\
|r|\leq \hat Q\\
\text{$r$ monic}}
} 
|r_M|^{-n}
\int_{|\theta|<|r|^{-1}\hQ^{-1}} 
\sum_{\substack{ \c\in \cO^n\\
|\c|\leq \hat C
}} 
S_{r,M,\b}(\c)
I_{r_M}(\theta;\c)
 \d \theta,$$
where $\hat C=\hat L H_F |r_M|| P|^{-1}\max\{1, |\theta|| P|^3\}$.
\end{lemma}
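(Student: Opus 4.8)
The plan is to start from the identity \eqref{eq:N1} furnished by Lemma~\ref{lem:summary}; the only task remaining is to truncate the inner sum over $\c$. Recall the computation \eqref{eq:small}, which, applied with the modulus $r_M=rM/(r,M)$ that actually occurs in \eqref{eq:N1}, records
\[
I_{r_M}(\theta;\c)=\frac{1}{\hat L^{n}}\,\psi\!\left(\frac{P\c.\x_0}{r_M}\right)J_G\!\left(\theta P^3;\frac{Pt^{-L}\c}{r_M}\right),
\]
where $G(\y)=F(\x_0+t^{-L}\y)$ is a polynomial over $K_\infty$ with height $H_G\leq H_F$. Thus everything reduces to locating the support of the map $\c\mapsto J_G(\theta P^3;Pt^{-L}\c/r_M)$.

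Next I would invoke Lemma~\ref{lem:J-easy} with the form taken to be $G$, with $\gamma=\theta P^3$ and $\w=Pt^{-L}\c/r_M$, noting that $|\w|=|P|\,|\c|/(\hat L\,|r_M|)$. The lemma shows that $J_G(\theta P^3;Pt^{-L}\c/r_M)=0$, and hence $I_{r_M}(\theta;\c)=0$, whenever
\[
\frac{|P|\,|\c|}{\hat L\,|r_M|}>\max\{1,\,|\theta|\,|P|^3 H_G\}.
\]
Equivalently, a term can contribute to \eqref{eq:N1} only if $|\c|\leq \hat L\,|r_M|\,|P|^{-1}\max\{1,|\theta||P|^3 H_G\}$.

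It then remains to bound this threshold by $\hat C$. Since $F$ is a non-zero form with coefficients in $\cO=\FF_q[t]$, at least one of its coefficients is a non-zero polynomial, so $H_F\geq 1$; combining this with $H_G\leq H_F$ gives
\[
\max\{1,|\theta||P|^3 H_G\}\leq\max\{1,|\theta||P|^3 H_F\}\leq H_F\max\{1,|\theta||P|^3\}.
\]
Consequently every non-vanishing term of \eqref{eq:N1} satisfies $|\c|\leq \hat L H_F|r_M||P|^{-1}\max\{1,|\theta||P|^3\}=\hat C$, and the summation over all $\c\in\cO^n$ may be restricted to those $\c$ with $|\c|\leq\hat C$, the ranges of $r$ and $\theta$ being carried over unchanged. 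This is precisely the assertion of the lemma.

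There is no serious obstacle here: the argument is a direct truncation built from the formula \eqref{eq:small} and Lemma~\ref{lem:J-easy}. The only points demanding a little care are the bookkeeping that distinguishes the modulus $r$ from $r_M$ throughout, and the elementary inequality $\max\{1,|\theta||P|^3 H_F\}\leq H_F\max\{1,|\theta||P|^3\}$, which is what legitimises the clean shape of $\hat C$ and which relies on $H_F\geq 1$.
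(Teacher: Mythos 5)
Your proof is correct and essentially identical to the paper's: both take the factorisation of $I_{r_M}(\theta;\c)$ from \eqref{eq:small}, apply Lemma~\ref{lem:J-easy} with $\gamma=\theta P^3$ and $\w=Pt^{-L}\c/r_M$, and conclude that only $\c$ with $|\c|\ll \hat L|r_M||P|^{-1}\max\{1,|\theta||P|^3 H_G\}$ survive. The paper absorbs $H_G\leq H_F$ and the inequality $\max\{1,|\gamma|H_F\}\leq H_F\max\{1,|\gamma|\}$ (valid since $H_F\geq 1$) into its displayed bound without comment; you simply make those two elementary steps explicit, which is a harmless difference of presentation rather than of substance.
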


We will need a good upper bound for 
$I_{r_M}(\theta;\c)$, for $r, \theta, \c$ appearing in the expression for $N(P)$ in this lemma.
 This need is met by the following result. 

\begin{lemma}\label{lem:I-hard}
We have 
$$
|I_{r_M}(\theta;\c)|\ll 
H_F^n \max\{1, |\theta|| P|^3\}^{-n/2}.
$$
\end{lemma}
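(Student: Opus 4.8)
The plan is to bound the oscillatory integral
\[
I_{r_M}(\theta;\c)=\frac{1}{\hat L^n}\,\psi\!\left(\frac{P\c.\x_0}{r}\right)J_G\!\left(\theta P^3;\frac{Pt^{-L}\c}{r}\right)
\]
that was obtained in \eqref{eq:small}, so it suffices to bound $|J_G(\gamma;\w)|$ with $\gamma=\theta P^3$ and $\w=Pt^{-L}\c/r$. Since $|\psi(\cdot)|=1$ and $\hat L$ is a fixed constant absorbed into the implied constant, and since $H_G\le H_F$, everything reduces to showing
\[
|J_G(\gamma;\w)|\ll H_F^n\max\{1,|\gamma|\}^{-n/2}.
\]
When $|\gamma|\le 1$ this is trivial because $|J_G(\gamma;\w)|\le \int_{\TT^n}\d\x=1\le H_F^n$ (recall $H_F\ge 1$, as $F\in\cO[x_1,\dots,x_n]$ is a nonzero integral form). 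So the real content is the regime $|\gamma|>1$, where we must gain the saving $|\gamma|^{-n/2}$.

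For $|\gamma|>1$ I would invoke Lemma~\ref{lem:small}, which rewrites $J_G(\gamma;\w)=\int_{\Omega}\psi(\gamma G(\x)+\w.\x)\d\x$ with
\[
\Omega=\bigl\{\x\in\TT^n:\ |\gamma\nabla G(\x)+\w|\le H_G\max\{1,|\gamma|^{1/2}\}=H_G|\gamma|^{1/2}\bigr\}.
\]
The trivial bound then gives $|J_G(\gamma;\w)|\le \meas(\Omega)$, so it remains to show $\meas(\Omega)\ll H_F^n|\gamma|^{-n/2}$. To estimate $\meas(\Omega)$ I would dissect $\TT^n$ into translates of $\delta\TT^n$ with $|\delta|=|\gamma|^{-1/2}$ (as in the proof of Lemma~\ref{lem:small}) and use Taylor expansion: on each such translate $\x=\y+\delta\z$, the condition defining $\Omega$ forces $\gamma\nabla G(\y)+\w$ to lie in a ball of radius $H_G|\gamma|^{1/2}$ (using that the second-order correction $|\delta\gamma\,\nabla^2G(\y)\z|\le |\delta\gamma|H_G=|\gamma|^{1/2}H_G$ is of the same size). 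Thus $\meas(\Omega)\le |\delta|^n\cdot\#\{\y\in(\TT/\delta\TT)^n:\ |\gamma\nabla G(\y)+\w|\le H_G|\gamma|^{1/2}\}$. Now $\nabla G(\y)$ is a polynomial map $\TT^n\to K_\infty^n$ whose leading part is $t^{-2L}\nabla F$-scaled; since $F$ is non-singular, $\nabla F$ is a finite morphism on $\PP^{n-1}$ and in particular $\nabla G$ is quasi-finite with fibres of bounded cardinality, so the number of residues $\y$ mod $\delta$ with $\gamma\nabla G(\y)$ within $H_G|\gamma|^{1/2}$ of the fixed vector $-\w$ is $O_{n,H_F}\bigl((H_G|\gamma|^{1/2}/|\gamma\delta|\,)^n+1\bigr)=O_{n,H_F}(1)$, because $|\gamma\delta|=|\gamma|^{1/2}$ and $H_G\le H_F$. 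Hence $\meas(\Omega)\ll_{n,H_F}|\delta|^n=|\gamma|^{-n/2}$, which is exactly what we want (with the $H_F^n$ factor coming from the counting estimate, tracked via $H_G\le H_F$ as in Lemma~\ref{lem:Nmbound}).

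The main obstacle will be making the counting step rigorous over the function field while keeping the dependence on $H_F$ explicit and the dependence on $\x_0,L$ correctly absorbed: one must check that the "number of $\y$ mod $\delta$" estimate genuinely holds with a constant depending only on $n$ (and a power of $H_F$), using non-singularity of $F$ to control $\nabla G$ — this is the analogue over $\FF_q(t)$ of the standard stationary-phase bound $|J_F(\gamma;\w)|\ll|\gamma|^{-n/2}$ for non-singular $F$ over $\QQ$, and in practice one would cite the function-field version of Heath-Brown's Lemma~10 of \cite{hb-10} (or reprove it using Lemma~\ref{lem:small} and Lemma~\ref{lem:deriv} exactly as sketched above). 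Once $\meas(\Omega)\ll H_F^n\max\{1,|\gamma|\}^{-n/2}$ is in hand, substituting $\gamma=\theta P^3$ and recalling $G(\y)=F(\x_0+t^{-L}\y)$ with $H_G\le H_F$ completes the proof.
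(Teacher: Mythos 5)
Your reduction to bounding $\meas(\Omega)$ via \eqref{eq:small} and Lemma~\ref{lem:small} matches the paper, as does the trivial treatment of $|\gamma|\le 1$. But the decisive counting step in the regime $|\gamma|>1$ has a genuine gap. You claim that the number of residues $\y\bmod\delta$ with $|\gamma\nabla G(\y)+\w|\le H_G|\gamma|^{1/2}$ is $O_{n,H_F}(1)$, and your justification is that $\nabla F$ is a finite (hence quasi-finite) morphism because $F$ is non-singular. That is the wrong geometric input: quasi-finiteness controls the \emph{cardinality} of fibres $\nabla G^{-1}(v)$, not the \emph{measure} of the preimage of a small ball $\{\y:|\nabla G(\y)-v|\le\varepsilon\}$. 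A non-singular cubic form still has a non-trivial Hessian hypersurface $\det\mathbf{H}(\x)=0$, and near such points a whole tube of $\y$'s can be sent by $\nabla G$ into a small ball, destroying the bound you want. Finiteness of the gradient map says nothing about this.

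What actually makes the lemma true — and what the paper's proof hinges on — is the specific construction of the weight $\omega$ in \eqref{eq:omega}: its support is a small ball around the point $\x_0$ of \eqref{eq:nice}, and $L$ is chosen large enough that \eqref{eq:heat} holds, so $|\det\mathbf{H}(\x)|=|\det\mathbf{H}(\x_0)|\neq 0$ for every $\x$ in the support. This makes the Hessian invertible with entries of bounded absolute value throughout the region of integration. The paper then runs a clean ``unique small ball'' argument: if $\x$ and $\x+\x'$ both lie in $\mathcal{R}$, then $|\nabla F(\x+\x')-\nabla F(\x)|\le H_F|\gamma|^{-1/2}$; Taylor's theorem gives $|\nabla F(\x+\x')-\nabla F(\x)-\mathbf{H}(\x)\x'|\le H_F|\x'|^2$; applying $\mathbf{H}(\x)^{-1}$ (legitimate by \eqref{eq:heat}) yields $|\x'|\ll H_F\max\{|\gamma|^{-1/2},|\x'|^2\}$, whence $|\x'|\ll H_F|\gamma|^{-1/2}$ since $|\x'|<1/H_F$. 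Thus $\mathcal{R}$ is contained in a single ball of radius $\ll H_F|\gamma|^{-1/2}$, giving $\meas(\mathcal{R})\ll H_F^n|\gamma|^{-n/2}$ directly. Your proposal never invokes \eqref{eq:nice} or \eqref{eq:heat}, which is precisely the ingredient that licenses the step you flag as ``the main obstacle''; replacing it by quasi-finiteness of $\nabla F$ does not work.
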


\begin{proof}
When $|\c|\leq \hat C$
we put 
$\gamma=\theta P^3$ and $\w=Pt^{-L}\c/r_M$, for convenience. 
In particular we have 
$$
 |\w|\leq H_F\max\{1,|\gamma|\}.
$$
It then follows from Lemma \ref{lem:small} that 
\begin{align*}
|I_{r_M}(\theta;\c)| &\leq \frac{1}{{\hat L}^{n}}
\left|J_G(\gamma;\w)\right| \\
&
\leq \frac{1}{{\hat L}^{n}}\meas\left\{\y\in \TT^n: |\gamma\nabla G(\y)+ \w|\leq
H_G \max\{1,
|\gamma|^{1/2}\}\right\}\\
&\leq
\meas(\mathcal{R}),
\end{align*}
where
$$
\mathcal{R}=
\left\{\x\in \TT^n: |\x-\x_0|<\hat L^{-1}, ~ |\gamma\nabla F(\x)+ \w|\leq H_F
\max\{1,
|\gamma|^{1/2}\}\right\}.
$$
We would like to  estimate the measure of this region. 
Recall that $\x_0$ satisfies \eqref{eq:nice}.
The parameter $L\geq 0$ is  chosen  large enough that 
\eqref{eq:heat} holds for  all $\x\in K_\infty^n$ such that 
$ |\x-\x_0|<\hat L^{-1}$.

To begin with, 
if $|\gamma|\leq 1$ then we take the trivial bound $\meas (\mathcal{R})\leq 1$.
Let us suppose instead that $|\gamma|> 1.$
If $\x$ and $\x+\x'$ are both in $\mathcal{R}$ then 
$$
|\nabla F(\x+\x')-\nabla F(\x)| \leq H_F|\gamma|^{-1/2}.
$$
But 
$$
\left|\nabla F(\x+\x')-\nabla F(\x)-\mathbf{H}(\x)\x'\right|\leq H_F |\x'|^2.
$$
Using the inverse of $\mathbf{H}(\x)$, whose entries each have absolute value $O(1)$,
we find that 
$$
|\x'| \ll H_F\max\left\{|\gamma|^{-1/2} , |\x'|^2\right\}.
$$
This implies that $|\x'|\ll H_F|\gamma|^{-1/2}$, since $|\x'|<1/H_F$.
We have therefore shown that 
\begin{equation}\label{eq:brownies}
\meas(\mathcal{R})\ll H_F^n \min\{1,|\gamma|^{-n/2}\},
\end{equation}
which concludes the proof of the lemma.
\end{proof}

\subsection{The main term}\label{8.1}

In this section we investigate the contribution to $N(P)$ in Lemma \ref{lem:N2}
coming from $\c=\0$. Let us denote this term by $M(P)$. We will always assume that $n\geq 8$. Recalling the 
definition of 
$I_{r_M}(\theta;\0)$ from Lemma~\ref{lem:summary}, 
 we find that
$$
M(P)=|P|^n
\sum_{\substack{
r\in \cO\\
|r|\leq \hat Q\\
\text{$r$ monic}}
} 
|r_M|^{-n}
S_{r,M,\b}(\0)
K_r,
$$
where
$$
K_r=
\int_{|\theta|<|r|^{-1}\hQ^{-1}} 
\int_{K_\infty^n} \omega(\x)
\psi \left(\theta P^3F(\x)\right)\d \x~
 \d \theta.
$$
It follows from Lemma \ref{lem:I-hard} that 
$
K_r=O(|P|^{-3})$
for any $r$.
Moreover, 
we recall from \eqref{eq:omega} that 
$\omega(\x)=w(t^L(\x-\x_0))$ in $K_r$,
where $w$ is given by \eqref{eq:weights}, $L$ is a large  fixed integer and 
$\x_0$ satisfies \eqref{eq:nice}.
In particular $\nabla F(\x_0)\neq \0$ and we
  let $\xi\in \ZZ$ be such that 
$$
\hat \xi=|\nabla F(\x_0)|.
$$
In particular $|\xi|<1$. 

We begin with the following basic result.

\begin{lemma}
\label{lem:bday}
For any $Y\geq 1$ and any $\ve>0$, we 
have 
$$
\sum_{\substack{
r\in \cO\\
 |r|= \hat Y\\
\text{$r$ monic}}
} 
|r_M|^{-n}
|S_{r,M,\b}(\0)|\ll \hat Y^{5/4-n/6+\ve}.
$$
\end{lemma}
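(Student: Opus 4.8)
The plan is to use the multiplicativity of $S_{r,M,\b}(\0)$ to reduce the estimate to a convolution over the multiplicity structure of $r$, and then to feed in the $\c=\0$ specialisations of the pointwise and average bounds proved in \S\ref{s:start-cubic}--\S\ref{s:averages}. First, since $M$ is fixed and $r_M=rM/(r,M)$ satisfies $|r_M|\ge|r|=\hat Y$, it is enough to show $\sum|S_{r,M,\b}(\0)|\ll\hat Y^{\,5n/6+5/4+\ve}$, the sum being over monic $r$ with $|r|=\hat Y$. The primes dividing $M$ cause no real trouble: those dividing $r$ to a power $\le 2$ split off via Lemma~\ref{lem:multi2} as a factor of absolute value $O_M(1)$, while those dividing $r$ to a power $\ge 3$ lie in the cube-full part of $r$, which is estimated by Lemma~\ref{lem:square-full} for an arbitrary modulus; for the cube-free part one may therefore assume coprimality to $M$, so that Lemma~\ref{lem:multi2} gives $S_{r,M,\b}(\0)=\prod_{\vp^\alpha\|r}S_{\vp^\alpha}(\0)$ there. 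Accordingly I would write $r=b_1b_2\,b_3\,r_4$, where $b_i=\prod_{\vp^i\|r}\vp^i$ and $r_4$ is the $4$-full part of $r$.

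Next I would collect the relevant bounds at $\c=\0$. Since $F^*$ is homogeneous of degree $3\cdot 2^{n-2}\ge 2$ we have $F^*(\0)=0$ and $\nabla F^*(\0)=\0$, so in Lemma~\ref{lem:62} the quantities $(b_1,\nabla F^*(\0))$ and $(k_2,F^*(\0))$ equal $b_1$ and $k_2$; writing $b_2=k_2^2$ this gives $|S_{b_1b_2,M,\b}(\0)|\ll A^{\omega(b_1b_2)}(|b_1||b_2|)^{(n+2)/2}$. For the cube-and-higher part I would apply Lemma~\ref{lem:square-full} with the fixed choice $C=1$ (so $\hat C$ is a constant): its first estimate, applied to the $4$-full $r_4$, gives $|S_{r_4,M,\b}(\0)|\ll|r_4|^{5n/6+1+\ve}$, while its second, sharper estimate, applied to the exact cube $b_3=k_3^3$, gives $|S_{b_3,M,\b}(\0)|\ll|b_3|^{5n/6+2/3+\ve}$; in both cases the finitely many moduli of bounded size are handled trivially, using $5n/6\ge n/2$. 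Multiplying these and dividing by $|r|^n=(|b_1||b_2|)^n|b_3|^n|r_4|^n$ yields
$$
|r|^{-n}|S_{r,M,\b}(\0)|\ll A^{\omega(b_1b_2)}(|b_1||b_2|)^{1-n/2}\,|b_3|^{2/3-n/6+\ve}\,|r_4|^{1-n/6+\ve}.
$$

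Finally I would sum over $r$ with $|r|=\hat Y$ by grouping according to the degrees $Z_1=\deg(b_1b_2)$, $Z_2=\deg b_3$, $Z_3=\deg r_4$ with $Z_1+Z_2+Z_3=Y$, using the standard counts $\sum_{\deg m=Z_1,\ m\text{ cube-free monic}}A^{\omega(m)}\ll\hat{Z_1}^{\,1+\ve}$, $\#\{k_3^3:\deg k_3^3=Z_2\}\ll\hat{Z_2}^{\,1/3}$, and $\#\{r_4:\deg r_4=Z_3,\ r_4\ \text{$4$-full}\}\ll\hat{Z_3}^{\,1/4+\ve}$. This bounds $\hat Y^{-n}\sum_{|r|=\hat Y}|S_{r,M,\b}(\0)|$ by $\hat Y^{\ve}$ times $\sum_{Z_1+Z_2+Z_3=Y}\hat{Z_1}^{\,2-n/2}\hat{Z_2}^{\,1-n/6}\hat{Z_3}^{\,5/4-n/6}$; for $n\ge 8$ the exponent $\tfrac54-\tfrac n6$ dominates both $2-\tfrac n2$ and $1-\tfrac n6$, so the maximal summand occurs at $Z_3=Y$ and, there being only $O(Y^2)=O(\hat Y^{\ve})$ summands, the whole quantity is $\ll\hat Y^{5/4-n/6+\ve}$. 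Since $|r_M|^{-n}\le|r|^{-n}$, this is the assertion of the lemma.

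I expect the main obstacle to be organisational rather than a single hard inequality: one must match each multiplicity range of $r$ to the sharpest available bound, and in particular it is essential to peel off the exact cubes $b_3$ and invoke the second (stronger) estimate of Lemma~\ref{lem:square-full} there — estimating the entire cube-full part by the first estimate alone would only produce the exponent $\tfrac43-\tfrac n6$, which is too weak — as well as to keep the divisor-type factor $A^{\omega(\cdot)}$ under the summation sign until the final counting step, where it is absorbed into $\hat Y^{\ve}$.
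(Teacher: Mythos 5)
Your proposal is correct and follows essentially the same route as the paper: both decompose $r=b_1b_2\cdot b_3\cdot r_4$, apply Lemma~\ref{lem:62} at $\c=\0$ (noting $F^*(\0)=0$ and $\nabla F^*(\0)=\0$) to the cube-free part, and the two parts of Lemma~\ref{lem:square-full} with $C=1$ to $b_3$ and $r_4$ respectively, then sum using the standard $O(\hat R^{1/j})$ count for $j$-full moduli. The difference is only cosmetic bookkeeping in the final summation step.
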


\begin{proof}
We factorise any $r$ in the summation as $r=b_1b_2b_3r_4$ and use the multiplicativity property  
Lemma \ref{lem:multi2} that is enjoyed by $S_{r,M,\b}(\0)$.  For the modulus $b_1b_2$ we apply Lemma \ref{lem:62}. For the modulus  $b_3$ (resp. ~$r_4$) we use the second (resp.~ first) part of Lemma \ref{lem:square-full} with $C=1$. This leads to the conclusion that 
\begin{align*}
S_{r,M,\b}(\0) 
&\ll |r|^\ve|b_1b_2|^{n/2+1} |b_3|^{5n/6+2/3} |r_4|^{5n/6+1}\\
&\ll |r|^{n/2+1+\ve} |b_3|^{n/3-1/3} |r_4|^{n/3}.
\end{align*}
Hence
\begin{align*}
\sum_{\substack{
r\in \cO\\
 |r|= \hat Y\\
\text{$r$ monic}}
} 
|r_M|^{-n}
|S_{r,M,\b}(\0) |
&\ll \hat Y^{1-n/6+\ve}\sum_{|b_3r_4|\leq \hat Y} 
|b_3|^{-1/3}\sum_{|b_1b_2|=\hat Y/|b_3r_4|} \frac{1}{|b_1b_2|^{n/3}}.
\end{align*}
The inner sum is absolutely convergent and there are $O(\hat R^{1/j})$ elements $r_j\in \cO$ such that $|r_j|\leq \hat R$.
Summing first over $r_4$ we see that the resulting sum over $b_3$ is absolutely convergent, which therefore completes the proof of the lemma.
\end{proof}

Let us put  $C=\hat{L-\xi}$. Since
$K_r=O(|P|^{-3})$ and 
 $5/4-n/6<0$ for $n\geq 8$,
Lemma \ref{lem:bday} implies that there exists $\delta>0$ such that 
the overall contribution to $M(P)$ from $r$ satisfying $C^{-1}\hat Q\leq |r|\leq \hat Q$ is $O(|P|^{n-3-\delta})$.
On the remaining range for $r$ we will actually  show that $K_r$ is independent of $r$.
Let 
$$
J_\theta
=
\int_{K_\infty^n} \omega(\x)
\psi \left(\theta P^3 F(\x)\right)\d \x.
$$
We then have 
$$
K_r=\int_{|\theta|<C|P|^{-3}} J_\theta ~\d\theta +\int_{C|P|^{-3}\leq |\theta|<|r|^{-1}\hat Q^{-1}} J_\theta ~\d\theta.
$$
The first integral is independent of $r$ and the second integral is over a non-empty interval if and only if $|r|<C^{-1}\hat Q^{-1}|P|^3=C^{-1}\hat Q$.

Recalling \eqref{eq:omega}, we obtain
\begin{align*}
J_\theta
&=
\int_{K_\infty^n} 
w\left(t^L(\x-\x_0)\right)
\psi \left(\theta P^3 F(\x)\right)\d \x\\
&=\frac{1}{\hat L^n}
\int_{\TT^n} 
\psi \left(\theta P^3 F(\x_0+t^{-L}\y)\right)\d \y.
\end{align*}
Let $f(\y)=\theta P^3 F(\x_0+t^{-L}\y)$.  Then, provided $L$ is sufficiently large, we will
have 
$$
|\nabla f(\y)|=
|\theta P^3t^{-L} \nabla F(\x_0+t^{-L}\y)|=\frac{|\theta| |P|^3 \hat \xi}{\hat
L}=\lambda,
$$
say, for all $\y\in \TT^n$.
Likewise, we have $|\partial^{\bbe} f(\y)|< \lambda$, 
for all $|\bbe|\geq 2$ and all $\y\in \TT^n$.  Hence Lemma 
\ref{lem:deriv} implies that  $J_\theta=0$
if $\lambda\geq 1$.
It therefore follows that 
$$
K_r=
\int_{|\theta|< C|P|^{-3}} 
\int_{K_\infty^n} \omega(\x)
\psi \left(\theta P^3F(\x)\right)\d \x
 \d \theta,
$$
when $|r|<C^{-1}\hat Q$, 
which is now independent of $r$.

Making the change of variables $\phi=\theta P^3$, we conclude that 
$$
M(P)=|P|^{n-3}
\mathfrak{S}(Q)
\mathfrak{I} +O(|P|^{n-3-\delta}),
$$
for $n\geq 8$, 
where
$$
\mathfrak{S}(Q)
=
\sum_{\substack{
r\in \cO\\
|r|\leq \hat Q\\
\text{$r$ monic}}
} 
|r_M|^{-n}
S_{r,M,\b}(\0)
$$
and 
$$
\mathfrak{I}=
\int_{|\phi|<\hat{L-\xi}} 
\int_{K_\infty^n} \omega(\x)
\psi \left(\phi F(\x)\right)\d \x
 \d \phi.
$$
The latter quantity is (essentially) the ``singular integral'' for the problem  and can be evaluated 
explicitly as follows.


\begin{lemma}\label{lem:sing-integral}
We have 
$$\mathfrak{I}=
\frac{1}{|\nabla F(\x_0)| \hat L^{n-1}}
\gg 1.
$$
\end{lemma}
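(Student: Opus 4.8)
The plan is to collapse $\mathfrak{I}$ to an explicit volume by means of the orthogonality relation for $\psi$. First I would perform the substitution $\x=\x_0+t^{-L}\y$ in the inner integral: recalling from \eqref{eq:omega} that $\omega(\x)=w(t^L(\x-\x_0))$ is the indicator of $\{|\x-\x_0|<\hat{-L}\}$, Lemma~\ref{lem:change} transforms this domain into $\TT^n$ and produces a Jacobian factor $\hat L^{-n}$, so that
\[
\mathfrak{I}=\frac{1}{\hat L^{n}}\int_{|\phi|<\hat{L-\xi}}\int_{\TT^n}\psi\bigl(\phi\,F(\x_0+t^{-L}\y)\bigr)\,\d\y\,\d\phi.
\]
Since the integrand is bounded and both domains carry finite measure, I would then use Fubini's theorem to interchange the order of integration and evaluate the inner $\phi$-integral by Lemma~\ref{lem:orthog} (with $Y=L-\xi\in\ZZ$ and $\gamma=F(\x_0+t^{-L}\y)$): it equals $\hat{L-\xi}$ when $|F(\x_0+t^{-L}\y)|<\hat{\xi-L}$ and vanishes otherwise. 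This yields
\[
\mathfrak{I}=\frac{\hat{L-\xi}}{\hat L^{n}}\,\meas\bigl\{\y\in\TT^n:\ |F(\x_0+t^{-L}\y)|<\hat{\xi-L}\bigr\}.
\]

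The crux is then to show that the set on the right is all of $\TT^n$. Since $F$ is a cubic form and $F(\x_0)=0$ by \eqref{eq:nice}, the Taylor expansion gives
\[
F(\x_0+t^{-L}\y)=t^{-L}\,\y\cdot\nabla F(\x_0)+\tfrac12 t^{-2L}\,\y^{T}\bH(\x_0)\y+t^{-3L}F(\y).
\]
For $\y\in\TT^n$ one has $|y_i|\le q^{-1}$, so the linear term has absolute value at most $q^{-L}\hat\xi\cdot q^{-1}=\hat{\xi-L-1}<\hat{\xi-L}$ (using $\hat\xi=|\nabla F(\x_0)|$ and $\ch(\FF_q)>3$, which makes the integer factors in the partial derivatives units); and since $L$ is a large fixed integer, the quadratic and cubic terms are bounded by $H_F q^{-2L-2}$ and $H_F q^{-3L-3}$ respectively, both strictly smaller than $\hat{\xi-L}$ once $L$ exceeds an explicit bound depending only on $\xi$ and $H_F$. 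The ultrametric inequality then forces $|F(\x_0+t^{-L}\y)|<\hat{\xi-L}$ for every $\y\in\TT^n$, so the measure above equals $\meas(\TT^n)=1$.

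Combining the two displays gives $\mathfrak{I}=\hat{L-\xi}/\hat L^{n}=\hat{-\xi-(n-1)L}=1/(\hat\xi\,\hat L^{n-1})=1/(|\nabla F(\x_0)|\,\hat L^{n-1})$, which is the claimed formula. Since $\x_0$, $L$ and $F$ are fixed once and for all, the right-hand side is a fixed positive real number, whence $\mathfrak{I}\gg1$. The only point demanding any care is the uniform control of the higher-order Taylor terms over all of $\TT^n$ in the second step, and even this is routine because $L$ has been taken large; there is no genuine obstacle here, the non-archimedean absolute value making the passage to the ``singular integral'' an exact localisation rather than the approximate one familiar over $\QQ$.
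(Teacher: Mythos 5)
Your proof is correct and follows exactly the same route as the paper: open $\omega$ by the change of variables $\x=\x_0+t^{-L}\y$, integrate out $\phi$ via Lemma~\ref{lem:orthog} to turn $\mathfrak{I}$ into $\hat{L-\xi}\hat L^{-n}$ times a measure, then use the Taylor expansion of $F$ about $\x_0$ (with $F(\x_0)=0$) and the ultrametric inequality to show that for $L$ large the measured set is all of $\TT^n$. The only cosmetic differences are that you name Fubini explicitly and spell out the powers of $q$ in the term-by-term bounds, which the paper leaves as an $O(\cdot)$.
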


\begin{proof}
Opening up $\omega$  and making  a change of variables as before, we see that 
\begin{align*}
\mathfrak{I}
&=
\frac{1}{ \hat{L}^n}
\int_{|\phi|<\hat{L-\xi}} 
\int_{\TT^n} 
\psi \left(\phi F(\x_0+t^{-L}\y)\right)\d \y
 \d \phi\\
&=
\frac{\hat{L-\xi}}{\hat L^n}
\meas \left\{ \y\in \TT^n: 
|F(\x_0+t^{-L}\y)| <\frac{\hat \xi}{\hat L}
\right\},
\end{align*}
by Lemma \ref{lem:orthog}.
Put $f(\y)=F(\x_0+t^{-L}\y)$. Then Taylor's theorem yields
$$
f(\y)=t^{-L}\y.\nabla F(\x_0)+\tfrac{1}{2}t^{-2L}\y^T \nabla^2F(\x_0)\y +t^{-3L}F(\y),
$$
since $F(\x_0)=0$ by \eqref{eq:nice}.
Now the second and third terms here have absolute value $O(|\y|^2/\hat L^2)$, whereas the first term has absolute value at most 
$|\y|\hat \xi/\hat L$.
Assuming that $L$ is large enough, it therefore follows from the ultrametric inequality that 
$|f(\y)|<\hat \xi/\hat L$ for any $\y\in \TT^n$. Hence the region in which we are interested has measure $1$, which finally leads to the desired conclusion.
\end{proof}

In view of Lemma \ref{lem:bday} we can extend the summation over $r$ in $\mathfrak{S}(Q)$ to infinity with acceptable error. Thus, for $n\geq 8$,  there exists $\delta>0$ such that 
$$
M(P)=|P|^{n-3}
\mathfrak{S}
\mathfrak{I} +O(|P|^{n-3-\delta}),
$$
where $\mathfrak{I}$ is given by Lemma \ref{lem:sing-integral} and 
$$
\mathfrak{S}
=
\sum_{\substack{
r\in \cO\\
\text{$r$ monic}}
} 
|r_M|^{-n}
S_{r,M,\b}(\0)
$$
is the (absolutely convergent) ``singular series''.
The analysis of $\mathfrak{S}$ is standard and 
will not be repeated here. 
It  runs exactly as in Lee  \cite{lee, lee-thesis}, with the outcome that $\mathfrak{S}>0$ if 
for every finite prime $\vp$ 
there exists 
$\x\in\cO_\vp^n$ such that $F(\x)=0$ and 
$|\x-\b|_\vp <|M|_\vp$.

\subsection{Preparations for the error term}\label{8.2}

It remains to show that overall contribution to $N(P)$ in Lemma \ref{lem:N2} from $\c\neq \0$ is $O(|P|^{n-3-\delta})$ for some $\delta>0$ if $n=8$. The purpose of this section is to lay some groundwork furthering this aim.
Now it is clear from \eqref{eq:blue} that there is a satisfactory overall contribution to $N(P)$ from values of $\theta$
such that $|\theta|<\hat Q^{-5}$. This allows us to henceforth focus on the contribution from $|\theta|\geq \hat Q^{-5}$.

Let $Y, \Theta\in \ZZ$ be such that 
\begin{equation}\label{eq:conditions on Y}
0\leq Y\leq Q, \quad 
-5Q\leq \Theta<-(Y+Q).
\end{equation}
The last inequality is equivalent to $\hat Q^{-5}\leq \hat \Theta<(\hat Y \hat Q)^{-1}$ and one sees that there are 
at most $4Q=O(\log |P|)$ choices for $Y,\Theta$.
We will content ourselves with focusing on the overall contribution to $N(P)$ from $\c\neq \0$ and $r, \theta $ such that 
$|r|=\hat Y$ and $|\theta|=\hat \Theta$. Let us denote this contribution by $E(P)=E(P;Y,\Theta)$.
Suppose that  we are able to prove the existence of a positive constant $\eta>0$ such that 
\begin{equation}\label{eq:EP}
E(P)=O(|P|^{n-3-\eta})
\end{equation}
for any 
 $Y, \Theta\in \ZZ$ satisfying \eqref{eq:conditions on Y}. Then this will lead to an asymptotic formula for $N(P)$, as $|P|\to \infty$,  for the range of $n$ that \eqref{eq:EP} is valid for.

In what follows it will be convenient to introduce the notation 
\begin{equation}\label{eq:J-notation}
J(\Theta)=\max\{1, \hat \Theta| P|^3\}.
\end{equation}
The constraint on $\c$ imposed in Lemma \ref{lem:N2} now becomes
$
|\c|\ll \hat Y |P|^{-1}J(\Theta),
$
for a suitable implied constant. 
In particular, since $\c\neq \0$, we must have
\begin{equation}\label{eq:hatY-size}
\hat Y\gg \frac{|P|}{J(\Theta)}.
\end{equation}
Switching the order of summation we obtain
\begin{equation}\label{eq:E(P)}
E(P)=|P|^n
\sum_{\substack{ \c\in \cO^n\\
\c\neq \0\\
|\c|\ll \hat Y |P|^{-1}J(\Theta)
}} 
\sum_{\substack{
r\in \cO\\ |r|=\hat Y\\
\text{$r$ monic}}
} 
|r_M|^{-n}
\int_{|\theta|=\hat \Theta} 
S_{r,M,\b}(\c)
I_{r_M}(\theta;\c)
 \d \theta,
\end{equation}
where $I_{r_M}(\theta;\c)\ll J(\Theta)^{-n/2}$.
Let $S$ be a set of finite primes to be decided upon in due course, but which contains all prime divisors of $M$.
Any $r\in \cO$ can be written $r=b_1'b_1r_2$ where 
$b_1'$ is square-free such that $\vp\mid b_1' \Rightarrow \vp\in S$
 and $b_1$ is square-free and coprime to $S$.
According to Lemma~\ref{lem:multi2} there is a factorisation 
 $M=M_1M_2M_3 $ for $M_1,M_2,M_3\in \cO$ such that 
$M_1\mid (b_1')^\infty$, $M_2\mid r_2^\infty$ and $(M_3,r)=1$, 
together with  $\b_1,\b_2,\b_3\in (\cO/M\cO)^n$ such that 
\begin{equation}\label{eq:factorise}
S_{r,M,\b}(\c)
=
S_{b_1,1,\0
}(\c)
S_{b_1',M_1,\b_1
}(\c)
S_{r_2,M_2,\b_2
}(\c)
\psi\left(\frac{-\c.\b_3}{M_3}\right).
\end{equation}
The vectors $ \b_1,\b_2$ and $\b_3$ depend only on the value of $b_1\bmod{M}$.
In \S \ref{s:sqf} we will consider the effect of $S_{b_1,1,\0
}(\c)
$ on $E(P)$. Later, in \S \ref{s:sqfull}, we will consider the contribution 
from $S_{r_2,M_2,\b_2
}(\c)
$.

\section{Contribution from square-free moduli}\label{s:sqf}

It will be convenient to define
$$
\mathcal{O}^\sharp= \left\{
b\in \mathcal{O}: \mbox{$b$ is monic and square-free}\right\}.
$$
Recalling the expression \eqref{eq:E(P)} 
and the subsequent factorisation \eqref{eq:factorise} of the exponential sum involved, it follows that there exists
$\b_1,\b_2\in (\cO/M\cO)^n$ and $b_0\in (\cO/M\cO)^*$ such that 
\begin{equation}\label{eq:E(P)-1}
\begin{split}
E(P)
\ll~& \frac{|P|^n}{\hat Y^{(n-1)/2}}
\sum_{\substack{ \c\in \cO^n\\
\c\neq \0\\
|\c|\ll \hat Y |P|^{-1}J(\Theta)
}} 
\sum_{\substack{b_1'\in \cO^\sharp\\ \vp\mid b_1'\Rightarrow \vp\in S}}\\
&\times
\sum_{\substack{
r_2\in \cO\\ |b_1'r_2|\leq \hat Y}} 
\frac{
|S_{b_1',M_1,\b_1
}(\c) S_{r_2,M_2,\b_2
}(\c)|}
{|b_1'r_2|^{(n+1)/2}}
\int_{|\theta|=\hat \Theta}
|\Sigma(Y, \theta)| \d\theta,
\end{split}
\end{equation}
where
\begin{equation}\label{eq:access}
\Sigma(Y,\theta)
=
\sum_{\substack{
b_1\in \cO^\sharp
\\
(b_1,S)=1\\
|b_1'b_1r_2|=\hat Y 
\\ b_1\equiv b_0\bmod{M}} }
\frac{S_{b_1,1,\0}(\c)
I_{b_1(b_1'r_2)_M}(\theta;\c)}{|b_1|^{(n+1)/2}
}.
\end{equation}
Here we have observed that 
$(b_1'b_1r_2)_M=b_1(b_1'r_2)_M$ since $(b_1,M)=1$.
Our main job in this section is to estimate 
$\Sigma(Y,\theta)$ whenever $\c$ is suitably generic.

In what follows we will 
put $b=b_1$ and redefine $b_1'r_2$ to be $d$, for simplicity. 
Putting 
$S_{b}(\c)=S_{b,1,\0}(\c)$, we  have 
\begin{align*}
S_{b}(\c)
&=
\sumstar_{\substack{
|a|<|b|} }
\sum_{\substack{\y\in \cO^n\\|\y|<|b|
}} 
\psi\left(\frac{aF(\y)-\c.\y}{r}\right).
\end{align*}
Let $\Delta_F\in \cO$ denote the non-zero discriminant of $F$. Assuming that $F^*(\c)\neq 0$
we shall take $S$ to be the set of primes dividing $\Delta_FMF^*(\c)$.
Alternatively, if $F^*(\c)=0$ but $\nabla F^*(\c)\neq \0$, then we will take 
 $S$ to be the set of primes dividing $\Delta_FM\nabla F^*(\c)$.
Lemma \ref{lem:multi2} shows that the sum $S_b(\c)$ is a multiplicative function of $b$. 
When $b=\vp$ for a prime $\vp$,  the sum is a complete exponential sum over the finite field $\FF_\vp$. 
It then follows from \eqref{eq:alpha=1}
that  $S_{\vp}(\c)
\ll  |\vp|^{(n+1)/2}|(\vp,\nabla F^*(\c))|^{1/2}$.
Hence
\begin{align*}
\sum_{\substack{
b\in \cO^\sharp
\\
(b,S)=1\\
|bd|=\hat Y 
\\ b\equiv b_0\bmod{M}} }
\frac{|S_{b}(\c)|
}{|b|^{(n+1)/2}
}
&\ll  \hat Y^\ve
\sum_{\substack{
b\in \cO^\sharp
\\
(b,S)=1\\
|bd|=\hat Y}}
|(b,\nabla F^*(\c))|^{1/2}
\end{align*}
in \eqref{eq:access}.
According to our definition of $S$ this is 
$O(|d|^{-1}\hat Y^{1+\ve})$
if  $\nabla F^*(\c)\neq \0$
and 
$O(|d|^{-3/2}\hat Y^{3/2+\ve})$
if 
$\nabla F^*(\c)=\0$.
Recalling the bound in Lemma \ref{lem:I-hard}
for 
$I_{bd_M}(\theta;\c)$
and the definition \eqref{eq:J-notation} of $J(\Theta)$, 
this leads to the following ``easy'' estimate for $\Sigma(Y,\theta)$.

\begin{lemma}\label{lem:goal-easy}
Let $\ve>0$. 
If   $\nabla F^*(\c)\neq \0$ and $S$ is the set of primes dividing $\Delta_F M\nabla F^*(\c)$, then
$$
\Sigma(Y,\theta)\ll  \frac{\hat Y^{1+\ve}}{|d|J(\Theta)^{n/2}}.
$$
If $\nabla F^*(\c)=\0$ and $S$ is the set of primes dividing 
$\Delta_F M$, then 
$$
\Sigma(Y,\theta)\ll  \frac{\hat Y^{3/2+\ve}}{|d|^{3/2}J(\Theta)^{n/2}}.
$$
\end{lemma}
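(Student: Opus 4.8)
\textbf{Proof proposal for Lemma \ref{lem:goal-easy}.}

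The plan is to extract the square-free modulus $b$ from the sum $\Sigma(Y,\theta)$ defined in \eqref{eq:access}, bound the remaining pieces trivially, and then reduce to a divisor-type estimate over $\cO$. First I would recall that by Lemma \ref{lem:I-hard} we have $I_{b d_M}(\theta;\c)\ll H_F^n J(\Theta)^{-n/2}$, where $J(\Theta)=\max\{1,\hat\Theta|P|^3\}$ as in \eqref{eq:J-notation}, and that this bound is uniform in $b$. Since the implied constants are allowed to depend on $H_F$, $M$ and $\b$, this lets me pull the integral factor $I$ out of $\Sigma(Y,\theta)$ at the cost of $J(\Theta)^{-n/2}$, leaving
$$
\Sigma(Y,\theta)\ll \frac{1}{J(\Theta)^{n/2}}
\sum_{\substack{ b\in \cO^\sharp\\ (b,S)=1\\ |b d|=\hat Y\\ b\equiv b_0\bmod M}}
\frac{|S_b(\c)|}{|b|^{(n+1)/2}}.
$$

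Next I would estimate $S_b(\c)=S_{b,1,\0}(\c)$. By the multiplicativity in Lemma \ref{lem:multi2}, since $b$ is square-free the sum $S_b(\c)$ is a product of complete exponential sums $S_\vp(\c)$ over the primes $\vp\mid b$, each of which is a complete sum over the finite field $\FF_\vp$. Applying the Deligne-based bound \eqref{eq:alpha=1}, namely $S_\vp(\c)\le A(n,|\Delta_F|)|\vp|^{(n+1)/2}|(\vp,\nabla F^*(\c))|^{1/2}$, and noting $\vp\nmid\Delta_F$ for all $\vp\mid b$ by the choice of $S$, I get
$$
|S_b(\c)|\ll A^{\omega(b)}|b|^{(n+1)/2}\,|(b,\nabla F^*(\c))|^{1/2}.
$$
Absorbing $A^{\omega(b)}$ into the factor $\hat Y^\ve$ (standard, since $\omega(b)\ll \log\hat Y/\log\log\hat Y$ for $|b|\le\hat Y$), the $b$-sum becomes
$$
\sum_{\substack{ b\in \cO^\sharp\\ (b,S)=1\\ |b d|=\hat Y}}
\frac{|S_b(\c)|}{|b|^{(n+1)/2}}
\ll \hat Y^\ve \sum_{\substack{ b\in \cO^\sharp\\ (b,S)=1\\ |b|=\hat Y/|d|}}
|(b,\nabla F^*(\c))|^{1/2}.
$$

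Finally I would evaluate this last sum in the two cases. If $\nabla F^*(\c)=\0$ then the gcd factor is $|(b,\0)|=|b|=\hat Y/|d|$, so the sum is $(\hat Y/|d|)^{1/2}$ times the number of monic square-free $b$ of norm $\hat Y/|d|$, which is $O((\hat Y/|d|)^{1/2}\cdot \hat Y/|d|)=O(\hat Y^{3/2}/|d|^{3/2})$; combining with the $\hat Y^\ve$ and $J(\Theta)^{-n/2}$ factors gives the second bound of the lemma. If $\nabla F^*(\c)\neq\0$, then the key point is that every prime $\vp\mid \nabla F^*(\c)$ divides $S$ by our choice of $S$, so it is excluded from $b$; hence $(b,\nabla F^*(\c))=1$ for all $b$ in the sum, and the sum collapses to the number of $b$ with $|b|=\hat Y/|d|$, which is $O(\hat Y/|d|)$. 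This yields the first bound $\Sigma(Y,\theta)\ll \hat Y^{1+\ve}/(|d|J(\Theta)^{n/2})$. The one mild subtlety — which I would treat as the main point rather than a real obstacle — is making sure the congruence condition $b\equiv b_0\bmod M$ and coprimality to $S$ are compatible (they are, since $M$ and $\nabla F^*(\c)$ have all their prime divisors in $S$, so dropping the congruence only loses a bounded constant), and that $\nabla F^*(\c)\neq\0$ genuinely forces a nontrivial set $S$ so that no cancellation is needed in the $b$-aspect. Everything else is routine counting over $\cO$.
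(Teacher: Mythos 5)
Your proposal is correct and follows essentially the same approach as the paper: pull the oscillatory integral factor out using Lemma~\ref{lem:I-hard}, apply the Deligne-type bound \eqref{eq:alpha=1} multiplicatively over the square-free $b$, absorb $A^{\omega(b)}$ into $\hat Y^\ve$, and count $b$ directly (with $(b,\nabla F^*(\c))=1$ forced by the coprimality to $S$ in the first case, and $|(b,\0)|=|b|$ in the second). The paper presents this as a single terse paragraph immediately preceding the lemma; your write-up simply fills in the elementary counting of monic square-free $b$ of fixed norm, and correctly notes that dropping the congruence $b\equiv b_0\bmod M$ only enlarges the sum.
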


The implied constants in these estimates are allowed to depend on the choice of $\ve$, a convention that we shall henceforth adhere to.
Lemma \ref{lem:goal-easy} does not take advantage of any cancellation in the sum over $b\in \cO^\sharp$ coming from sign changes 
in the exponential sum $S_{b}(\c)$. 
The following ``hard'' estimate does so under suitable hypotheses. Its proof will occupy the rest of this section.

\begin{lemma}\label{lem:goal}
Assume that $n$ is even and that $F^*(\c)\neq 0$. Let $S$ be the set of primes dividing $\Delta_F MF^*(\c)$. 
Assume, furthermore, that $
\hat Y > \sqrt{q|d||P||\c|}$.
Then for any $\ve>0$ we have 
$$
\Sigma(Y,\theta)\ll \frac{(|\c| \hat Y)^\ve}{|d|^{1/2}} \frac{\hat Y^{1/2}}{J(\Theta)^{n/2-1/2}}.
$$
\end{lemma}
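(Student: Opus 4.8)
The plan is to exploit cancellation in the sum over square-free $b=b_1$ coprime to $S$ by opening the exponential sum $S_b(\c)$, using multiplicativity to write it as a Dirichlet series, and then invoking the analytic properties of the twisted global $L$-function $L(\eta\otimes\eta'\otimes H_\ell^m(Y),s)$ from \S\ref{s:twisting}. Concretely, by \eqref{eq:alpha=1} and the interpretation of $S_\vp(\c)$ as a complete exponential sum over $\FF_\vp$, the trace function $\vp\mapsto S_\vp(\c)/|\vp|^{(n+1)/2}$ is (up to bounded-rank error terms and the trivial/Tate factors) the trace of Frobenius on the middle cohomology $H_\ell^m(Y_\c)$ of the cubic hypersurface $Y_\c$ cut out on the hyperplane $\c.\x=0$ by $F$; here $F^*(\c)\neq0$ guarantees $Y_\c$ is non-singular, so this sheaf is lisse away from $S$ and $\iota$-pure of weight $m=n-3$. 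The congruence condition $b\equiv b_0\bmod M$ and the condition $|b_1'b_1 d|=\hat Y$ (i.e.\ $|b|$ fixed) are handled by introducing the Dirichlet character $\eta'$ modulo $M$ together with the character $\eta$ on $(\cO_\infty/t^{-J}\cO_\infty)^*$ that detects $\deg b$ — this is precisely the device described in the introduction and built in \S\ref{s:twisting} to substitute for partial summation.

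The key steps, in order, are: (i) express $\Sigma(Y,\theta)$ via Perron/coefficient-extraction as a weighted sum over the Dirichlet coefficients of $\prod_\vp(1 - S_\vp(\c)|\vp|^{-(n+1)/2}T^{\deg\vp}+\cdots)^{-1}$, which after accounting for the square-free restriction and the spurious factors $P_0,P_2$ is essentially $L(\eta\otimes\eta'\otimes H_\ell^m(Y_\c),s)^{\pm1}$ evaluated along $\Re(s)=(n+1)/2$; (ii) use \eqref{eq:YEAH-twist} and the fact that the inverse roots of $P_{1,m}$ have absolute value $q^{(m+k)/2}$ with $m=n-3$, so that the relevant $L$-value, being a polynomial in $q^{-s}$ (here we crucially use $C=\PP^1$, $g=0$, so $P_{0,m}$ and $P_{2,m}$ only contribute the known Tate pieces that we peel off by hand) of degree $O_{d,m,|M|}(\log|\Delta_{Y_\c}|+J)$, is bounded on the critical line by its degree times the size of its largest coefficient; (iii) bound $\log|\Delta_{Y_\c}|\ll_{\ve}(|\c|\hat Y)^\ve$ — since $\Delta_{Y_\c}$ is a polynomial of bounded degree in the coefficients of $F$ and in $\c$, its degree in $t$ is $O_{d,n}(\log|\c|)$ — and choose $J$ of size $O(\log\hat Y)$ to detect $\deg b$, which is where the hypothesis $\hat Y>\sqrt{q|d||P||\c|}$ enters to ensure the ``dual length'' of the detecting sum is controlled and no main-term (pole-type) contribution survives; (iv) insert the resulting saving of $\hat Y^{1/2}$ relative to the trivial bound of Lemma~\ref{lem:goal-easy}, together with the bound $|I_{bd_M}(\theta;\c)|\ll J(\Theta)^{-n/2}$ from Lemma~\ref{lem:I-hard} and the extra factor $J(\Theta)^{1/2}$ that the Riemann hypothesis for $P_{1,m}$ buys in the $\theta$-aspect; and (v) collect the $d$-dependence, using $|S_{b_1',M_1,\b_1}(\c)S_{r_2,M_2,\b_2}(\c)|\ll|b_1'r_2|^{(n+1)/2+\ve}$ from Lemma~\ref{lem:62} (trivially applied on the square-free part $b_1'$ and absorbed into the $|d|^{1/2}$), to arrive at the stated bound.

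The main obstacle I anticipate is step (ii)–(iii): making rigorous the passage from ``sum of Dirichlet coefficients of a ratio of $L$-functions'' to a genuine bound of the form $(|\c|\hat Y)^\ve\hat Y^{1/2}$. One has to (a) correctly strip off the contributions of $P_{0,m}$ and $P_{2,m}$ (which over $\PP^1$ are completely explicit, contributing only the trivial ``main term'' already isolated in \S\ref{8.1}, hence harmless here because we are in the regime $\c\neq\0$ and the relevant $L$-value has no pole), (b) control the degree of $P_{1,m}$ uniformly, which forces the estimate $\log|\Delta_{Y_\c}| = O_{\ve}((|\c|\hat Y)^\ve)$ and the choice of the auxiliary modulus $J$, and (c) verify that the $\eta$-twist genuinely isolates a fixed value of $\deg b = Y - \deg(b_1'r_2)$ while keeping $\sw(\eta)=O(J)=O(\log\hat Y)$ so that $\deg P_{1,m}$ stays $\ll_\ve(|\c|\hat Y)^\ve$. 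Once the degree bound and the purity of the inverse roots of $P_{1,m}$ are in hand, the final estimate is a short computation; the delicate part is the bookkeeping of which cohomology contributes and the uniformity in $\c$.
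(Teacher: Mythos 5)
Your overall skeleton is the right one — discretise the $b$-dependence by introducing characters, apply Perron, and invoke the rationality and Riemann hypothesis (via \eqref{eq:YEAH-twist}) for the twisted $L$-function of $X_\c$, combined with a degree bound for $P_{1,m}$. But three of the load-bearing steps, as you have described them, would not go through, and two of them point to the most novel part of the paper's argument.

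First, the character on $(\cO_\infty/t^{-J}\cO_\infty)^*$ is \emph{not} there to detect $\deg b$: in $\Sigma(Y,\theta)$ the degree of $b$ is already fixed (equal to $K=\deg(\hat Y/|d|)$). What the character $\chi\bmod x^J$ (with $x=t^{-1}$) actually detects is the residue $b/t^K\bmod x^J$, i.e.\ the top $J$ coefficients of $b$, because — \emph{under the hypothesis $\hat Y>\sqrt{q|d||P||\c|}$, which guarantees $J\leq K$} — the archimedean integral $I_{bd_M}(\theta;\c)$ only depends on $b$ through this residue. The entire point of the hypothesis on $\hat Y$ is to make this discretisation of the oscillatory integral legitimate. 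As you have set it up, your Perron step would have $I_{bd_M}(\theta;\c)$ varying inside the $b$-sum with no handle on it.

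Second, and more seriously, once you have introduced $\hat J=\#D_2$ characters $\chi$, the identity \eqref{eq:cross} presents you with the sum $\sum_{a\in D_2}\bar\chi(a)I_{t^Kad_M}(\theta;\c)$. Trivially this costs a factor $\hat J\ll J(\Theta)$ (this is \eqref{eq:bath}), which after combining with the Perron saving gives only $J(\Theta)^{1-n/2}$ in the $\theta$-aspect — a full factor of $J(\Theta)^{1/2}$ worse than the target. The missing ingredient is Lemma~\ref{lem:red}: a non-trivial bound $J(\Theta)^{1/2-n/2}$ on $\frac{1}{\#D_2}\sum_\chi|\sum_a\bar\chi(a)I_{t^Kad_M}|$, proved by a stationary-phase decomposition of $a$ into $a_0+x^{J_0}a_1$ and an explicit evaluation of the resulting Gauss-type sum over $a_1$. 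Your proposal instead attributes the exponent $n/2-1/2$ to ``the extra factor $J(\Theta)^{1/2}$ that the Riemann hypothesis for $P_{1,m}$ buys in the $\theta$-aspect''. This is a misattribution: the Riemann hypothesis input has nothing to do with the $\theta$-aspect, it only controls the $b$-sum (giving $\hat Z^{1/2}$ there); the $J(\Theta)^{1/2}$ exponent comes from a purely archimedean argument that your plan does not contain.

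Third, the inverse $L$-value $L(\eta\otimes H_\ell^{n-3}(X_\c),s)^{-1}$ in \eqref{eq:continue} is $P_0P_2/P_1$, not a polynomial: the denominator $P_1$ has degree $\ll\log|\c|+J$ and its zeros lie on $\Re(s)=\tfrac12$, so on the line $\sigma=\tfrac12+\ve$ the trivial lower bound $|P_1(q^{-s})|\geq(1-q^{-\ve})^{\deg P_1}$ only gives an estimate that is \emph{polynomial} in $|\c|$, not $O(|\c|^\ve)$. Bounding it ``by its degree times the size of its largest coefficient'' would therefore lose far too much. Lemma~\ref{lem:size of F} really does need the Borel--Carath\'eodory / Hadamard three-circle machinery to interpolate between a trivial bound on $\sigma>1$ and a crude bound on $\sigma>\tfrac12$ and reach $O(|\c|^\ve)$ uniformly on $\sigma\geq\tfrac12+\ve$. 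In short: the global shape of your plan matches the paper's, but the discretisation mechanism, the non-trivial averaging over $a\in D_2$, and the analytic continuation bound all need genuinely different and more elaborate arguments than what you propose.
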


At this stage it might be useful to compare  Lemmas \ref{lem:goal-easy} and \ref{lem:goal} for typical values of 
$Y,\Theta, \c$ satisfying \eqref{eq:conditions on Y},
by which we mean that  $Y\sim Q$, 
$\hat \Theta\sim (\hat Y \hat Q)^{-1}$
and 
$|\c|\sim |P|^{1/2}$.
But then  $J(\Theta)\sim 1$ and the bound in Lemma \ref{lem:goal} is roughly 
of order $(\hat Q/|d|)^{1/2}$, while that in 
 Lemma \ref{lem:goal-easy} is 
of order $\hat Q/|d|$.

We begin the proof of Lemma \ref{lem:goal}  by writing 
$$
\Sigma(Y,\theta)
=
\int_{K_\infty^n} 
w\left(t^L(\x-\x_0)\right)
\psi \left(\theta P^3F(\x)\right)
\Sigma(Y;\x)
\d \x,
$$
where
$$
\Sigma(Y;\x)=
\sum_{\substack{
b\in \cO^\sharp
\\
(b,S)=1\\
|b|=\hat Y/|d|\\b\equiv b_0\bmod{M}
} }
\frac{S_{b}(\c)
}{|b|^{(n+1)/2}}
\psi\left(\frac{P \c.\x/d_M}{b }\right).
$$
Here we recall  that $|\x|<1$ for $\x\in K_\infty^n$ such that $w(t^L(\x-\x_0))\neq 0$. 
We  detect the condition 
$b\equiv b_0\bmod{M}$ by summing over  Dirichlet characters $\eta_1 \bmod{M}$.
Letting $D_1=(\cO/M\cO)^*$, this gives
\begin{align*}
\Sigma(Y;\x)=
\frac{1}{\#D_1}\sum_{\eta_1\bmod{M}}\overline{\eta_1(b_0)}\sum_{\substack{
b\in \cO^\sharp
\\
(b,S)=1\\
|b|=\hat Y/|d|
} }
\frac{\eta_1(b)S_{b}(\c)
}{|b|^{(n+1)/2}}
\psi\left(\frac{P \c.\x/d_M}{b }\right).
\end{align*}

Next, 
we let  $J\in \ZZ$ be such that
$$
\hat J=q^J=\max\left\{1, \frac{q|P||\c|}{\hat Y}\right\}.
$$
In particular $J\geq 0$. Typically we 
expect  $\hat J$ to be rather  small.
For any $b$ arising in $\Sigma(Y;\x)$ let us put $K=\deg(b)$, so that 
$$
\hat K=q^K=\frac{\hat Y}{|d|}.
$$
Lemma \ref{lem:goal} is stated under the assumption that $\hat Y \geq  \sqrt{q|d||P||\c|}$, 
which is equivalent to    $J\leq  K$. 
Let us put  $x=t^{-1}$ for the prime at infinity and $A=\FF_q[x] \subset \mathcal{O}_\infty$.
Then, since $b$ is  monic, there exist
$c_1,\dots,c_K\in \FF_q$ such that 
$$
b=
\underbrace{t^K+c_1t^{K-1}+\dots+c_{J-1}t^{K-J+1}}_{=t^Ka}
+\underbrace{c_Jt^{K-J}+\dots+c_K}_{=t^{K-J}b'},
$$
where  $a\in (A/x^J A)^*$ and 
 $b'\in A$. 
 Thus $b=t^K(a+x^J b')$ with $|b'|\leq 1$ and $|a|=1$.
  But then it follows that 
 \begin{align*}
 \psi\left(\frac{P \c.\x/d_M}{b }\right)&=
 \psi\left(\frac{P \c.\x}{d_M}\left\{\frac{1}{t^K(a+x^J b')}-\frac{1}{t^Ka}
 \right\}\right) 
 \psi\left(\frac{P \c.\x/d_M}{t^Ka}\right) \\
 &= \psi\left(\frac{P \c.\x/d_M}{t^Ka}\right),
 \end{align*}
since 
\begin{align*}
\left|\frac{P \c.\x}{d_M}\left\{\frac{1}{t^K(a+x^J b')}-\frac{1}{t^Ka}
 \right\}\right|
&<
\frac{|P||\c|}{|t^Kad_M|}
 \left|-\frac{x^J b'}{a}+\dots\right| \\
&\leq
\frac{|P||\c|}{\hat J \hat K  |d|}\\
&\leq q^{-1}.
\end{align*}
The conclusion of this is that the character $\psi$ 
in $\Sigma(Y;\x)$ 
only depends on the value of $b/t^K \bmod{x^J}$.

Putting $D_2=(A/x^JA)^*$, 
it follows that 
$$
\Sigma(Y;\x)
=
\frac{1}{\#D_1}\sum_{\eta_1\bmod{M}}\overline{\eta_1(b_0)}\sum_{a\in D_2}
\psi\left(\frac{P \c.\x/d_M}{t^Ka}\right)
\hspace{-0.2cm}
\sum_{\substack{
b\in \cO^\sharp
\\
(b,S)=1\\
|b|=\hat Y/|d|\\
t^{-K}b\equiv a \bmod{x^J}
} }
\frac{\eta_1(b)S_{b}(\c)
}{|b|^{(n+1)/2}}.
$$
Introducing Dirichlet characters $\chi:D_2\rightarrow \CC^*$ 
to detect the congruence condition in the inner sum, we deduce that 
\begin{align*}
\Sigma(Y;\x)
=~&
\frac{1}{\#D_1\#D_2}\sum_{\eta_1\bmod{M}}
\sum_{\chi \bmod{x^J}}\\
&\times\sum_{a\in D_2}
\psi\left(\frac{P \c.\x/d_M}{t^Ka}\right)
\bar{\eta_1(b_0)
\chi(a)}
\Sigma_0(\eta_1,\chi;Y),
\end{align*}
where
$$
\Sigma_0(\eta_1,\chi;Y)
=\sum_{\substack{
b\in \cO^\sharp
\\
(b,S)=1\\
|b|=\hat Y/|d|\\
} }
\frac{\eta_1(b)\chi(t^{-K}b)S_{b}(\c) 
}{|b|^{(n+1)/2}}.
$$
In conclusion,  we have therefore established the identity
\begin{equation}\label{eq:cross}
\begin{split}
\Sigma(Y,\theta)
=~&
\frac{1}{\#D_1\#D_2}\sum_{\eta_1\bmod{M}}\bar{\eta_1(b_0)}
\sum_{\chi \bmod{x^J}}
\Sigma_0(\eta_1,\chi;Y)\\
&\times
\sum_{a\in 
D_2}\bar{\chi(a)}I_{t^Kad_M}(\theta;\c).
\end{split}
\end{equation}

Our first concern is an estimate for the inner sum over $a$. It is easy to see that 
\begin{equation}\label{eq:bath}
\begin{split}
\frac{1}{\#D_2}\sum_{\chi \bmod{x^J}}\left|
\sum_{a\in D_2}\bar{\chi(a)}I_{t^Kad_M}(\theta;\c)\right|
&\ll \hat J J(\Theta)^{-n/2}\ll J(\Theta)^{1-n/2} ,
\end{split}
\end{equation}
since the size constraint on $\c$ in 
\eqref{eq:E(P)-1} gives
$\hat J=\max\{1,|P||\c|/\hat Y\}\ll J(\Theta)$.
It turns out that this bound does not suffice for \eqref{eq:EP} when $n=8$
and it is necessary to produce a bound which  takes advantage of non-trivial averaging over $a$. 
This is achieved in the following result. 

\begin{lemma}\label{lem:red}
We have 
$$ 
\frac{1}{\#D_2}\sum_{\chi \bmod{x^J}}\left|
\sum_{a\in D_2}\bar{\chi(a)}I_{t^Kad_M}(\theta;\c)\right|
\ll J(\Theta)^{1/2-n/2}.
$$
\end{lemma}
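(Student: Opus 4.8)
The plan is to exploit the orthogonality of the Dirichlet characters modulo $x^J$ in order to replace the character average by a diagonal $\ell^2$-sum of the integrals $I_{t^Kad_M}(\theta;\c)$, and then to invoke the pointwise bound of Lemma~\ref{lem:I-hard}. For brevity write $f(a)=I_{t^Kad_M}(\theta;\c)$ for $a\in D_2=(A/x^JA)^*$, and recall that the number of Dirichlet characters modulo $x^J$ is exactly $\#D_2$. First I would apply Cauchy's inequality to the sum over $\chi$, using that $(\#D_2)^{-1}\sum_{\chi\bmod{x^J}}1=1$, to obtain
$$
\frac{1}{\#D_2}\sum_{\chi\bmod{x^J}}\left|\sum_{a\in D_2}\bar{\chi(a)}f(a)\right|
\leq\left(\frac{1}{\#D_2}\sum_{\chi\bmod{x^J}}\left|\sum_{a\in D_2}\bar{\chi(a)}f(a)\right|^2\right)^{1/2}.
$$

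Next I would expand the square on the right and apply orthogonality: for $a,a'\in D_2$ the quantity $(\#D_2)^{-1}\sum_{\chi\bmod{x^J}}\bar{\chi(a)}\chi(a')$ equals $1$ when $a=a'$ in $A/x^JA$ and $0$ otherwise, so only the diagonal terms survive and the right-hand side collapses to $\big(\sum_{a\in D_2}|f(a)|^2\big)^{1/2}$. (The one point to watch is that $a$ and $a'$ already lie in $D_2$, so the orthogonality yields genuine equality in $A/x^JA$, not merely a congruence modulo $x^J$.) Now Lemma~\ref{lem:I-hard}, together with the notation \eqref{eq:J-notation} and the fact that $|\theta|=\hat\Theta$ throughout, gives the uniform estimate $|f(a)|\ll J(\Theta)^{-n/2}$, whence $\sum_{a\in D_2}|f(a)|^2\ll\#D_2\,J(\Theta)^{-n}$. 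Since $\#D_2\leq q^J=\hat J$, this produces a bound of order $\hat J^{1/2}J(\Theta)^{-n/2}$.

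Finally I would close exactly as in the derivation of \eqref{eq:bath}: the size constraint $|\c|\ll\hat Y|P|^{-1}J(\Theta)$ recorded in \eqref{eq:E(P)-1} forces $\hat J=\max\{1,q|P||\c|/\hat Y\}\ll J(\Theta)$, so that $\hat J^{1/2}J(\Theta)^{-n/2}\ll J(\Theta)^{1/2-n/2}$, which is the claimed bound. I do not expect a genuine obstacle here: the improvement over the trivial bound \eqref{eq:bath} is merely the gain of $\hat J^{1/2}$ in place of $\hat J$, and this gain is extracted purely from the Cauchy--Schwarz/orthogonality step, with no cancellation required in the integrals $I_{t^Kad_M}(\theta;\c)$ themselves.
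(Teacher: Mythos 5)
Your proof is correct, and it takes a genuinely different -- and considerably simpler -- route than the paper's. The paper opens up $I_{t^Kad_M}(\theta;\c)$ via Lemma~\ref{lem:small}, splits $a=a_0+x^{J_0}a_1$ with $\hat J_0$ of order $J(\Theta)^{1/2+\ve}$, notes that the domain of integration $\Omega_a$ depends only on $a_0$, and then analyses the inner complete sum over $a_1$ to show that for each fixed $\x$ only $O(\hat J_0)$ characters $\chi$ contribute, yielding $\ll\hat J_0\,J(\Theta)^{-n/2}\sim J(\Theta)^{1/2+\ve-n/2}$ (so the paper's proof actually carries a harmless $J(\Theta)^\ve$ loss relative to the stated lemma). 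You instead dualise directly: Cauchy--Schwarz with the probability measure $(\#D_2)^{-1}\sum_\chi$ followed by orthogonality of the characters of the finite abelian group $D_2$ collapses the left-hand side to $\bigl(\sum_{a\in D_2}|I_{t^Kad_M}(\theta;\c)|^2\bigr)^{1/2}$, and the same two inputs the paper uses in \eqref{eq:bath} -- namely $|I_{t^Kad_M}(\theta;\c)|\ll J(\Theta)^{-n/2}$ and $\#D_2\le\hat J\ll J(\Theta)$ -- then give $J(\Theta)^{1/2-n/2}$ exactly. Both arguments extract the same $\approx\hat J^{1/2}$ saving over \eqref{eq:bath}, but yours is shorter, avoids the $\ve$-loss, and does not require any structural analysis of the integrals. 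The one point worth making explicit, which you do implicitly invoke, is that the pointwise estimate of Lemma~\ref{lem:I-hard} (proved there for moduli $r_M$) carries over verbatim to the moduli $t^Kad_M$: the proof only uses $|\w|\ll H_F\max\{1,|\gamma|\}$ and the measure bound \eqref{eq:brownies}, both of which hold here for $|a|=1$, and indeed the paper already relies on this uniform bound in deriving \eqref{eq:bath}.
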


\begin{proof}
Let $\chi \bmod{x^J}$ be a Dirichlet character.  Opening up 
$I_{t^Kad_M}(\theta;\c)$, we
deduce from \eqref{eq:small} that
\begin{align*}
 \sum_{a\in D_2}\chi(a)I_{t^Kad_M}(\theta;\c)=
 ~& \frac{1}{{\hat L}^{n}}\sum_{a\in D_2}\chi(a)
\psi\left(
\frac{P \c.\x_0}{t^Kad_M}\right)
J_G\left(\theta P^3,\frac{Pt^{-L}\c}{t^Kad_M}\right),
\end{align*}
in the notation of \eqref{eq:J}, 
where $G(\x)=F(\x_0+t^{-L}\x)$. Lemma \ref{lem:small} implies that 
\begin{align*}
J_G\left(\theta P^3,\frac{Pt^{-L}\c}{t^Kad_M}\right)
=\int_{\Omega_a}\psi\left(\theta P^3G(\x)+
\frac{Pt^{-L} \c.\x}{t^Kad_M}\right)\d\x,
\end{align*}
where 
$$\Omega_a=
\left\{\x\in \TT^n:  \left|\theta P^3\nabla G(\x)+\frac{Pt^{-L}\c}{t^Kad_M}\right|\ll 
J(\Theta)^{1/2} \right\}.
$$ 
It follows from  \eqref{eq:brownies} that $\meas(\Omega_a)\ll J(\Theta)^{-n/2}.$

Let $\ve>0$ and choose $J_0\in \ZZ$ such that $\hat J_0$ has order of magnitude $J(\Theta)^{1/2+\ve}.$
If $J_0>J$ then Lemma \ref{lem:red} follows from \eqref{eq:bath}.
Alternatively, we may proceed under the assumption that $J/2\leq J_0\leq J$.
Recall that $x=t^{-1}$ and suppose that  $a\equiv a'\bmod{x^{J_0}}$, for $a,a'\in D_2$. Then 
\begin{align*}
\left| \frac{Pt^{-L} \c}{t^Kad_M}-\frac{Pt^{-L} \c}{t^Ka'd_M}\right|\leq 
\hJ\left|\frac{a-a'}{aa'}\right|\leq \frac{\hat J}{\hat J_0}\ll J(\Theta)^{1/2-\ve}.
\end{align*}
Hence  the set $\Omega_a$ only depends on the value of $a\bmod{x^{J_0}}$.

Let us write $a=a_0+x^{J_0}a_1$, where 
$a_0\in (A/x^{J_0}A)^*$ and $a_1\in A/x^{J-J_0}A $. 
Then 
\begin{align*}
 \sum_{a\in D_2}\chi(a)I_{t^Kad_M}(\theta;\c)
 =~&
 \sum_{a_0\in (A/x^{J_0}A)^*}\sum_{a_1\in A/x^{J-J_0}A}\chi(a_0+x^{J_0}a_1)
\\
&\times 
\frac{1}{{\hat L}^{n}}
\int_{\Omega_{a_0}}\psi\left(\theta 
P^3G(\x)\right)
\psi\left(\frac{P \c.(\x_0+t^{-L}\x)}{t^K(a_0+x^{J_0}a_1)d_M}\right)\d\x.
\end{align*}
For fixed   $a_0\in (A/x^{J_0} A)^*$ and $\x\in \Omega_{a_0}$ we proceed to examine the sum
\begin{align*}
S(\x)= \sum_{a_1\in A/x^{J-J_0}A}\psi\left(\frac{P \c.\y}{t^K(a_0+x^{J_0}a_1)d_M}\right)\chi(1+x^{J_0}a_1\bar{a_0}),
\end{align*}
where 
$\y=\x_0+t^{-L}\x$ and 
$\bar{a_0}$ denotes the multiplicative inverse of $a_0\bmod{x^{J-J_0}}$.

Let $\phi_\chi$ be the additive character defined on $A/x^{J-J_0}A$ via
$$
\phi_\chi(a)=\chi(1+x^{J_0}a).
$$
This must  be a twist of the standard additive character. Thus there exists
an element $a_\chi\in A/x^{J-J_0}A$ such that 
$$
\phi_\chi(a) =\psi\left(\frac{a_\chi a}{x^{J-J_0}}\right),
$$
for any $a\in A/x^{J-J_0}A$.
This gives a surjective homomorphism 
$$
\phi:\Hom\left((A/x^{J}A)^*,\CC^*\right)\rightarrow A/x^{J-J_0}A, 
$$ 
defined by $\phi(\chi)=a_\chi$,
with kernel isomorphic to $\Hom((A/x^{J_0}A)^*,\CC^*)$. 
We conclude that 
\begin{align}
\label{eq:hope}
S(\x)=  \sum_{a_1\in A/x^{J-J_0}A}\psi\left(\frac{P \c.\y}{t^K(a_0+x^{J_0}a_1)d_M}\right)
  \psi\left(\frac{a_\chi a_1\bar{a_0}}{x^{J-J_0}}\right).
\end{align}
Observe that $|{P \c.\y}/({t^Kd_M})| \leq \hJ$.
Hence
\begin{align*} 
\psi\left(\frac{P \c.\y}{t^K(a_0+x^{J_0}a_1)d_M}\right)&=\psi\left(\frac{P 
\c.\y}{t^Ka_0(1+x^{J_0}a_1\overline{a_0})d_M}\right)\\
&=\psi\left(\frac{P \c.\y(1-a_1\bar{a_0}x^{J_0})}{t^Ka_0d_M}\right)
\\
&=\psi\left(\frac{P 
\c.\y}{t^Ka_0d_M}\right)\psi\left(\frac{-P \c.\y a_1\bar{a_0}x^{J_0}}{t^Ka_0d_M}\right)
\end{align*}
and 
\begin{align*}
 \psi\left(\frac{-P \c.\y a_1\bar{a_0}x^{J_0}}{t^Ka_0d_M}\right)=\psi\left(\frac{a''a_1\bar{a_0}^2}{x^{J-J_0}}\right),
\end{align*}
for some $a''\in A$. Applying this reasoning in \eqref{eq:hope}, we are led to the identity
\begin{align*}
S(\x)= 
\psi\left(\frac{P 
\c.\y}{t^Ka_0d_M}\right)
\sum_{a_1\in A/x^{J-J_0}A}\psi\left(\frac{a_1\bar{a_0}(a_\chi+a''\bar{a_0})}{x^{J-J_0}}\right),
\end{align*}
where  $a_\chi$ and $a''$ are independent of the choices of $a_0$ and $a_1$. 

For  fixed $a_0$ we deduce that $S(\x)=0$ unless  
$a_\chi\equiv a'''\bmod{x^{J-J_0}} 
$, where $a'''=-a''\bar{a_0}\bmod{x^{J-J_0}}$,  in which case $|S(\x)|\leq \hat J/\hat J_0$.  
 However, for  fixed $a'''\in A/x^{J-J_0}A$ we have 
$
\#\{\chi\in\phi^{-1}(a''')\}\leq\#\{\chi\in\phi^{-1}(0)\}\leq \hat{J_0},
$ 
since $\phi$ is a homomorphism. 
Thus
\begin{align*}
\frac{1}{\#D_2}\sum_{\chi \bmod{x^J}}\left|
\sum_{a\in D_2}\chi(a)I_{t^Kad_M}(\theta;\c)\right|
\ll~& \frac{1}{\hat J} 
\sum_{\chi \bmod{x^J}}
\sum_{a_0\in (A/x^{J_0}A)^*}\\
&\times
\left| \int_{\Omega_{a_0}} \psi\left( \theta P^3 G(\x)\right) S(\x) \d\x\right|
\\
\ll~& 
\sum_{a_0\in (A/x^{J_0}A)^*} \meas(\Omega_{a_0})\\
\ll~& \hat J_0J(\Theta)^{-n/2}.
\end{align*}
This completes the proof of the lemma, since $\hat J_0$ has order $J(\Theta)^{1/2+\ve}.$
\end{proof}

It is now time to  start analysing the sum $\Sigma_0(\eta_1,\chi;Y) $ for fixed 
Dirichlet  characters $\eta_1:D_1\rightarrow \CC^*$ and $\chi: D_2\rightarrow\CC^*$. 
Let us define a further character $\eta_2:\mathcal{O}\rightarrow \CC^*$, given by  
$
\eta_2(r)=\chi(r/t^{\deg r} )
$
for any $r\in \mathcal{O}$.
This a multiplicative character of order at most $\hat J$. We proceed to bound the sum
$$\sum_{\substack{b\in \cO^\sharp\\ 
(b,S)=1\\
|b|\leq \hat Z 
}} 
\frac{\eta_1(b)\eta_2(b)S_{b}(\c)
}{|b|^{(n+1)/2}}$$
for any $Z\geq 1$, 
where $S$ is  the set of primes dividing $\Delta_F MF^*(\c)$.

Let $X\subset \PP_K^{n-1}$ denote the smooth and  projective  hypersurface $F=0$ defined over $K$ and 
let $X_\c\subset \PP_K^{n-2}$ denote the projective hypersurface cut out from $X$ by the hyperplane
 $\c.\x=0$.   Since $F^*(\c)\neq 0$ it follows that $X_\c$ is smooth.
Moreover, we have  $\dim(X)=n-2$ and $\dim (X_\c)=n-3$.
We begin our analysis of $S_\vp(\c)$ with an application of   Hooley \cite[Lemma 7 and Eq.~(86)]{oct}. This shows that 
\begin{equation}\label{eq:bonus}
S_\vp(\c)=|\vp|\left\{
|\vp| \#{X}_{\c,\vp}(\FF_\vp) - \#{X}_{\vp}(\FF_\vp)+1
\right\},
\end{equation}
for any prime $\vp$.  It now follows from \eqref{eq:counting'} that 
$$
S_\vp(\c)=
(-1)^{n-3} |\vp|^2\sum_{j=1}^{b_{n-3}}\omega_{n-3,j} +O(|\vp|^{n/2}),
$$
for any finite prime $\vp\not \in S$, 
where for any prime $\ell\nmid q$ the number 
$b_{n-3}$ is the dimension of the 
 middle cohomology group $H_\ell^{n-3}(X_\c)=\het^{n-3}(\bar X_\c,\QQ_\ell)$ (as a vector space over $\QQ_\ell$)
and 
 $\omega_{n-3,j}$ are the eigenvalues of the Frobenius endomorphism acting on it.
The dimension  
$b_{n-3}$  is independent of the choice of $\ell$ and is bounded in terms of $n$.
Moreover,  $|\omega_{n-3,j}|=|\vp|^{(n-3)/2}$ for each index  $1\leq j\leq b_{n-3}$. 

We proceed to study the 
Dirichlet series
\begin{equation}\label{eq:dirichlet}
F(s)=\sum_{\substack{b\in \cO^\sharp\\ (b,S)=1}}
\frac{\eta_1(b)\eta_2(b)S_{b}(\c)}{|b|^s}
=\prod_{\vp\not \in S} \left(
1+\frac{\eta_1(\vp)\eta_2(\vp)S_\vp(\c)}{|\vp|^s}
\right),
\end{equation}
which is defined 
for $\sigma=\Re(s)>(n+3)/2$.
Let  $\vp \not\in S$
and $\sigma>n/2+1$. Then, 
with \S\S   \ref{s:weil}--\ref{s:twisting} to hand,
\eqref{eq:bonus} implies that 
\begin{equation}\label{eq:stone}
\begin{split}
1+\frac{\eta_1(\vp)\eta_2(\vp)S_\vp(\c)}{|\vp|^s}
=~&
\left(1
+\eta_1(\vp)\eta_2(\vp)\frac{(-1)^{n-3}}{ |\vp|^{s-2}}\sum_{j=1}^{b_{n-3}}\omega_{n-3,j}\right)\\
&\times\left(1 +O(|\vp|^{n/2-\sigma}+|\vp|^{
n+1-2\sigma})\right)\\
=~&L_\vp(\eta\otimes H_\ell^{n-3}(X_\c),s-2)^{(-1)^{n-3}}\\
&\times
\left(1 +O(|\vp|^{n/2-\sigma})\right),
\end{split}
\end{equation}
where we view  $\eta=\eta_1\otimes\eta_2$ as a Galois representation by class field theory.

We may now appeal to the contents of \S \ref{s:twisting} where some of the analytic properties of 
the global $L$-function $L(\eta \otimes H_\ell^{n-3}(X_\c),s-2)$ are recorded. 
When $\vp\in S$ it follows from our discussion in \S \ref{s:kahn}
that 
$$
L_\vp(\eta\otimes H_\ell^{n-3}(X_\c),s-2)=1+O(|\vp|^{(n+1)/2-\sigma}),
$$
since the inverse roots have modulus at most
$|\vp|^{(n-3)/2}$.
Hence, on recalling the definition of the associated global $L$-function,
we finally obtain
\begin{equation}\label{eq:continue}
F(s)= 
L(\eta\otimes H_\ell^{n-3}(X_\c),s-2)^{(-1)^{n-3}}E(s), \quad (\sigma>n/2+1),
\end{equation}
where
\begin{equation}\label{eq:E}
E(s)=\prod_{\vp\not \in S}
\left(1 +O(|\vp|^{n/2-\sigma})\right)
\prod_{\vp\in S}
\left(1 +O(|\vp|^{(n+1)/2-\sigma})\right).
\end{equation}
Note that $E(s)$ is holomorphic and bounded for 
$\sigma>n/2+1$.

We will need a decent  bound for the absolute value of the function $F(s)$ well inside its domain of analytic 
continuation. 
This  is achieved in the following result. 

\begin{lemma}\label{lem:size of F}
Assume that $n$ is even and let $\ve>0$.  Then 
for $\sigma\geq 1/2+\ve$ 
we have 
$
\left|F\left(s+(n+1)/{2}\right)\right|\ll_\ve |\c|^\ve.
$
\end{lemma}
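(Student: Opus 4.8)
The starting point is the factorisation \eqref{eq:continue}. Since $n$ is even, $n-3$ is odd, so $(-1)^{n-3}=-1$, and hence
$$
F(s)=\frac{E(s)}{L(\eta\otimes H_\ell^{n-3}(X_\c),\,s-2)},\qquad \sigma>\tfrac n2+1,
$$
with $E(s)$ as in \eqref{eq:E}. Applying \eqref{eq:YEAH-twist} and \eqref{eq:degree-Hm-twist} to the smooth hypersurface $X_\c\subset\PP_K^{n-2}$ of dimension $m=n-3$, twisted by $\eta=\eta_1\otimes\eta_2$ with $\eta_1$ of conductor dividing $M$ and $\eta_2$ of conductor dividing $x^J$, gives
$$
\frac{1}{L(\eta\otimes H_\ell^{n-3}(X_\c),\,s-2)}=\frac{P_{0,n-3}(q^{-(s-2)})\,P_{2,n-3}(q^{-(s-2)})}{P_{1,n-3}(q^{-(s-2)})},
$$
where the inverse roots of $P_{k,n-3}$ have modulus $q^{(n-3+k)/2}$, where $\deg P_{0,n-3},\deg P_{2,n-3}\leq b_{n-3}=O_n(1)$, and where $\deg P_{1,n-3}=O_{n,|M|}(\log|\Delta_{X_\c}|+J)$. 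Writing $w=s+\tfrac{n+1}2$ with $\sigma=\Re(s)\geq\tfrac12+\ve$ we have $\Re(w-2)=\sigma+\tfrac{n-3}2\geq\tfrac{n-2}2+\ve$, so $u:=q^{-(w-2)}$ satisfies $|u|\leq q^{-(n-2)/2-\ve}$; in particular $w$ lies strictly to the right of the pole line $\Re=\tfrac n2+1$ of $F$ and of every zero of $P_{1,n-3}(q^{-(s-2)})$.

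Next I would bound the three polynomial factors at $u$. Each linear factor $1-\omega u$ of $P_{0,n-3}$ or $P_{2,n-3}$ has $|1-\omega u|\leq 1+q^{(n-1)/2}|u|\leq 1+q^{1/2-\ve}$, so $|P_{0,n-3}(u)|$ and $|P_{2,n-3}(u)|$ are $O_{n,q}(1)$ (these polynomials have bounded degree). Each of the $\deg P_{1,n-3}$ inverse roots $\omega$ of $P_{1,n-3}$ has modulus exactly $q^{(n-2)/2}$, so $|\omega u|\leq q^{-\ve}<1$, hence $|1-\omega u|\geq 1-q^{-\ve}$ and $|P_{1,n-3}(u)|\geq(1-q^{-\ve})^{\deg P_{1,n-3}}$. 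Thus $\bigl|1/L(\eta\otimes H_\ell^{n-3}(X_\c),w-2)\bigr|\ll_{n,q}(1-q^{-\ve})^{-\deg P_{1,n-3}}$. For the factor $E(w)$ I would use \eqref{eq:E}: the product over $\vp\notin S$ has exponents $n/2-\Re(w)\leq-1-\ve$ and converges absolutely to $O_\ve(1)$, while the product over $\vp\in S$ has at most $\#S\leq\deg(\Delta_FMF^*(\c))=O_{n,|M|}(\log|\c|)$ factors, each of the form $1+O(|\vp|^{(n+1)/2-\Re(w)})=1+O(|\vp|^{-1/2-\ve})$ with $|\vp|\geq q$, so it is bounded by $\bigl(1+O(q^{-1/2-\ve})\bigr)^{O_{n,|M|}(\log|\c|)}$. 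Collecting the pieces, and using that $\Delta_{X_\c}$ is a polynomial of degree $O_n(1)$ in $\c$ (with coefficients built from those of $F$), so that $\log|\Delta_{X_\c}|=O_n(\log|\c|)$ and the $J$-term is comparatively negligible in the range in which the lemma is used, one arrives at $|F(s+(n+1)/2)|\ll_\ve|\c|^\ve$.

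The step I expect to be the main obstacle is precisely this final collection: one must verify that the two ``$\exp(O(\log|\c|))$''-type quantities produced above — coming from $\deg P_{1,n-3}$ and from the number of bad primes $\#S$ — can genuinely be absorbed into $|\c|^\ve$. Concretely this requires (i) the conductor bounds of \S\ref{s:twisting}, so that $\deg P_{1,n-3}=O_{n,|M|}(\log|\Delta_{X_\c}|+J)$ holds with \emph{uniform} implied constants and $\log|\Delta_{X_\c}|=O_n(\log|\c|)$, and (ii) that the $\ve$-margin in the zero-free region $\Re(w)\geq\tfrac n2+1+\ve$ is genuinely exploited — this is the function-field incarnation of the classical principle that an $L$-function is $\ll_\ve\mathrm{cond}^\ve$ just to the right of its critical strip. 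Everything else — the location of the pole line, the moduli of the inverse roots, and the elementary estimates for $P_{0,n-3},P_{1,n-3},P_{2,n-3}$ — is a direct consequence of Deligne's Riemann hypothesis and the material of \S\ref{s:weil}--\S\ref{s:twisting} already in hand.
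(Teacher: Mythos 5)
Your setup is correct and matches the paper: you factorise via \eqref{eq:continue}, invoke \eqref{eq:YEAH-twist} for the rational shape of $L(\eta\otimes H_\ell^{n-3}(X_\c),\cdot)$, use the Riemann hypothesis to locate the inverse roots of $P_{0},P_1,P_2$, and bound $\deg P_1$ via \eqref{eq:degree-Hm-twist} and $\log|\Delta_{X_\c}|\ll\log|\c|$. But your actual estimate does not establish the claim. From $|P_1(u)|\geq(1-q^{-\ve})^{\deg P_1}$ you obtain
\[
\log\bigl|1/P_1(u)\bigr|\leq \deg P_1\cdot\bigl(-\log(1-q^{-\ve})\bigr)
= A(\ve)\cdot O(\log|\c|),
\]
where $A(\ve)=-\log(1-q^{-\ve})$ is a fixed positive constant for fixed $q,\ve$ — and, crucially, $A(\ve)\to\infty$ as $\ve\to 0^+$, not $0$. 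So the exponent of $|\c|$ you produce is $O_\ve(1)$ and does not shrink with $\ve$; your bound is $|F|\ll_\ve|\c|^{O_\ve(1)}$, not $|F|\ll_\ve|\c|^\ve$. The same defect appears in your estimate for the product over $\vp\in S$ in $E(w)$, which again has $O(\log|\c|)$ local factors each of modulus $\geq$ some fixed constant $<1$.

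This is precisely the gap you flag at the end as the main obstacle, and it is genuine: pointwise polynomial bounds on $P_0,P_1,P_2,E$ cannot produce a $|\c|^\ve$-saving. The paper fills it with a convexity argument on the analytic function $\log H(s)$, $H=E\cdot P_0/P_1$: a trivial $O(1)$ bound (uniform in $\c$) for $\sigma>1$, a Borel--Carath\'eodory bound of size $O_\ve(\log|\c|)$ for $\sigma\geq 1/2+\ve$, and Hadamard's three-circle theorem to interpolate, yielding $|\log H(s)|\ll c(\ve)^{1-\beta}(c(\ve)e_1)^\beta$ with $\beta\leq 1-\ve/2<1$. Since $e_1\ll 1+\log|\c|$ and $\beta<1$, one gets $|H(s)|\leq c(\ve)^{1+(\log|\c|)^\beta}$, which \emph{is} $\ll_\eta|\c|^\eta$ for every $\eta>0$. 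That sub-multiplicative/convexity step — the function-field analogue of $L\ll_\ve\mathrm{cond}^\ve$ just right of the critical strip — is the heart of the lemma and is absent from your argument; correctly identifying it as the obstacle is not the same as overcoming it.
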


\begin{proof}
Recalling \eqref{eq:continue},  the fact that   $n$ is even
implies that 
$F(s+(n+1)/2)=G(s)E(s+(n+1)/2)$ for
$\sigma>1/2$, with 
$$
G(s)=
L\left(\eta\otimes H_\ell^{n-3}(X_\c),s+\tfrac{n-3}{2}\right)^{-1},
$$
where
 $\eta=\eta_1\otimes\eta_2$.
It follows from   \eqref{eq:YEAH-twist} that 
$$
G(s)=
\frac{P_0(q^{-s-(n-3)/2})P_2(q^{-s-(n-3)/2})}{P_1(q^{-s-(n-3)/2})},
$$
with $P_{k}=P_{k,n-3}\in \ZZ[T]$ as in \S \ref{s:twisting} for $k\in \{0,1,2\}$.
Furthermore, if  we put $e_k=\deg P_k$ then it follows that $e_0,e_2=O(1)$ and 
\begin{equation}\label{eq:e1-bound}
e_1\ll 1+\log |F^*(\c)| \ll 1+\log |\c|,
\end{equation}
by \eqref{eq:degree-Hm-twist}.
Moreover, the inverse roots of $P_k$ have 
absolute value $q^{(n-3+k)/2}$.
It is now clear that $G(s)$ is holomorphic in the half-plane $\sigma>1/2$ and that in this region its only zeros come from the zeros of 
$P_2(q^{-s-(n-3)/2})$, which  are located on the line $\sigma=1$. We have 
$$
F\left(s+\tfrac{n+1}{2}\right)= P_2(q^{-s-(n-3)/2}) H(s),
$$
with 
$$
H(s)=
E\left(s+\tfrac{n+1}{2}\right) \frac{P_0(q^{-s-(n-3)/2})}{P_1(q^{-s-(n-3)/2})}.
$$
Now it is obvious that
$
|P_2(q^{-s-(n-3)/2})|\leq (1+q^{-\sigma+1})^{e_2} \ll 1, 
$
for $\sigma>1/2$. Hence it suffices to establish the bound in the lemma for $H(s)$.

We will produce a good bound when $\sigma>1$ together with a weaker bound which is valid for $\sigma>1/2$. In the 
familiar way (cf.~Titchmarsh \cite[Chapter~XIV]{tit}), we will then use the Hadamard three circle theorem to establish 
the final bound recorded  in the statement of the lemma. 
Our trivial bound is based on  \eqref{eq:dirichlet}.  Thus it follows from  \eqref{eq:stone} that 
there is a constant $c>0$ such that 
\begin{align*}
\left|\log F\left(s+\tfrac{n+1}{2}\right)\right|
&\leq 
\sum_{j=1}^{b_{n-3}}\sum_{\vp} \sum_{\alpha\geq 1} \frac{1}{\alpha|\vp|^{\alpha \sigma}}
+\sum_{\vp}\sum_{\alpha\geq 1} 
\frac{1}{\alpha} \left(\frac{c}{|\vp|^{\sigma+1/2}}\right)^\alpha\\
&\ll \log Z(\sigma),
\end{align*}
for $\sigma>1$, where
$Z(s)$ is the ordinary zeta function of $K=\FF_q(t)$. 
It easily follows that 
\begin{equation}\label{eq:H-easy}
|\log H(s)|\ll \log Z(\sigma), \quad (\sigma>1).
\end{equation}

Next, 
for $\sigma>1/2$, 
it follows from \eqref{eq:E} that  $E(s+(n+1)/2)\ll Z(\sigma+1/2)^c$ for some absolute constant $c>0$. 
Hence we obtain
$$
|H(s)|\ll Z\left(\sigma+\tfrac{1}{2}\right)^c (1-q^{1/2-\sigma})^{-e_1}
$$
for $\sigma>1/2$, whence
$$
\Re \log H(s)=\log |H(s)|\ll 
\log Z\left(\sigma+\tfrac{1}{2}\right) +e_1
$$
in this region.  Note that $\log H(s)$ is analytic in the half-plane $\sigma>1/2$.
We apply the Borel--Carath\'eodory theorem to $\log H(s)$ with circles of centre
$3/2+it_0$ and radii $1-\ve/2$ and $1-\ve$.  This leads to the conclusion that
\begin{equation}\label{eq:H-hard}
|\log H(s)|\ll \frac{1}{\ve}\left\{\log Z\left(\sigma+\tfrac{1}{2}\right)+e_1\right\}, \quad (\sigma\geq 
\tfrac{1}{2}+\ve).
\end{equation}

We now refine this bound by applying  the Hadamard three circle theorem to $\log H(s)$.
Let $\sigma_0=\sigma_0(\ve)$ and let $s=\sigma+it$ with $1/2+\ve\leq \sigma\leq 1+\ve/2$.
We take circles with centre $\sigma_0+it$ and radii $r_1=\sigma_0-1-\ve/2$, $r_2=\sigma_0-\sigma$ and 
$r_3=\sigma_0-1/2-\ve/2$.
Combining \eqref{eq:H-easy} and \eqref{eq:H-hard}, we deduce the existence of constants $c_1(\ve),c_2(\ve)>0$ such that 
$$
|\log H(s)| \leq c_1(\ve)^{1-\beta} \left(c_2(\ve)e_1\right)^\beta,
$$
where
$$
\beta=\frac{\log r_2/r_1}{\log r_3/r_1}= 2-2\sigma+\ve+O\left(\frac{1}{\sigma_0}\right)\leq 
1-\ve+O\left(\frac{1}{\sigma_0}\right).
$$
We take  $\sigma_0$ sufficiently large to ensure that $\beta\leq 1-\ve/2<1$.
Recalling the bound \eqref{eq:e1-bound} for $e_1$, 
all of this is now seen to give
$$
|H(s)| \leq c(\ve)^{1+(\log |\c|)^\beta}, \quad (\sigma\geq \tfrac{1}{2}+\ve),
$$
for an appropriate constant $c(\ve)>0$.
The statement of the lemma easily follows.
\end{proof}

We are now ready to establish the following estimate, which once combined with \eqref{eq:cross} and Lemma \ref{lem:red},
clearly completes the proof of Lemma 
\ref{lem:goal}.

\begin{lemma}\label{lem:home}
Assume that $n$ is even  and  $F^*(\c)\neq 0$.  Then 
for any  $\ve>0$ we have
$$
\sum_{\substack{b\in \cO^\sharp\\ 
(b,\Delta_F MF^*(\c))=1\\
|b|\leq \hat Z 
}} 
\frac{\eta_1(b)\eta_2(b)S_{b}(\c)
}{|b|^{(n+1)/2}}
\ll (|\c| \hat J)^\ve\hat Z^{1/2+\ve}.
$$
\end{lemma}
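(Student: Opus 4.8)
\textbf{Proof strategy for Lemma \ref{lem:home}.}
The plan is a function‑field Perron argument: the sum to be bounded is, after grouping by degree, a partial sum of the Taylor coefficients at the origin of the Dirichlet series $F(s)$ of \eqref{eq:dirichlet}, and we estimate it by a single application of Cauchy's integral formula, feeding in the analytic continuation \eqref{eq:continue}--\eqref{eq:E} and the subconvex bound of Lemma \ref{lem:size of F}. No genuinely new input beyond Lemma \ref{lem:size of F} is needed.

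Put $v=q^{-s}$ and let $G(v)=F\bigl(s+\tfrac{n+1}{2}\bigr)$. Since $F(s')$ is given by the absolutely convergent series $\sum_b \eta_1(b)\eta_2(b)S_b(\c)\,|b|^{-s'}$ for $\Re(s')$ large, with $b$ running over monic square‑free elements coprime to $\Delta_F MF^*(\c)$, we have
$$
G(v)=\sum_{b}\frac{\eta_1(b)\eta_2(b)S_b(\c)}{|b|^{(n+1)/2}}\,v^{\deg b}=\sum_{k\geq 0}d_k v^k,\qquad d_k=\sum_{\deg b=k}\frac{\eta_1(b)\eta_2(b)S_b(\c)}{|b|^{(n+1)/2}},
$$
for $|v|$ small, so that $d_k$ are exactly the Taylor coefficients of $G$ at the origin and the quantity we want is $\sum_{k=0}^{Z}d_k$. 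By \eqref{eq:continue} and the factorisation $F\bigl(s+\tfrac{n+1}{2}\bigr)=P_2\bigl(q^{-s-(n-3)/2}\bigr)H(s)$, together with the fact that the poles of $H$ all lie on $\Re(s)=\tfrac12$ (a consequence of Deligne's Riemann Hypothesis, via the purity statement of \S\ref{s:twisting}, exactly as established in the proof of Lemma \ref{lem:size of F}), the function $G$ extends holomorphically to the disc $|v|<q^{-1/2}$. Hence, for any $\ve>0$, Cauchy's formula on the circle $|v|=q^{-1/2-\ve}$ gives $d_k=\tfrac{1}{2\pi i}\oint_{|v|=q^{-1/2-\ve}}G(v)\,v^{-k-1}\,\d v$.

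On this circle $|G(v)|=\bigl|F\bigl(s+\tfrac{n+1}{2}\bigr)\bigr|$ with $\Re(s)=\tfrac12+\ve$, and Lemma \ref{lem:size of F}---more precisely its proof, in which one retains the conductor of $\eta_2$ so that $\deg P_{1,n-3}\ll \log|F^*(\c)|+J\ll \log(|\c|\hat J)$ by \eqref{eq:degree-Hm-twist}---yields $|G(v)|\ll (|\c|\hat J)^{\ve}$ there. Estimating the integral trivially gives $|d_k|\ll (|\c|\hat J)^{\ve}\,q^{(1/2+\ve)k}$, and summing the resulting geometric series (whose ratio $q^{1/2+\ve}$ exceeds $1$, with bounded constant) yields
$$
\Bigl|\sum_{\substack{b\in \cO^\sharp\\ (b,\Delta_F MF^*(\c))=1\\ |b|\leq \hat Z}}\frac{\eta_1(b)\eta_2(b)S_b(\c)}{|b|^{(n+1)/2}}\Bigr|\;\leq\;\sum_{k=0}^{Z}|d_k|\;\ll\;(|\c|\hat J)^{\ve}\,q^{(1/2+\ve)Z}\;\ll\;(|\c|\hat J)^{\ve}\,\hat Z^{1/2+\ve},
$$
which is the assertion of Lemma \ref{lem:home}; combined with \eqref{eq:cross} and Lemma \ref{lem:red} it completes the proof of Lemma \ref{lem:goal}.

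The main point is simply that once Lemma \ref{lem:size of F} is in hand there is no real obstacle: the substantive work (analytic continuation, purity of the twisted middle cohomology, the convexity‑type bound obtained there via Borel--Carath\'eodory and the Hadamard three‑circle theorem) has already been done. What remains requires only care with bookkeeping, namely (i) the shift $s\mapsto s+\tfrac{n+1}{2}$, under which the line $\Re(s)=\tfrac12+\ve$ sits just inside the holomorphy disc $|v|<q^{-1/2}$ of $G$; (ii) verifying that the Taylor expansion of $G$ at $0$ is indeed $\sum_k d_k v^k$ and that this converges in a neighbourhood of the circle $|v|=q^{-1/2-\ve}$, which follows from absolute convergence of $F$ for $\Re(s)>(n+3)/2$ together with the analytic continuation; and (iii) extracting the $\hat J$‑dependence from the degree bound \eqref{eq:degree-Hm-twist} for $P_{1,n-3}$ rather than from the statement of Lemma \ref{lem:size of F} as literally written.
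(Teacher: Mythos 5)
Your proposal is correct, and it takes a genuinely different (and arguably cleaner) route from the paper's. The paper proves Lemma~\ref{lem:home} by a classical Perron argument in the $s$-variable: it writes the partial sum as $\frac{1}{2\pi i}\int_{(2)} F\bigl(s+\tfrac{n+1}{2}\bigr)\hat Z^{s}\,\mathrm{d}s/s$, truncates at height $T$, shifts the contour to $\Re(s)=\tfrac12+\ve$, bounds the horizontal and vertical pieces separately using Lemma~\ref{lem:size of F}, and finally chooses $T=\hat Z^{3}$. You instead pass to the variable $v=q^{-s}$, observe that $F\bigl(s+\tfrac{n+1}{2}\bigr)$ is a genuine power series $\sum_{k}d_{k}v^{k}$ whose partial sums of coefficients are exactly the quantity to be bounded, and then read off $|d_{k}|\ll(|\c|\hat J)^{\ve}q^{(1/2+\ve)k}$ from Cauchy's formula on $|v|=q^{-1/2-\ve}$ using the same input (holomorphy and the sup bound on $\Re(s)=\tfrac12+\ve$). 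This avoids Perron truncation and the attendant horizontal-segment bookkeeping entirely --- a simplification made available precisely because, over $\FF_q(t)$, the $L$-function is a rational function of $q^{-s}$ and the "imaginary direction" is compact. What the paper's approach buys is that it mirrors the number-field argument verbatim; what yours buys is brevity and no auxiliary parameter $T$.

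You also correctly flag a small imprecision in the paper itself: the \emph{statement} of Lemma~\ref{lem:size of F} records the bound as $\ll_\ve|\c|^{\ve}$, but the degree bound \eqref{eq:degree-Hm-twist} gives $\deg P_{1,n-3}=O(\log|\Delta_{X_\c}|+J)\ll\log(|\c|\hat J)$, so the three-circle argument in its proof really produces $\ll_\ve(|\c|\hat J)^{\ve}$. The paper itself silently uses this stronger form inside the proof of Lemma~\ref{lem:home} (the factor $(|\c|\hat J)^{\ve}$ reappearing on the horizontal contours), so your reading is the intended one.
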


\begin{proof}
 It follows from Perron's formula 
that the sum to be estimated is equal to
 $$
\sum_{k\leq  \hat Z}
\frac{a_k}{k^{(n+1)/2}}=
\frac{1}{2\pi i}\int_{2-i\infty}^{2+i\infty} F\left(s+\frac{n+1}{2}\right) \frac{\hat Z^{s}\d s}{s},
$$
where 
$$
a_k=
\sum_{\substack{b\in \cO^\sharp, ~|b|=k\\ 
(b,\Delta_F MF^*(\c))=1}} 
\eta_1(b)\eta_2(b)S_{b}(\c)
$$
and $F(s)$ is the Dirichlet series \eqref{eq:dirichlet}. The latter 
is  absolutely convergent and bounded   for $\sigma>(n+3)/2$.
Noting that 
$$
\frac{1}{2\pi i} \int_{2\pm iT}^{2\pm i \infty} \frac{u^s \d s}{s}=O\left(\frac{u^2}{T |\log u|}\right),
$$
this may clearly be rewritten as
$$
\frac{1}{2\pi i}\int_{2-iT}^{2+iT} F\left(s+\frac{n+1}{2}\right) \frac{\hat Z^{s}\d s}{s} +
O\left(\frac{\hat Z^3}{T}\right).
$$

Let $\ve>0$. 
According to \eqref{eq:continue}, the function  $F(s+(n+1)/2)$ has an analytic continuation to 
the half-plane $\sigma\geq 1/2+\ve$ on which it is holomorphic. 
We   change the contour of integration so that it consists of the remaining three sides of the rectangle $R$
with vertices $2-iT, 1/2+\ve-iT, 1/2+\ve+iT$  and $2+iT$. We will  use Lemma 
\ref{lem:size of F} to estimate the contributions from the various contours. 
Thus, to begin with,  the horizontal contours are seen to  contribute 
\begin{align*}
&\ll
\frac{(|\c|\hat J)^\ve}{T}
\int_{\frac{1}{2}+\ve}^{2}  
 \hat Z^{\sigma}\d \sigma 
\ll
\frac{(|\c|\hat J)^\ve \hat Z^{2}}{T}.
\end{align*}
The remaining contour makes the overall contribution
\begin{align*}
&\ll
|\c|^\ve \hat Z^{1/2+\ve}
\int_{\frac{1}{2}+\ve-iT}^{\frac{1}{2}+\ve+iT}  \frac{\d t }{1+|t|}
\ll |\c|^\ve \hat Z^{1/2+\ve}T^{\ve}.
\end{align*}
Combining our estimates and taking $T=\hat Z^{3}$, we therefore arrive at the statement of the lemma.
\end{proof}

\begin{remark}\label{r:odd}
Let us put $m=n-3=\dim X_\c$. Our discussion so far has focussed on the case of  even $n$ (i.e.\ $m$ odd).
The purpose of this remark is to highlight the difficulty of dealing with odd $n$ (i.e.\ $m$ even).
Returning to  the proof of Lemma \ref{lem:size of F} and applying 
\eqref{eq:YEAH-twist}, when $m$ is even  we instead have 
$F(s+(n+1)/2)=G(s)H(s)$ for
$\sigma>1/2$, with 
 $H(s)$  holomorphic and bounded in this half-plane and where
$$
G(s)=
L\left(\eta_1\otimes \eta_2\otimes H_\ell^{m}(X_\c),s+\tfrac{m}{2}\right)=
\frac{P_1(q^{-s-m/2})}{P_0(q^{-s-m/2})P_2(q^{-s-m/2})},
$$
for suitable polynomials $P_0, P_1, P_2\in \ZZ[T]$. 
(Recall that for odd $m$ it was the reciprocal of this function that we needed to analyse.)
In order to have an analogue of Lemma 
\ref{lem:home} for  even $m$ we need a holomorphic continuation of $G(s)$ to the left of the line $\sigma=1$.
However, any inverse root of $P_2$ has absolute value $q^{\frac{m}{2}+1}$
and it is therefore  possible  that 
$P_2(q^{-s-m/2})$ has a zero at $s=1$ (which would  imply that 
$G(s)$ has a pole there). Since we have been unsuccessful in our attempts to  analyse this situation precisely, 
this  prevents us from establishing a version of Theorem \ref{THM} when $n=9$ using the methods of this paper.
As pointed out to the authors by the anonymous referee, 
the location  of the   poles of 
$L\left(\eta_1\otimes \eta_2\otimes H_\ell^{m}(X_\c),s\right)$ is related to  the Tate conjectures and it 
would be interesting to see what they have to say in this setting.
\end{remark}

\section{Contribution from square-full moduli}\label{s:sqfull}

In what follows we will adhere to the notation introduced in Definition \ref{def:r} regarding $j$-full
numbers. Thus any $r\in \cO$  admits a unique factorisation 
$$
r=r_{j+1}\prod_{i=1}^{j}b_i=r_{j+1}\prod_{i=1}^{j}k_i^{i},
$$
for any integer $j\geq 1$, 
with $r_{j}$ being  $j$-full.
In particular it is easy to prove that 
$$
\sum_{|r_j|<\hat X}1=O(\hat X^{1/j}) 
\quad \text{ and }\quad 
\sum_{|r_j|>\hat X}|r_j|^{-\ell}=O(\hat X^{1/j-\ell})
$$
for any $X>1$ and  $\ell>1/j$.
We will make frequent use of these bounds without further comment.

In this section we complete our estimation of $E(P)$, which was initiated in 
\eqref{eq:E(P)-1}, by using the bounds for  
$\Sigma(Y,\theta)$ derived in the preceding section together with the 
estimates for averages of complete exponential sums in \S 
\ref{s:averages}. 
We begin by recalling 
\eqref{eq:E(P)-1}, in which it follows from 
\eqref{eq:alpha=1} that  
$$
S_{b_1',M_1,\b_1
}(\c)\ll |b_1'|^{(n+1)/2+\ve} |(b_1',F^*(\c))|^{1/2}.
$$
Hence there exists
$\b_1,\b_2\in (\cO/M\cO)^n$ and $b_0\in (\cO/M\cO)^*$ such that 
\begin{equation}\label{eq:E(P)-2}
\begin{split}
E(P)
\ll~& \frac{|P|^{n+\ve}}{\hat Y^{(n-1)/2}}
\sum_{\substack{ \c\in \cO^n\\
\c\neq \0\\
|\c|\ll \hat Y |P|^{-1}J(\Theta)
}} 
\sum_{\substack{b_1'\in \cO^\sharp\\ \vp\mid b_1'\Rightarrow \vp\in S}}
|(b_1',F^*(\c))|^{1/2}
\\
&\times
\sum_{\substack{
r_2\in \cO\\ |b_1'r_2|\leq \hat Y}}
\frac{
|S_{r_2,M_2,\b_2
}(\c)|}
{|r_2|^{(n+1)/2}}
\int_{|\theta|=\hat \Theta}
|\Sigma(Y, \theta)| \d\theta,
\end{split}
\end{equation}
for suitable $M_2\mid M$,  where 
 $\Sigma(Y,\theta)$ is given by \eqref{eq:access}.
Our treatment of this sum  differs according to the value of $\c$ in the outer sum.  
It will be convenient to differentiate these contributions by writing 
\begin{itemize}
\item
$E_1(P)$ for  the part coming from  $\c$ such that  $F^*(\c)\neq 0$,
\item
$E_2(P)$ for the part coming from  $\c$  such that $\nabla F^*(\c)\neq  \0$ but $F^*(\c)=0$,
\item
$E_3(P)$ for the  part coming from $\c\neq \0$  such that $\nabla F^*(\c)= \0$.
\end{itemize}
In our estimation of these quantities we will follow common convention and take $\ve>0$ to be a positive quantity whose value may change from one appearance to the next.
Finally, it will be convenient to set
\begin{equation}\label{eq:C-hat}
\hat C=\hat Y |P|^{-1} J(\Theta),
\end{equation}
to ease notation. In particular we must have $\hat C\gg 1$ in \eqref{eq:E(P)-2}, which recovers the bound
$\hat Y\gg |P|/J(\Theta)$
that we recorded in 
\eqref{eq:hatY-size}.
Throughout this section we will make frequent use of the  inequalities 
\eqref{eq:conditions on Y} satisfied by $Y$ and $\Theta$.

\subsection{Treatment of $E_1(P)$}

In  this section we will assume that $n\geq 8$ is even and we will
 take $S$ to be the set of primes dividing $\Delta_F M F^*(\c)$. In particular, it is worth emphasising that $|b_1'|$ can potentially be rather large. 

Let $Y_1\geq 0$ be such that 
\begin{equation}\label{eq:Y1}
\hat Y_1= \frac{\hat Y}{J(\Theta)}.
\end{equation}
Our argument will differ according to the size of  $|b_1'r_2|$.  
Let $E_{1,a}(P)$ be the contribution to $E_1(P)$ from $|b_1'r_2|\leq \hat Y_1$
and write 
$E_{1,b}(P)$ for the corresponding  contribution from $|b_1'r_2|\geq  \max\{1,\hat Y_1\}
$.
In the first scenario it will be more efficient to apply Lemma \ref{lem:goal}, whereas 
Lemma \ref{lem:goal-easy} is sharper in the second scenario.

\subsubsection*{The contribution from $|b_1'r_2|\leq \hat Y_1$}

We may suppose that $\hat Y_1\geq 1$ since otherwise there is nothing to prove.
Note  that 
$|b_1'r_2||P||\c|\ll \hat Y^2$
for any $\c$ contributing to  $E_{1,a}(P)$. 
Since $F^*(\c)\neq 0$,
it therefore  follows from Lemma \ref{lem:goal} that
$$
\Sigma(Y,\theta)\ll \frac{|P|^\ve}{|b_1'r_2|^{1/2}} \frac{\hat Y^{1/2}}{J(\Theta)^{n/2-1/2}},
$$
 for any $\ve>0$.
There are $O(|P|^\ve)$ choices of 
$b_1'\in \cO^\sharp$ such that $\vp\mid b_1'\Rightarrow \vp\in S$.
Employing our bound for 
$\Sigma(Y,\theta)$
 in \eqref{eq:E(P)-2} we therefore obtain
\begin{align*}
E_{1,a}(P)
\ll~& \frac{|P|^{n+\ve}\hat Y^{1-n/2}\hat \Theta }{J(\Theta)^{n/2-1/2}}
\sum_{\substack{ \c\in \cO^n\\
F^*(\c)\neq 0\\
|\c|\ll \hat C 
}} 
\sum_{\substack{
r_2\in \cO\\ |r_2|\leq \hat Y_1}}
\frac{
|S_{r_2,M_2,\b_2
}(\c)|}
{|r_2|^{n/2+1}}.
\end{align*}
Decomposing $r_2$ as $b_2r_3$ it follows from Lemma \ref{lem:multi2} and 
\eqref{eq:alpha=2}
that 
\begin{equation}\label{eq:sock}
\sum_{b_2\leq \hat Y_1}
\frac{|S_{b_2,M_2,\b_2
}(\c)|}{|b_2|^{n/2+1}}
\ll |P|^{\ve} 
\sum_{|k_2|\leq \hat Y_1^{1/2}}
\frac{|(k_2, F^*(\c))|}{|k_2| }
\ll |P|^{\ve}.
\end{equation}
Hence
\begin{align*}
E_{1,a}(P)
\ll~& \frac{|P|^{n+\ve}\hat Y^{1-n/2}\hat \Theta }{J(\Theta)^{n/2-1/2}}
\sum_{\substack{ \c\in \cO^n\\
F^*(\c)\neq 0\\
|\c|\ll \hat C
}} 
\sum_{\substack{
r_3\in \cO\\ |r_3|\leq \hat Y_1}}
\frac{
|S_{r_3,M_3,\b_3
}(\c)|}
{|r_3|^{n/2+1}},
\end{align*}
for appropriate 
 $M_3\mid M$ and $\b_3\bmod{M}$. 
The following result is devoted to estimating  the inner sums over $\c$ and $r_3$.

\begin{lemma}\label{lem:shoe}
Let $R\geq 1$. There exists a constant $\delta>0$ depending only on $n$ such that 
$$
\sum_{\substack{ \c\in \cO^n\\
|\c|\ll \hat C
}} 
\sum_{\substack{
r_3\in \cO\\ |r_3|= \hat R}}
|S_{r_3,M_3,\b_3
}(\c)|
\ll |P|^\ve \hat R^{n/2+4/3-\delta} 
\left(\hat R^{n/3}+\hat C^n\right).
$$
\end{lemma}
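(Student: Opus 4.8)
The strategy is to split the $r_3$-sum according to the arithmetic of $\c$ with respect to $F^*$, exactly as in \S\ref{s:averages}, and feed in the three average-value bounds Lemma~\ref{lem:square-full}, Lemma~\ref{lem:square-full 3} and Lemma~\ref{lem:square-full 5}. Write $\c = \0$, $\c \neq \0$ with $\nabla F^*(\c) = \0$, $F^*(\c) = 0$ but $\nabla F^*(\c) \neq \0$, and $F^*(\c) \neq 0$, and bound the contribution of each piece separately. The $\c = \0$ piece is covered by the first part of Lemma~\ref{lem:square-full} with $C=1$, which gives $|S_{r_3,M_3,\b_3}(\0)| \ll |r_3|^{n/2+1+\ve}(|r_3|^{n/3}+1)$; since there are $O(\hat R^{1/3})$ choices of $3$-full $r_3$ with $|r_3|=\hat R$, this is comfortably inside the claimed bound (with plenty of room, even without the $\hat C^n$ factor). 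The $\c$ with $\nabla F^*(\c) = \0$ is handled by Lemma~\ref{lem:square-full 5} (valid since $n=8$) together with the bounds \eqref{eq:G1}--\eqref{eq:G2}; here one checks $|G_1(r_3)| \ll |r_3|^{n/2+2-1/2} = |r_3|^{n/2+3/2}$ and $|G_2(r_3)|$ has the shape $|r_3|^{n/2+2}$ times a product over prime powers that is bounded by $|r_3|^{O(1)}$ in a way that, after the sum over $3$-full $r_3$ of bounded size, stays below $\hat R^{n/2+4/3-\delta}(\hat R^{n/3}+\hat C^n)$ — the exponent $5/2$ on $|r_4|$ in \eqref{eq:cor-square} is the one to watch, but since $|r_4| \leq |r_3|^{3/4}$ for $3$-full $r_3$ it gets absorbed.

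The substantive cases are $F^*(\c)=0, \nabla F^*(\c)\neq\0$ and $F^*(\c)\neq 0$. For the former, apply Lemma~\ref{lem:square-full 3}: summing over $3$-full $r_3$ with $|r_3|=\hat R$ and using $|b_3| \leq |r_3|$, $|r_4| \leq |r_3|^{3/4}$, the bound there gives a contribution of order $\hat R^\ve \hat C^\ve ( \hat R^{5n/6+2/3}\cdot\hat R^{3/8} + \hat C^{n-3/2}\hat R^{n/2+4/3})$ after the trivial sum over $3$-full moduli (the $|r_4|^{n+1/2}$ term combines with $|b_3|^{5n/6+2/3}$ to total $|r_3|$-power $5n/6+2/3$ up to the gain from $3$-fullness); both terms must be checked against $\hat R^{n/2+4/3-\delta}(\hat R^{n/3}+\hat C^n)$ for $n=8$, which is where the numerology $5n/6+2/3 < n/2+n/3$ (i.e.\ $5n/6 < 5n/6$, borderline) forces one to extract the genuine power saving $\delta$ from the $3$-fullness of $r_3$ — precisely the content of ``$r_3 = r_4\prod_i b_i$'' and the fact that $b_3 = k_3^3$ has square-root-of-itself many residues. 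For $F^*(\c)\neq 0$, use the first part of Lemma~\ref{lem:square-full} again, now with $C = \log_q \hat C$: one gets $\sum_{|\c|\ll\hat C}|S_{r_3,M_3,\b_3}(\c)| \ll |r_3|^{n/2+1+\ve}(|r_3|^{n/3}+\hat C^n)$ for each $r_3$, and summing over $3$-full $r_3$ with $|r_3|=\hat R$ costs only $O(\hat R^{1/3+\ve})$, giving $\ll \hat R^{n/2+4/3+\ve}(\hat R^{n/3}+\hat C^n)$ — this almost matches but needs the extra saving $\delta$; here one refines by decomposing $r_3 = b_3 r_4$ and treating $b_3$ via the sharper second part of Lemma~\ref{lem:square-full} (exponent $n/2+2/3$ instead of $n/2+1$) and $r_4$ via the first part, as in the proof of Lemma~\ref{lem:bday}, which produces a genuine $|b_3|$- and $|r_4|$-saving and yields the $-\delta$.

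\textbf{Main obstacle.} The delicate point is bookkeeping the exponents so that each of the four contributions comes in strictly below $\hat R^{n/2+4/3}(\hat R^{n/3}+\hat C^n)$ with a uniform $\delta>0$ for $n=8$; several of the input bounds (notably Lemma~\ref{lem:square-full 3} and the $\c=\0$ case of Lemma~\ref{lem:square-full}) are borderline in the $|r_3|$-aspect, and the margin of $4/3$ over $1$ in the exponent is exactly what one buys back from the $3$-fullness of $r_3$ (via $\sum_{|r_3|=\hat R, \ 3\text{-full}}1 = O(\hat R^{1/3})$ and $|r_4|\leq|r_3|^{3/4}$, $|b_3|=|k_3|^3$). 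I would organize the proof as a case analysis with one displayed inequality chain per case, in each case reducing to the relevant lemma of \S\ref{s:averages}, decomposing $r_3$ into its cube-full part $b_3$ and quartic-full part $r_4$ where a sharper bound is available, and verifying numerically that the worst exponent on $\hat R$ is $n/2+4/3-\delta$ and on $\hat C$ is at most $n$. The computations are routine once the correct decomposition is chosen; the only real risk is an arithmetic slip in the exponent arithmetic, so I would double-check the $n=8$ values against Table~\ref{table} and \eqref{eq:G1}--\eqref{eq:G2}.
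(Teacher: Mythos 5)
Your proposal takes a genuinely different route from the paper, and as sketched it has concrete gaps. The paper never splits by the arithmetic of $F^*(\c)$; Lemmas~\ref{lem:square-full 3} and \ref{lem:square-full 5} play no role in this proof at all. Instead the paper writes $r_3 = b_3 r_4$, fixes a parameter $Z$ with $|r_4| = \hat Z$, and plays two estimates off each other: the first part of Lemma~\ref{lem:square-full} applied to the full modulus $r_3$ gives $\ll |P|^\ve \hat R^{n/2+4/3}\hat Z^{-1/12}(\hat R^{n/3}+\hat C^n)$ (the $\hat Z^{-1/12}$ coming from counting $3$-full $r_3$ with prescribed $4$-full part), while estimating the $r_4$-factor trivially and applying the second part of Lemma~\ref{lem:square-full} to $b_3$ alone gives $\ll |P|^\ve \hat R^{n/2+1}\hat Z^{n/2+1/4}(\hat R^{n/3}+\hat C^n)$. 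One estimate decreases and the other increases in $\hat Z$, so taking the minimum and summing $q$-adically over $Z$ yields $\delta = 1/(18n+12)$, uniformly over all $\c$ and with no case analysis.

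The specific gaps in your plan: (i) the $\c = \0$ contribution is not ``comfortable'' --- Lemma~\ref{lem:square-full} (first part) with $C=1$ gives $|S_{r_3}(\0)| \ll |r_3|^{5n/6+1+\ve}$, and after multiplying by the $O(\hat R^{1/3})$ cube-full moduli one lands at $\hat R^{n/2+4/3+\ve}\cdot \hat R^{n/3}$, which is a factor $\hat R^{\delta+\ve}$ \emph{above} the claimed bound; you would need the same $b_3 r_4$ refinement here, so isolating $\c = \0$ buys nothing. (ii) The inequality ``$|r_4| \leq |r_3|^{3/4}$'' is false --- for $4$-full $r_3$ one has $b_3 = 1$ and $r_4 = r_3$, so $|r_4|$ can equal $|r_3|$ --- which undermines your absorption of the $|r_4|^{5/2}$ term in \eqref{eq:cor-square}. (iii) ``$|G_1(r_3)| \ll |r_3|^{n/2+3/2}$'' does not follow from \eqref{eq:G1}: for $9$-full $r_3$, \eqref{eq:G1} only gives $|G_1(r_3)| \leq |r_3|^{n/2+7/4}$. (iv) Lemma~\ref{lem:square-full 5} is stated only for $n=8$, whereas Lemma~\ref{lem:shoe} asserts a $\delta$ depending only on $n$, for general $n$; a proof leaning on Lemma~\ref{lem:square-full 5} would establish a strictly weaker statement than what is claimed.
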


\begin{proof}
To estimate this  we write $r_3=b_3r_4 $ and 
we  will need to argue differently according to the size of  $|r_4|$. 
For a 
parameter 
 $0<Z\leq R$, to be defined in due course, the  contribution to the inner sum from  
 $r_4$ such that $|r_4|=\hat Z$ is at most
\begin{equation}\label{eq:knee}
\begin{split}
\sum_{\substack{
r_3=b_3r_4\in \cO\\ |r_3|= \hat R\\
|r_4|=\hat Z
}}
\sum_{\substack{ \c\in \cO^n\\
|\c|\ll \hat C
}}
|S_{r_3,M_3,\b_3
}(\c)|
&\ll |P|^\ve 
\sum_{\substack{
r_3=b_3r_4\in \cO\\ |r_3|= \hat R\\
|r_4|=\hat Z
}}
|r_3|^{n/2+1}
\left(| r_3|^{n/3}+\hat C^n\right)\\
&\ll \frac{|P|^\ve \hat R^{n/2+4/3}}{\hat Z^{1/12}}
\left(\hat R^{n/3}+\hat C^n\right),
\end{split}
\end{equation}
by Lemma \ref{lem:square-full}. 
Alternatively, for appropriate $M_3',\b_3'$,  the second part of this same result gives
\begin{align*}
\sum_{\substack{
r_3=b_3r_4\in \cO\\ |r_3|= \hat R\\
|r_4|=\hat Z
}}
\sum_{\substack{ \c\in \cO^n\\
|\c|\ll \hat C
}}
|S_{r_3,M_3,\b_3
}(\c)|
&\ll 
\sum_{\substack{
r_3=b_3r_4\in \cO\\ |r_3|= \hat R\\
|r_4|=\hat Z
}}|r_4|^{n+1}
\sum_{\substack{ \c\in \cO^n\\
|\c|\ll \hat C
}}
|S_{b_3,M_3',\b_3'
}(\c)|\\
&\ll |P|^\ve \hat Z^{n+1}
\sum_{\substack{
r_3=b_3r_4\in \cO\\ |r_3|= \hat R\\
|r_4|=\hat Z
}} |b_3|^{n/2+2/3}
\left(| b_3|^{n/3}+\hat C^n\right)\\
&\ll |P|^\ve \hat R^{n/2+1}\hat Z^{n/2+1/4}
\left(\hat R^{n/3}+\hat C^n\right).
\end{align*}
Taking the minimum of these two estimates and summing over $q$-adic intervals for $Z$, we readily arrive at the statement of the 
lemma.
\end{proof}

Recalling the definitions \eqref{eq:C-hat},  \eqref{eq:Y1} of $\hat C$ and $\hat Y_1$,
and applying Lemma~\ref{lem:shoe} with $q$-adic ranges for $\hat R\ll \hat Y_1$,  
our work so far shows that 
\begin{align*}
E_{1,a}(P)
\ll~& \frac{|P|^{n+\ve}\hat Y^{4/3-n/2-\delta}\hat \Theta }{J(\Theta)^{n/2-1/6-\delta}}
\left\{
\left(
\frac{\hat Y}{J(\Theta)}\right)^{n/3}+\left(\hat Y |P|^{-1}J(\Theta)\right)^n\right\}\\
\ll~& |P|^\ve
\left\{
\frac{|P|^{n-3}\hat Y^{4/3-n/6-\delta}\hat \Theta |P|^3 }{J(\Theta)^{5n/6-1/6-\delta}}
+
\hat Y^{n/2+4/3-\delta} \hat \Theta J(\Theta)^{n/2+1/6+\delta}\right\},
\end{align*}
for a  constant $\delta>0$ depending only on $n$. Note that $4/3-n/6-\delta<0$ for $n\geq 8$. Hence, in view of \eqref{eq:hatY-size}, there exists $\delta'>0$ such that 
the first term is
\begin{equation}\label{eq:whistla}
\ll |P|^{n-3-\delta'}
\frac{\hat \Theta |P|^{3} }{J(\Theta)}\ll |P|^{n-3-\delta'}.
\end{equation}
This  is clearly satisfactory.
Applying 
\eqref{eq:conditions on Y}, the second term is seen to be
\begin{equation}\label{eq:IHP}
\begin{split}
&\ll
|P|^\ve\hat \Theta
\hat Y^{n/2+4/3-\delta}
+
|P|^{3n/2+1/2+3\delta+\ve} \hat \Theta^{n/2+7/6+\delta}
\hat Y^{n/2+4/3-\delta}\\
&\ll
|P|^\ve \hat Q^{n/2-2/3-\delta}
+
\frac{|P|^{3n/2+1/2+3\delta+\ve}\hat Y^{1/6-2\delta}}{   \hat Q^{n/2+7/6+\delta}}\\
&\ll
|P|^{3n/4-1-\delta'},
\end{split}
\end{equation}
for an appropriate constant $\delta'>0$ depending on $\delta$ and $\ve$.
This is satisfactory for $n\geq 8$.

\subsubsection*{The contribution from $|b_1'r_2|\geq  \max\{1,\hat Y_1\}$}

Let us put $\hat Y_2=\max\{1,\hat Y_1\}$ to ease notation.
In this case we deduce from Lemma \ref{lem:goal-easy} that 
$$
 \Sigma(Y,\theta)\ll \frac{|P|^\ve}{|b_1'r_2|} \frac{\hat Y}{J(\Theta)^{n/2}},
$$
since $\nabla F^*(\c)\neq \0$.
Applying this bound in  \eqref{eq:E(P)-2} we obtain
\begin{align*}
E_{1,b}(P)
\ll~& \frac{|P|^{n+\ve}\hat Y^{(3-n)/2}\hat \Theta }{J(\Theta)^{n/2}}
\sum_{\substack{ \c\in \cO^n\\
F^*(\c)\neq 0\\
|\c|\ll \hat C
}} 
\sum_{\substack{b_1'\in \cO^\sharp\\ \vp\mid b_1'\Rightarrow \vp\in S}}
\sum_{\substack{
r_2\in \cO\\  \hat Y_2< |b_1'r_2|\leq  \hat Y}}
\frac{
|S_{r_2,M_2,\b_2
}(\c)|}
{|b_1'|^{1/2}|r_2|^{(n+3)/2}}.
\end{align*}
Decomposing $r_2$ as $b_2r_3$, we find that 
$$
\sum_{\substack{
r_2\in \cO\\  \hat Y_2< |b_1'r_2|\leq  \hat Y}}
\frac{
|S_{r_2,M_2,\b_2
}(\c)|}
{|r_2|^{(n+3)/2}}
=
\sum_{\substack{
r_2=b_2r_3\in \cO\\  \hat Y_2< |b_1'b_2r_3|\leq  \hat Y}}
\frac{
|S_{b_2,M_2',\b_2'
}(\c)S_{r_3,M_3,\b_3}(\c)|}
{|b_2r_3|^{(n+3)/2}},
$$
for appropriate $M_2',M_3, \b_2',\b_3$,
Summing this over the relevant $\c$, we now apply
Lemma \ref{lem:shoe} for $q$-adic values of  $\hat R$ in the interval  $
\hat Y_2/|b_1'b_2|< \hat R \leq  \hat Y/|b_1'b_2|$ to conclude that
\begin{align*}
\sum_{\substack{ \c\in \cO^n\\
F^*(\c)\neq 0\\
|\c|\ll \hat C
}} 
\sum_{\substack{
r_3\in \cO\\  \hat Y_2< |b_1'b_2r_3|\leq  \hat Y}}
\frac{
|S_{r_3,M_3,\b_3
}(\c)|}
{|r_3|^{(n+3)/2}}
&\ll
|P|^\ve
 \left( \hat Y^{n/3-1/6-\delta}+
\frac{\hat C^n}{ (\hat Y_2/|b_1'b_2|)^{1/6+\delta}}\right).
\end{align*}
The sums over $b_1'$ and  $b_2$ are now easily estimated (with recourse to  \eqref{eq:sock} for the latter). 
Hence, recalling \eqref{eq:C-hat},  we obtain
\begin{equation}\label{eq:E1b}
\begin{split}
E_{1,b}(P)
\ll~& \frac{|P|^{n+\ve}\hat Y^{(3-n)/2}\hat \Theta }{J(\Theta)^{n/2} }
 \left( \hat Y^{n/3-1/6-\delta}+
\frac{(\hat Y |P|^{-1}J( \Theta))^n}{ \hat Y_2^{1/6+\delta}}\right)\\
\ll~& |P|^\ve \hat \Theta\left\{
\frac{|P|^{n}\hat Y^{4/3-n/6-\delta} }{J(\Theta)^{n/2} }
+
\frac{\hat Y^{n/2+3/2} J( \Theta)^{n/2}}{ \hat Y_2^{1/6+\delta}}
\right\}
\end{split}
\end{equation}
for some $\delta>0$.
Note that $4/3-n/6-\delta<0$ for $n\geq 8$, as before.  Hence, in view of \eqref{eq:hatY-size}, there exists $\delta'>0$ such that 
the first term is bounded by \eqref{eq:whistla},
which is satisfactory.
On the other hand, taking $\hat Y_2\geq \hat Y_1=\hat Y/J(\Theta)$, the second term is seen to be
\begin{align*}
&\ll
|P|^\ve\hat \Theta
\hat Y^{n/2+4/3-\delta}
+
|P|^{3n/2+1/2+3\delta+\ve} \hat \Theta^{n/2+7/6+\delta}
\hat Y^{n/2+4/3-\delta}.
\end{align*}
But this is  satisfactory for $n\geq 8$, by \eqref{eq:IHP}.

\subsection{Treatment of $E_2(P)$}

In this section we will assume that $n\geq 8$ (without any assumption on the parity)
and we will
 take $S$ to be the set of primes dividing $\Delta_F M \nabla F^*(\c)$. 
 There are $O(|P|^\ve)$ choices for $b_1'$ in \eqref{eq:E(P)-2}.
Applying Lemma 
\ref{lem:goal-easy}, we therefore obtain the bound
\begin{align*}
E_2(P)
\ll~& \frac{|P|^{n+\ve}\hat \Theta}{\hat Y^{(n-3)/2} J(\Theta)^{n/2}}
\sum_{\substack{ \c\in \cO^n\\
F^*(\c)=0\\
\nabla F^*(\c)\neq \0\\
|\c|\ll \hat C
}} 
\sum_{\substack{
r_2\in \cO\\ |r_2|\leq \hat Y}}
\frac{
|S_{r_2,M_2,\b_2
}(\c)|}
{|r_2|^{(n+3)/2}}.
\end{align*}
The argument used in 
\eqref{eq:sock} allows us to replace $r_2$ by $r_3$ in the inner sum, after adjusting the value of the parameter $\ve$ in the exponent of $|P|$.
We will need the following analogue of 
Lemma \ref{lem:shoe}.

\begin{lemma}\label{lem:shoe'}
Let $R\geq 1$ and put $\delta=\tfrac{1}{2(n-1)}$. Then
\begin{align*}
\sum_{\substack{ \c\in \cO^n\\
F^*(\c)= 0\\
|\c|\ll \hat C
}} 
\sum_{\substack{
r_3\in \cO\\ |r_3|= \hat R}}
\frac{|S_{r_3,M_3,\b_3
}(\c)|}{|r_3|^{(n+3)/2}}
\ll~& |P|^\ve
\left(
\frac{\hat R^{n/3}}{\hat R^{1/6+\delta/3}}+
\hat C^{n-1/2-\delta}
+
\hat R^{1/6}{\hat C}^{n-3/2}\right),
\end{align*}
\end{lemma}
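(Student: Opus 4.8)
The plan is to run the proof of Lemma~\ref{lem:shoe}, but now keeping the constraint $F^*(\c)=0$ in play so that the sharper estimate of Lemma~\ref{lem:square-full 3} (which rests on the point count of Lemma~\ref{lem:cohen} for the absolutely irreducible hypersurface $F^*=0$) becomes available. First I would factorise $r_3=b_3r_4$ in the notation of Definition~\ref{def:r} and split the sum over $r_3$ with $|r_3|=\hat R$ into $q$-adic ranges according to the size $\hat Z=|r_4|$ of the $4$-full part; since $|b_3|=\hat R/\hat Z$, the elementary counts of $j$-full polynomials from \S\ref{s:sqfull} show there are $O(\hat R^{1/3}\hat Z^{-1/12})$ admissible factorisations in each range. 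It then suffices to bound, for each $Z$, the quantity
$$
\Xi(\hat R,\hat Z)=\sum_{\substack{r_3=b_3r_4\in\cO\\|r_3|=\hat R,\ |r_4|=\hat Z}}\ \frac{1}{|r_3|^{(n+3)/2}}\sum_{\substack{\c\in\cO^n\setminus\{\0\}\\|\c|\ll\hat C,\ F^*(\c)=0}}|S_{r_3,M_3,\b_3}(\c)|,
$$
and then to sum over $Z$.

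I would majorise $\Xi(\hat R,\hat Z)$ by the minimum of two estimates. Applying Lemma~\ref{lem:square-full 3} to the inner $\c$-sum, dividing by $|r_3|^{(n+3)/2}$ and summing over the factorisations gives
$$
\Xi(\hat R,\hat Z)\ll|P|^{\ve}\bigl(\hat R^{n/3-1/2}\hat Z^{n/6-1/4}+\hat C^{n-3/2}\hat R^{1/6}\hat Z^{-1/12}\bigr),
$$
whereas applying instead the first part of Lemma~\ref{lem:square-full} (discarding the constraint $F^*(\c)=0$) gives
$$
\Xi(\hat R,\hat Z)\ll|P|^{\ve}\hat R^{-1/6}\hat Z^{-1/12}\bigl(\hat R^{n/3}+\hat C^{n}\bigr).
$$
The term $\hat C^{n-3/2}\hat R^{1/6}\hat Z^{-1/12}$ decays in $\hat Z$, so summing it over all $q$-adic $Z\le R$ contributes $\hat R^{1/6}\hat C^{n-3/2}$, the last term in the statement. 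For the rest, I would take the pointwise minimum and sum over $Z$: the increasing term $\hat R^{n/3-1/2}\hat Z^{n/6-1/4}$ meets the decreasing term $\hat R^{n/3-1/6}\hat Z^{-1/12}$ at $\hat Z\asymp\hat R^{2/(n-1)}$ (the exponent identity $(n/6-1/4)+\tfrac{1}{12}=(n-1)/6$ fixes this crossover), producing the contribution $\hat R^{n/3-1/6-\delta/3}$ with $\delta=\tfrac{1}{2(n-1)}$; and the same increasing term meets the decreasing term $\hat R^{-1/6}\hat C^{n}\hat Z^{-1/12}$ at a crossover whose value simplifies to $\hat C^{n-1/2-\delta}$, the middle term. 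In the degenerate case where this last crossover lies above $\hat R$ one has $\hat C\ge\hat R^{(n-1)/(2n)}$, and since $\tfrac{n-1}{2n}\bigl(n-\tfrac12-\delta\bigr)=\tfrac{n}{2}-\tfrac34$ exactly, the surviving bound $\hat R^{n/2-3/4}$ is again $\le\hat C^{n-1/2-\delta}$.

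Every analytic ingredient is already available: Lemmas~\ref{lem:square-full}, \ref{lem:square-full 3}, \ref{lem:cohen} and the counts of $j$-full polynomials; no new idea is required, and one does not even need to factorise the exponential sum $S_{r_3}$ itself. The one point calling for genuine care is the optimisation: one has to keep simultaneous track of the $4$-full size $\hat Z$, of the relative sizes of $\hat R$ and $\hat C$, and of the boundary cases where a crossover value of $\hat Z$ leaves the admissible interval $[1,\hat R]$, and then check in each regime that the pointwise minimum of the two estimates is majorised by the three-term expression claimed. This multi-parameter bookkeeping, rather than any single estimate, is the main obstacle.
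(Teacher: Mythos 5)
Your proposal is correct and follows essentially the same route as the paper: factorise $r_3=b_3r_4$, split into $q$-adic ranges for $|r_4|=\hat Z$, bound the inner $\c$-sum once via Lemma~\ref{lem:square-full 3} and once via the first part of Lemma~\ref{lem:square-full}, multiply by the count $O(\hat R^{1/3}\hat Z^{-1/12})$ of admissible factorisations, take the pointwise minimum of the two resulting bounds, and optimise over $Z$. The only cosmetic difference is that the paper dispatches the optimisation uniformly with the inequality $\min\{X,Y\}\le X^\delta Y^{1-\delta}$ for $\delta=\tfrac1{2(n-1)}$ (which avoids case analysis), whereas you compute the crossover values of $\hat Z$ explicitly and verify the boundary case in which a crossover leaves $[1,\hat R]$; both computations lead to the same three-term bound.
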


\begin{proof}
To estimate this  we write $r_3=b_3r_4 $ and 
we  start by considering the 
contribution to the inner sum from  
 $r_4$ such that $|r_4|=\hat Z$, 
for a parameter 
 $0<Z\leq R$  to be defined in due course. 
The estimate \eqref{eq:knee} gives 
 \begin{align*}
\sum_{\substack{
r_3=b_3r_4\in \cO\\ |r_3|= \hat R\\
|r_4|=\hat Z
}}
\sum_{\substack{ \c\in \cO^n\\
F^*(\c)=0\\
|\c|\ll \hat C
}}
|S_{r_3,M_3,\b_3
}(\c)|
&\ll \frac{|P|^\ve \hat R^{n/2+4/3}}{\hat Z^{1/12}}
\left(\hat R^{n/3}+\hat C^n\right).
\end{align*}
Alternatively, we invoke Lemma 
\ref{lem:square-full 3}, which gives 
\begin{align*}
\sum_{\substack{ \c\in \cO^n\\
F^*(\c)=0\\
|\c|\ll \hat C
}}
|S_{r_3,M_3,\b_3
}(\c)|
&\ll
\hat R^{n/2+4/3+\ve}
\left(
|b_3|^{n/3-2/3}\hat Z^{n/2-5/6}+{\hat C}^{n-3/2}\right)\\
&=
\hat R^{n/2+4/3+\ve}
\left(
\hat R^{n/3-2/3}\hat Z^{n/6-1/6}+{\hat C}^{n-3/2}\right),
\end{align*}
for any $r_3=b_3r_4\in \cO$ such that $|r_3|= \hat R$ and $|r_4|=\hat Z$. There are clearly $O(\hat R^{1/3}\hat Z^{-1/12})$
such choices for $r_3.$
This therefore gives
\begin{align*}
\sum_{\substack{
r_3=b_3r_4\in \cO\\ |r_3|= \hat R\\
|r_4|=\hat Z
}}
\sum_{\substack{ \c\in \cO^n\\
F^*(\c)=0\\
|\c|\ll \hat C
}}
|S_{r_3,M_3,\b_3
}(\c)|
&\ll 
\frac{\hat R^{n/2+5/3+\ve}}{\hat Z^{1/12}}
\left(
\hat R^{n/3-2/3}\hat Z^{n/6-1/6}+{\hat C}^{n-3/2}\right)\\
&\ll  \hat R^{n/2+4/3+\ve}\left(
\hat R^{n/3-1/3}\hat Z^{n/6-1/4}
+
\hat R^{1/3}
{\hat C}^{n-3/2}\right).
\end{align*}
Taking the minimum of these two estimates gives
\begin{align*}
\sum_{\substack{
r_3=b_3r_4\in \cO\\ |r_3|= \hat R\\
|r_4|=\hat Z
}}
\sum_{\substack{ \c\in \cO^n\\
F^*(\c)=0\\
|\c|\ll \hat C
}}
|S_{r_3,M_3,\b_3
}(\c)|
\ll~& |P|^\ve \hat R^{n/2+4/3}\left(A+B+
\hat R^{1/3}{\hat C}^{n-3/2}\right),
\end{align*}
where 
\begin{align*}
A&=\min\{\hat R^{n/3-1/3}\hat Z^{n/6-1/4}, \hat Z^{-1/12}\hat R^{n/3}\},\\
B&=\min\{\hat R^{n/3-1/3}\hat Z^{n/6-1/4}, \hat Z^{-1/12}\hat C^{n}\}.
\end{align*}
We take $\min\{X,Y\}\leq X^{\delta}Y^{1-\delta}$ in both of these, 
with $\delta=\tfrac{1}{2(n-1)}$, 
to find that 
$A\leq \hat R^{n/3-\delta/3}$ and $B
\leq \hat R^{1/6} \hat C^{n-1/2-\delta}.
$
Summing over $q$-adic intervals for $\hat Z$, we quickly arrive at the statement of the 
lemma.
\end{proof}

Applying Lemma \ref{lem:shoe'} in our earlier bound for $E_2(P)$, with $1\leq \hat R\leq  \hat Y$, we are led to the conclusion that 
\begin{align*}
E_2(P)
\ll~& \frac{|P|^{n+\ve}\hat \Theta}{\hat Y^{(n-3)/2} J(\Theta)^{n/2}}
\left(
\hat Y^{n/3-1/6-\delta/3}+
 \hat C^{n-1/2-\delta}
+
\hat Y^{1/6}{\hat C}^{n-3/2}\right),
\end{align*}
where $\delta=\tfrac{1}{2(n-1)}$
The first term here is equal to the first term in the estimate \eqref{eq:E1b} for $E_{1,b}(P)$, with a different value of $\delta$,  and so makes a satisfactory overall contribution for $n\geq 8$.
Recalling the definition \eqref{eq:C-hat} of $\hat C$, the second   term contributes
\begin{align*}
&\ll\frac{|P|^{n+\ve}\hat \Theta (\hat Y |P|^{-1}J(\Theta))^{n-1/2-\delta}}{\hat Y^{(n-3)/2} J(\Theta)^{n/2}}\\
&= |P|^{1/2+\delta+\ve}\hat \Theta  \hat Y^{n/2+1-\delta}J(\Theta)^{n/2-1/2-\delta}\\
&\ll |P|^{1/2+\delta+\ve}\hat Q^{n/2-1-\delta}
+ |P|^{3n/2-1-2\delta+\ve}\hat Y^{n/2+1-\delta}\Theta^{n/2+1/2-\delta}\\
&\ll |P|^{3n/4-1-\delta/2+\ve},
\end{align*}
which is satisfactory for $n\geq 8$. 
Similarly, the contribution from the third term is seen to be 
\begin{align*}
&\ll\frac{|P|^{n+\ve}\hat \Theta (\hat Y |P|^{-1}J(\Theta))^{n-3/2}}{\hat Y^{n/2-5/3} J(\Theta)^{n/2}}\\
&= |P|^{3/2+\ve}\hat \Theta  \hat Y^{n/2+1/6}J(\Theta)^{n/2-3/2}\\
&\ll |P|^{3/2+\ve}\hat Q^{n/2-11/6}
+ |P|^{3n/2-3+\ve}\hat Y^{n/2+1/6}\Theta^{n/2-1/2}\\
&\ll |P|^{3n/4-5/4+\ve},
\end{align*}
which is also satisfactory for $n\geq 8$.

\subsection{Treatment of $E_3(P)$}

In this section we will assume that $n= 8$ and we 
take $S$ to be the set of primes dividing $\Delta_F M$. 
 We combine the second part of 
 Lemma 
\ref{lem:goal-easy} with the 
 argument used in 
\eqref{eq:sock} to  replace $r_2$ by $r_3$, to get 
\begin{align*}
E_3(P)
\ll~& \frac{|P|^{n+\ve}\hat \Theta}{\hat Y^{n/2-2} J(\Theta)^{n/2}}
\sum_{\substack{
r_3\in \cO\\ |r_3|\leq \hat Y}}
\sum_{\substack{ \c\in \cO^n\\
\nabla F^*(\c)= \0\\
0<|\c|\ll \hat C
}} 
\frac{
|S_{r_3,M_3,\b_3
}(\c)|}
{|r_3|^{n/2+2}},
\end{align*}
for appropriate $M_3\mid M$ and $\b_3 \bmod{M_3}$.
Our main tools to estimate the inner sum over $\c$ will be 
Lemma 
\ref{lem:square-full 5} and its corollary 
\eqref{eq:cor-square}, together with 
Lemma~\ref{lem:square-full}.
We begin with the following result. 

\begin{lemma}\label{lem:bon}
Let $n=8$ and let $\Delta>0$. Then 
$$
 \frac{|P|^{n+\ve}\hat \Theta \hat C^{6-\Delta}}{\hat Y^{n/2-2} J(\Theta)^{n/2}} \ll |P|^{5-\Delta/2+\ve}=|P|^{n-3-\Delta/2+\ve}
$$
\end{lemma}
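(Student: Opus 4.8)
The plan is to substitute the definitions of $\hat C$ and $J(\Theta)$, thereby reducing the asserted bound to a purely elementary inequality among $\hat Y,\hat\Theta,\hat Q$ and $|P|$, and then to verify that inequality using the constraints \eqref{eq:conditions on Y} together with the relation $\hat Q=|P|^{3/2}$ fixed in the circle-method set-up.

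First I would rewrite the left-hand side in a more transparent form. By \eqref{eq:C-hat} we have $\hat C=\hat Y|P|^{-1}J(\Theta)$, hence $J(\Theta)=\hat C|P|\hat Y^{-1}$, while $J(\Theta)\geq 1$ by \eqref{eq:J-notation}. Setting $n=8$, so that $n/2-2=2$ and $n/2=4$, and eliminating $J(\Theta)$ from the quotient yields the identity
$$
\frac{|P|^{n+\ve}\hat\Theta\hat C^{6-\Delta}}{\hat Y^{n/2-2}J(\Theta)^{n/2}}=|P|^{4+\ve}\,\hat\Theta\,\hat Y^{2}\,\hat C^{2-\Delta},
$$
so it suffices to bound the right-hand side. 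Next I would record the consequences of \eqref{eq:conditions on Y} and of the non-emptiness condition $\hat C\gg 1$ noted after \eqref{eq:C-hat}: from $\hat\Theta<(\hat Y\hat Q)^{-1}$ and $\hat Y\leq\hat Q=|P|^{3/2}$ one gets $\hat\Theta\hat Y^{2}<\hat Y\hat Q^{-1}=\hat Y|P|^{-3/2}\leq 1$; from $J(\Theta)\geq 1$ one gets $\hat Y=\hat C|P|J(\Theta)^{-1}\leq\hat C|P|$; and $\hat\Theta<(\hat Y\hat Q)^{-1}$ combined with $\hat Y\leq\hat Q$ forces $J(\Theta)<|P|^{3/2}\hat Y^{-1}$, hence $\hat C=\hat Y|P|^{-1}J(\Theta)<|P|^{1/2}$.

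It then remains to estimate $|P|^{4+\ve}\hat\Theta\hat Y^{2}\hat C^{2-\Delta}$, which I would do by a short case analysis according to the sign of the exponent of $\hat C$. When $\Delta\leq 2$, I would combine $\hat\Theta\hat Y^{2}<1$ with $\hat C^{2-\Delta}<|P|^{(2-\Delta)/2}$ to reach $|P|^{5-\Delta/2+\ve}$ directly. When $\Delta>2$, I would instead keep the sharper bound $\hat\Theta\hat Y^{2}<\hat Y|P|^{-3/2}$, insert $\hat Y\leq\hat C|P|$ to obtain $|P|^{7/2+\ve}\hat C^{3-\Delta}$, and finish with $\hat C^{3-\Delta}<|P|^{(3-\Delta)/2}$, which again gives $|P|^{5-\Delta/2+\ve}$; this argument is tight precisely in the small-$\Delta$ regime relevant to the applications, where Lemma~\ref{lem:bon} is fed into the treatment of $E_3(P)$ via \eqref{eq:cor-square} and Lemma~\ref{lem:square-full}. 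The step I expect to be the main (albeit minor) obstacle is exactly this case distinction: there is no genuine difficulty, but one must be careful to estimate each of $\hat C$ and $\hat Y$ in the direction (above by $|P|^{1/2}$ resp.\ $\hat Q$, or via $\hat Y\le\hat C|P|$) dictated by the sign of its exponent, so that the ``difficulty'' is entirely one of bookkeeping.
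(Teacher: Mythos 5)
Your argument is essentially the paper's own, up to repackaging. The paper substitutes $\hat C=\hat Y|P|^{-1}J(\Theta)$ to rewrite the left-hand side as $|P|^{2+\Delta+\ve}\hat\Theta\hat Y^{4-\Delta}J(\Theta)^{2-\Delta}$, bounds $J(\Theta)^{2-\Delta}\le 1+(\hat\Theta|P|^{3})^{2-\Delta}$ and finishes using $\hat\Theta\hat Y<\hat Q^{-1}$ together with $\hat Y\le\hat Q=|P|^{3/2}$; you instead solve for $J(\Theta)$ in terms of $\hat C$ up front and split on the sign of $2-\Delta$. These are the same elementary optimisation in a different order, and both require the value $\hat Q=|P|^{3/2}$, which you correctly supply.

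One point of caution, which you partly anticipated in your closing remark: in the $\Delta>2$ branch the final inequality $\hat C^{3-\Delta}<|P|^{(3-\Delta)/2}$ reverses once $\Delta>3$ (since then $3-\Delta<0$ while $\hat C\ge 1$ and $|P|^{(3-\Delta)/2}<1$). The paper's own write-up has exactly the analogous limitation: the first term $|P|^{2+\Delta+\ve}\hat\Theta\hat Y^{4-\Delta}$ is bounded by $|P|^{5-\Delta/2+\ve}$ only when $\Delta\le 3$. So both your proof and the paper's in fact establish the lemma only for $0<\Delta\le 3$, not for arbitrary $\Delta>0$; indeed for $\Delta>4$ the assertion fails outright (take $\hat Y\asymp|P|$, $\hat\Theta\asymp|P|^{-3}$, so $J(\Theta)\asymp\hat C\asymp 1$ and the left side is $\asymp|P|^{3+\ve}$, exceeding $|P|^{5-\Delta/2+\ve}$). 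This is harmless because the lemma is only invoked with $\Delta<1$ in the treatment of $E_3(P)$, but you should be aware that your proof inherits rather than removes this implicit restriction.
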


\begin{proof}
Recalling the notation \eqref{eq:C-hat} for $\hat C$,
we take $n=8$ and see that the  left hand side is
\begin{align*}
&\ll |P|^{2+\Delta+\ve}\hat \Theta \hat Y^{4-\Delta}J(\Theta)^{2-\Delta}\\
&\ll |P|^{2+\Delta+\ve}\hat \Theta \hat Y^{4-\Delta}
+ |P|^{8-2\Delta+\ve}\hat \Theta^{3-\Delta} \hat Y^{4-\Delta}\\
&\ll |P|^{5-\Delta/2+\ve},
\end{align*}
as claimed.
\end{proof}

To begin with we dispatch the  contribution from $r_3$ for which $|b_3|>\hat Y^{1-\delta}$, for some small value of $\delta>0$ to be determined below.  In particular we must have $|r_4|<\hat Y^{\delta}$ in the decomposition $r_3=b_3r_4$.  In this setting
\eqref{eq:cor-square} gives the contribution 
\begin{align*}
&\ll \frac{|P|^{n+\ve}\hat \Theta}{\hat Y^{n/2-2} J(\Theta)^{n/2}}
\sum_{\substack{
r_3\in \cO\\ |r_3|\leq \hat Y\\ |b_3|>\hat Y^{1-\delta}}}
\left(\frac{\hat C^{n-5/2}}{|b_3|^{1/2}}+|b_3|^{4/3}|r_4|^{5/2}\right)\\
&\ll \frac{|P|^{n+\ve}\hat \Theta \hat Y^{O(\delta)}}{\hat Y^{n/2-2} J(\Theta)^{n/2}}
\left(\frac{\hat C^{n-5/2}}{\hat Y^{1/6}}+\hat Y^{5/3}\right),
\end{align*}
since there are $O(\hat Y^{1/3})$ available choices of $r_3$.
Assuming that $\delta$ is sufficiently small,
the first term makes a  satisfactory contribution, by Lemma \ref{lem:bon}.
On the other hand, taking $n=8$, the second term contributes
\begin{align*}
&\ll \frac{|P|^{n+\ve}\hat \Theta \hat Y^{-1/3+O(\delta)}}{ J(\Theta)^{n/2}}
= |P|^{n-3+\ve}\left(\frac{\hat \Theta |P|^{3} \hat Y^{-1/3+O(\delta)}}{ J(\Theta)^{n/2}}\right).
\end{align*}
This too is satisfactory, if $\delta$ is small enough,  since $\hat Y\gg |P|/J(\Theta)$.

We now turn to the contribution from $|r_3|\leq \hat Y$ such that $|b_3|\leq \hat Y^{1-\delta}$. 
There are clearly at most $O(\hat Y^{1/3-\delta/12})$ choices for $r_3$.
In fact the only place
we will need to use this inequality is when dealing with the term $|r_3|^{5n/6+1}$ that appears in Lemma \ref{lem:square-full}. 
Summing over the available $r_3$ the effect of this term is seen to be
\begin{align*}
&\ll \frac{|P|^{n+\ve}\hat \Theta}{\hat Y^{n/2-2} J(\Theta)^{n/2}}
\sum_{\substack{
r_3\in \cO\\ |r_3|\leq \hat Y\\ |b_3|\leq \hat Y^{1-\delta}}}
|r_3|^{n/3-1}
\ll \frac{|P|^{n+\ve}\hat \Theta}{\hat Y^{n/6-4/3+\delta/12} J(\Theta)^{n/2}}.
\end{align*}
The exponent of $\hat Y$ is strictly positive for $n=8$, which is  enough to conclude that this term makes a satisfactory overall contribution.

Applying Lemmas~\ref{lem:square-full 5} and 
\ref{lem:square-full}, the remaining contribution is found to be at most 
\begin{align*}
&\ll \frac{|P|^{n+\ve}\hat \Theta}{\hat Y^{n/2-2} J(\Theta)^{n/2}}
\left( H_1+H_2\right),
\end{align*}
where
$$
H_1=
\sum_{\substack{
r_3\in \cO\\ |r_3|\leq \hat Y}}
\min\left\{ \frac{\hat C^n}{|r_3|} ~,~ 
\frac{|G_1(r_3)|\hat C^{n-5/2}}{|r_3|^{n/2+2}}
\right\}, 
~~
H_2=
\sum_{\substack{
r_3\in \cO\\ |r_3|\leq \hat Y}}
\min\left\{ \frac{\hat C^n}{|r_3|} ~,~ 
\frac{|G_2(r_3)|}{|r_3|^{n/2+2}}\right\}.
$$
In the light of Lemma \ref{lem:bon} it suffices to show the existence of positive constants $\Delta_1,\Delta_2>0$ such that 
$
H_i\ll 
 \hat C^{6-\Delta_i}$, 
for $i=1,2$.
Beginning with $H_1$, 
we take $\min\{X,Y\}\leq X^{1/5-2\delta/5}Y^{4/5+2\delta/5}$
for a very small value of $\delta>0$.
Recalling that $n=8$ and then appealing to    \eqref{eq:G1}, we therefore find that 
$$
H_1
\leq 
 \hat C^{6-\delta}
\sum_{\substack{
r_3\in \cO\\ |r_3|\leq \hat Y}}
\frac{|r_3|^{O(\delta)}}
{|r_3|^{1/5}\left(|b_3|^{1/2}|b_4|^{3/8}|b_5|^{2/5}|b_6|^{1/12}|b_7|^{5/14}|r_8|^{1/4}\right)^{4/5}}.
$$
This therefore gives
$H_1
\ll  \hat C^{6-\delta}$, as required, since 
the sum over $r_3$ is absolutely convergent if $\delta$ is small enough.
Turning to  $H_2$, 
for a very small value of $\delta>0$, 
 we take $\min\{X,Y\}\leq X^{3/4-\delta/8}Y^{1/4+\delta/8}$.
 This time we appeal to   \eqref{eq:G2}, giving 
$$
H_2
\leq 
 \hat C^{6-\delta}
\sum_{\substack{
r_3\in \cO\\ |r_3|\leq \hat Y}}
\frac{|r_3|^{O(\delta)}\left(|b_3|^{4/3}|b_4| |b_5|^{9/5}|b_6|^{2}|b_7|^{2}|b_8|^{19/8} |r_9|^{5/2}\right)^{1/4}}{|r_3|^{3/4}}.
$$
This therefore gives
$H_2
\ll  \hat C^{6-\delta}$, as required, 
since 
the sum over $r_3$ is absolutely convergent if $\delta$ is small enough.

\end{document}